
\documentclass[francais,12pt]{smfart}

\usepackage{amsmath,amssymb,amsfonts,footnote}
\usepackage[all]{xy}

\topmargin=0.33in
\oddsidemargin=0.22in
\evensidemargin=0.22in
\textwidth=5.7in
\textheight=8.593in
\setlength{\parskip}{3mm}

\newtheorem{prop0}{Proposition}[section]
\newtheorem{lem0}[prop0]{Lemme}
\newtheorem{rem0}[prop0]{Remarque}
\newtheorem{thm0}[prop0]{Th\'eor\`eme}

\newtheorem{prop}{Proposition}[subsection]
\newtheorem{definit}[prop]{D\'efinition}
\newtheorem{lem}[prop]{Lemme}
\newtheorem{thm}[prop]{Th\'eor\`eme}
\newtheorem{rem}[prop]{Remarque}
\newtheorem{cor}[prop]{Corollaire}

\def\={\buildrel {\rm d\acute ef}\over =}

\DeclareMathOperator{\ind}{ind}
\DeclareMathOperator{\res}{res}
\DeclareMathOperator{\Fr}{Fr}

\DeclareMathOperator{\End}{End}
\DeclareMathOperator{\Ind}{Ind}
\DeclareMathOperator{\Hom}{Hom}
\DeclareMathOperator{\Gal}{Gal}
\DeclareMathOperator{\Sym}{Sym}
\DeclareMathOperator{\val}{val}
\DeclareMathOperator{\GL}{GL}
\DeclareMathOperator{\PGL}{PGL}
\DeclareMathOperator{\WD}{WD}
\DeclareMathOperator{\Aut}{Aut}
\DeclareMathOperator{\Disc}{disc}
\DeclareMathOperator{\ad}{ad}
\DeclareMathOperator{\tr}{tr}

\newcommand{\R}{{\mathbb R}}
\newcommand{\C}{{\mathbb C}}
\newcommand{\oDv}{{\mathcal O}_{D_v}}
\newcommand{\oDw}{{\mathcal O}_{D_w}}
\newcommand{\oM}{{\mathcal O}_M}
\newcommand{\et}{{\mathrm{\acute e t}}}
\newcommand{\CNL}{{\mathcal{C}}_{\mathcal{O}}}
\newcommand{\CF}{D^\triangle}
\newcommand{\CD}{D}
\newcommand{\field}[1]{\mathbb{#1}}
\newcommand{\norm}{\vert\cdot\vert}
\newcommand{\M}{\mathcal{M}}
\newcommand{\Mbar}{\overline{\M}}

\newcommand{\Sbar}{\overline{S}}
\newcommand{\Fv}{F_{v}}
\newcommand{\Fw}{F_{w}}
\newcommand{\pE}{\varpi_E}
\newcommand{\oD}{{\mathcal O}_D}
\newcommand{\oE}{{\mathcal O}_E}
\newcommand{\oL}{{\mathcal O}_L}
\newcommand{\oFv}{{\mathcal O}_{\Fv}}
\newcommand{\oFw}{{\mathcal O}_{\Fw}}

\newcommand{\A}{\field{A}}
\newcommand{\F}{\field{F}}
\newcommand{\T}{\field{T}}
\newcommand{\Z}{{\mathbb Z}}
\newcommand{\Q}{{\mathbb Q}}
\newcommand{\Qp}{\Q_{p}}
\newcommand{\Zp}{\Z_{p}}
\newcommand{\Fp}{{\mathbb F}_{p}}
\newcommand{\Fq}{{\mathbb F}_{q}}
\newcommand{\Qbar}{\overline\Q}
\newcommand{\Qpbar}{\overline\Qp}
\newcommand{\Fvbar}{\overline\Fv}
\newcommand{\Fwbar}{\overline\Fw}
\newcommand{\Zpbar}{\overline\Zp}
\newcommand{\Fpbar}{\overline\Fp}
\newcommand{\rhobar}{\overline\rho}
\newcommand{\G}{{\mathrm{GL}}_2(L)}

\newcommand{\GFv}{{\mathrm{GL}}_2(\Fv)}
\newcommand{\B}{{\mathrm{B}}(L)}
\newcommand{\I}{{\mathrm{I}}(\oL)}
\newcommand{\II}{{\mathrm{I}_1}(\oL)}

\newcommand{\K}{{\mathrm{GL}}_2(\oL)}
\newcommand{\Gbar}{{\mathrm{GL}}_2(\Fpbar)}
\newcommand{\g}{{\Gal}(\Qpbar/L)}
\newcommand{\gF}{{\Gal}(\Qbar/F)}
\newcommand{\gFv}{{{\Gal}(\Fvbar/\Fv)}}
\newcommand{\gFvbar}{{{\Gal}(\overline{k_v}/k_v)}}
\newcommand{\gFw}{{{\Gal}(\Fwbar/\Fw)}}
\newcommand{\gp}{\mathfrak{p}}

\newcommand{\gm}{\mathfrak{m}}
\newcommand{\nr}{\mathrm{nr}}
\newcommand{\univ}{\mathrm{univ}}
\newcommand{\loc}{\mathrm{loc}}
\newcommand{\fl}{\mathrm{fl}}
\newcommand{\ab}{\mathrm{ab}}

\newcommand{\smat}[1]{\left( \begin{smallmatrix} #1 \end{smallmatrix} \right)}

\author[C. Breuil]{Christophe Breuil}
\address{B\^atiment 425\\
C.N.R.S. et Universit\'e Paris-Sud\\
91405 Orsay Cedex\\
France}
\email{christophe.breuil@math.u-psud.fr}

\author[F. Diamond]{Fred Diamond}
\address{Department of Math.\\
King's College London\\
Strand, London WC2R 2LS\\
U.K.}
\email{fred.diamond@kcl.ac.uk}

\thanks{C.\ B.\ remercie pour leur soutien le CNRS, l'universit\'e Paris-Sud, le projet Th\'eHopaD ANR-2011-BS01-005 et le Fields Institute. F.\ D.\ remercie l'I.H.\'E.S. pour son soutien durant la phase de recherche, le C.R.M. de Barcelone, l'I.A.S. et le Fields Institute pour leur hospitalit\'e durant la phase de r\'edaction.\\ \\
${}^\dag$La version finale de cet article sera publi\'ee dans les Annales Scientifiques de l'\'Ecole Normale Sup\'erieure sous le titre {\it Formes modulaires de Hilbert
modulo $p$ et valeurs d'extensions entre caract\`eres galoisiens.}}

\title[Formes modulaires de Hilbert et extensions galoisiennes]{Formes modulaires de Hilbert modulo $p$ et valeurs d'extensions galoisiennes${}^\dag$}

\begin{document} 

\begin{abstract}
Soit $F$ un corps totalement r\'eel, $v$ une place de $F$ non ramifi\'ee divisant $p$ et $\rhobar:\gF\rightarrow \Gbar$ une repr\'esentation continue irr\'eductible dont la restriction $\rhobar\vert_{\gFv}$ est r\'eductible et suffisamment g\'en\'eri\-que. Si $\rhobar$ est modulaire (et satisfait quelques conditions techniques faibles), nous montrons comment retrouver l'extension correspondante entre les deux caract\`eres de $\gFv$ en terme de l'action de $\GFv$ sur la cohomologie modulo $p$. 
\end{abstract}

\begin{altabstract}
Let $F$ be a totally real field, $v$ an unramified place of $F$ dividing $p$ and $\rhobar:\gF\rightarrow \Gbar$ a continuous irreducible representation such that $\rhobar\vert_{\gFv}$ is reducible and sufficiently generic. If $\rhobar$ is modular (and satisfies some weak technical assumptions), we show how to recover the corresponding extension between the two characters of $\gFv$ in terms of the action of $\GFv$ on the cohomology mod $p$.
\end{altabstract}


\maketitle

\setcounter{tocdepth}{2}

\tableofcontents

\section{Introduction}\label{intro}

Soit $F$ un corps totalement r\'eel, $v$ une place de $F$ divisant $p$ et $F_v$ le compl\'et\'e de $F$ en $v$, le $H^1$ \'etale modulo $p$ de tours de courbes de Shimura sur $F$ de niveau en $v$ arbitrairement grand fournit des repr\'esentations lisses de $\GFv$ sur $\Fpbar$ que l'on aimerait comprendre. Si $f$ est une forme de Hilbert propre pour les op\'erateurs de Hecke de repr\'esentation galoisienne modulo $p$ associ\'ee $\rhobar_f$ irr\'eductible, on aimerait par exemple d\'ej\`a savoir d\'ecrire la partie $\rhobar_f$-isotypique de ces repr\'esentations de $\GFv$. Ceci est chose faite lorsque $F_v=\Qp$ (au moins lorsque $F=\Q$ \cite{Em}, mais cela devrait s'\'etendre \`a tout $F$) mais demeure largement myst\'erieux lorsque $F_v\ne \Qp$.

Une des premi\`eres tentatives a \'et\'e de comprendre les repr\'esentations de $\GL_2({\mathcal O}_{F_v})$ apparaissant (\`a multiplicit\'e pr\`es) dans le $\GL_2({\mathcal O}_{F_v})$-socle de cette partie $\rhobar_f$-isotypique : dans \cite{BDJ}, les auteurs donnent une liste conjecturale explicite de ces ``poids de Serre'' lorsque $F_v$ est une extension non ramifi\'ee de $\Qp$, conjecture qui vient d'\^etre compl\`etement d\'emontr\'ee par Gee et Kisin (\cite{GeK}, voir aussi le travail \`a venir de Newton). Cette liste ne d\'epend que de la repr\'esentation locale $\rhobar_f\vert_{\gFv}$ (et m\^eme seulement de sa restriction \`a l'inertie). \`A la suite de \cite{BDJ}, des repr\'esentations lisses admissibles de $\GFv$ sur $\Fpbar$ avec les $\GL_2({\mathcal O}_{F_v})$-socles de \cite{BDJ} ont \'et\'e construites dans \cite{BP} de mani\`ere purement locale en supposant $\rhobar_f\vert_{\gFv}$ suffisamment g\'en\'erique. Des r\'esultats r\'ecents d'Emerton, Gee et Savitt (\cite{EGS}) (faisant suite \`a des r\'esultats partiels dans le cas de vari\'et\'es de Shimura compactes \`a l'infini (cf. \cite{Br}) et des calculs informatiques de Demb\'el\'e (dans le m\^eme cadre, cf. \cite{De})) montrent que la partie $\rhobar_f$-isotypique ci-dessus contient l'une des repr\'esentations de \cite{BP} (lorsque $\rhobar_f\vert_{\gFv}$ est g\'en\'erique). Mais l'une des nouveaut\'e de \cite{BP} est que, d\`es que $F_v\ne \Qp$ (avec $F_v$ non ramifi\'ee), alors il y a {\it \'enorm\'ement} de repr\'esentations lisses admissibles de $\GFv$ sur $\Fpbar$ de socle fix\'e (celui correspondant \`a $\rhobar_f\vert_{\gFv}$). Plus pr\'ecis\'ement, dans les ``diagrammes'' que contiennent les repr\'esentations de \cite{BP} apparaissent de multiples ``param\`etres'' dont le nombre grossit exponentiellement avec le degr\'e $[F_v:\Qp]$ et dont les valeurs sur la partie $\rhobar_f$-isotypique ci-dessus (suppos\'ee contenir l'un de ces diagrammes) sont pour la plupart \`a ce jour myst\'erieuses (par exemple, on ignore si toutes sont locales, i.e. ne d\'ependent que de $\rhobar_f\vert_{\gFv}$).

Du c\^ot\'e repr\'esentations de Galois, il n'y a pas de param\`etres nouveaux qui apparaissent lorsque l'on passe de ${\Gal}(\Qpbar/\Qp)$ \`a $\gFv$, {\it sauf si la repr\'esentation de $\gFv$ est une extension non scind\'ee entre deux ca\-ract\`eres} : on sait en effet que l'espace de ces extensions est g\'en\'eriquement de dimension $[F_v:\Qp]$ sur $\Fpbar$. Une question naturelle se pose alors : est-ce que parmi les nombreux param\`etres qui apparaissent c\^ot\'e $\GFv$ il s'en trouve au moins quelques uns dont les valeurs d\'eterminent compl\`etement l'extension entre les deux caract\`eres de la repr\'esentation de $\gFv$ lorsque celle-ci est (g\'en\'erique) r\'eductible non scind\'ee ? Le but de cet article est de montrer que oui : lorsque $\rhobar_f\vert_{\gFv}$ est g\'en\'erique r\'eductible, nous montrons d'une part que certains des param\`etres de \cite{BP} c\^ot\'e $\GFv$ sont bien d\'efinis (sans aucune conjecture) sur la partie $\rhobar_f$-isotypique du $H^1$ \'etale ci-dessus, et d'autre part que leurs valeurs permettent de retrouver effectivement l'extension pr\'ecise entre les caract\`eres de $\rhobar_f\vert_{\gFv}$. Notons que ce genre de r\'esultat n'a d'\'equivalent ni modulo $\ell\ne p$ (puisque g\'en\'eriquement il n'y a pas d'extension non scind\'ee entre deux caract\`eres de $\gFv$ modulo $\ell\ne p$) ni modulo $p$ pour ${\Gal}(\Qpbar/\Qp)$ (puisque g\'en\'eriquement il y a alors une seule extension non scind\'ee).

\'Enon\c cons maintenant plus pr\'ecis\'ement les r\'esultats principaux de l'article. On suppose donc $F_v$ non ramifi\'ee et on note $k_v$ son corps r\'esiduel. On fixe une repr\'esentation continue, irr\'eductible, totalement impaire $\overline\rho:\gF\rightarrow \Gbar$ et on suppose que $\overline\rho\vert_{\gFv}$ est r\'eductible g\'en\'erique, c'est-\`a-dire de la forme (quitte \`a tordre par un caract\`ere) :
$$\overline\rho\vert_{\gFv}\cong\begin{pmatrix}\nr_v\displaystyle{\prod_{\sigma:k_v\hookrightarrow \Fpbar}}\omega_{\sigma}^{r_{v,\sigma}+1} &*\\0&\nr'_v\end{pmatrix}$$
o\`u $r_{v,\sigma}\in \{0,\cdots,p-3\}$ (non tous \'egaux \`a $0$ ou \`a $p-3$), $\omega_{\sigma}$ est le caract\`ere fondamental de Serre associ\'e au plongement $\sigma$ et $\nr_v, \nr'_v$ des caract\`eres non ramifi\'es. On peut alors d\'ecrire explicitement $\rhobar\vert_{\gFv}$ par le truchement de son module de Fontaine-Laffaille contravariant :
$$\prod_{\sigma:k_v\hookrightarrow \Fpbar}\big(M^{\sigma}=\Fpbar e^{\sigma}\oplus \Fpbar f^{\sigma},{\rm Fil}^{r_{v,\sigma}+1}M^{\sigma}=\Fpbar f^{\sigma}\big)$$
avec $\ \left\{\begin{array}{lll}
\varphi(e^{\sigma})&=&\alpha_{v,\sigma}e^{\sigma\circ\varphi^{-1}}\\
\varphi_{r_{v,\sigma}+1}(f^{\sigma})&=&\beta_{v,\sigma}(f^{\sigma\circ\varphi^{-1}}+x_{v,\sigma}e^{\sigma\circ\varphi^{-1}})
\end{array}\right.$ o\`u $\alpha_{v,\sigma},\beta_{v,\sigma}\in \Fpbar^{\times}$ et $x_{v,\sigma}\in \Fpbar$. Maintenant, on suppose $\rhobar$ modulaire, c'est-\`a-dire :
$$\pi_D(\rhobar)\=\Hom_{\Fpbar[\gF]} \Big(\rhobar,\varinjlim_U H^1_{\et}(X_{U,\Qbar},\Fpbar)(1)\Big)\ne 0$$
o\`u $(X_U)_U$ est une tour de courbes de Shimura sur $F$ associ\'ee \`a une alg\`ebre de quaternions $D$ sur $F$ d\'eploy\'ee en une seule des places infinies de $F$ ainsi qu'aux places divisant $p$ ($U$ parcourant les sous-groupes ouverts compacts de $(D\otimes_\Z\widehat \Z)^\times$). Sous quelques hypoth\`eses techniques sur $\rhobar$ (que nous n'avons pas cherch\'e \`a optimiser, cf. d\'ebut du \S\ \ref{facteur}), on peut utiliser l'action de $(D\otimes_\Z\widehat \Z)^\times$ sur $\pi_D(\rhobar)$ aux places diff\'erentes de $v$ pour d\'efinir un ``facteur local'' $\pi_{D,v}(\rhobar)$ en $v$ qui est une repr\'esentation lisse admissible de $(D\otimes_FF_v)^\times\cong \GFv$ sur $\Fpbar$ (mais dont on ignore si elle ne d\'epend que de $\rhobar|_{\gFv}$). Notons que l'on ne dispose pas ici {\it a priori} d'une factorisation de $\pi_D(\rhobar)$ ``\`a la Flath'' (bien que cela soit conjectur\'e, cf. \cite[Conj.4.7]{BDJ} et le \S\ \ref{globalprelim}), d'o\`u la n\'ecessit\'e de d\'efinir soigneusement ce facteur local $\pi_{D,v}(\rhobar)$.

Si $J$ est un sous-ensemble des plongements de $k_v$ dans $\Fpbar$, on d\'efinit la ``fronti\`ere de $J$'' $F(J)$ comme l'ensemble des plongements $\sigma:k_v\hookrightarrow \Fpbar$ tels que ou bien $\sigma\in J$ et $\sigma\circ\varphi^{-1}\notin J$, ou bien $\sigma\notin J$ et $\sigma\circ\varphi^{-1}\in J$ o\`u $\varphi$ est le Frobenius usuel $x\mapsto x^p$ sur $k_v$. Notons ${\rm I}({\mathcal O}_{F_v})$ le sous-groupe de $\GL_2({\mathcal O}_{F_v})$ des matrices triangulaires sup\'erieures modulo $p$ et ${\rm I}_1({\mathcal O}_{F_v})\subset {\rm I}({\mathcal O}_{F_v})$ celui des matrices unipotentes sup\'erieures modulo $p$. Le premier th\'eor\`eme associe \`a $\pi_{D,v}(\rhobar)$ certains invariants $x(J)$ de $\Fpbar^\times$ qui apparaissent naturellement dans les diagrammes de \cite[\S\ 13]{BP} (bien qu'ils n'y soient pas explicit\'es).

\begin{thm0}\label{invariant}
Soit $J$ tel que $F(J)\cap \{\sigma,x_{v,\sigma}=0\}=\emptyset$. Il existe \`a scalaire pr\`es un unique vecteur $v\in \pi_{D,v}(\rhobar)^{{\rm I}_1({\mathcal O}_{F_v})}$ non nul sur lequel ${\rm I}({\mathcal O}_{F_v})$ agisse par le caract\`ere ${\prod_{\sigma\in J}}\sigma^{p-1}{\prod_{\sigma\notin J}}\sigma^{r_{v,\sigma}}\otimes {\prod_{\sigma\in J}}\sigma^{r_{v,\sigma}}{\prod_{\sigma\notin J}}\sigma^{p-1}$ de ${\rm I}({\mathcal O}_{F_v})/{\rm I}_1({\mathcal O}_{F_v})\cong k_v^{\times}\times k_v^{\times}$ et un unique \'el\'ement $x(J)\in \Fpbar^{\times}$ tel que l'on ait l'\'egalit\'e dans $\pi_{D,v}(\rhobar)$ :
$$\sum_{s\in k_v}\!\bigg(\prod_{\sigma\in J}\sigma(s)^{p-1-r_{v,\sigma}}\!\bigg)\!\begin{pmatrix}[s] & 1\\ 1 & 0\end{pmatrix}\begin{pmatrix}0 & 1\\ p & 0\end{pmatrix}\!v = x(J)\!\sum_{s\in k_v}\!\bigg(\prod_{\sigma\notin J}\sigma(s)^{p-1-r_{v,\sigma}}\!\bigg)\!\begin{pmatrix}[s] & 1\\ 1 & 0\end{pmatrix}\!v$$
o\`u $[s]$ est le repr\'esentant multiplicatif de $s$ dans ${\mathcal O}_{F_v}$.
\end{thm0}

Lorsque $\{\sigma,x_{v,\sigma}=0\}= \emptyset$, les invariants $x(J)$ ci-dessus sont les {\it seuls} invariants de \cite{BP}, mais il en appara\^\i t bien d'autres lorsque $\{\sigma,x_{v,\sigma}=0\}\ne \emptyset$. Les r\'esultats de \cite{BP} montrent par ailleurs que l'on peut construire des repr\'esentations lisses admissibles de $\GFv$ sur $\Fpbar$ avec le $\GL_2({\mathcal O}_{F_v})$-socle correspondant \`a $\overline\rho\vert_{\gFv}$ et des valeurs {\it presque quelconques} de ces invariants $x(J)$ (voir \S\ \ref{spec}), de sorte que les valeurs prises par les scalaires $x(J)$ du th\'eor\`eme \ref{invariant} ne peuvent pas du tout \^etre pr\'edites {\it a priori}. Le deuxi\`eme th\'eor\`eme donne ces valeurs pr\'ecises.

\begin{thm0}\label{valeur}
Soit $J$ tel que $F(J)\cap \{\sigma,x_{v,\sigma}=0\}=\emptyset$, on a :
$$x(J)=-\Big(\prod_{\sigma\in J}\alpha_{v,\sigma}\prod_{\sigma\notin J}\beta_{v,\sigma}\Big)^{-1}\frac{\displaystyle{\prod_{\stackrel{\sigma\in J}{ \sigma\circ\varphi^{-1}\notin J}}}\!\!x_{v,\sigma}(r_{v,\sigma}+1)}{\displaystyle{\prod_{\stackrel{\sigma\notin J}{ \sigma\circ\varphi^{-1}\in J}}}\!\!x_{v,\sigma}(r_{v,\sigma}+1)}\in \Fpbar^{\times}.$$
\end{thm0}

En particulier, on voit que ces valeurs sont {\it locales}, i.e. ne d\'ependent que de $\rhobar\vert_{\gFv}$, ce qui n'\'etait pas \'evident {\it a priori}. Notons que les scalaires $\alpha_{v,\sigma},\beta_{v,\sigma}$ et $x_{v,\sigma}$ ne sont pas d\'efinis de mani\`ere unique (comme le lecteur peut imm\'ediatement le voir en faisant un changement de base sur $M^{\sigma}$ qui respecte les structures), mais on peut v\'erifier directement que les scalaires $x(J)$ du th\'eor\`eme \ref{valeur} ne d\'ependent pas des choix faits. En parti\-culier, on peut supposer tous les $\alpha_{v,\sigma}$ (resp. $\beta_{v,\sigma}$) \'egaux \`a $1$ sauf un, donn\'e alors par $\nr'_v(p^{-1})$ (resp. $\nr_v(p^{-1})$) et, quitte \`a faire un changement de base, on peut \'egalement supposer que l'un des $x_{v,\sigma}$ vaut $1$ (du moins s'il existe un $x_{v,\sigma}$ non nul, mais dans le cas contraire $\overline\rho\vert_{\gFv}$ est scind\'ee et les invariants $x(J)$ donn\'es par les th\'eor\`emes ci-dessus se limitent \`a $\nr_v(p)$ et $\nr'_v(p)$). Le lecteur pourra alors v\'erifier, en prenant par exemple des $J$ de la forme $\{\sigma,\sigma\circ \varphi,\cdots,\sigma\circ\varphi^j\}$ pour $\sigma$ et $j$ convenables, que l'on retrouve facilement les valeurs de tous les autres $x_{v,\sigma}$ non nuls \`a partir des valeurs des $x(J)$ du th\'eor\`eme \ref{valeur} et des $r_{v,\sigma}$ (cf. la remarque \ref{comment}(iii)). 

Le travail r\'ecent d'Emerton, Gee et Savitt (\cite{EGS}, voir aussi \cite{Br}) montre que, sous nos hypoth\`eses, la $\GL_2(F_v)$-repr\'esentation $\pi_{D,v}(\rhobar)$ contient un des diagrammes construits dans \cite{BP}, notons le $D_v(\rhobar)$. Une conjecture naturelle support\'ee par le Th\'eor\`eme \ref{valeur} ci-dessus est que $D_v(\rhobar)$ est enti\`erement {\em local}, c'est-\`a-dire ne d\'epend que de la restriction de $\rhobar$ \`a $\gFv$ (c'est par exemple le cas si $\{\sigma,x_{v,\sigma}=0\}= \emptyset$). Si l'on est optimiste on peut m\^eme penser que toute la $\GL_2(F_v)$-repr\'esentation $\pi_{D,v}(\rhobar)$ pourrait elle-m\^eme \^etre locale. Bien que nous esp\'erons que ces \'enonc\'es sont vrais, nous avons n\'eanmoins choisi de ne pas les pr\'esenter sous forme de conjectures. Une raison est que, en dehors des r\'esultats de cet article et de l'article de Hu \cite{Hu}, nous ne savons pour l'instant rien de plus sur les diagrammes $D_v(\rhobar)$, qui demeurent donc en g\'en\'eral largement myst\'erieux (sans parler des $\GL_2(F_v)$-repr\'esentations $\pi_{D,v}(\rhobar)$). 

Disons quelques mots sur les preuves des th\'eor\`emes \ref{invariant} et \ref{valeur}. Le coeur du th\'eor\`eme \ref{invariant} est de montrer que le poids de Serre $\otimes_{\sigma:k_v\hookrightarrow \Fpbar}(\Sym^{r_{v,\sigma}}\Fpbar^2)^{\sigma}$ (voir \S\ \ref{prel} pour les notations) appara\^\i t {\it avec multiplicit\'e un} dans le $\GL_2({\mathcal O}_{F_v})$-socle de $\pi_{D,v}(\rhobar)$ (le fait qu'il apparaisse \'etait essentiellement d\'ej\`a connu). Cela se d\'emontre en utilisant les techniques de multiplicit\'e un issues de la m\'ethode de Taylor-Wiles comme inaugur\'e par Fujiwara (\cite{Fu}) et l'un d'entre nous (\cite{Di1}). Un deuxi\`eme ingr\'edient essentiel est que la repr\'esentation de $\GL_2({\mathcal O}_{F_v})$ sur $\Fpbar$ :
$$\ind_{{\rm I}({\mathcal O}_{F_v})}^{\GL_2({\mathcal O}_{F_v})}\Big({\prod_{\sigma\in J}}\sigma^{p-1}{\prod_{\sigma\notin J}}\sigma^{r_{v,\sigma}}\otimes {\prod_{\sigma\in J}}\sigma^{r_{v,\sigma}}{\prod_{\sigma\notin J}}\sigma^{p-1}\Big)$$
n'a qu'un seul de ses constituants qui appara\^\i t dans ce $\GL_2({\mathcal O}_{F_v})$-socle : \`a savoir le poids de Serre $\otimes_{\sigma}(\Sym^{r_{v,\sigma}}\Fpbar^2)^{\sigma}$ ci-dessus. Cela se d\'eduit par exemple directement de \cite{GeK} et d'un calcul facile (mais peut aussi se d\'emontrer de mani\`ere plus \'el\'ementaire sans utiliser \cite{GeK}). Une fois ces deux ingr\'edients disponibles, l'existence de $x(J)$ se ram\`ene essentiellement \`a de la th\'eorie des repr\'esentations (cf. proposition \ref{xj}). 

Le coeur du th\'eor\`eme \ref{valeur} est un calcul local c\^ot\'e $\gFv$ et un autre c\^ot\'e $\GFv$. C\^ot\'e $\gFv$, on calcule la r\'eduction modulo $p$ des valeurs propres du Frobenius (multipli\'ees par les bonnes puissances de $p$) sur les modules de Dieudonn\'e des repr\'esentations potentiellement Barsotti-Tate de $\gFv$ de donn\'ee de descente $[{\prod_{\sigma\in J}}\omega_{\sigma}^{p-1}{\prod_{\sigma\notin J}}\omega_{\sigma}^{r_{v,\sigma}}]\oplus [{\prod_{\sigma\in J}}\omega_{\sigma}^{r_{v,\sigma}}{\prod_{\sigma\notin J}}\omega_{\sigma}^{p-1}]$ relevant $\rhobar\vert_{\gFv}$ ($[\cdot ]$ est le repr\'esentant de Teichm\"uller). C\^ot\'e $\GFv$, on calcule la r\'eduction mo\-dulo $p$ d'un scalaire $\widehat x(J)\in \Zpbar$ d\'efini essentiellement comme le scalaire $x(J)$ du th\'eor\`eme \ref{invariant} mais \`a l'int\'erieur de la s\'erie principale lisse usuelle en caract\'eristique $0$ associ\'ee (par la correspondance de Langlands locale classique) \`a la repr\'esentation de Weil-Deligne d'une repr\'esentation potentiellement Barsotti-Tate comme ci-dessus, au lieu de la repr\'esentation $\pi_{D,v}(\rhobar)$. Comme ce calcul fait intervenir les valeurs propres du Frobenius via la compa\-tibilit\'e local-global classique (\cite{Sai}), en mettant bout \`a bout les deux calculs, on obtient la formule du th\'eor\`eme \ref{valeur}. Signalons que ces calculs locaux ont \'et\'e \'etendus par Hu (\cite{Hu}) pour d\'eterminer les valeurs de quelques autres param\`etres de \cite{BP} analogues aux param\`etres $x(J)$.

Au passage, on donne aussi dans le cours du texte un r\'esultat annexe qui a un int\'er\^et ind\'ependamment des deux th\'eor\`emes ci-dessus. M\^eme lorsque $D$ n'est pas d\'eploy\'ee en $v$, on peut d\'efinir un facteur local $\pi_{D,v}(\rhobar)$. On montre que cette repr\'esentation de $(D\otimes_FF_v)^\times$ est alors toujours {\it de longueur infinie} (m\^eme si $F_v=\Qp$, cf. corollaire \ref{extra+}). La preuve utilise l'existence de repr\'esentations irr\'eductibles en caract\'eristique $0$ de dimension (finie) arbitrairement grande de $(D\otimes_FF_v)^\times$, le calcul de la r\'eduction modulo $p$ des types de Bushnell-Kutzko pr\'esent\'e en appendice et des arguments de congruence. \'Evidemment, elle ne marche plus si $D$ est d\'eploy\'ee en $v$.

Passons maintenant bri\`evement en revue les diff\'erentes sections de l'article. La premi\`ere partie rassemble tous les calculs locaux et la seconde tous les r\'esultats globaux (dont les deux th\'eor\`emes \ref{invariant} et \ref{valeur}). Apr\`es des pr\'eliminaires (\S\ \ref{prel}), on calcule explicitement aux \S\S\ \ref{fl1} et \ref{fl2} le module de Fontaine-Laffaille contravariant de la r\'eduction modulo $p$ de certaines repr\'esentations potentiellement Barsotti-Tate de donn\'ee de descente $[{\prod_{\sigma\in J}}\omega_{\sigma}^{p-1}{\prod_{\sigma\notin J}}\omega_{\sigma}^{r_{\sigma}}]\oplus [{\prod_{\sigma\in J}}\omega_{\sigma}^{r_{\sigma}}{\prod_{\sigma\notin J}}\omega_{\sigma}^{p-1}]$. Au \S\ \ref{bt}, on en d\'eduit le calcul de la r\'eduction modulo $p$ des valeurs propres de Frobenius mentionn\'e ci-avant (th\'eor\`eme \ref{thm_reduction}). Au \S\ \ref{jacobi}, on calcule la r\'eduction modulo $p$ des invariants $\widehat x(J)$ dans les s\'eries principales mod\'er\'ement ramifi\'ees provenant des repr\'esentations potentiellement Barsotti-Tate des \S\S\ \ref{fl1} et \ref{fl2} (th\'eor\`eme \ref{main}). Au \S\ \ref{spec}, on donne des conditions suffisantes pour pouvoir d\'efinir (abstraitement) des invariants $x(J)$ comme dans le th\'eor\`eme \ref{invariant}, on rappelle des r\'esultats de \cite{BP} et on donne le r\'esultat local sous sa forme finale (corollaire \ref{mainlocal}). Au \S\ \ref{globalprelim} on introduit le cadre global et la repr\'esentation $\pi_D(\rhobar)$ ci-dessus. Au \S\ \ref{lifts0}, on rappelle (et g\'en\'eralise tr\`es l\'eg\`erement) des r\'esultats de Barnet-Lamb, Gee et Geraghty (\cite{Ge}, \cite{BLGG}, \cite{BLGG2}) sur l'existence de repr\'esentations galoisiennes globales modulaires avec certaines conditions locales fix\'ees que l'on utilise pour d\'eterminer quand $\pi_D(\rhobar)$ est non nul (corollaire \ref{nonzero}). Au \S\ \ref{facteur}, on d\'efinit le facteur local $\pi_{D,v}(\rhobar)$ pr\'ec\'edent. Au \S\ \ref{deformation}, on introduit les anneaux de d\'eformations de repr\'esenta\-tions galoisiennes locales et globales qui serviront \`a la preuve du th\'eor\`eme de multiplicit\'e un. Aux \S\S\ \ref{unI} et \ref{unII}, on introduit les syst\`emes de Taylor-Wiles dont on a besoin puis on les utilise pour montrer qu'un certain module est libre de rang $2$ sur l'alg\`ebre de Hecke (th\'eor\`eme \ref{thm:free}). Au \S~\ref{resprinc}, on en d\'eduit le th\'eor\`eme de multiplicit\'e un (th\'eor\`eme \ref{thm:multone}), puis on l'utilise (ainsi que tout ce qui pr\'ec\`ede) pour d\'emontrer les th\'eor\`emes \ref{invariant} et \ref{valeur}. Enfin, en appendice, on calcule la semi-simplification modulo $p$ des types de Bushnell-Kutzko (ou $K$-types) pour $\GL_2$ ou pour les unit\'es d'une alg\`ebre de quaternions.

On ach\`eve cette introduction avec quelques notations g\'en\'erales. Dans tout le texte, $E$ est une extension finie de $\Qp$ qui d\'esigne le corps des coefficients, $\oE$ est son anneau d'entiers et $k_E$ son corps r\'esiduel. On suppose toujours $E$ ``suffisamment grand'' (cela sera explicit\'e dans le corps du texte). Tout ce qui est r\'eduit modulo une uniformisante $\pE$ de $\oE$ est surlign\'e : par exemple, si $x\in \oE$, $\overline x$ est sa r\'eduction dans $k_E$, si $M$ est un $\oE$-module, $\overline M$ d\'esigne $M/\pE$, etc.

On note $\varepsilon$ le caract\`ere cyclotomique $p$-adique usuel et $\omega$ (plut\^ot que $\overline\varepsilon$) sa r\'eduction modulo $p$. On note $[x]$ le repr\'esentant multiplicatif d'un \'el\'ement $x$ d'une extension finie de $\Fp$. On normalise l'application de r\'eciprocit\'e de la th\'eorie du corps de classes local en envoyant les Frobenius g\'eom\'etriques sur les uniformisantes. 

Si $L$ est une extension finie de $\Qp$ d'anneau d'entiers $\oL$, on note $\B\subset \G$ le sous-groupe des matrices triangulaires sup\'erieures et $\I\subset \K$ (resp. $\II\subset \I$) le sous-groupe des matrices triangulaires (resp. unipotentes) sup\'erieures modulo une uniformisante de $\oL$. 

Les autres notations seront introduites au fur et \`a mesure des besoins.

Les auteurs remercient Toby Gee pour leur avoir signal\'e les r\'esultats r\'ecents de \cite{BLGG2} et \cite{GeK} qui leur ont permis d'all\'eger les hypoth\`eses techniques dans les \'enonc\'es globaux, ainsi que Colin Bushnell et Guy Henniart pour leur avoir signal\'e des r\'ef\'erences utiles pour l'appendice. Ils remercient \'egalement deux rapporteurs anonymes pour leur lecture minutieuse et leurs remarques pertinentes.

\section{R\'esultats locaux}\label{local}

\subsection{Quelques pr\'eliminaires}\label{prel}

Cette partie contient divers rappels, notations, d\'efini\-tions et r\'esultats \'el\'ementaires qui seront utilis\'es dans la suite.

On d\'esigne par $L$ une extension finie de $\Qp$ {\it non ramifi\'ee} de degr\'e $f\geq 1$ et d'anneau d'entiers $\oL$ et on suppose que le corps des coefficients $E$ est tel que $\vert\Hom(L,E)\vert =f$. On pose $q\=p^f$ et on note $\varphi$ le Frobenius $x\mapsto x^p$ sur $\Fq$. On note ${\mathcal S}$ l'ensemble des plongements de $\Fq$ dans $k_E$ (qui s'identifie \`a l'ensem\-ble des plongements de $L$ dans $E$ puisque $L$ est non ramifi\'ee). Si $\sigma\in {\mathcal S}$, on note $\omega_{\sigma}$ le caract\`ere (fondamental) induit sur $\Gal(\Qpbar/L)$ par $\sigma$ compos\'e avec :
$$\g\twoheadrightarrow \Gal(L[\sqrt[p^f-1]{-p}]/L) \buildrel\sim\over\rightarrow \Fq^{\times},\ g \mapsto \overline{\frac{g(\sqrt[p^f-1]{-p})}{\sqrt[p^f-1]{-p}}}.$$ 
Via le corps de classes local on a $\omega_{\sigma}(p)=1$ pour tout $\sigma\in {\mathcal S}$. On note $[\sigma]:\Fq\hookrightarrow \oE$ le caract\`ere multiplicatif tel que $[\sigma](x)\=[\sigma(x)]$ pour $x\in \Fq$ et $\nr(y)$ le caract\`ere non ramifi\'e de $L^\times$ ou $\g$ envoyant $p$ sur $y$. 

On fixe une repr\'esentation lin\'eaire continue $\rhobar:\g\rightarrow \GL_2(k_E)$ r\'eductible et g\'en\'erique au sens de \cite[Def.11.7]{BP}, c'est-\`a-dire de la forme :
$$\rhobar\cong\begin{pmatrix}\big(\nr(\lambda)\displaystyle{\prod_{\sigma\in {\mathcal S}}}\omega_{\sigma}^{r_{\sigma}}\big)\omega &*\\0&\nr(\mu)\end{pmatrix}\otimes \theta$$
o\`u $\lambda,\mu \in k_E^{\times}$, $r_{\sigma}\in \{0,\cdots,p-3\}$ avec $(r_{\sigma})_{\sigma\in {\mathcal S}}\ne (0,\cdots,0),(p-3,\cdots,p-3)$ (ce qui implique donc $p>3$) et $\theta:\g\rightarrow k_E^\times$. Quitte \`a modifier $\lambda$ et $\mu$, on ne restreint pas la g\'en\'eralit\'e en supposant $\theta(p)=1$. La repr\'esentation $\rhobar\otimes \theta^{-1}$ est alors toujours dans la cat\'egorie de Fontaine-Laffaille, c'est-\`a-dire s'\'ecrit : 
$$\overline\rho\otimes \theta^{-1}={\rm Hom}_{{\rm Fil}^{\cdot},\varphi_{\cdot}}(M,A_{\rm cris}\otimes_{\Zp}\Fp)$$
o\`u $M$ est un $\varphi$-module filtr\'e de Fontaine-Laffaille (\cite{FL}) de la forme $M=\prod_{\sigma\in {\mathcal S}}M^{\sigma}$, ${\rm Fil}^{i}M=\prod_{\sigma\in {\mathcal S}}{\rm Fil}^{i}M^{\sigma}$ avec pour $\sigma\in {\mathcal S}$ :
\begin{equation*}
\left\{\begin{array}{llcl}
M^{\sigma}&=&k_E e^{\sigma}\oplus k_Ef^{\sigma}\\
{\rm Fil}^iM^{\sigma}&=&M^{\sigma}&i\leq 0\\
{\rm Fil}^{i}M^{\sigma}&=& k_E f^{\sigma}&1\leq i\leq r_{\sigma}+1\\
{\rm Fil}^{i}M^{\sigma}&=&0&r_{\sigma}+2\leq i
\end{array}\right.
\end{equation*}
\begin{equation}\label{norm}
\!\!\!\!\!\!\!\!\!\!\left\{\begin{array}{lll}
\varphi(e^{\sigma})&=&\alpha_{\sigma}e^{\sigma\circ\varphi^{-1}}\\
\varphi_{r_{\sigma}+1}(f^{\sigma})&=&\beta_{\sigma}(f^{\sigma\circ\varphi^{-1}}+x_{\sigma}e^{\sigma\circ\varphi^{-1}})
\end{array}\right.
\end{equation}
o\`u $\alpha_{\sigma},\beta_{\sigma}\in k_E^{\times}$ et $x_{\sigma}\in k_E$. 

On pose :
\begin{equation}\label{zrho}
Z(\rhobar)\=\{\sigma\in {\mathcal S},\ x_{\sigma}=0\}
\end{equation}
et on note que $Z(\rhobar)=\mathcal S$ si et seulement si $\rhobar$ est scind\'ee. 

Les scalaires $\alpha_{\sigma},\beta_{\sigma},x_{\sigma}$ ne sont pas bien d\'efinis mais on voit que l'on doit avoir par exemple $(\prod_{\sigma}\beta_{\sigma})^{-1}=\lambda$ et $(\prod_{\sigma}\alpha_{\sigma})^{-1}=\mu$, de sorte que les deux scalaires $(\prod_{\sigma}\beta_{\sigma})^{-1}$ et $(\prod_{\sigma}\alpha_{\sigma})^{-1}$ ne d\'ependent que de $\rhobar$ (i.e. ne d\'ependent ni de $\theta$ ni de l'\'ecriture (\ref{norm})). On voit aussi que l'ensemble de plongements $Z(\rhobar)$ ne d\'epend que de $\rhobar$. Plus g\'en\'eralement, on a le lemme \'el\'ementaire suivant, dont la preuve est laiss\'ee au lecteur comme (plaisant) exercice.

\begin{lem}\label{joli}
Pour tout $J\subseteq {\mathcal S}$ tel que $Z(\rhobar)\cap\{\sigma\in {\mathcal S},\sigma\notin J, \sigma\circ\varphi^{-1}\in J\}=\emptyset$, les scalaires :
$$\Big(\prod_{\sigma\in J}\alpha_{\sigma}\prod_{\sigma\notin J}\beta_{\sigma}\Big)^{-1}\frac{\displaystyle{\prod_{\stackrel{\sigma\in J}{ \sigma\circ\varphi^{-1}\notin J}}}x_{\sigma}}{\displaystyle{\prod_{\stackrel{\sigma\notin J}{\sigma\circ\varphi^{-1}\in J}}}x_{\sigma}}\ \ \in \ \ k_E$$
ne d\'ependent que de $\rhobar$.
\end{lem}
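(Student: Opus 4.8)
The plan is to pin down exactly the ambiguity in the presentation (\ref{norm}) and then to check that the displayed quantity is insensitive to it. Recall that $\theta$, hence the Fontaine-Laffaille module $M$, is determined by $\rhobar$ (the restriction of $\theta$ to inertia is read off from $\rhobar$, and $\theta(p)=1$ by normalisation, so $M$ is fixed up to isomorphism); it therefore suffices to see how $\alpha_\sigma,\beta_\sigma,x_\sigma$ change when one replaces one basis $(e^\sigma,f^\sigma)_{\sigma\in{\mathcal S}}$ of $M$ realizing (\ref{norm}) by another one $(\widetilde e^\sigma,\widetilde f^\sigma)_\sigma$. The first step is to check that this change is necessarily diagonal. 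Indeed $\widetilde f^\sigma$ and $f^\sigma$ both span the line ${\rm Fil}^{r_\sigma+1}M^\sigma$, so $\widetilde f^\sigma=b_\sigma f^\sigma$ with $b_\sigma\in k_E^\times$; writing $\widetilde e^\sigma=a_\sigma e^\sigma+c_\sigma f^\sigma$ with $a_\sigma\in k_E^\times$ and $c_\sigma\in k_E$, and using $\varphi(f^\sigma)=0$ (since $f^\sigma\in{\rm Fil}^1M^\sigma$), one gets $\varphi(\widetilde e^\sigma)=a_\sigma\alpha_\sigma e^{\sigma\circ\varphi^{-1}}$; as this must be a multiple of $\widetilde e^{\sigma\circ\varphi^{-1}}=a_{\sigma\circ\varphi^{-1}}e^{\sigma\circ\varphi^{-1}}+c_{\sigma\circ\varphi^{-1}}f^{\sigma\circ\varphi^{-1}}$ and $a_\sigma\alpha_\sigma\neq0$, we must have $c_{\sigma\circ\varphi^{-1}}=0$ for every $\sigma$, hence $c_\sigma=0$ for all $\sigma$ (as $\sigma\mapsto\sigma\circ\varphi^{-1}$ is a permutation of ${\mathcal S}$).

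Substituting the rescaling $(e^\sigma,f^\sigma)\mapsto(a_\sigma e^\sigma,b_\sigma f^\sigma)$ into (\ref{norm}) then yields the transformation law
$$\widetilde\alpha_\sigma=\alpha_\sigma\,\frac{a_\sigma}{a_{\sigma\circ\varphi^{-1}}},\qquad\widetilde\beta_\sigma=\beta_\sigma\,\frac{b_\sigma}{b_{\sigma\circ\varphi^{-1}}},\qquad\widetilde x_\sigma=x_\sigma\,\frac{b_{\sigma\circ\varphi^{-1}}}{a_{\sigma\circ\varphi^{-1}}}.$$
In particular $\prod_\sigma\alpha_\sigma$ and $\prod_\sigma\beta_\sigma$ are unchanged (recovering the two scalars singled out before the statement), and $Z(\rhobar)$ is unchanged, so the hypothesis on $J$ is intrinsic.

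It remains to carry out the bookkeeping. Write $g\colon\sigma\mapsto\sigma\circ\varphi^{-1}$ for the corresponding permutation of ${\mathcal S}$, and for $S\subseteq{\mathcal S}$ and a family $(u_\sigma)\in(k_E^\times)^{\mathcal S}$ set $u_S=\prod_{\sigma\in S}u_\sigma$. Then $\prod_{\sigma\in J}u_\sigma/u_{g(\sigma)}=u_J/u_{g(J)}=u_{J\setminus g(J)}/u_{g(J)\setminus J}$, and dually (using $g({\mathcal S}\setminus J)={\mathcal S}\setminus g(J)$) $\prod_{\sigma\notin J}u_\sigma/u_{g(\sigma)}=u_{g(J)\setminus J}/u_{J\setminus g(J)}$. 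Moreover $\{\sigma\in J:\sigma\circ\varphi^{-1}\notin J\}=J\setminus g^{-1}(J)$ has image $g(J)\setminus J$ under $g$, while $\{\sigma\notin J:\sigma\circ\varphi^{-1}\in J\}=g^{-1}(J)\setminus J$ has image $J\setminus g(J)$. Feeding the transformation law into the displayed scalar, the correction factor coming from $\big(\prod_{\sigma\in J}\alpha_\sigma\prod_{\sigma\notin J}\beta_\sigma\big)^{-1}$ is $\dfrac{a_{g(J)\setminus J}\,b_{J\setminus g(J)}}{a_{J\setminus g(J)}\,b_{g(J)\setminus J}}$, and the one coming from the ratio of the $x_\sigma$'s is its inverse $\dfrac{a_{J\setminus g(J)}\,b_{g(J)\setminus J}}{a_{g(J)\setminus J}\,b_{J\setminus g(J)}}$; the two cancel, so the scalar is unchanged and depends only on $\rhobar$. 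I do not expect a genuine obstacle here: the one step needing care is the vanishing of the $c_\sigma$ (which rules out any non-diagonal change of basis preserving the shape (\ref{norm})), and the rest is just keeping track of the index shifts $\sigma\leftrightarrow\sigma\circ\varphi^{-1}$.
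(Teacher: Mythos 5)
The paper explicitly leaves this lemma to the reader ``comme (plaisant) exercice,'' so there is no paper proof to compare against; your argument supplies one, and it is correct. You correctly identify the only ambiguity — a choice of basis of each $M^\sigma$ realizing (\ref{norm}), once $\theta$ is pinned down by $\theta(p)=1$ (and, in the split case, by the convention of la remarque \ref{comment}(ii)) — and the observation that $\varphi\vert_{{\rm Fil}^1}=0$ modulo $p$ forces the change of basis to be diagonal is the one genuinely substantive step. The resulting transformation laws $\widetilde\alpha_\sigma=\alpha_\sigma a_\sigma/a_{\sigma\circ\varphi^{-1}}$, $\widetilde\beta_\sigma=\beta_\sigma b_\sigma/b_{\sigma\circ\varphi^{-1}}$, $\widetilde x_\sigma=x_\sigma b_{\sigma\circ\varphi^{-1}}/a_{\sigma\circ\varphi^{-1}}$ are right, as is the cancellation between the two correction factors after identifying $\{\sigma\in J,\ \sigma\circ\varphi^{-1}\notin J\}$ and $\{\sigma\notin J,\ \sigma\circ\varphi^{-1}\in J\}$ with $g(J)\setminus J$ and $J\setminus g(J)$ under $g\colon\sigma\mapsto\sigma\circ\varphi^{-1}$. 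As a side benefit your computation also shows that $Z(\rhobar)$ is intrinsic (since $\widetilde x_\sigma$ is a unit multiple of $x_\sigma$), which is needed for the hypothesis on $J$ in the lemma to be well posed, and which the paper notes in passing just before the statement.
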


\begin{rem}\label{comment}
{\rm (i) Noter les deux cas extr\^emes $J=\emptyset$ et $J=\mathcal S$ qui cor\-respondent aux deux scalaires ci-dessus (lorsque $Z(\rhobar)=\mathcal S$, i.e. lorsque $\rhobar$ est scind\'ee, ce sont d'ailleurs les deux seuls cas du lemme).\\
(ii) Lorsque $\rhobar$ est scind\'ee (ce qui n'est pas le cas important de cet article), il y a deux possibilit\'es pour $(\lambda,\mu,(r_{\sigma})_{\sigma\in \mathcal S}, \theta)$, l'autre choix \'etant $(\mu,\lambda,(p-3-r_{\sigma})_{\sigma\in \mathcal S}, \theta\prod_{\sigma\in {\mathcal S}}\omega_{\sigma}^{r_{\sigma}+1})$. Nous choisissons dans ce cas une des deux possibilit\'es.\\
(iii) Pour $J$ fix\'e, le scalaire correspondant du lemme \ref{joli} est non nul si et seulement si l'on a de plus $Z(\rhobar)\cap\{\sigma\in {\mathcal S},\sigma\in J, \sigma\circ\varphi^{-1}\notin J\}=\emptyset$. Il est facile de voir que l'on peut retrouver $\rhobar$ (\`a torsion pr\`es par un caract\`ere de la forme $\prod_{\sigma\in {\mathcal S}}\omega_{\sigma}^{s_{\sigma}}$) \`a partir de la connaissance des $r_{\sigma}$, de $Z(\rhobar)$ et des valeurs de tous les scalaires du lemme \ref{joli} (voir la discussion de l'introduction apr\`es le th\'eor\`eme \ref{valeur}).}
\end{rem}

On \ rappelle \ qu'un poids de Serre est une repr\'esentation irr\'eductible de $\K$, ou de mani\`ere \'equivalente de $\GL_2(\Fq)$, sur $k_E$. \`A torsion pr\`es par un caract\`ere, il est de la forme $\otimes_{\sigma\in {\mathcal S}}(\Sym^{s_{\sigma}}k_E^2)^{\sigma}$ o\`u $s_{\sigma}\in \{0,\cdots,p-1\}$ et o\`u $\GL_2(\Fq)$ agit sur $(\Sym^{s_{\sigma}}k_E^2)^{\sigma}$ via $\sigma:\Fq\hookrightarrow k_E$ et l'action sur la base canonique de $k_E^2$. \`A toute repr\'esentation $\rhobar$ de dimension $2$ est associ\'e dans \cite{BDJ} un ensemble de poids de Serre que l'on note ${\mathcal D}(\rhobar)$. Dans le cas ci-dessus, par \cite[Prop.A.3]{Br} (voir aussi \cite{CD} pour un r\'esultat {\it a posteriori} \'equivalent), c'est l'ensemble des poids de Serre :
$$\big(\otimes_{\sigma\in {\mathcal S}}(\Sym^{s_{\sigma}}k_E^2)^{\sigma}\big)\otimes \Big(\prod_{\sigma\circ\varphi\in I}\sigma\circ{\det}^{-(s_{\sigma}+1)}\Big)\otimes \theta\circ \det$$
pour lesquels il existe $I\subseteq Z(\rhobar)$ tel que :
$$\rhobar\cong\begin{pmatrix}\nr(\lambda)\displaystyle{\prod_{\sigma\circ \varphi\notin I}}\omega_{\sigma}^{s_{\sigma}+1}
&*\\0&\nr(\mu)\displaystyle{\prod_{\sigma\circ \varphi\in I}}\omega_{\sigma}^{s_{\sigma}+1}\end{pmatrix}\otimes \theta\displaystyle{\prod_{\sigma\circ\varphi\in I}}\omega_{\sigma}^{-(s_{\sigma}+1)}.$$
Avec nos hypoth\`eses sur les $r_{\sigma}$, on a $\vert {\mathcal D}(\rhobar)\vert = 2^{\vert Z(\rhobar)\vert}$. De plus on a toujours $\big(\otimes_{\sigma\in {\mathcal S}}(\Sym^{r_{\sigma}}k_E^2\big)^{\sigma})\otimes \theta\circ\det\in {\mathcal D}(\rhobar)$.

Pour $J\subseteq {\mathcal S}$, on d\'efinit les caract\`eres multiplicatifs de $\Fq^{\times}$ \`a valeurs dans $\oE^{\times}$ :
\begin{equation}\label{type}
\left\{\begin{array}{lll}
\eta(J)&\=&[\theta\vert_{[\Fq^{\times}]}]\displaystyle{\prod_{\sigma\in J}}[\sigma]^{r_{\sigma}}\displaystyle{\prod_{\sigma\notin J}}[\sigma]^{p-1}\\
\eta'(J)&\=&[\theta\vert_{[\Fq^{\times}]}]\displaystyle{\prod_{\sigma\in J}}[\sigma]^{p-1}\displaystyle{\prod_{\sigma\notin J}}[\sigma]^{r_{\sigma}}.
\end{array}\right.
\end{equation}
Les hypoth\`eses sur $r_{\sigma}$ entra\^\i nent $\eta(J)\ne \eta'(J)$. Notons que $\eta({\mathcal S}\backslash J)=\eta'(J)$ pour tout $J$ et que $\overline\eta'(\emptyset)=\big((\prod_{\sigma\in {\mathcal S}}\omega_{\sigma}^{r_{\sigma}})\theta\big)\vert_{[\Fq^{\times}]}=\big((\nr(\lambda)\prod_{\sigma\in {\mathcal S}}\omega_{\sigma}^{r_{\sigma}})\theta\big)\vert_{[\Fq^{\times}]}$ et $\overline\eta(\emptyset)=\theta\vert_{[\Fq^{\times}]}=(\nr(\mu)\theta)\vert_{[\Fq^{\times}]}$.

Nous aurons besoin du lemme qui suit (un calcul \'el\'ementaire laiss\'e au lecteur).

\begin{lem}\label{csigma}
On a $\eta(J)=\eta'(J)\prod_{\sigma\in {\mathcal S}}[\sigma]^{c_{\sigma}}$ o\`u :
$$\begin{array}{ccccccc}
c_{\sigma}&=&p-2-r_{\sigma}&\ {si}&\ \sigma\notin J&{et}&\sigma\circ\varphi^{-1}\in J\\
c_{\sigma}&=&p-1-r_{\sigma}&\ {si}&\ \sigma\notin J&{et}&\sigma\circ\varphi^{-1}\notin J\\
c_{\sigma}&=&r_{\sigma}+1&\ {si}&\ \sigma\in J&{et}&\sigma\circ\varphi^{-1}\notin J\\
c_{\sigma}&=&r_{\sigma}&\ {si}&\ \sigma\in J&{et}&\sigma\circ\varphi^{-1}\in J.
\end{array}$$
\end{lem}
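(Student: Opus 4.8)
Le plan est d'expliciter les deux membres puis de ramener l'identit\'e annonc\'ee \`a une unique relation dans le groupe des caract\`eres $\Hom(\Fq^\times,\oE^\times)$, \`a savoir $[\sigma\circ\varphi]=[\sigma]^p$, et d'en d\'eduire la formule \`a quatre cas par une r\'eindexation utilisant le Frobenius.

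D'abord, j'expliciterais les d\'efinitions (\ref{type}) : en divisant les deux caract\`eres qui y figurent on obtient directement
$$\eta(J)\,\eta'(J)^{-1}=\prod_{\sigma\in J}[\sigma]^{r_\sigma-(p-1)}\prod_{\sigma\notin J}[\sigma]^{(p-1)-r_\sigma}=\prod_{\sigma\in{\mathcal S}}[\sigma]^{d_\sigma},$$
o\`u $d_\sigma=r_\sigma-(p-1)$ si $\sigma\in J$ et $d_\sigma=(p-1)-r_\sigma$ si $\sigma\notin J$ (le facteur $[\theta\vert_{[\Fq^\times]}]$ se simplifie). Il suffit donc de montrer que $\prod_{\sigma\in{\mathcal S}}[\sigma]^{c_\sigma-d_\sigma}=1$. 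Or une soustraction cas par cas \`a partir du tableau d\'efinissant $c_\sigma$ donne, dans les quatre cas,
$$c_\sigma-d_\sigma=p\,\mathbf{1}_{J}(\sigma)-\mathbf{1}_{J}(\sigma\circ\varphi^{-1}),$$
o\`u $\mathbf{1}_J$ est la fonction caract\'eristique de $J$ : on trouve en effet $-1$, $0$, $p$ et $p-1$ selon les quatre lignes du tableau, qui sont exactement les valeurs de $p\,\mathbf{1}_J(\sigma)-\mathbf{1}_J(\sigma\circ\varphi^{-1})$.

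Il reste alors \`a v\'erifier que $\prod_{\sigma\in{\mathcal S}}[\sigma]^{p\mathbf{1}_J(\sigma)}=\prod_{\sigma\in{\mathcal S}}[\sigma]^{\mathbf{1}_J(\sigma\circ\varphi^{-1})}$. J'utiliserais ici que, le repr\'esentant de Teichm\"uller \'etant multiplicatif et $\varphi$ \'etant $x\mapsto x^p$, on a $[\sigma\circ\varphi]=[\sigma]^p$ pour tout $\sigma\in{\mathcal S}$ ; d'o\`u
$$\prod_{\sigma\in{\mathcal S}}[\sigma]^{p\mathbf{1}_J(\sigma)}=\prod_{\sigma\in{\mathcal S}}[\sigma\circ\varphi]^{\mathbf{1}_J(\sigma)}=\prod_{\tau\in{\mathcal S}}[\tau]^{\mathbf{1}_J(\tau\circ\varphi^{-1})},$$
la derni\`ere \'egalit\'e provenant du changement d'indice $\tau=\sigma\circ\varphi$, qui permute ${\mathcal S}$. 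C'est pr\'ecis\'ement l'identit\'e voulue, d'o\`u $\prod_{\sigma\in{\mathcal S}}[\sigma]^{c_\sigma-d_\sigma}=1$ et le lemme.

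Il n'y a pas de v\'eritable obstacle : le calcul est \'el\'ementaire, comme les auteurs le signalent. Le seul point \`a isoler est que la forme \`a quatre cas des exposants $c_\sigma$ n'est pas impos\'ee terme \`a terme — les exposants ``na\"\i fs'' $d_\sigma$ conviennent d\'ej\`a — mais est le repr\'esentant particulier dont tous les exposants appartiennent \`a $\{0,\ldots,p-1\}$ (ce que permettent les hypoth\`eses $r_\sigma\in\{0,\ldots,p-3\}$), obtenu \`a partir des $d_\sigma$ gr\^ace \`a la relation $[\sigma]=[\sigma\circ\varphi^{-1}]^p$ ; reconna\^\i tre la combinaison $p\,\mathbf{1}_J(\sigma)-\mathbf{1}_J(\sigma\circ\varphi^{-1})$ est ce qui rend la simplification transparente.
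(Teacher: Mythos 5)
Votre preuve est correcte et est précisément le « calcul élémentaire » que les auteurs laissent au lecteur : on compare terme à terme $c_\sigma$ avec les exposants naïfs $d_\sigma$ issus de (\ref{type}), et l'on utilise l'identité $[\sigma\circ\varphi]=[\sigma]^p$ (avec la réindexation par le Frobenius) pour absorber la différence $p\,\mathbf{1}_J(\sigma)-\mathbf{1}_J(\sigma\circ\varphi^{-1})$. Votre remarque finale — que les $c_\sigma$ sont le représentant dont tous les exposants sont dans $\{0,\ldots,p-1\}$, ce que permet l'hypothèse $r_\sigma\in\{0,\ldots,p-3\}$ — identifie bien la raison d'être de cette normalisation, qui sert ensuite dans les \S\S\ \ref{fl1} et \ref{jacobi} (écriture en base $p$ et théorème de Stickelberger).
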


On note $\eta'(J)\otimes\eta(J): \I\rightarrow \oE^{\times}$ le caract\`ere :
\begin{eqnarray}\label{carJ}
\begin{pmatrix} a & b\\ pc & d\end{pmatrix}\mapsto \eta'(J)(\overline a)\eta(J)(\overline d)
\end{eqnarray}
et $\ind_{\I}^{\K}\eta'(J)\otimes\eta(J)$ le $E$-espace vectoriel des fonctions $f:\K\rightarrow E$ telles que :
$$f(kk')=(\eta'(J)\otimes\eta(J))(k)f(k')$$
($k\in \I$, $k'\in \K$) que l'on munit de l'action \`a gauche de $\K$ par translation \`a droite sur les fonctions. Notons que cette action se factorise par ${\mathrm{GL}}_2(\Fq)$. On note de 
m\^eme $\ind_{\I}^{\K}\overline\eta'(J)\otimes\overline\eta(J)$ le $k_E$-espace vectoriel des fonctions $f:\K\rightarrow k_E$ telles que $f(kk')=(\overline\eta'(J)\otimes\overline\eta(J))(k)f(k')$ muni de la m\^eme action de $\K$. Rappelons que, si la $E$-repr\'esentation $\ind_{\I}^{\K}\eta'(J)\otimes\eta(J)$ \ est \ irr\'eductible, \ il \ n'en \ est \ pas \ de \ m\^eme \ de \ la \ $k_E$-repr\'esentation $\ind_{\I}^{\K}\overline\eta'(J)\otimes\overline\eta(J)$. En effet, ses constituants sont naturellement index\'es par un certain sous-ensemble de cardinal $\geq 2$ de l'ensemble des parties de ${\mathcal S}$ (qui co\"\i ncide g\'en\'erique\-ment avec l'ensemble des parties de ${\mathcal S}$), cf. \cite[\S\ 2]{BP} ou \cite[\S\ 2]{Br}. Par exemple son socle (irr\'eductible) correspond \`a $\emptyset$ et son co-socle (ibid.) \`a ${\mathcal S}$.

Pour $J\subseteq {\mathcal S}$, on pose :

\begin{equation}\label{fj}
F(J)\=\{\sigma\in {\mathcal S},\sigma\in J, \sigma\circ\varphi^{-1}\notin J\}\amalg \{\sigma\in {\mathcal S},\sigma\notin J, \sigma\circ\varphi^{-1}\in J\}
\end{equation}
($F(J)$ pour ``Fronti\`ere de $J$''). On note que $\vert F(J)\vert$ est toujours pair (\'eventuelle\-ment nul) et que l'on a $F({\mathcal S}\backslash J)=F(J)$. 

\begin{lem}\label{unik}
Soit $J\subseteq {\mathcal S}$ tel que $Z(\rhobar)\cap F(J)=\emptyset$. Un seul des constituants de $\ind_{\I}^{\K}\overline\eta'(J)\otimes\overline\eta(J)$ est un poids de Serre associ\'e \`a $\rhobar$, et c'est $\big(\otimes_{\sigma\in {\mathcal S}}(\Sym^{r_{\sigma}}k_E^2\big)^{\sigma})\otimes \theta\circ\det$. Dans l'indexation ci-dessus, il correspond \`a ${\mathcal S}\backslash J$.
\end{lem}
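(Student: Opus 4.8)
The plan is to intersect two explicit lists of Serre weights --- the Jordan--Hölder constituents of $\ind_{\I}^{\K}\overline\eta'(J)\otimes\overline\eta(J)$ on one side, and the set ${\mathcal D}(\rhobar)$ on the other --- and to check that the intersection is exactly $\{(\otimes_{\sigma}(\Sym^{r_\sigma}k_E^2)^\sigma)\otimes\theta\circ\det\}$.

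For the first list, I would start from Lemma~\ref{csigma}: up to the twist $\overline\eta'(J)\circ\det$, the representation $\ind_{\I}^{\K}\overline\eta'(J)\otimes\overline\eta(J)$ (which factors through $\GL_2(\Fq)$) is the mod~$p$ principal series $\ind_{B}^{\GL_2(\Fq)}$ of the diagonal character $1\otimes\prod_\sigma\sigma^{c_\sigma}$, where $c_\sigma\in\{r_\sigma,\,r_\sigma+1,\,p-2-r_\sigma,\,p-1-r_\sigma\}$ according to whether $\sigma\in J$ and whether $\sigma\circ\varphi^{-1}\in J$, so that $0\le c_\sigma\le p-1$ by the bounds on the $r_\sigma$. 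Since $\overline\eta'(J)\ne\overline\eta(J)$, \cite[\S2]{BP} (equivalently \cite[\S2]{Br}) then describes its Jordan--Hölder constituents explicitly: they are Serre weights $\mu(I')$ indexed by $I'$ in a family of subsets of ${\mathcal S}$ (all of ${\mathcal P}({\mathcal S})$ when every $c_\sigma\in\{1,\dots,p-2\}$), with $\mu(\emptyset)=(\otimes_\sigma(\Sym^{c_\sigma}k_E^2)^\sigma)\otimes\overline\eta'(J)\circ\det$ the socle, $\mu({\mathcal S})$ the cosocle, and an explicit ``carry'' recipe relating them. For the second list I would use the description of ${\mathcal D}(\rhobar)$ recalled above (cf.\ \cite[Prop.A.3]{Br}, \cite{CD}): its elements $\nu(I)$ are indexed by the subsets $I\subseteq Z(\rhobar)$, with $\nu(\emptyset)=(\otimes_\sigma(\Sym^{r_\sigma}k_E^2)^\sigma)\otimes\theta\circ\det=:W_0$.

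First I would show that $W_0$ occurs among the $\mu(I')$, in the slot $I'={\mathcal S}\backslash J$. In the extreme cases this is immediate: for $J={\mathcal S}$ one has $c_\sigma=r_\sigma$ for every $\sigma$ and $\overline\eta'({\mathcal S})=\theta\vert_{[\Fq^\times]}$ (using $\prod_\sigma\sigma^{p-1}=1$), so $W_0$ is the socle $\mu(\emptyset)=\mu({\mathcal S}\backslash{\mathcal S})$; dually, for $J=\emptyset$, $W_0$ is the cosocle $\mu({\mathcal S})=\mu({\mathcal S}\backslash\emptyset)$. In general I would apply the carry recipe of \cite[\S2]{BP} starting from the socle and running across the subset ${\mathcal S}\backslash J$, checking case by case on the four shapes of $c_\sigma$ in Lemma~\ref{csigma} that the resulting exponents are exactly $(r_\sigma)_\sigma$ --- directly at $\sigma\notin F(J)$, and at $\sigma\in F(J)$ only after taking the carry coming from $\sigma\circ\varphi$ into account (recall $\vert F(J)\vert$ is even, so the carries close up around each $\varphi$-orbit) --- and that the resulting $\det$-twist is $\theta\circ\det$.

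It then remains to show that no $\mu(I')$ with $I'\ne{\mathcal S}\backslash J$ lies in ${\mathcal D}(\rhobar)$. Suppose $\mu(I')=\nu(I)$ for some $I\subseteq Z(\rhobar)$; writing both as $\otimes_\sigma(\Sym^{s_\sigma}k_E^2)^\sigma$ twisted by a character of $\det$ and comparing exponents and twist embedding by embedding, one obtains at each $\sigma$ a constraint relating the local configuration of $I'$ near $\sigma$ (whether $\sigma$ and $\sigma\circ\varphi^{-1}$ lie in $I'$, and whether $c_\sigma\in\{0,p-1\}$) to that of $I$ near $\sigma$ relative to $Z(\rhobar)$. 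The point is that the deviation of $\mu(I')$ from $W_0=\mu({\mathcal S}\backslash J)$ is forced, by the carry structure of the principal series, to be supported along the boundary $F(J)$, whereas the deviation of $\nu(I)$ from $W_0=\nu(\emptyset)$ is supported along $Z(\rhobar)$; since $Z(\rhobar)\cap F(J)=\emptyset$ by hypothesis, the two kinds of deviation cannot match unless both are trivial, i.e.\ $I'={\mathcal S}\backslash J$ and $I=\emptyset$. The one delicate point in the whole argument is the bookkeeping of the edge cases $c_\sigma\in\{0,p-1\}$ --- exactly the $\sigma$ at which the indexing family of $\ind_{\I}^{\K}\overline\eta'(J)\otimes\overline\eta(J)$ fails to be all of ${\mathcal P}({\mathcal S})$, and at which both the carry recipe and the description of ${\mathcal D}(\rhobar)$ must be handled carefully; the rest is a routine finite verification.
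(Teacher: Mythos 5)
Your overall plan is essentially the one the paper follows: use Lemma~\ref{csigma} to write $\ind_{\I}^{\K}\overline\eta'(J)\otimes\overline\eta(J)$ as a mod-$p$ principal series, cite \cite[\S~2]{BP} for the combinatorial parametrisation of its constituents, cite \cite[Prop.~A.3]{Br} for $\mathcal{D}(\rhobar)$, and intersect. The existence and position of $\big(\otimes_\sigma(\Sym^{r_\sigma}k_E^2)^\sigma\big)\otimes\theta\circ\det$ is also handled in the same way in both proofs.

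The gap is in your uniqueness step. You assert that ``the deviation of $\mu(I')$ from $W_0=\mu(\mathcal{S}\backslash J)$ is forced, by the carry structure of the principal series, to be supported along the boundary $F(J)$,'' and similarly that the deviation of $\nu(I)$ from $\nu(\emptyset)$ is supported along $Z(\rhobar)$, and then conclude from $Z(\rhobar)\cap F(J)=\emptyset$. The first claim is not correct as stated: the constituents $\mu(I')$ of the principal series vary with $I'$ at \emph{every} embedding $\sigma$ where the local configuration of $I'$ changes, not just at $F(J)$. Take $J=\emptyset$, so $F(J)=\emptyset$; the principal series $\ind_{\I}^{\K}\overline\eta'(\emptyset)\otimes\overline\eta(\emptyset)$ still has $2^{f}$ genuinely distinct constituents (generically), so their ``deviations'' from $W_0$ are certainly not supported on the empty set. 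What is true — and what the paper actually uses — is not a support statement about individual constituents, but the precise characterisation of \cite[Prop.~4.3]{Br}: among all constituents $\mu(I')$ of the given principal series, those lying in $\mathcal{D}(\rhobar)$ are exactly the ones with $J^{\rm min}\subseteq I'\subseteq J^{\rm max}$, and the difference $J^{\rm max}\backslash J^{\rm min}$ is (a $\varphi$-shift of) $F(J)\cap Z(\rhobar)$. The hypothesis $Z(\rhobar)\cap F(J)=\emptyset$ then forces $J^{\rm min}=J^{\rm max}$ and hence uniqueness, with no further bookkeeping. So the correct way to close your argument is not to track supports of exponent deviations directly, but to invoke (or re-derive) the $J^{\rm min}/J^{\rm max}$ description; the ``routine finite verification'' you defer is, in fact, the content of \cite[Prop.~4.3]{Br}, and without it your uniqueness step does not go through.
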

\begin{proof}
Fixons $\sigma_0\in {\mathcal S}$. Pour $j\in \{0,\cdots,f-1\}$, l'indice $j$ dans cette preuve signifie $\sigma_0\circ \varphi^j$. Le socle de $\ind_{\I}^{\K}\overline\eta'(J)\otimes\overline\eta(J)$ est irr\'educ\-tible, donc est un poids de Serre qui, par le lemme \ref{csigma} et avec les notations de \cite[\S\ 4]{Br}, correspond au $f$-uplet $\lambda=(\lambda_j(x_j))_{j\in \{0,\cdots,f-1\}}$ avec :
\begin{equation}\label{lambda}
\left\{\begin{array}{llcllll}
\lambda_j(x_j)&=&p-2-x_j&{\rm si}&\sigma_0\circ\varphi^j\notin J&{\rm et}&\sigma_0\circ\varphi^{j-1}\in J\\
\lambda_j(x_j)&=&p-1-x_j&{\rm si}&\sigma_0\circ\varphi^j\notin J&{\rm et}&\sigma_0\circ\varphi^{j-1}\notin J\\
\lambda_j(x_j)&=&x_j+1&{\rm si}&\sigma_0\circ\varphi^j\in J&{\rm et}&\sigma_0\circ\varphi^{j-1}\notin J\\
\lambda_j(x_j)&=&x_j&{\rm si}&\sigma_0\circ\varphi^j\in J&{\rm et}&\sigma_0\circ\varphi^{j-1}\in J
\end{array}\right.
\end{equation}
(attention qu'ici $x_j$ est une variable formelle comme dans \cite[\S\ 4]{Br} et ne d\'esigne pas l'\'el\'ement $x_{\sigma_0\circ\varphi^j}\in k_E$ de (\ref{norm})), c'est-\`a-dire qu'il s'agit du poids de Serre :
$$\big(\otimes_{j=0}^{f-1}(\Sym^{\lambda_j(r_j)}k_E^2)^{\sigma_0\circ \varphi^j}\big)\otimes \sigma_0(\det)^{e(\lambda)(r_0,\cdots,r_{f-1})}\theta\circ\det$$
o\`u ${\mathrm{GL}}_2(\Fq)$ agit sur le $j$-i\`eme facteur du produit tensoriel via $\sigma_0\circ \varphi^j$ et o\`u $e(\lambda)$ est d\'efini comme dans \cite[\S\ 4]{Br}. Par \cite[\S\ 2]{BP} (en particulier \cite[Lem.2.2]{BP}) et (\ref{lambda}), on en d\'eduit que $\big(\otimes_{\sigma\in {\mathcal S}}(\Sym^{r_{\sigma}}k_E^2\big)^{\sigma})\otimes \theta\circ\det$ est un constituant de $\ind_{\I}^{\K}\overline\eta'(J)\otimes\overline\eta(J)$ pour tout $J\subseteq {\mathcal S}$. Par ailleurs on a par l'\'egalit\'e (18) de \cite[\S\ 4]{Br} (attention que l'\'el\'ement $\mu_{j+1}=\mu_{\sigma_0\circ\varphi^{-j-1}}\in k_E$ de \cite[(16)]{Br} est \'egal \`a $x_{\sigma_0\circ\varphi^{-j}}=x_{\sigma_0\circ\varphi^{f-j}}\in k_E$ en (\ref{norm})) :
\begin{equation}\label{zj}
Z(\rhobar)=\{\sigma_0\circ\varphi^j,j\in J_{\rhobar}\}.
\end{equation}
Si $\lambda_j(x_j)\in \{p-2-x_j,x_j+1\}$, alors $\sigma_0\circ\varphi^j\in F(J)$ par (\ref{lambda}), donc $\sigma_0\circ\varphi^j\notin Z(\rhobar)$ (puisque $Z(\rhobar)\cap F(J)=\emptyset$) et donc $j\notin J_{\rhobar}$ par (\ref{zj}). En particulier les deux ensembles $J^{\rm min}$ et $J^{\rm max}$ de \cite[Prop.4.3]{Br} (cf. les \'egalit\'es (19) dans la preuve de {\it loc.cit.}) sont tous les deux \'egaux \`a :
\begin{equation}\label{jmin}
\left\{j\in \{0,\cdots,f-1\},\lambda_{j+1}(x_{j+1})\in \{p-1-x_{j+1},x_{j+1}+1\}\right\}.
\end{equation}
Par \cite[Prop.4.3]{Br}, l'ensemble des constituants de $\ind_{\I}^{\K}\overline\eta'(J)\otimes\overline\eta(J)$ qui sont dans ${\mathcal D}(\rhobar)$ est un singleton, n\'ecessairement constitu\'e du poids de Serre $(\otimes_{\sigma\in {\mathcal S}}(\Sym^{r_{\sigma}}k_E^2)^{\sigma})\otimes \theta\circ\det$. Ce poids de Serre est index\'e par $\{\sigma_0\circ\varphi^j,\ j\in J^{\rm min}\}$, c'est-\`a-dire en utilisant (\ref{jmin}) et (\ref{lambda}) par l'ensemble des $\sigma_0\circ\varphi^j$ pour $j$ tel que $\sigma_0\circ\varphi^{(j+1)-1}\notin J$, c'est-\`a-dire par ${\mathcal S}\backslash J$.
\end{proof}

\subsection{De Barsotti-Tate \`a Fontaine-Laffaille I}\label{fl1}

Cette partie et la suivante, qui seront utilis\'ees au \S\ \ref{bt}, ont pour but de calculer le module de Fontaine-Laffaille de repr\'esentations r\'eductibles de $\g$ sur $k_E$ provenant de certains modules fortement divisibles (ou groupes $p$-divisibles) avec donn\'ee de descente mod\'er\'ement ramifi\'ee.

On conserve les notations du \S\ \ref{prel} et on fixe un plongement $\sigma_0:\Fq\hookrightarrow k_E$. 

Comme dans \cite[\S\ 5]{Br}, on pose $e\=p^f-1$ et on note $S$ le compl\'et\'e $p$-adique de l'enveloppe aux puissances divis\'ees de $\oE[u]$ par rapport \`a l'id\'eal $(u^e+p)\oE[u]$ compatibles avec les puissances divis\'ees sur l'id\'eal $p\oE[u]$, et ${\rm Fil}^pS\subseteq S$ le compl\'et\'e $p$-adique de l'id\'eal engendr\'e par $\frac{(u^e+p)^i}{i!}$ pour $i\geq p$. On renvoie \`a \cite[\S\ 5]{Br} (et aux r\'ef\'erences donn\'ees dans {\it loc.cit.}) pour le rappel de ce qu'est un $\oE$-module fortement divisible et, si $\eta,\eta':\Fq^{\times}\rightarrow \oE^{\times}$ sont deux caract\`eres multiplicatifs distincts, pour la d\'efinition d'un $\oE$-module fortement divisible de type $\eta\otimes\eta'$.

On fixe $\eta,\eta':\Fq^{\times}\rightarrow \oE^{\times}$ distincts et on note $c\in \{1,\cdots,q-2\}$ l'unique entier tel que $\eta=\omega_{\sigma_0}^c\eta'$, que l'on \'ecrit $c=\sum_{i=0}^{f-1}c_ip^i$ avec $c_i\in \{0,\cdots,p-1\}$ ($c_i$ doit \^etre vu comme $c_{\sigma_0\circ\varphi^i}$). Pour $j\in \{0,\cdots,f-1\}$, on note $c^{(j)}\=\sum_{i=0}^{j-1}c_{f-(j-i)}p^i+\sum_{i=j}^{f-1}c_{i-j}p^i\in \{1,\cdots,q-2\}$. 

On consid\`ere dans la suite des $\oE$-modules fortement divisibles $\M$ de type $\eta\otimes\eta'$ qui ont la forme suivante :

(i) $\M=\M^{\sigma_0}\times\M^{\sigma_0\circ\varphi^{-1}}\times\cdots\times\M^{\sigma_0\circ\varphi^{-(f-1)}}$ avec $\M^{\sigma_0\circ\varphi^{-j}}=S e^{\sigma_0\circ\varphi^{-j}}_{\eta}\oplus Se^{\sigma_0\circ\varphi^{-j}}_{\eta'}$

(ii) $\Gal(L[\sqrt[e]{-p}]/L)$ agit sur $e^{\sigma_0\circ\varphi^{-j}}_{\eta}$ (resp. $e^{\sigma_0\circ\varphi^{-j}}_{\eta'}$) par $\eta$ (resp. $\eta'$)

(iii) pour tout $j\in \{0,\cdots,f-1\}$, on a l'une des deux possibilit\'es ci-dessous pour l'application $\varphi_1:{\rm Fil}^1\M^{\sigma_0\circ\varphi^{-j}}\rightarrow \M^{\sigma_0\circ\varphi^{-(j+1)}}$ (o\`u l'on remplace $\sigma_0\circ\varphi^{-j}$ par $j$ pour all\'eger l'\'ecriture) :
\begin{equation}\label{iii_1}
\left\{\begin{array}{lll}
{\rm Fil}^1\M^j&=&\Big(S(e^j_{\eta}+a_ju^{c^{(j)}}e^j_{\eta'})\oplus S(u^e+p) e^j_{\eta'}\Big) + {\rm Fil}^pS \M^j\\
\varphi_1(e^j_{\eta}+a_ju^{c^{(j)}}e^j_{\eta'})&=&e^{j+1}_{\eta}\\
\varphi_1((u^e+p)e^j_{\eta'})&=&e^{j+1}_{\eta'}
\end{array}\right.
\end{equation}
\begin{equation}\label{iii_2}
\ \ \ \ \ \ \left\{\begin{array}{lll}
{\rm Fil}^1\M^j&\!\!\!=&\!\!\!\Big(S(u^e+p)e^j_{\eta}\oplus S(e^j_{\eta'}+a_ju^{e-c^{(j)}}e^j_{\eta})\Big)+ {\rm Fil}^pS \M^j\\
\varphi_1((u^e+p)e^j_{\eta})&\!\!\!=&\!\!\!e^{j+1}_{\eta}\\
\varphi_1(e^j_{\eta'}+a_ju^{e-c^{(j)}}e^j_{\eta})&\!\!\!=&\!\!\!e^{j+1}_{\eta'}
\end{array}\right.
\end{equation}
pour des $a_j=a_{\sigma_0\circ\varphi^{-j}}\in \oE$, et avec $\alpha e^{0}_{\eta}$ et $\alpha' e^{0}_{\eta'}$ au lieu de $e^{j+1}_{\eta}$ et $e^{j+1}_{\eta'}$ dans l'image de $\varphi_1$ si $j=f-1$ (pour des $\alpha,\alpha'\in \oE^{\times}$). On note $I_{\eta}$ (resp. $I_{\eta'}$) le sous-ensemble de ${\mathcal S}$ form\'e des $\sigma_0\circ\varphi^{-j}$ pour $\M^j=\M^{\sigma_0\circ\varphi^{-j}}$ comme en (\ref{iii_1}) (resp. comme en (\ref{iii_2})) et $I_{\eta}^{\times}$ (resp. $I_{\eta'}^{\times}$) le sous-ensemble de $I_{\eta}$ (resp. $I_{\eta'}$) des $\sigma_0\circ\varphi^{-j}$ tels que $a_{\sigma_0\circ\varphi^{-j}}\in \oE^{\times}$.

Jusqu'\`a la fin de cette section, on fixe $(r_{\sigma})_{\sigma\in {\mathcal S}}$ et $\theta$ comme au \S\ \ref{prel}, c'est-\`a-dire $r_{\sigma}\in \{0,\cdots,p-3\}$ pour tout $\sigma$ avec $(r_{\sigma})_{\sigma\in {\mathcal S}}\ne (0,\cdots,0),(p-3,\cdots,p-3)$, et $\theta(p)=1$.

\begin{definit}\label{J}
Soit $J\subseteq {\mathcal S}$ et $\eta(J)$, $\eta'(J)$ comme en (\ref{type}). On dit qu'un $\oE$-module fortement divisible $\M$ est de type $J$ si $\M$ est comme ci-dessus avec $\eta=\eta(J)$, $\eta'=\eta'(J)$ et si l'on a :
\begin{eqnarray*}
{I}_{\eta(J)}^{\times}&\subseteq &\{\sigma\in {\mathcal S},\sigma\circ\varphi^{-1}\notin J\}\\
{I}_{\eta'(J)}^{\times}&\subseteq &\{\sigma\in {\mathcal S},\sigma\circ\varphi^{-1}\in J\}\\
{I}_{\eta(J)}\backslash {I}_{\eta(J)}^{\times}&\subseteq &\{\sigma\in {\mathcal S},\sigma\notin J, \sigma\circ\varphi^{-1}\notin J\}\\
{I}_{\eta'(J)}\backslash {I}_{\eta'(J)}^{\times}&\subseteq &\{\sigma\in {\mathcal S},\sigma\in J, \sigma\circ\varphi^{-1}\in J\}.
\end{eqnarray*}
\end{definit}

\begin{rem}\label{plus}
{\rm (i) Les modules fortement divisibles de type $J$ sont donc des modules fortement divisibles de type $\eta(J)\otimes\eta'(J)$ (au sens de \cite[D\'ef.5.1]{Br}) particuliers.\\
(ii) On a ${I}_{\eta(J)}\subseteq \{\sigma\in {\mathcal S},\sigma\circ\varphi^{-1}\notin J\}$ et ${I}_{\eta'(J)}\subseteq \{\sigma\in {\mathcal S},\sigma\circ\varphi^{-1}\in J\}$. Or ${I}_{\eta(J)}\amalg {I}_{\eta'(J)}={\mathcal S}$ ce qui force en fait ${I}_{\eta(J)}= \{\sigma\in {\mathcal S},\sigma\circ\varphi^{-1}\notin J\}$ et ${I}_{\eta'(J)}= \{\sigma\in {\mathcal S},\sigma\circ\varphi^{-1}\in J\}$. En particulier on a toujours $\vert {I}_{\eta(J)}\vert=\vert {\mathcal S}\backslash J\vert$ et $\vert {I}_{\eta'(J)}\vert=\vert J\vert$.\\
(iii) \^Etre de type $J$ implique en particulier $a_{\sigma}\in \oE^{\times}$ si $\sigma\in F(J)$ (voir (\ref{fj})).\\
(iv) Un module fortement divisible de type $J$ est de type ${\mathcal S}\backslash J$ si l'on \'echange $\alpha$ et $\alpha'$, $\eta$ et $\eta'$ et si l'on remplace $c$ par $e-c$.}
\end{rem}

Nous montrons dans cette section et la suivante que les $\oE$-modules fortement divisibles de type $J$ sont exactement les $\oE$-modules fortement divisibles de type $\eta(J)\otimes\eta'(J)$ (au sens de \cite[D\'ef.5.1]{Br}) tels que la repr\'esentation $\rhobar={\rm Hom}_{{\rm Fil}^1,\varphi_1}(\Mbar,\widehat A_{\rm cris}\otimes_{\Zp}\Fp)^{\vee}(1)$ (voir \cite[\S\ 6]{Br} pour les notations) est r\'eductible g\'en\'erique avec $Z(\rhobar)\cap F(J)=\emptyset$.

\begin{prop}\label{J'}
Soit $\rhobar$ r\'eductible g\'en\'erique et $J\subseteq {\mathcal S}$ tel que $Z(\rhobar)\cap F(J)=\emptyset$. Soit $\M$ un $\oE$-module fortement divisible de type $\eta(J)\otimes\eta'(J)$ tel que~:
$$\rhobar\simeq {\rm Hom}_{{\rm Fil}^1,\varphi_1}(\Mbar,\widehat A_{\rm cris}\otimes_{\Zp}\Fp)^{\vee}(1).$$
Alors $\M$ est de type $J$.
\end{prop}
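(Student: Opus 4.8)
The strategy is to run the explicit Fontaine–Laffaille computation of $\rhobar$ from the strongly divisible module $\M$, following the method of \cite[\S\S 5--7]{Br}, and then to read off the constraints that $Z(\rhobar)\cap F(J)=\emptyset$ and the genericity of $\rhobar$ impose on the combinatorial data $(I_{\eta(J)}, I_{\eta'(J)})$ and on the parameters $a_\sigma$. Since $\M$ is assumed to be of type $\eta(J)\otimes\eta'(J)$ in the sense of \cite[Déf.\ 5.1]{Br}, the only thing to prove is the four inclusions in Définition \ref{J}. First I would recall that $I_{\eta(J)}\amalg I_{\eta'(J)}=\mathcal S$ and compute $\rhobar$ as a product over $\sigma\in\mathcal S$ of two-dimensional pieces, exactly as in \cite[\S 6]{Br}: the diagonal characters of $\rhobar$ are determined by which of the two shapes \eqref{iii_1} or \eqref{iii_2} occurs at each index, so that $I_{\eta(J)}$ and $I_{\eta'(J)}$ control the pair of Serre-type characters on the diagonal. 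Comparing with the normalized form of $\rhobar$ recalled in \S \ref{prel} (the description of $\mathcal D(\rhobar)$ via subsets $I\subseteq Z(\rhobar)$), one sees that $I_{\eta(J)}$ must be forced to equal $\{\sigma:\sigma\circ\varphi^{-1}\notin J\}$ and $I_{\eta'(J)}$ to equal $\{\sigma:\sigma\circ\varphi^{-1}\in J\}$, which is Remarque \ref{plus}(ii).

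Next I would identify $Z(\rhobar)$ in terms of the module $\M$. Here the key computational input is the comparison between the strongly divisible module and its associated Fontaine–Laffaille module: after applying $\varphi_1$ the appropriate number of times and passing to $\widehat A_{\rm cris}\otimes\Fp$, the off-diagonal entry $x_\sigma$ of \eqref{norm} is governed by the reduction mod $\pE$ of the parameter $a_\sigma$ together with the exponent $c^{(j)}$ and the shape of the filtration. Concretely I expect, as in the computations of \cite[\S 4, \S 7]{Br}, that $x_\sigma=0$ precisely when $\overline a_\sigma=0$, i.e.\ $Z(\rhobar)=\mathcal S\setminus(I_{\eta(J)}^\times\amalg I_{\eta'(J)}^\times)$, up to identifying indices via $\sigma\leftrightarrow\sigma_0\circ\varphi^{-j}$. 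This is the step I would cross-check most carefully against the sign/exponent conventions of \cite{Br}, since the whole statement hinges on it; this is where I expect the main technical obstacle to lie — reconciling the descent data normalization, the Teichmüller twists in \eqref{type}, and the contravariant Fontaine–Laffaille functor without an error in the exponents $c^{(j)}$.

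Granting that dictionary, the conclusion is combinatorial. From $I_{\eta(J)}=\{\sigma:\sigma\circ\varphi^{-1}\notin J\}$ and $I_{\eta'(J)}=\{\sigma:\sigma\circ\varphi^{-1}\in J\}$ together with $Z(\rhobar)=\mathcal S\setminus(I_{\eta(J)}^\times\amalg I_{\eta'(J)}^\times)$, the hypothesis $Z(\rhobar)\cap F(J)=\emptyset$ says exactly that every $\sigma\in F(J)$ lies in $I_{\eta(J)}^\times\amalg I_{\eta'(J)}^\times$. Splitting $F(J)$ into its two halves $\{\sigma\in J,\sigma\circ\varphi^{-1}\notin J\}$ and $\{\sigma\notin J,\sigma\circ\varphi^{-1}\in J\}$ (see \eqref{fj}) and intersecting with $I_{\eta(J)}$, resp.\ $I_{\eta'(J)}$, gives $a_\sigma\in\oE^\times$ on the appropriate pieces, which is the content of the first two inclusions in Définition \ref{J}; the last two inclusions then follow formally, since the complement $I_{\eta(J)}\setminus I_{\eta(J)}^\times$ sits inside $\{\sigma:\sigma\circ\varphi^{-1}\notin J\}$ and, being disjoint from $F(J)$ by construction of $Z(\rhobar)$ and the hypothesis, must in fact lie in $\{\sigma\notin J,\sigma\circ\varphi^{-1}\notin J\}$, and symmetrically for $\eta'(J)$. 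Thus $\M$ is of type $J$. I would also invoke Remarque \ref{plus}(iii)--(iv) only as consistency checks, not as part of the logical core.
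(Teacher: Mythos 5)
Your plan follows the same overall structure as the paper's proof of Proposition~\ref{J'}: identify $I_{\eta(J)}=\{\sigma:\sigma\circ\varphi^{-1}\notin J\}$ (and its complement $I_{\eta'(J)}$) from the Serre-weight/diagonal-character data of $\rhobar$, identify $Z(\rhobar)=\{\sigma:\overline a_\sigma=0\}$ from the Fontaine--Laffaille computation (citing \cite[\S 7]{Br} / \cite[Thm.8.1]{Br}, as the paper does), and finish with the same combinatorial deduction of the two $I\setminus I^\times$ inclusions.

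There is, however, one genuine gap in the first step as you've written it. To pin down $I_{\eta(J)}$ by ``comparing with the normalized form of $\rhobar$'' you need to know that the constituent of $\ind_{\I}^{\K}\overline\eta'(J)\otimes\overline\eta(J)$ lying in $\mathcal D(\rhobar)$ is \emph{unique}; otherwise distinct partitions $(I_{\eta(J)}, I_{\eta'(J)})$ could produce strongly divisible modules of type $\eta(J)\otimes\eta'(J)$ whose reductions all recover $\rhobar$, and the match would not single out one of them. That uniqueness is precisely Lemma~\ref{unik}, and it is there --- not only at the end --- that the hypothesis $Z(\rhobar)\cap F(J)=\emptyset$ first enters (it is what forces $J^{\rm min}=J^{\rm max}$ in the parametrization of \cite[Prop.4.3]{Br}). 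Your proposal invokes $Z(\rhobar)\cap F(J)=\emptyset$ only in the final paragraph, for the $I^\times$ inclusions; the paper needs it earlier, via Lemma~\ref{unik} together with \cite[Thm.8.1(ii)]{Br}, to conclude $\mathcal S=I_{\eta(J)}\amalg I_{\eta'(J)}$ with $I_{\eta(J)}=\{\sigma:\sigma\circ\varphi^{-1}\notin J\}$. Once that is supplied, your closing combinatorial argument is correct (modulo a harmless mislabeling of which of the four inclusions of Définition~\ref{J} are immediate from the identification of $I_{\eta(J)}$, $I_{\eta'(J)}$, and which require $Z(\rhobar)\cap F(J)=\emptyset$).
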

\begin{proof}
Par le lemme \ref{unik} et \cite[Thm.8.1(ii)]{Br}, on doit avoir ${\mathcal S}={I}_{\eta(J)}\amalg {I}_{\eta'(J)}$ et : 
$${I}_{\eta(J)}=\{\sigma\circ\varphi, \sigma\in {\mathcal S}\backslash J\}=\{\sigma\in {\mathcal S},\sigma\circ\varphi^{-1} \notin J\}.$$
On a donc aussi ${I}_{\eta'(J)}=\{\sigma\in {\mathcal S},\sigma\circ\varphi^{-1}\in J\}$. Par \cite[Thm.8.1(i)]{Br} si $\rhobar$ est scind\'ee, (\ref{zj}) et \cite[Prop.7.3]{Br} si $\rhobar$ est non scind\'ee, on a $Z(\rhobar)=\{\sigma\in {\mathcal S}, \overline a_{\sigma}=0\}$. Avec $Z(\rhobar)\cap F(J)=\emptyset$, on en d\'eduit les inclusions :
\begin{eqnarray*}
{I}_{\eta(J)}\backslash {I}_{\eta(J)}^{\times}&\subseteq &\{\sigma\in {\mathcal S},\sigma\notin J, \sigma\circ\varphi^{-1}\notin J\}\\
{I}_{\eta'(J)}\backslash {I}_{\eta'(J)}^{\times}&\subseteq &\{\sigma\in {\mathcal S},\sigma\in J, \sigma\circ\varphi^{-1}\in J\}.
\end{eqnarray*}
\end{proof}

Soit $\M$ un $\oE$-module fortement divisible de type $J$, on pose :
\begin{equation}\label{ab}
\left\{\begin{array}{llcc}
A\=\overline\alpha \prod_{\sigma\in F(J)}\overline a_{\sigma}&{\rm si}&\sigma_0\in J\\
A\=\overline\alpha'\prod_{\sigma\in F(J)}\overline a_{\sigma}&{\rm si}&\sigma_0\notin J
\end{array}\right.
\end{equation}
et on remarque que $A\in k_E^{\times}$ par la remarque \ref{plus}(iii).
 
\begin{prop}\label{prop1}
Soit $J\subseteq {\mathcal S}$, $\M$ un $\oE$-module fortement divisible de type $J$
et $\rhobar\={\rm Hom}_{{\rm Fil}^1,\varphi_1}(\Mbar,\widehat A_{\rm cris}\otimes_{\Zp}\Fp)^{\vee}(1)$. On a:
$$\rhobar\cong\begin{pmatrix}\big(\nr(A)\displaystyle{\prod_{\sigma\in {\mathcal S}}}\omega_{\sigma}^{r_{\sigma}}\big)\omega
&*\\0&\nr(A^{-1}\overline\alpha\overline\alpha')\end{pmatrix}\otimes \theta.$$
\end{prop}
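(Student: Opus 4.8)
The strategy is to compute the contravariant Fontaine–Laffaille module of $\rhobar$ directly from the explicit strongly divisible module $\M$ of type $J$, using the recipe of \cite[\S\ 6]{Br} (reduction mod $p$, then $\Mbar\rightsquigarrow M$ passing through $\widehat A_{\mathrm{cris}}\otimes\Fp$ and dualizing, with the Tate twist). Concretely, I would first reduce equations \eqref{iii_1} and \eqref{iii_2} modulo $\pE$, so that over $j\in\{0,\dots,f-1\}$ the $\varphi_1$ on $\overline\M^j$ is given by the two tame-descent normal forms with parameters $\overline a_{\sigma_0\circ\varphi^{-j}}$ and boundary scalars $\overline\alpha,\overline\alpha'$ at $j=f-1$. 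The point is that the tame-descent data $\eta(J)\oplus\eta'(J)$ forces a decomposition of $\Mbar$ along the characters of $\Gal(L[\sqrt[e]{-p}]/L)$, and on each isotypic piece one extracts the Fontaine–Laffaille filtered $\varphi$-module by the standard ``divide by $u^{c^{(j)}}$ (resp.\ $u^{e-c^{(j)}}$) and read off the Frobenius eigenvalue'' procedure; the exponents $c^{(j)}$ were rigged in \S\ \ref{fl1} precisely so that the resulting Hodge–Tate weight in embedding $\sigma_0\circ\varphi^{-j}$ is $r_{\sigma_0\circ\varphi^{-j}}+1$, matching the shape of $M$ in \eqref{norm}.

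Next I would trace through which basis vector of each $\overline\M^j$ survives as $e^\sigma$ (the ``$\varphi$-unit'' line, Hodge weight $0$) and which as $f^\sigma$ (the line in $\mathrm{Fil}^{r_\sigma+1}$): in case \eqref{iii_1} (i.e.\ $\sigma_0\circ\varphi^{-j}\in I_{\eta(J)}$, equivalently $\sigma_0\circ\varphi^{-(j+1)}\notin J$) the roles are one way, in case \eqref{iii_2} the other. Composing the $\varphi_1$'s around the $f$-cycle then yields, after dualizing and Tate-twisting, a representation of the form
$$\rhobar\cong\begin{pmatrix}\chi_1\omega&*\\0&\chi_2\end{pmatrix}\otimes\theta$$
where $\chi_1,\chi_2$ are computed as follows. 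The diagonal of $\varphi^f$ on the ``$e$-part'' picks up the product of the boundary scalar ($\overline\alpha$ or $\overline\alpha'$ depending on whether $\sigma_0\in J$) times the parameters $\overline a_\sigma$ contributed exactly at the indices where the two normal forms \emph{switch} between \eqref{iii_1} and \eqref{iii_2}, i.e.\ exactly over $\sigma\in F(J)$ (by Remark \ref{plus}(iii) these are units, so $A\in k_E^\times$ by \eqref{ab}); meanwhile the tame part of $\chi_1$ restricted to inertia is $\prod_\sigma\omega_\sigma^{r_\sigma}$ by the same Hodge-weight bookkeeping, using $\omega_\sigma(p)=1$. The complementary diagonal entry $\chi_2$ must then be the unramified character whose value at $p$ makes $\det$ come out to $\overline\alpha\,\overline\alpha'$ (the total product of all Frobenius eigenvalues, since the $\overline a_\sigma$ cancel in the determinant): hence $\chi_2=\nr(A^{-1}\overline\alpha\overline\alpha')$, which is exactly the claimed formula. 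Compatibility with \S\ \ref{prel} (and with Proposition \ref{J'}, run in reverse) guarantees $\rhobar$ is reducible generic with $Z(\rhobar)\cap F(J)=\emptyset$, so this is consistent.

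The main obstacle is the careful combinatorial bookkeeping of \emph{which} $\overline a_\sigma$ enter the off-diagonal (and hence cancel in $\det$) versus which enter the diagonal eigenvalue $A$, i.e.\ correctly identifying that the ``switches'' between \eqref{iii_1} and \eqref{iii_2} happen precisely over the frontier $F(J)$, together with keeping the Tate twist and the $\omega$-factor on the correct line after dualization. This is essentially the content already packaged in \cite[Thm.~8.1]{Br} and \cite[Prop.~7.3]{Br}; so in practice I would deduce Proposition \ref{prop1} by invoking those results for the type $\eta(J)\otimes\eta'(J)$, specializing to the type-$J$ hypotheses via Definition \ref{J} and Remark \ref{plus}(ii), and then simply matching the output of \emph{loc.\ cit.} against the normal form \eqref{norm} to read off $\nr(A)$ and $\nr(A^{-1}\overline\alpha\overline\alpha')$. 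The only genuinely new verification beyond citing \cite{Br} is checking that the extra constraints imposed by ``type $J$'' (the inclusions on $I_{\eta(J)}^\times$, $I_{\eta'(J)}^\times$ and their complements) pin down the Frobenius eigenvalue on the unit line to be exactly $A$ as defined in \eqref{ab}, which is a direct unwinding of the normal forms.
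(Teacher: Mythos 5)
Your plan matches the paper's proof in essence: both exhibit a rank-one $\varphi$-stable direct summand of $\Mbar$, compute the Frobenius eigenvalue and the tame character on it (the Frobenius eigenvalue being $A$ up to the sign $(-1)^{|F(J)|}=+1$, and the tame part being $\prod_\sigma\omega_\sigma^{r_\sigma}$ after matching exponents), and then determine the quotient line by the determinant. The one place where your plan is thinner than the paper's proof, and which you should be aware you cannot simply outsource: \cite[Thm.~8.1]{Br} and \cite[Prop.~7.3]{Br} only describe the \emph{inertial} action (the paper itself flags this in the proof of the following Proposition \ref{prop2}), so the unramified twist $\nr(A)$ and hence $\nr(A^{-1}\overline\alpha\overline\alpha')$ must be extracted by hand. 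The paper does this by writing down an explicit generator $e^j$ of the stable line on each factor $\Mbar^j$ --- note it is not always one of $\overline e^j_{\eta(J)}$, $\overline e^j_{\eta'(J)}$ but a unimodular combination with a correction term $-\overline a_{j-1}^{-1}u^{p c^{(j-1)}}\overline e^j_{\eta'(J)}$ (or its twin) at indices where the normal form switches, which is where the type-$J$ invertibility hypotheses on the $a_\sigma$ along $F(J)$ enter --- and then invoking \cite[Ex.3.7]{Sa} and \cite[(33)]{Br} to turn the resulting cyclic $\varphi_1$-formula \eqref{resum} into the exponent $h$, from which $\omega_{\sigma_0}^h\overline\eta(J)=\theta\omega\prod_\sigma\omega_\sigma^{r_\sigma}$ by an explicit digit computation using Lemma \ref{csigma}. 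You correctly anticipate all of this ("the only genuinely new verification..."), so I read your proposal as the same argument, just with the Frobenius-eigenvalue and $h$-computations left implicit; carrying them out would require exactly the bookkeeping the paper records.
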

\begin{proof}
On montre d'abord que $\Mbar$ contient un sous-module libre de rang $1$ facteur direct stable par toutes les op\'erations ($\varphi_1$ sur le ${\rm Fil}^1$ induit et action de 
$\Gal(L[\sqrt[e]{-p}]/L)$). On pose pour $j\in \{0,\cdots,f-1\}$ :
\begin{equation*}
\begin{array}{llcllll}
e^j&\=&\overline e^j_{\eta(J)}&{\rm si}&\sigma_0\circ\varphi^{-(j-1)}\in J&{\rm et}&\sigma_0\circ\varphi^{-j}\in J\\
e^j&\=&\overline e^j_{\eta'(J)}&{\rm si}&\sigma_0\circ\varphi^{-(j-1)}\notin J&{\rm et}&\sigma_0\circ\varphi^{-j}\notin J\\
e^j&\=&\overline e^j_{\eta(J)}-\overline a_{j-1}^{-1}u^{pc^{(j-1)}}\overline e^j_{\eta'(J)}&{\rm si}&\sigma_0\circ\varphi^{-(j-1)}\notin J&{\rm et}&\sigma_0\circ\varphi^{-j}\in J\\
e^j&\=&\overline e^j_{\eta'(J)}-\overline a_{j-1}^{-1}u^{p(e-c^{(j-1)})}\overline e^j_{\eta(J)}&{\rm si}&\sigma_0\circ\varphi^{-(j-1)}\in J&{\rm et}&\sigma_0\circ\varphi^{-j}\notin J
\end{array}
\end{equation*}
en rempla\c cant $\overline a_{j-1}^{-1}$ par $\overline a_{f-1}^{-1}(\frac{\overline\alpha}{\overline\alpha'})^{-1}$ (resp. $\overline a_{f-1}^{-1}\frac{\overline\alpha}{\overline\alpha'}$) si $j=0$ et $\sigma_0\in J$ (resp. $\sigma_0\notin J$). Montrons que le sous-module $\Sbar e^0\times\cdots\times \Sbar e^{f-1}$ de $\Mbar$ est stable par toutes les op\'erations. La stabilit\'e par $\Gal(L[\sqrt[e]{-p}]/L)$ est \'el\'ementaire et laiss\'ee au lecteur. Consid\'erons d'abord les cas $\sigma_0\circ\varphi^{-j}\in F(J)$. En utilisant :
\begin{equation*}
\begin{array}{lllll}
e-c^{(j)}+pc^{(j-1)}&=&e(c_{f-j}+1)&=&e+ec_{\sigma_0\circ\varphi^{-j}}\\
c^{(j)}+p(e-c^{(j-1)})&=&e(p-c_{f-j})&=&e+e(p-1-c_{\sigma_0\circ\varphi^{-j}})
\end{array}
\end{equation*}
on a par un petit calcul :
$$\begin{array}{llcllll}
u^{e-c^{(j)}}e^j&\in&{\rm Fil}^1\Mbar&{\rm si}&\sigma_0\circ\varphi^{-j}\in J&{\rm et}&\sigma_0\circ\varphi^{-(j+1)}\notin J\\
u^{c^{(j)}}e^j&\in&{\rm Fil}^1\Mbar&{\rm si}&\sigma_0\circ\varphi^{-j}\notin J&{\rm et}&\sigma_0\circ\varphi^{-(j+1)}\in J
\end{array}$$
(noter que $\sigma_0\circ\varphi^{-j}\in {I}_{\eta(J)}^{\times}$ dans le premier cas, $\sigma_0\circ\varphi^{-j}\in {I}_{\eta'(J)}^{\times}$ dans le deuxi\`eme). Comme $c_{\sigma_0\circ\varphi^{-j}}=r_{\sigma_0\circ\varphi^{-j}}+1\geq 1$ dans le premier cas, $p-1-c_{\sigma_0\circ\varphi^{-j}}=r_{\sigma_0\circ\varphi^{-j}}+1\geq 1$ dans le deuxi\`eme (cf. lemme \ref{csigma}), on v\'erifie que (rappelons que ${\rm Fil}^1\Mbar\cap \Sbar e^j=u^{e-c^{(j)}}\Sbar e^j$, resp. ${\rm Fil}^1\Mbar\cap \Sbar e^j=u^{c^{(j)}}\Sbar e^j$) :
$$\begin{array}{llllll}
\varphi_1(u^{e-c^{(j)}}e^j)&=&\varphi_1(u^{e-c^{(j)}}\overline e_{\eta(J)}^j)&=&-\overline a_je^{j+1}&{\rm dans\ le\ premier\ cas}\\
\varphi_1(u^{c^{(j)}}e^j)&=&\varphi_1(u^{c^{(j)}}\overline e_{\eta'(J)}^j)&=&-\overline a_je^{j+1}
&{\rm dans\ le\ deuxi\grave eme}
\end{array}$$
(en rempla\c cant $\overline a_{j}$ par $\overline a_{f-1}\overline\alpha$ si $j=f-1$ et $\sigma_0\in J$, par $\overline a_{f-1}\overline\alpha'$ si $j=f-1$ et $\sigma_0\notin J$). Dans les autres cas, c'est-\`a-dire $\sigma_0\circ\varphi^{-j}\notin F(J)$, on v\'erifie qu'on a toujours $\varphi_1(u^ee^j)=e^{j+1}$ si $j<f-1$, $\varphi_1(u^ee^{f-1})=\overline\alpha e^{0}$ si $\sigma_0\in J$, $\varphi_1(u^ee^{f-1})=\overline\alpha' e^{0}$ si $\sigma_0\notin J$. Pour r\'esumer, on a donc :
\begin{equation}\label{resum}
\left\{\begin{array}{llcllll}
\varphi_1(u^{e-c^{(j)}}e^j)&=&-\overline a_je^{j+1}&{\rm si}&\sigma_0\circ\varphi^{-j}\in J&\!\!\!{\rm et}&\sigma_0\circ\varphi^{-(j+1)}\notin J\\
\varphi_1(u^{c^{(j)}}e^j)&=&-\overline a_je^{j+1}&{\rm si}&\sigma_0\circ\varphi^{-j}\notin J&\!\!\!{\rm et}&\sigma_0\circ\varphi^{-(j+1)}\in J\\
\varphi_1(u^{e}e^j)&=&e^{j+1}&{\rm si}&\sigma_0\circ\varphi^{-j}\notin F(J)&&
\end{array}\right.
\end{equation}
en multipliant \`a droite par $\overline\alpha$ si $j=f-1$ et $\sigma_0\in J$, par $\overline\alpha'$ si $j=f-1$ et $\sigma_0\notin J$. Donc $\Sbar e^0\times\cdots\times \Sbar e^{f-1}$ est stable par toutes les op\'erations et ${\rm Hom}_{{\rm Fil}^1,\varphi_1}\big(\prod_{j=0}^{f-1}\Sbar e^j,\widehat A_{\rm cris}\otimes_{\Zp}\Fp\big)^{\!\vee}\!(1)$ est une sous-repr\'esentation de $\rhobar$ de dimension $1$ (sur $k_E$). Notons $0\leq j_1< j_2< \cdots < j_{2t-1}< j_{2t}\leq f-1$ les \'el\'ements de $\{0,\cdots,f-1\}$ tels que $\sigma_0\circ\varphi^{-j}\in F(J)$ (il y en a un nombre pair) et supposons d'abord $\sigma_0\in J$. On a alors par le lemme \ref{csigma} :
\begin{equation}\label{rsigma}
\left\{\begin{array}{llclllc}
c_{\sigma_0\circ\varphi^{-j}}&=&p-2-r_{\sigma_0\circ\varphi^{-j}}&{\rm si}&j = j_{2s}\\
c_{\sigma_0\circ\varphi^{-j}}&=&p-1-r_{\sigma_0\circ\varphi^{-j}}&{\rm si}&j_{2s-1}<j<j_{2s}\\
c_{\sigma_0\circ\varphi^{-j}}&=&r_{\sigma_0\circ\varphi^{-j}}+1&{\rm si}&j = j_{2s-1}\\
c_{\sigma_0\circ\varphi^{-j}}&=&r_{\sigma_0\circ\varphi^{-j}}&{\rm si}&j_{2s}<j<j_{2s+1}
\end{array}\right.
\end{equation}
pour $s\in \{1,\cdots,t\}$ et en identifiant $\{j_{2t}+1,\cdots,j_{2t+1}-1\}$ \`a $\{j_{2t}+1,\cdots,f-1\}\amalg \{0,\cdots,j_1-1\}$. Par \cite[Ex.3.7]{Sa} (et un petit travail de traduction), \cite[(33)]{Br}, (\ref{resum}) et (\ref{ab}), on a :
$${\rm Hom}_{{\rm Fil}^1,\varphi_1}\bigg(\prod_{j=0}^{f-1}\Sbar e^j,\widehat A_{\rm cris}\otimes_{\Zp}\Fp\bigg)^{\!\vee}\!(1)=\nr(A)\overline\eta(J)\omega_{\sigma_0}^{h}$$
o\`u $h=\displaystyle{\sum_{j\notin \{j_{s},1\leq s\leq 2t\}}\!\!\!\!\!p^{f-j}+\!\!\!\sum_{j\in \{j_{2s-1},1\leq s\leq t\}}\!\!\!\!\!p^{f-j}-\sum_{s=1}^t\sum_{j_{2s-1}<j\leq j_{2s}}\!\!\!p^{f-j}c_{\sigma_0\circ\varphi^{-j}}}$. On explicite $h$ avec (\ref{rsigma}) :
\begin{eqnarray*}
h&=&\!\!\!\displaystyle{\sum_{j\notin \{j_{2s}\}}\!\!p^{f-j}-\!\!\sum_{j\in \{j_{2s}\}}\!\!p^{f-j}(p-2-r_{\sigma_0\circ\varphi^{-j}})-\sum_{s=1}^t\sum_{j_{2s-1}<j<j_{2s}}\!\!\!\!p^{f-j}(p-1-r_{\sigma_0\circ\varphi^{-j}})}\\
&=&\displaystyle{\sum_{j=0}^{f-1}p^{f-j}-\sum_{s=1}^t\sum_{j_{2s-1}<j\leq j_{2s}}\!\!\!\!p^{f-j}(p-1-r_{\sigma_0\circ\varphi^{-j}})}\\
&=&\displaystyle{\sum_{j=0}^{f-1}p^{f-j}-\!\!\!\!\sum_{\sigma_0\circ\varphi^{-j}\notin J}\!\!\!\!p^{f-j}(p-1-r_{\sigma_0\circ\varphi^{-j}})}\\
&=&\!\!\!\!\displaystyle{\sum_{\sigma_0\circ\varphi^{-j}\notin J}\!\!\!\!p^{f-j}r_{\sigma_0\circ\varphi^{-j}}-\!\!\!\!\sum_{\sigma_0\circ\varphi^{-j}\notin J}\!\!\!\!p^{f-j}(p-1)+\sum_{j=0}^{f-1}p^{f-j}}
\end{eqnarray*}
donc $\omega_{\sigma_0}^h=\prod_{\sigma\notin J}\omega_{\sigma}^{r_{\sigma}}\prod_{\sigma\notin J}\omega_{\sigma}^{-(p-1)}\prod_{\sigma\in {\mathcal S}}\omega_{\sigma}$. Comme $\overline\eta(J)=\theta\!\prod_{\sigma\in J}\!\omega_{\sigma}^{r_{\sigma}}\!\prod_{\sigma\notin J}\!\omega_{\sigma}^{p-1}$ (cf. (\ref{type}) vu c\^ot\'e Galois), on a bien :
\begin{eqnarray}\label{enplus}
\overline\eta(J)\omega_{\sigma_0}^{h}=\theta\omega\prod_{\sigma\in {\mathcal S}}\omega_{\sigma}^{r_{\sigma}}.
\end{eqnarray} 
La forme du quotient de $\rhobar$ se d\'eduit de son d\'eterminant, qui, vu les hypoth\`eses sur $\M$, est $\omega\nr(\overline\alpha\overline\alpha')\overline\eta(J)\overline\eta'(J)=\theta^2\omega\nr(\overline\alpha\overline\alpha')\prod_{\sigma\in {\mathcal S}}\omega_{\sigma}^{r_{\sigma}}$ (cf. (\ref{type})). Le cas $\sigma_0\notin J$ est strictement analogue au cas $\sigma_0\in J$ et laiss\'e au lecteur.
\end{proof}

Une des cons\'equences de la proposition \ref{prop1} (et des hypoth\`eses sur les $r_{\sigma}$) est que $\rhobar$ est en particulier g\'en\'erique au sens de \cite[Def.11.7]{BP}, et donc est comme au \S\ \ref{prel}.

\subsection{De Barsotti-Tate \`a Fontaine-Laffaille II}\label{fl2}

Dans cette partie, on d\'etermine compl\`etement le module de Fontaine-Laffaille contravariant de la repr\'esentation $\rhobar\otimes \theta^{-1}$ provenant d'un $\oE$-module fortement divisible de type $J$.

On conserve les notations du \S\ \ref{fl1}. La preuve de la proposition qui suit est contenue dans celle de \cite[Prop.5.1.2]{BM} (en particulier, notons que par rapport \`a \cite[\S7]{Br} nous tenons compte ici dans les modules fortement divisibles des torsions par des caract\`eres de Galois non ramifi\'es). Pour la commodit\'e du lecteur, nous redonnons les d\'etails dans le cas particulier qui nous concerne.

\begin{prop}\label{prop2}
Soit $J$, $\M$ et $\rhobar$ comme dans la proposition \ref{prop1}. On a $\overline\rho\otimes\theta^{-1}={\rm Hom}_{{\rm Fil}^{\cdot},\varphi_{\cdot}}(M,A_{\rm cris}\otimes_{\Zp}\Fp)$ o\`u $M$ est le module de Fontaine-Laffaille :
$$M=M^{\sigma_0}\times M^{\sigma_0\circ\varphi^{-1}}\times\cdots\times M^{\sigma_0\circ\varphi^{-(f-1)}}$$
avec (cf. (\ref{ab}) pour $A$) :
\begin{equation*}
\!\!\!\!\!\!\!\!\!\!\!\!\!\!\!\!\!\!\left\{\begin{array}{llcl}
M^{\sigma_0\circ\varphi^{-j}}&=&k_E v^{\sigma_0\circ\varphi^{-j}}\oplus k_Ew^{\sigma_0\circ\varphi^{-j}}\\
{\rm Fil}^iM^{\sigma_0\circ\varphi^{-j}}&=&M^{\sigma_0\circ\varphi^{-j}}&i\leq 0\\
{\rm Fil}^{i}M^{\sigma_0\circ\varphi^{-j}}&=& k_E w^{\sigma_0\circ\varphi^{-j}}&1\leq i\leq r_{\sigma_0\circ\varphi^{-j}}+1\\
{\rm Fil}^{i}M^{\sigma_0\circ\varphi^{-j}}&=&0&r_{\sigma_0\circ\varphi^{-j}}+2\leq i
\end{array}\right.
\end{equation*}
\begin{equation*}
\left\{\begin{array}{lll}
\varphi(v^{\sigma_0\circ\varphi^{-j}})&=&v^{\sigma_0\circ\varphi^{-(j+1)}}\\
\varphi_{r_{\sigma_0\circ\varphi^{-j}}+1}(w^{\sigma_0\circ\varphi^{-j}})&=&w^{\sigma_0\circ\varphi^{-(j+1)}}-A_{j}v^{\sigma_0\circ\varphi^{-(j+1)}}
\end{array}\right.\ \ \ 0\leq j\leq f-2
\end{equation*}
\begin{equation*}\!\!\!\!\!\!\!\!\!\!\!\!\!\!\!\!\!\!\!\!\!\!\!\!\!\!\!\!\!\!\!\!\!\!\!\!\!\!\!\!\!\!\!\!\left\{\begin{array}{lll}
\varphi(v^{\sigma_0\circ\varphi^{-(f-1)}})&=&A\overline\alpha^{-1}\overline\alpha'^{-1}v^{\sigma_0}\\ 
\varphi_{r_{\sigma_0\circ\varphi^{-(f-1)}}+1}(w^{\sigma_0\circ\varphi^{-(f-1)}})&=&A^{-1}w^{\sigma_0}-A_{f-1}v^{\sigma_0}
\end{array}\right.
\end{equation*}
o\`u :
\begin{equation}\label{aj}
\left\{\begin{array}{llll}
A_{j}\!&=&\!\overline a_{\sigma_0\circ\varphi^{-j}}\!\!{\displaystyle \prod_{\stackrel{0\leq i\leq j-1}{ \sigma_0\circ\varphi^{-i}\in F(J)}}}\!\!(-\overline a_{\sigma_0\circ\varphi^{-i}}^{-2})&{\rm si}\ \ {\sigma_0\circ\varphi^{-j}}\notin F(J)\\
A_{j}\!&=&\!\overline a_{\sigma_0\circ\varphi^{-j}}^{-1}\!\!{\displaystyle\prod_{\stackrel{0\leq i\leq j-1}{ {\sigma_0\circ\varphi^{-i}}\in F(J)}}}\!\!(-\overline a_{\sigma_0\circ\varphi^{-i}}^{-2})&{\rm si}\ \ \sigma_0\circ\varphi^{-j}\in F(J)
\end{array}\right.
\end{equation}
en multipliant le membre de droite de (\ref{aj}) par $A\overline\alpha^{-1}\overline\alpha'^{-1}$ si $j=f-1$.
\end{prop}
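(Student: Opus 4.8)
The plan is to follow the method of \cite[Prop.5.1.2]{BM} (which is itself an elaboration of \cite[\S7]{Br}): one computes the contravariant Fontaine-Laffaille module of $\overline\rho\otimes\theta^{-1}$ by writing down explicitly the comparison between the strongly divisible module $\Mbar$, together with its tame descent data of type $\eta(J)\otimes\eta'(J)$, and the Breuil-module image of a Fontaine-Laffaille module over $\oL$. Because $L$ is unramified, this comparison produces $M$ component by component: for each $j$ one gets a rank-two $k_E$-space $M^{\sigma_0\circ\varphi^{-j}}$ whose filtration and divided Frobenius are read off from $\mathrm{Fil}^1\Mbar^{\sigma_0\circ\varphi^{-j}}$ and $\varphi_1$ using (\ref{iii_1})--(\ref{iii_2}) reduced modulo $\pE$. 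Half of this work is already done: the proof of Proposition \ref{prop1} produced the rank-one stable sub-object generated by the vectors $e^j$, and (\ref{resum}) records the action of $\varphi_1$ on its $\mathrm{Fil}^1$-generators; the images of the $e^j$ will be the $v^{\sigma_0\circ\varphi^{-j}}$ of the statement, and (\ref{resum}) (after absorbing $u^e\equiv-p$ into the divided Frobenius) gives $\varphi(v^{\sigma_0\circ\varphi^{-j}})=v^{\sigma_0\circ\varphi^{-(j+1)}}$ for $j<f-1$, with the wrap-around scalar $A\overline\alpha^{-1}\overline\alpha'^{-1}$ for $j=f-1$ coming from (\ref{ab}) and the $\overline\alpha,\overline\alpha'$ at the bottom of the Frobenius orbit.

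It then remains to complete each $e^j$ to an $\Sbar$-basis of $\Mbar^{\sigma_0\circ\varphi^{-j}}$ by the evident complementary vector ($\overline e^j_{\eta'(J)}$ or $\overline e^j_{\eta(J)}$), to normalize it to a vector $w^{\sigma_0\circ\varphi^{-j}}$ whose filtration jump has the expected degree, and to compute $\varphi_{r_{\sigma_0\circ\varphi^{-j}}+1}(w^{\sigma_0\circ\varphi^{-j}})$ from (\ref{iii_1})--(\ref{iii_2}). The result should be $w^{\sigma_0\circ\varphi^{-(j+1)}}-A_jv^{\sigma_0\circ\varphi^{-(j+1)}}$, where the coefficient $A_j$ accumulates one factor $-\overline a_{\sigma_0\circ\varphi^{-i}}^{-2}$ for each boundary place $\sigma_0\circ\varphi^{-i}\in F(J)$ with $i<j$ met while propagating the normalization of the complementary vectors, together with the extra factor $\overline a_{\sigma_0\circ\varphi^{-j}}$ if $\sigma_0\circ\varphi^{-j}\notin F(J)$ (shape (\ref{iii_1})) or $\overline a_{\sigma_0\circ\varphi^{-j}}^{-1}$ if $\sigma_0\circ\varphi^{-j}\in F(J)$ (shape (\ref{iii_2})): this is exactly (\ref{aj}), the $j=f-1$ value being corrected by $A\overline\alpha^{-1}\overline\alpha'^{-1}$ as stated. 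For the filtration, Lemma \ref{csigma} says that the $u$-exponent governing $\mathrm{Fil}^1\Mbar^{\sigma_0\circ\varphi^{-j}}$ equals $r_{\sigma_0\circ\varphi^{-j}}+1$, possibly after subtracting from $p-1$ or from $e$; hence after the comparison $M^{\sigma_0\circ\varphi^{-j}}$ has $\mathrm{Fil}^iM^{\sigma_0\circ\varphi^{-j}}=k_Ew^{\sigma_0\circ\varphi^{-j}}$ for $1\leq i\leq r_{\sigma_0\circ\varphi^{-j}}+1$ and $0$ beyond, as claimed. Finally one checks, via the same comparison, that $\mathrm{Hom}_{\mathrm{Fil}^\cdot,\varphi_\cdot}(M,A_{\mathrm{cris}}\otimes_{\Zp}\Fp)=\overline\rho\otimes\theta^{-1}$, which is consistent with Proposition \ref{prop1} (the sub-object $\langle v\rangle$ being the unramified part, $\langle w\rangle$ the twist with $\mathrm{Fil}$-jump $r_\sigma+1$).

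The main difficulty is the combinatorial bookkeeping: one must propagate the normalization of the $w^{\sigma_0\circ\varphi^{-j}}$ around the entire Frobenius orbit $j=0,\dots,f-1$ without sign or indexing errors, and correctly handle the wrap-around at $j=f-1$, where the unramified scalars $\overline\alpha,\overline\alpha'$ and hence $A$ of (\ref{ab}) enter, so as to land on (\ref{aj}) on the nose. This is legitimate because $\overline a_\sigma\in k_E^\times$ at every boundary place by Remark \ref{plus}(iii), so all the inversions make sense; everything else --- the rank-one sub-object, the identification of the filtration jumps, and the comparison with Galois representations --- is the content of \cite[\S7]{Br} and the proof of Proposition \ref{prop1}, to which one reduces.
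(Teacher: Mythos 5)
Your overall strategy is the same as the paper's — apply the comparison of \cite[Prop.5.1.2]{BM} and \cite[\S 7]{Br} component by component, exhibit the rank-one stable sub-object carrying the $v$'s, and track the normalizations of the complementary $w$'s around the Frobenius orbit — so the skeleton is right. But two of the concrete claims you make are not correct as stated, and fixing them is not cosmetic.

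First, you assert that the $e^j$ built in the proof of Proposition~\ref{prop1} map onto the $v^{\sigma_0\circ\varphi^{-j}}$ under the comparison, with~(\ref{resum}) ``(after absorbing $u^e\equiv-p$)'' yielding $\varphi(v^{\sigma_0\circ\varphi^{-j}})=v^{\sigma_0\circ\varphi^{-(j+1)}}$. This is not what~(\ref{resum}) says: at a boundary place $\sigma_0\circ\varphi^{-j}\in F(J)$ the formula reads $\varphi_1(\cdot\,e^j)=-\overline a_je^{j+1}$, not $e^{j+1}$. The uniform normalization $\varphi(v^{\sigma_0\circ\varphi^{-j}})=v^{\sigma_0\circ\varphi^{-(j+1)}}$ in the statement is only obtained after a further change of basis by the cumulative factors $\prod_{0\leq i\leq j-2,\ \sigma_0\circ\varphi^{-i}\in F(J)}(-\overline a_i)$ on the $v$'s (the paper does this in its final change of basis), in parallel to the $\prod\overline a_i^{-1}$ normalization you did mention for the $w$'s. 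Omitting the $v$-side normalization produces stray $-\overline a_j$ factors in the $\varphi$-action and in $A_j$; your later explanation of $A_j$ implicitly assumes the wrong uniformization.

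Second, you attribute the dichotomy ``extra factor $\overline a_{\sigma_0\circ\varphi^{-j}}$ versus $\overline a_{\sigma_0\circ\varphi^{-j}}^{-1}$'' to the shape (\ref{iii_1}) versus (\ref{iii_2}) of $\M^j$. These are different dichotomies: by Remark~\ref{plus}(ii), $\M^j$ is of shape~(\ref{iii_1}) iff $\sigma_0\circ\varphi^{-j}\in I_{\eta(J)}$, i.e.\ iff $\sigma_0\circ\varphi^{-(j+1)}\notin J$, whereas the condition in~(\ref{aj}) is $\sigma_0\circ\varphi^{-j}\in F(J)$ or not, which refers to the pair $(\sigma_0\circ\varphi^{-j},\sigma_0\circ\varphi^{-(j+1)})$. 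For instance $\sigma_0\circ\varphi^{-j}\in J$, $\sigma_0\circ\varphi^{-(j+1)}\in J$ gives shape~(\ref{iii_2}) but a non-boundary place, contradicting your parenthetical. Your stated condition ($F(J)$) is the correct one, but the reason you give for it is wrong, and this matters because the shape~(\ref{iii_1})/(\ref{iii_2}) alternation controls which of $\overline e^j_{\eta(J)}$, $\overline e^j_{\eta'(J)}$ is the ``unramified'' generator, while the $F(J)$ alternation controls when the renormalization accumulates a factor; conflating them breaks the induction around the orbit.
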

\begin{proof}
On pose $\widetilde\eta(J)\=[\theta^{-1}]\eta(J)$, $\widetilde\eta'(J)\=[\theta^{-1}]\eta'(J)$ pour $J\subseteq \mathcal S$. On d\'efinit d'abord une suite $(\eta_{\sigma})_{\sigma\in {\mathcal S}}$ avec $\eta_{\sigma}\in \{\widetilde\eta(J),\widetilde\eta'(J)\}$ comme suit : si $J=\emptyset$, $\eta_{\sigma}\=\widetilde\eta'(J)$ pour tout $\sigma\in {\mathcal S}$, si $J={\mathcal S}$, $\eta_{\sigma}\=\widetilde\eta(J)$ pour tout $\sigma\in {\mathcal S}$ et si $J\notin \{\emptyset,{\mathcal S}\}$ :
\begin{equation}\label{eta}
\left\{\begin{array}{cclcc}
\eta_{\sigma}\=\widetilde\eta(J),\ \eta_{\sigma\circ\varphi^{-1}}\=\widetilde\eta'(J)&{\rm si}&\sigma\in J&{\rm et}&\sigma\circ\varphi^{-1}\notin J\\
\eta_{\sigma}\=\widetilde\eta'(J),\ \eta_{\sigma\circ\varphi^{-1}}\=\widetilde\eta(J)&{\rm si}&\sigma\notin J&{\rm et}&\sigma\circ\varphi^{-1}\in J\\
\eta_{\sigma}=\eta_{\sigma\circ\varphi^{-1}}&{\rm si}&\sigma\notin F(J).&&
\end{array}\right.
\end{equation}
Un examen facile de (\ref{eta}) montre que l'on obtient $\eta_{\sigma}=\widetilde\eta(J)$ si $\sigma\in J$ et $\eta_{\sigma}=\widetilde\eta'(J)$ si $\sigma\notin J$. Par ailleurs, par la d\'efinition \ref{J} et \cite[(32)]{Br}, on voit que (\ref{eta}) implique $\eta_{\sigma_0\circ\varphi^{-j}}=\eta_j$ pour tout $j\in \{0,\cdots,f-1\}$ o\`u $\eta_j$ est comme dans \cite[\S\ 6]{Br}. Supposons d'abord $\sigma_0\in J$ dans ce qui suit, ce qui revient donc \`a $\eta_0=\widetilde\eta(J)$. Par la proposition \ref{prop1} et (\ref{enplus}), on a :
$$(\rhobar\otimes \theta^{-1})^{\vee}(1)\otimes \overline{\widetilde\eta}(J)\omega_{\sigma_0}^{h}\omega^{-1}\cong(\rhobar\otimes \theta^{-1})\otimes \nr(\overline\alpha^{-1}\overline\alpha'^{-1}).$$
Le module de Fontaine-Laffaille contravariant de $\rhobar\otimes \theta^{-1}$ est donc donn\'e par les formules \`a la fin de la preuve de \cite[Prop.7.3]{Br} tordues (c\^ot\'e Galois) par $\nr(\overline\alpha\overline\alpha')$. Nous explicitons compl\`etement ce module dans ce qui suit. La d\'efinition \ref{J} et ce qui pr\'ec\`ede donnent les \'equivalences :
$$\begin{array}{ccccccc}
\sigma_0\circ\varphi^{-j}\in I_{\eta(J)}^{\times}&{\rm et}&\eta_j=\widetilde\eta(J)&\Leftrightarrow & \sigma_0\circ\varphi^{-j}\in J&{\rm et}&\sigma_0\circ\varphi^{-(j+1)}\notin J\\
\sigma_0\circ\varphi^{-j}\in I_{\eta'(J)}&{\rm et}&\eta_j=\widetilde\eta(J)&\Leftrightarrow & \sigma_0\circ\varphi^{-j}\in J&{\rm et}&\sigma_0\circ\varphi^{-(j+1)}\in J\\
\sigma_0\circ\varphi^{-j}\in I_{\eta'(J)}^{\times}&{\rm et}&\eta_j=\widetilde\eta'(J)&\Leftrightarrow & \sigma_0\circ\varphi^{-j}\notin J&{\rm et}&\sigma_0\circ\varphi^{-(j+1)}\in J\\
\sigma_0\circ\varphi^{-j}\in I_{\eta(J)}&{\rm et}&\eta_j=\widetilde\eta'(J)&\Leftrightarrow & \sigma_0\circ\varphi^{-j}\notin J&{\rm et}&\sigma_0\circ\varphi^{-(j+1)}\notin J.
\end{array}$$
Avec le lemme \ref{csigma}, on voit donc que le module de Fontaine-Laffaille contrava\-riant $M=M^{\sigma_0}\times\cdots\times M^{\sigma_0\circ\varphi^{-(f-1)}}$ de la preuve de \cite[Prop.7.3]{Br} (tordu par $\nr(\overline\alpha\overline\alpha')$ c\^ot\'e Galois) est donn\'e par $M^{\sigma_0\circ\varphi^{-j}}=k_E e^{\sigma_0\circ\varphi^{-j}}\oplus k_Ef^{\sigma_0\circ\varphi^{-j}}=k_E e^{j}\oplus k_Ef^{j}$ avec (la d\'efinition de la filtration \'etant implicite) :
$$\begin{array}{llcll}
\left\{\begin{array}{llc}
\varphi(e^{j})&=&e^{j+1}-\overline a_jf^{j+1}\\
\varphi_{r_{j}+1}(f^{j})&=&f^{j+1}
\end{array}\right.
\ \ \ \ \ &{\rm si}&\ \sigma_0\circ\varphi^{-j}\in J&{\rm et}&\sigma_0\circ\varphi^{-(j+1)}\notin J\\
\left\{\begin{array}{llc}
\varphi(e^{j})&=&e^{j+1}\\
\varphi_{r_{j}+1}(f^{j})&=&f^{j+1}-\overline a_je^{j+1}
\end{array}\right.
\ \ \ \ \ &{\rm si}&\ \sigma_0\circ\varphi^{-j}\in J&{\rm et}&\sigma_0\circ\varphi^{-(j+1)}\in J\\
\left\{\begin{array}{llc}
\varphi_{r_{j}+1}(e^{j})&=&e^{j+1}\\
\varphi(f^{j})&=&f^{j+1}-\overline a_je^{j+1}
\end{array}\right.
\ \ \ \ \ &{\rm si}&\ \sigma_0\circ\varphi^{-j}\notin J&{\rm et}&\sigma_0\circ\varphi^{-(j+1)}\in J\\
\left\{\begin{array}{llc}
\varphi_{r_{j}+1}(e^{j})&=&e^{j+1}-\overline a_jf^{j+1}\\
\varphi(f^{j})&=&f^{j+1}
\end{array}\right.
\ \ \ \ \ &{\rm si}&\ \sigma_0\circ\varphi^{-j}\notin J&{\rm et}&\sigma_0\circ\varphi^{-(j+1)}\notin J
\end{array}$$
o\`u $r_{j}=r_{\sigma_0\circ\varphi^{-j}}$, $\overline a_{j}=\overline a_{\sigma_0\circ\varphi^{-j}}$ et avec $\overline\alpha'^{-1} e^0$ et $\overline\alpha^{-1} f^0$ au lieu de $e^{j+1}$ et $f^{j+1}$ dans l'image de $\varphi$ et $\varphi_{r_{j}+1}$ si $j=f-1$ (les formules pr\'ecises avec $\overline\alpha'$ et $\overline\alpha$ pour $j=f-1$ ne sont pas explicit\'ees dans la preuve de \cite[Prop.7.3]{Br} o\`u l'on se contentait de d\'ecrire l'action de l'inertie seulement, mais un petit calcul les donne en notant que l'on a ici tordu c\^ot\'e Galois par $\nr(\overline\alpha\overline\alpha')$, ce qui revient c\^ot\'e Fontaine-Laffaille \`a multiplier $\varphi$ et $\varphi_{r_{j}+1}$ par $(\overline\alpha\overline\alpha')^{-1}$ au cran $j=f-1$). Pour $j\in \{0,\cdots,f-1\}$ on pose $f^{'j}\=f^j$ si $\sigma_0\circ\varphi^{-j}\in J$ et $f^{'j}\=e^j$ si $\sigma_0\circ\varphi^{-j}\notin J$. Pour $j\in \{1,\cdots,f-1\}$ on pose :
\begin{equation*}
\begin{array}{llcllll}
e^{'j}&\=&e^j&{\rm si}&\sigma_0\circ\varphi^{-(j-1)}\in J&{\rm et}&\sigma_0\circ\varphi^{-j}\in J\\
e^{'j}&\=&f^j&{\rm si}&\sigma_0\circ\varphi^{-(j-1)}\notin J&{\rm et}&\sigma_0\circ\varphi^{-j}\notin J\\
e^{'j}&\=&e^j-\overline a_{j-1}f^j&{\rm si}&\sigma_0\circ\varphi^{-(j-1)}\in J&{\rm et}&\sigma_0\circ\varphi^{-j}\notin J\\
e^{'j}&\=&f^j-\overline a_{j-1}e^j&{\rm si}&\sigma_0\circ\varphi^{-(j-1)}\notin J&{\rm et}&\sigma_0\circ\varphi^{-j}\in J
\end{array}
\end{equation*}
et pour $j=0$ :
\begin{equation*}
\begin{array}{llcll}
e^{'0}&\=&e^0&{\rm si}&\sigma_0\circ\varphi^{-(f-1)}\in J\\
e^{'0}&\=&e^0-\overline a_{f-1}^{-1}(\frac{\overline\alpha}{\overline\alpha'})^{-1}f^0&{\rm si}&\sigma_0\circ\varphi^{-(f-1)}\notin J
\end{array}
\end{equation*}
en se souvenant que $\sigma_0\in J$ par hypoth\`ese. Un calcul au cas par cas, un peu laborieux mais sans difficult\'e, donne alors $\varphi(e^{'0})=e^{'1}$ puis :
$$\begin{array}{ll}
\left\{\begin{array}{llcll}
\varphi(e^{'j})&=&e^{'j+1}&{\rm si}&\sigma_0\circ\varphi^{-(j-1)}\notin F(J)\\
\varphi(e^{'j})&=&-\overline a_{j-1}e^{'j+1}&{\rm si}&\sigma_0\circ\varphi^{-(j-1)}\in F(J)
\end{array}\right.
\ \ \ &1\leq j\leq f-2\\ \\
\left\{\begin{array}{llcll}
\varphi_{r_{j}+1}(f^{'j})&=&f^{'j+1}-\overline a_j e^{'j+1}&{\rm si}&\sigma_0\circ\varphi^{-j}\notin F(J)\\
\varphi_{r_{j}+1}(f^{'j})&=&\overline a_j^{-1} f^{'j+1}-\overline a_j^{-1} e^{'j+1}&{\rm si}&\sigma_0\circ\varphi^{-j}\in F(J)
\end{array}\right.
\ \ \ &0\leq j\leq f-2
\end{array}$$
et si $j=f-1$ :
$$\begin{array}{l}
\left\{\begin{array}{llcllll}
\varphi(e^{'f-1})&=&\overline\alpha'^{-1} e^{'0}&{\rm si}&\sigma_0\circ\varphi^{-(f-2)}\in J&{\rm et}&\sigma_0\circ\varphi^{-(f-1)}\in J\\
\varphi(e^{'f-1})&=&-\overline a_{f-2}\overline\alpha'^{-1} e^{'0}&{\rm si}&\sigma_0\circ\varphi^{-(f-2)}\notin J&{\rm et}&\sigma_0\circ\varphi^{-(f-1)}\in J\\
\varphi(e^{'f-1})&=&-\overline a_{f-1}\overline\alpha'^{-1} e^{'0}&{\rm si}&\sigma_0\circ\varphi^{-(f-2)}\notin J&{\rm et}&\sigma_0\circ\varphi^{-(f-1)}\notin J\\
\varphi(e^{'f-1})&=&\overline a_{f-2}\overline a_{f-1}\overline\alpha'^{-1} e^{'0}&{\rm si}&\sigma_0\circ\varphi^{-(f-2)}\in J&{\rm et}&\sigma_0\circ\varphi^{-(f-1)}\notin J
\end{array}\right.\\ \\
\left\{\begin{array}{llcll}
\varphi_{r_{f-1}+1}(f^{'f-1})&=&\overline\alpha^{-1}f^{'0}-\overline a_{f-1} \overline\alpha'^{-1} e^{'0}&{\rm si}&\sigma_0\circ\varphi^{-(f-1)}\in J\\
\varphi_{r_{f-1}+1}(f^{'f-1})&=&\overline a_{f-1}^{-1}\overline\alpha^{-1}f^{'0}+ \overline\alpha'^{-1} e^{'0}&{\rm si}&\sigma_0\circ\varphi^{-(f-1)}\notin J
\end{array}\right.
\end{array}$$
(lorsque $f=1$, on obtient simplement $\left\{\begin{array}{lcl}
\varphi(e^{'0})&=&\overline\alpha'^{-1} e^{'0}\\
\varphi_{r_{0}+1}(f^{'0})&=&\overline\alpha^{-1}f^{'0}-\overline a_{0} \overline\alpha'^{-1} e^{'0}
\end{array}\right.$). Le changement de base :
$$\begin{array}{l}
v^{\sigma_0}\=e^{'0},\ v^{\sigma_0\circ\varphi^{-1}}\=e^{'1},\ v^{\sigma_0\circ\varphi^{-j}}\=\big(\prod_{\stackrel{0\leq i\leq j-2 }{\sigma_0\circ\varphi^{-i}\in F(J)}}(-\overline a_i)\big)e^{'j}\ {\rm si}\ 2\leq j\leq f-1\\
w^{\sigma_0}\=f^{'0},\ w^{\sigma_0\circ\varphi^{-j}}\=\big(\prod_{\stackrel{0\leq i\leq j-1 }{\sigma_0\circ\varphi^{-i}\in F(J)}}\overline a_i^{-1}\big)f^{'j}\ {\rm si}\ 1\leq j\leq f-1
\end{array}$$
donne alors par un calcul facile exactement la pr\'esentation de l'\'enonc\'e (en remarquant que $\sigma_0\circ\varphi^{-(f-1)}\in F(J)$ si et seulement si $\sigma_0\circ\varphi^{-(f-1)}\notin J$ et que $\prod_{\stackrel{0\leq i\leq f-1 }{\sigma_0\circ\varphi^{-i}\in F(J)}}(-\overline a_i)=\prod_{\sigma\in {\mathcal S}}\overline a_{\sigma}$ car $\vert F(J)\vert$ est pair). Le calcul lorsque $\sigma_0\notin J$ est analogue (on peut aussi utiliser la remarque \ref{plus}(iv)).
\end{proof}

Si $J$, $\M$ et $\rhobar$ sont comme dans la proposition \ref{prop2} (ou la proposition \ref{prop1}), on voit avec (\ref{zrho}) et (\ref{aj}) que l'on a :
\begin{equation*}
Z(\rhobar)=\{\sigma_0\circ\varphi^{-j}, \overline a_{\sigma_0\circ\varphi^{-j}}=0\}=\{\sigma\in {\mathcal S}, \overline a_{\sigma}=0\}
\end{equation*}
(ce r\'esultat d\'ecoule aussi de \cite[Thm.8.1]{Br} (cas scind\'e) et de \cite[Prop.7.3]{Br} et (\ref{zj}) (cas non scind\'e)). En particulier, on a toujours $Z(\rhobar)\cap F(J)=\emptyset$ par la remarque \ref{plus}(iii) si $\rhobar$ provient d'un $\oE$-module fortement divisible de type $J$.

\subsection{Valeurs propres de Frobenius}\label{bt}

On calcule la valuation et le ``coefficient dominant'' des valeurs propres de Frobenius sur le module de Dieudonn\'e d'un groupe $p$-divisible provenant d'un $\oE$-module fortement divisible de type $J$ dont la repr\'esentation galoisienne r\'esiduelle associ\'ee est r\'eductible g\'en\'erique. 

On conserve les notations des sections pr\'ec\'edentes. Le th\'eor\`eme suivant est le r\'esultat local clef de cet article.

\begin{thm}\label{thm_reduction}
Soit $\rhobar$ r\'eductible g\'en\'erique, $J\subseteq {\mathcal S}$ et $\M$ un $\oE$-module fortement divisible de type $J$ tel que $\rhobar\simeq {\rm Hom}_{{\rm Fil}^1,\varphi_1}(\Mbar,\widehat A_{\rm cris}\otimes_{\Zp}\Fp)^{\vee}(1)$. Soit $G$ le groupe $p$-divisible avec donn\'ee de descente correspondant \`a $\M$ et $D$ le module de Dieudonn\'e contravariant associ\'e \`a $G$. Alors la valeur propre de $\varphi^f$ sur la partie isotypique de $D$ pour le caract\`ere $\eta'(J)$ de $\Gal(L[\sqrt[e]{-p}]/L)$ est \'egale \`a $p^{\vert J\vert}\alpha'$ o\`u $\alpha'\in \oE^{\times}$ a pour r\'eduction dans $k_E^{\times}$ :
\begin{eqnarray}\label{reduction}
\overline \alpha'=(-1)^{\frac{1}{2}\vert F(J)\vert}\Big(\prod_{\sigma\in J}\alpha_{\sigma}\prod_{\sigma\notin J}\beta_{\sigma}\Big)^{-1}\frac{\displaystyle{\prod_{\stackrel{\sigma\in J}{ \sigma\circ\varphi^{-1}\notin J}}}x_{\sigma}}{\displaystyle{\prod_{\stackrel{\sigma\notin J}{\sigma\circ\varphi^{-1}\in J}}}x_{\sigma}}
\end{eqnarray}
avec $(\alpha_{\sigma})_{\sigma\in {\mathcal S}}$, $(\beta_{\sigma})_{\sigma\in {\mathcal S}}$ et $(x_{\sigma})_{\sigma\in {\mathcal S}}$ comme en (\ref{norm}).
\end{thm}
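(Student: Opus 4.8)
The plan is to compute directly the action of Frobenius on the Dieudonné module attached to $\M$, and then to feed in Propositions \ref{prop1} and \ref{prop2} in order to rewrite the answer in terms of the Fontaine--Laffaille data $(\alpha_\sigma),(\beta_\sigma),(x_\sigma)$ of $\rhobar$. First I would recall that the contravariant Dieudonné module $D$ of $G$ is obtained from $\M$ by base change along $S\rightarrow \oE$, $u\mapsto 0$ (note $S/uS\cong \oE$), the Frobenius and the action of $\Gal(L[\sqrt[e]{-p}]/L)$ being induced from those of $\M$. Thus $D$ decomposes under the descent action, and its $\eta'(J)$-isotypic part $D[\eta'(J)]$ is the free $\oE$-module with basis the images of $e^{\sigma_0\circ\varphi^{-j}}_{\eta'(J)}$, $0\le j\le f-1$; moreover $\varphi$ carries the line through $e^{\sigma_0\circ\varphi^{-j}}_{\eta'(J)}$ into the next one, so $\varphi^f$ acts on $D[\eta'(J)]$ by a scalar, which is the sought eigenvalue. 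Using the standard relation $\varphi=c^{-1}\varphi_1\circ\big((u^e+p)\cdot\big)$ on $\M$ with $c=\varphi_1(u^e+p)\equiv 1 \pmod u$, one reads off from (\ref{iii_1}) and (\ref{iii_2}) that in $D$ one has $\varphi(e^{j}_{\eta'(J)})=e^{j+1}_{\eta'(J)}$ when $\M^{\sigma_0\circ\varphi^{-j}}$ has the shape (\ref{iii_1}) and $\varphi(e^{j}_{\eta'(J)})=p\,e^{j+1}_{\eta'(J)}$ when it has the shape (\ref{iii_2}) (the cross term carries $u^{p(e-c^{(j)})}$ with $e-c^{(j)}>0$, hence dies modulo $u$), with $e^{f}_{\eta'(J)}$ replaced by $\alpha'e^{0}_{\eta'(J)}$ at $j=f-1$. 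Since by Remark \ref{plus}(ii) the number of $j$ with $\M^{\sigma_0\circ\varphi^{-j}}$ of shape (\ref{iii_2}) is $\vert I_{\eta'(J)}\vert=\vert J\vert$, composing gives that $\varphi^f$ acts on $D[\eta'(J)]$ by $p^{\vert J\vert}\alpha'$, with $\alpha'\in \oE^\times$ the scalar of Definition \ref{J}.

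It then remains to identify $\overline{\alpha'}$. By Proposition \ref{prop2}, setting $e^\sigma:=v^\sigma$ and $f^\sigma:=w^\sigma$ yields a presentation of $\rhobar\otimes\theta^{-1}$ of the form (\ref{norm}) in which $\alpha_\sigma=\beta_\sigma=1$ except $\alpha_{\sigma_0\circ\varphi}=A\overline\alpha^{-1}\overline\alpha'^{-1}$ and $\beta_{\sigma_0\circ\varphi}=A^{-1}$, and in which $x_\sigma=-A_j$ (resp. $x_{\sigma_0\circ\varphi}=-A\,A_{f-1}$), with $A$ and $A_j$ as in (\ref{ab}) and (\ref{aj}). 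I would then substitute these values into the combination appearing in Lemma \ref{joli}. The factors $A_j$ telescope along the cyclically ordered frontier $F(J)=\{\sigma_0\circ\varphi^{-j_1},\dots,\sigma_0\circ\varphi^{-j_{2t}}\}$, the two subsets $\{\sigma\in J,\ \sigma\circ\varphi^{-1}\notin J\}$ and $\{\sigma\notin J,\ \sigma\circ\varphi^{-1}\in J\}$ of $F(J)$ alternate along this cycle (with a parity that switches according to whether $\sigma_0\in J$, by (\ref{rsigma}) and its analogue), and $\vert F(J)\vert=2t$ is even; carrying the signs and the $\overline a_\sigma$'s through the telescoping gives
\begin{equation*}
\Big(\prod_{\sigma\in J}\alpha_\sigma\prod_{\sigma\notin J}\beta_\sigma\Big)^{-1}\frac{\displaystyle\prod_{\stackrel{\sigma\in J}{\sigma\circ\varphi^{-1}\notin J}}x_\sigma}{\displaystyle\prod_{\stackrel{\sigma\notin J}{\sigma\circ\varphi^{-1}\in J}}x_\sigma}=(-1)^{t}\,\overline{\alpha'}.
\end{equation*}
By Lemma \ref{joli} the left-hand side depends only on $\rhobar$ (the hypothesis $Z(\rhobar)\cap F(J)=\emptyset$ being in force here), hence it coincides with the analogous expression formed from the fixed normalization (\ref{norm}) of $\rhobar$; together with the first step this yields (\ref{reduction}). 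When $\sigma_0\notin J$ one may instead obtain the presentation from Remark \ref{plus}(iv).

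The essential local content is already packaged in Propositions \ref{prop1} and \ref{prop2}, so the first step is a short computation once the shape of a module of type $J$ and Remark \ref{plus}(ii) are in hand. The one genuinely delicate point I expect is the sign bookkeeping of the second step: keeping track of the factor $(-1)^{\vert F(J)\vert/2}$ through the telescoping of the $A_j$, and handling uniformly the wrap-around index $j=f-1$ (where the extra factor $A\overline\alpha^{-1}\overline\alpha'^{-1}$ in $A_{f-1}$ and the value $\beta_{\sigma_0\circ\varphi}=A^{-1}$ must be combined correctly) together with the two parity cases $\sigma_0\in J$ and $\sigma_0\notin J$.
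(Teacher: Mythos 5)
Your proposal is correct, and it takes precisely the alternative route the paper mentions at the start of its own proof and then deliberately avoids. After the first step (reading off the Frobenius eigenvalue $p^{|J|}\alpha'$ from the shape of a type-$J$ strongly divisible module, which you do essentially as the paper does, via $D=\M\otimes_S\oE$ and Remark \ref{plus}(ii)), the paper notes that since the scalar of Lemma \ref{joli} depends only on $\rhobar$, it would be enough to verify (\ref{reduction}) with the particular Fontaine--Laffaille presentation of Proposition \ref{prop2} --- but then declines to do so, on the grounds that the proof of Lemma \ref{joli} was left as an exercise, and instead gives a self-contained computation: it derives the $\overline a_j$ explicitly in terms of the $\alpha_\sigma,\beta_\sigma,x_\sigma$ of the fixed normalization (\ref{norm}) via the relations (\ref{ai}) and then proves the two inductive telescoping formulas (\ref{s-1}),(\ref{s}) to evaluate $\prod_{\sigma\in F(J)}\overline a_\sigma$. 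You take the shorter path: read off $\alpha_\sigma=\beta_\sigma=1$ (except at $\sigma_0\circ\varphi$), $x_{\sigma_0\circ\varphi^{-j}}=-A_j$ from Proposition \ref{prop2}, telescope the $A_j$'s along the cyclic frontier using $\tilde A_{j_{2s-1}}/\tilde A_{j_{2s}}=-\overline a_{j_{2s-1}}\overline a_{j_{2s}}$ and (\ref{ab}) to get $(-1)^t\overline\alpha'$ (I checked the bookkeeping in both sub-cases $\sigma_0\circ\varphi\in J$ and $\sigma_0\circ\varphi\notin J$; the extra factors $A\overline\alpha^{-1}\overline\alpha'^{-1}$ at $j=f-1$ and $A=\overline\alpha\prod_{F(J)}\overline a_\sigma$ combine as you expect), and then invoke Lemma \ref{joli} to transfer the answer to the arbitrary normalization. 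Your route is cleaner and shorter, at the price of relying on the unproved Lemma \ref{joli}; the paper's route is longer but self-contained, which is exactly the trade-off the authors weigh in the opening lines of their proof.
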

\begin{proof}
Le module de Dieudonn\'e $D$ v\'erifie $D=\M\otimes_SE$ o\`u la fl\`eche $S\twoheadrightarrow \oE\subset E$ est le morphisme de $\oE$-alg\`ebres qui envoie $u$ et ses puissances divis\'ees sur $0$. En revenant \`a la d\'efinition de $I_{\eta}$ et $I_{\eta'}$ en (\ref{iii_1}) et (\ref{iii_2}) (pour $(\eta,\eta')=(\eta(J),\eta'(J))$), on voit que la valeur propre de $\varphi^f$ sur la partie isotypique de $D$ pour le caract\`ere $\eta'(J)$ de $\Gal(L[\sqrt[e]{-p}]/L)$ est $p^{\vert I_{\eta'(J)}\vert}\alpha'$. Mais on a $\vert I_{\eta'(J)}\vert=\vert J\vert$ par la remarque \ref{plus}(ii). Il reste donc \`a d\'emontrer (\ref{reduction}).\\
Notons d'abord que le terme de droite en (\ref{reduction}) est dans $k_E^{\times}$ (car $Z(\rhobar)\cap F(J)=\emptyset$, cf. fin de la section \ref{fl2}). Comme il ne d\'epend que de $\overline\rho$ et de $J$ par le lemme \ref{joli}, il suffirait de montrer (\ref{reduction}) avec le module de Fontaine-Laffaille de la proposition \ref{prop2}. N\'eanmoins, comme nous avons laiss\'e la preuve du lemme \ref{joli} au lecteur, donnons ici une preuve directe de l'identit\'e (\ref{reduction}) qui n'utilise pas ce lemme. Fixons un plongement $\sigma_0:\Fq\hookrightarrow k_E$. En explicitant le changement de base qui permet de passer du module de Fontaine-Laffaille contravariant de $\rhobar\otimes \theta^{-1}$ du \S\ \ref{prel} \`a celui de la proposition \ref{prop2}, on voit que l'on a les \'egalit\'es $\overline\alpha\overline\alpha'=\lambda\mu$ et $A=\lambda$ (notons que, si $\rhobar$ est scind\'ee, il y a deux choix de $\theta$ comme dans la Remarque \ref{comment}(ii), mais une fois que l'on a fix\'e l'un de ces choix, on a ces \'egalit\'es en consid\'erant le module de Fontaine-Laffaille correspondant). De plus par (\ref{ab}) on a :
\begin{eqnarray}\label{abc}
\overline \alpha'=\mu \!\prod_{\sigma\in F(J)}\!\overline a_{\sigma}{\rm\ \ si\ \ }\sigma_0\in J,\ \ \ \overline \alpha'=\lambda \!\prod_{\sigma\in F(J)}\!\overline a_{\sigma}^{-1}{\rm\ \ si\ \ }\sigma_0\notin J.
\end{eqnarray}
Nous allons expliciter $\prod_{\sigma\in F(J)}\overline a_{\sigma}$. Posons $\alpha_i\=\alpha_{\sigma_0\circ\varphi^{-i}}$, $\beta_i\=\beta_{\sigma_0\circ\varphi^{-i}}$ et $x_j\=x_{\sigma_0\circ\varphi^{-j}}$, on voit que l'on a aussi $A_j=-\big(\prod_{0\leq i\leq j}\frac{\beta_i}{\alpha_i}\big)x_j$ pour $0\leq j\leq f-2$ et $A_{f-1}=-\lambda^{-1}x_{f-1}$. Un calcul imm\'ediat \`a partir de (\ref{aj}) donne alors pour $j\in \{0,\cdots,f-1\}$ (avec $\overline a_j=\overline a_{\sigma_0\circ\varphi^{-j}}$)~:
\begin{eqnarray}\label{ai}
\left\{\begin{array}{llll}
\overline a_j &=& -x_j\displaystyle{\prod_{0\leq i\leq j}\frac{\beta_i}{\alpha_i}}\displaystyle{\prod_{\stackrel{0\leq i\leq j-1}{\sigma_0\circ\varphi^{-i}\in F(J)}}}\!\!(-\overline a_i^2)&\ \ {\rm si}\ \ \sigma_0\circ\varphi^{-j}\notin F(J)\\
\overline a_j &=& -x_j^{-1}\!\!\!\displaystyle{\prod_{0\leq i\leq j}\frac{\alpha_i}{\beta_i}}\displaystyle{\prod_{\stackrel{0\leq i\leq j-1}{ \sigma_0\circ\varphi^{-i}\in F(J)}}}\!\!(-\overline a_i^{-2})&\ \ {\rm si}\ \ \sigma_0\circ\varphi^{-j}\in F(J).
\end{array}\right.
\end{eqnarray}
Comme dans la preuve de la proposition \ref{prop1}, notons $0\leq j_1< j_2< \cdots < j_{2t-1}< j_{2t}\leq f-1$ les \'el\'ements de $\{0,\cdots,f-1\}$ tels que $\sigma_0\circ\varphi^{-j}\in F(J)$. Nous allons d\'eduire de (\ref{ai}) par r\'ecurrence les formules pour $1\leq s\leq \frac{1}{2}\vert F(J)\vert$ :
\begin{eqnarray}
\label{s-1}\overline a_{j_1}\overline a_{j_2}\cdots \overline a_{j_{2s-1}} &=& (-1)^s\frac{\bigg(\displaystyle{\prod_{0\leq i\leq j_{2s-1}}\frac{\alpha_i}{\beta_i}}\bigg)}{x_{j_{2s-1}}}\frac{\displaystyle{\prod_{i=1}^{s-1}\left[x_{j_{2i}}\bigg(\displaystyle{\prod_{0\leq k\leq j_{2i-1}}\frac{\alpha_k}{\beta_k}}\bigg)\right]}}{\displaystyle{\prod_{i=1}^{s-1}\left[x_{j_{2i-1}}\bigg(\displaystyle{\prod_{0\leq k\leq j_{2i}}\frac{\alpha_k}{\beta_k}}\bigg)\right]}}\\
\label{s}\overline a_{j_1}\overline a_{j_2}\cdots \overline a_{j_{2s}} &=& (-1)^s\frac{\displaystyle{\prod_{i=1}^{s}\left[x_{j_{2i-1}}\bigg(\displaystyle{\prod_{0\leq k\leq j_{2i}}\frac{\alpha_k}{\beta_k}}\bigg)\right]}}{\displaystyle{\prod_{i=1}^{s}\left[x_{j_{2i}}\bigg(\displaystyle{\prod_{0\leq k\leq j_{2i-1}}\frac{\alpha_k}{\beta_k}}\bigg)\right]}}.
\end{eqnarray}
Pour $s=1$, l'\'egalit\'e (\ref{s-1}) est juste la deuxi\`eme \'egalit\'e en (\ref{ai}) pour $j=j_1$. Montrons comment (\ref{s}) se d\'eduit de (\ref{s-1}). La deuxi\`eme \'egalit\'e en (\ref{ai}) pour $j=j_{2s}$ se r\'ecrit :
$$\overline a_{j_{2s}}= x_{j_{2s}}^{-1}\Big(\displaystyle{\prod_{0\leq i\leq j_{2s}}\frac{\alpha_i}{\beta_i}}\Big)\prod_{i=1}^{2s-1}\overline a_{j_i}^{-2}.$$
En multipliant des deux c\^ot\'es par $\overline a_{j_1}\overline a_{j_2}\cdots \overline a_{j_{2s-1}}$ on obtient :
$$\overline a_{j_1}\overline a_{j_2}\cdots \overline a_{j_{2s}}= x_{j_{2s}}^{-1}\Big(\displaystyle{\prod_{0\leq i\leq j_{2s}}\frac{\alpha_i}{\beta_i}}\Big)(\overline a_{j_1}\overline a_{j_2}\cdots \overline a_{j_{2s-1}})^{-1}.$$
En rempla\c cant $\overline a_{j_1}\overline a_{j_2}\cdots \overline a_{j_{2s-1}}$ \`a droite par la valeur donn\'ee en (\ref{s-1}), on obtient exactement (\ref{s}). En utilisant la deuxi\`eme \'egalit\'e en (\ref{ai}) pour $j=j_{2s+1}$, on montre de mani\`ere analogue (\ref{s-1}) pour $s+1$ \`a partir de (\ref{s}) pour $s$. En appliquant (\ref{s}) \`a $s=t$, on a en particulier :
$$\prod_{\sigma\in F(J)}\overline a_{\sigma}= (-1)^{\frac{1}{2}\vert F(J)\vert}\frac{\displaystyle{\prod_{i=1}^{\frac{1}{2}\vert F(J)\vert}}\left[x_{j_{2i-1}}\bigg(\displaystyle{\prod_{0\leq k\leq j_{2i}}\frac{\alpha_k}{\beta_k}}\bigg)\right]}{\displaystyle{\prod_{i=1}^{\frac{1}{2}\vert F(J)\vert}}\left[x_{j_{2i}}\bigg(\displaystyle{\prod_{0\leq k\leq j_{2i-1}}\frac{\alpha_k}{\beta_k}}\bigg)\right]}$$
qui se r\'ecrit, en distinguant les deux cas $\sigma_0\in J$ et $\sigma_0\notin J$ :
\begin{eqnarray*}
\prod_{\sigma\in F(J)}\overline a_{\sigma}&=& (-1)^{\frac{1}{2}\vert F(J)\vert}\bigg(\displaystyle{\prod_{\sigma\notin J}\frac{\alpha_{\sigma}}{\beta_{\sigma}}}\bigg)\frac{\displaystyle{\prod_{\stackrel{\sigma\in J}{\sigma\circ\varphi^{-1}\notin J}}}x_{\sigma}}{\displaystyle{\prod_{\stackrel{\sigma\notin J}{\sigma\circ\varphi^{-1}\in J}}}x_{\sigma}}\ \ {\rm si}\ \ \sigma_0\in J\\
\prod_{\sigma\in F(J)}\overline a_{\sigma}&=& (-1)^{\frac{1}{2}\vert F(J)\vert}\bigg(\displaystyle{\prod_{\sigma\in J}\frac{\alpha_{\sigma}}{\beta_{\sigma}}}\bigg)\frac{\displaystyle{\prod_{\stackrel{\sigma\notin J}{ \sigma\circ\varphi^{-1}\in J}}}x_{\sigma}}{\displaystyle{\prod_{\stackrel{\sigma\in J}{ \sigma\circ\varphi^{-1}\notin J}}}x_{\sigma}}\ \ \ \ {\rm si}\ \ \sigma_0\notin J.
\end{eqnarray*}
Avec (\ref{abc}) on en d\'eduit (\ref{reduction}) puisque $\mu=\prod_{\sigma\in \mathcal S}\alpha_{\sigma}^{-1}$ et $\lambda=\prod_{\sigma\in \mathcal S}\beta_{\sigma}^{-1}$.
\end{proof}

\begin{rem}\label{apartbis}
{\rm Pour $J=\mathcal S$, on a $\overline \alpha'=(\prod_{\sigma\in \mathcal S}\alpha_{\sigma})^{-1}=\mu$ et pour $J=\emptyset$, on a $\overline \alpha'=(\prod_{\sigma\in \mathcal S}\beta_{\sigma})^{-1}=\lambda$.}
\end{rem}

\subsection{S\'eries principales et sommes de Jacobi}\label{jacobi}

On calcule la r\'eduction modulo $p$ de certains invariants associ\'es aux s\'eries principales mod\'er\'ement ra\-mifi\'ees de $\G$ sur $E$ provenant des repr\'esentations de $\g$ issues de certains $\oE$-modules fortement divisibles de type $J$.

On conserve les notations pr\'ec\'edentes. On note $\val$ la valuation $p$-adique (sur une extension finie de $\Qp$) normalis\'ee par $\val(p)=1$ et $\norm\=\frac{1}{q^{\val(\cdot)}}$.

Rappelons d'abord le th\'eor\`eme suivant d\^u \`a Stickelberger qui donne la valuation $p$-adique ainsi que le ``coefficient dominant'' de certaines sommes de Jacobi.

\begin{thm}[\cite{La}]\label{stick}
Soit $a,b$ deux entiers tels que $0<a,b\leq q-1$ et $q-1$ ne divise pas $a+b$. \'Ecrivons :
\begin{eqnarray*}
a&=&\!\sum_{j=0}^{f-1}p^ja_j,\ \ \ b\ =\ \sum_{j=0}^{f-1}p^jb_j\\
a+b&=&\!\sum_{j=0}^{f-1}p^j(a+b)_j + (q-1)Q
\end{eqnarray*}
o\`u $a_j,b_j,(a+b)_j\in \{0,\cdots,p-1\}$ et $Q\geq 0$. Soit $\sigma_0:\Fq\hookrightarrow k_E$ un plongement quelconque. On a dans $\oE$ :
$$\sum_{s\in \Fq}[\sigma_0(s)]^{a}[1-\sigma_0(s)]^{b}=Up^u+C$$
o\`u :
\begin{eqnarray*}
u&=&\frac{1}{p-1}\bigg(\sum_{j=0}^{f-1}p-1-(a_j+b_j-(a+b)_j)\bigg)\in \Z_{\geq 0}\\
U&=&(-1)^{f-1+u}\frac{\prod_{j=0}^{f-1}a_j!b_j!}{\prod_{j=0}^{f-1}(a+b)_j!}\in \Zp^\times
\end{eqnarray*}
et o\`u $\val(C)>u$. 
\end{thm}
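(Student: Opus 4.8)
This is a restatement of the classical Stickelberger theorem on the prime factorisation of Gauss sums, and the plan is to reduce to it via the Gauss--Jacobi identity and then do the (slightly delicate) bookkeeping to match the normalisations.

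First I would identify the sum with a Jacobi sum. Writing $\omega\=[\sigma_0]:\Fq\rightarrow \oE^\times$ for the Teichm\"uller character (extended by $\omega(0)=0$ when $a$ or $b<q-1$), the sum $\sum_{s\in \Fq}[\sigma_0(s)]^a[1-\sigma_0(s)]^b$ is the Jacobi sum $J(\omega^a,\omega^b)=\sum_s\omega^a(s)\omega^b(1-s)$. Fixing a nontrivial additive character $\psi$ of $\Fq$ and setting $g(\omega^m)\=\sum_{s\in\Fq}\omega^m(s)\psi(s)\in\oE[\zeta_p]$, the hypothesis $q-1\nmid a+b$ ensures $\omega^{a+b}\ne 1$, whence the standard identity $g(\omega^a)g(\omega^b)=J(\omega^a,\omega^b)\,g(\omega^{a+b})$, i.e. $J(\omega^a,\omega^b)=g(\omega^a)g(\omega^b)/g(\omega^{a+b})$. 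So it suffices to pin down the leading term of each Gauss sum.

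Next I would invoke Stickelberger in the following sharp form: set $\pi\=\zeta_p-1$, so $\val(\pi)=\tfrac{1}{p-1}$ and $\pi^{p-1}=-pv$ for a unit $v$ with $v\equiv-1$ modulo $\pi$ (from $p=\prod_{k=1}^{p-1}(1-\zeta_p^k)$ and $(\zeta_p^k-1)/\pi\equiv k$). For $0\le m<q-1$ write $q-1-m=\sum_j m'_jp^j$ with $m'_j\in\{0,\dots,p-1\}$ and $s(m)\=\sum_j m'_j$; Stickelberger's congruence (see \cite{La}; equivalently Gross--Koblitz together with the reduction of $\Gamma_p$, or the Hasse--Davenport relation together with the elementary computation $\sum_{s\in\Fp}\omega_p^j(s)\zeta_p^s\equiv -\pi^j/j!$ mod $\pi^{j+1}$) gives
$$g(\omega^m)=-\,\frac{\pi^{s(m)}}{\prod_j m'_j!}+(\text{terms of higher }\pi\text{-adic valuation}).$$
Applying this with $m=a$, $m=b$ and $m=c\=(a+b\bmod q-1)$, whose relevant digits are $p-1-a_j$, $p-1-b_j$ and $p-1-(a+b)_j$, and substituting into the Gauss--Jacobi identity, one gets $J(\omega^a,\omega^b)=-\pi^{(p-1)u}\cdot\prod_j\frac{(p-1-(a+b)_j)!}{(p-1-a_j)!(p-1-b_j)!}+(\text{higher valuation})$, with $(p-1)u=f(p-1)-s(a)-s(b)+s(c)$, i.e. $u=\tfrac1{p-1}\sum_j\big(p-1-(a_j+b_j-(a+b)_j)\big)$, which is the stated exponent and is a nonnegative integer. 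Finally one converts: the Wilson-type congruence $(p-1-k)!\equiv(-1)^{k+1}(k!)^{-1}$ mod $p$ turns the product of factorials of complementary digits into $(-1)^{?}\prod_j a_j!b_j!/\prod_j(a+b)_j!$, and $\pi^{(p-1)u}=p^uv^{-u}$ with $v^{-u}\equiv(-1)^u$ mod $\pi$ turns $\pi^{(p-1)u}$ into $p^u$ up to sign; collecting these signs and using that $f(p-1)-s(a)-s(b)+s(c)$ is even yields exactly $U=(-1)^{f-1+u}\prod_j a_j!b_j!\big/\prod_j(a+b)_j!\in\Zp^\times$ and $\val(C)>u$.

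\textbf{Main obstacle.} The substance is just the input from \cite{La}; the only real work is the sign bookkeeping in the last step. One must correctly assemble the three factors $-1$ from the leading terms of the three Gauss sums, the $(-1)$-contributions coming from each Wilson congruence $(p-1-k)!\equiv(-1)^{k+1}(k!)^{-1}$, the factor $(-1)^u$ from $\pi^{p-1}\equiv-p$, and (if the Hasse--Davenport route is used for $\Fq$) the factors $(-1)^{f-1}$ per digit, into the single exponent $f-1+u\bmod 2$, using the parity of $f(p-1)-s(a)-s(b)+s(c)$. The combinatorial identity relating $s(a)+s(b)-s(c)$ to $\sum_j\big(p-1-(a_j+b_j-(a+b)_j)\big)$ via base-$p$ carries is routine and I would not belabour it.
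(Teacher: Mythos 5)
The paper does not prove this theorem: it is stated as a citation to Lang's \emph{Cyclotomic Fields} \cite{La}, and the subsequent sections simply use it as a black box. Your proposal correctly identifies the content of the cited result and reconstructs the classical derivation (Jacobi sum as a ratio of Gauss sums, Stickelberger's congruence for Gauss sums, then Wilson's theorem to convert the factorials of complementary digits). This is essentially the argument one finds in \cite{La}, so the approach is the intended one and there is no genuinely different route to compare.

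One small slip in the sign bookkeeping, which you explicitly decline to carry through in full: from $p = \prod_{k=1}^{p-1}(1-\zeta_p^k)$ and $(1-\zeta_p^k)/(1-\zeta_p) \equiv k \pmod{\pi}$ one gets $p = \pi^{p-1}(p-1)!\,(\mathrm{unit}\equiv 1)$, hence $\pi^{p-1} = -p\,v$ with $v\equiv 1\pmod{\pi}$, not $v\equiv -1$ as you wrote. Consequently $\pi^{(p-1)u} \equiv (-1)^u p^u\pmod{p^u\pi}$, and it is this extra $(-1)^u$, combined with the three leading $(-1)$'s from the Gauss sums and the $(-1)^{(p-1)u - fp}\equiv(-1)^f$ coming from the $3f$ Wilson congruences, that assembles into the exponent $f-1+u$: one finds $J \equiv -\,(-1)^u\,(-1)^f\,p^u\prod_j a_j!b_j!\big/\prod_j(a+b)_j! = (-1)^{f-1+u}\,p^u\prod_j a_j!b_j!\big/\prod_j(a+b)_j! \pmod{p^u\pi}$. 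Your claim that the quantity $f(p-1)-s(a)-s(b)+s(c)$ is even is correct (it equals $f(p-1)-(p-1)u$), but it is not what produces the $(-1)^u$; that sign comes from $\pi^{p-1}\equiv -p$ directly. With this correction the computation closes as stated, and the proposal is otherwise a faithful account of the argument.
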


Si $\chi,\chi':L^{\times}\rightarrow E^{\times}$ sont deux caract\`eres multiplicatifs localement constants, on note $\chi'\otimes\chi: \B\rightarrow E^{\times}$ le caract\`ere :
\begin{eqnarray*}
\begin{pmatrix} a & b\\ 0 & d\end{pmatrix}\mapsto \chi'(a)\chi(d)
\end{eqnarray*}
et $\Ind_{\B}^{\G}\!\chi'\otimes\chi$ le $E$-espace vectoriel des fonctions localement constantes $f:\G\rightarrow E$ telles que :
$$f(bg)=(\chi'\otimes\chi)(b)f(g)$$
($b\in \B$, $g\in \G$) que l'on munit de l'action \`a gauche de $\G$ par translation \`a droite sur les fonctions.

\begin{thm}\label{main}
Soit $\rhobar$ r\'eductible g\'en\'erique, $J\subseteq {\mathcal S}$ et $\M$ un $\oE$-module fortement divisible de type $J$ tel que $\rhobar\simeq {\rm Hom}_{{\rm Fil}^1,\varphi_1}(\Mbar,\widehat A_{\rm cris}\otimes_{\Zp}\Fp)^{\vee}(1)$. Soit $G$ le groupe $p$-divisible avec donn\'ee de descente correspondant \`a $\M$, $D$ le module de Dieudonn\'e contravariant associ\'e \`a $G$ et $V$ (resp. $V'$) la valeur propre de $\varphi^f$ sur la partie $\eta(J)$-isotypique (resp. $\eta'(J)$-isotypique) de $D$. Consid\'erons la s\'erie principale mod\'er\'ement ramifi\'ee : 
$$\pi_p\=\Ind_{\B}^{\G}\!\eta'(J)\nr(V')\norm \otimes \eta(J)\nr(V)$$
o\`u $\eta(J)$ et $\eta'(J)$ sont vus comme des caract\`eres de $L^{\times}$ en envoyant $p$ et $1+p\oL$ vers $1$ et soit $\widehat v\in \pi_p^{\II}$ non nul sur lequel $\I$ agit par $\eta'(J)\otimes \eta(J)$ (un tel vecteur existe). Si $J\notin \{\emptyset, {\mathcal S}\}$, il existe un unique \'el\'ement $\widehat x(J)\in \oE^{\times}$ tel que dans $\pi_p$ :
$$\sum_{s\in \Fq}\!\bigg(\!\prod_{\sigma\in J}[\sigma(s)]^{p-1-r_{\sigma}}\!\bigg)\!\begin{pmatrix}[s] & 1\\ 1 & 0\end{pmatrix}\begin{pmatrix}0 & 1\\ p & 0\end{pmatrix}\!\widehat v = \widehat x(J)\!\!\sum_{s\in \Fq}\!\bigg(\!\prod_{\sigma\notin J}[\sigma(s)]^{p-1-r_{\sigma}}\!\bigg)\!\begin{pmatrix}[s] & 1\\ 1 & 0\end{pmatrix}\!\widehat v.$$
De plus, la r\'eduction de $\widehat x(J)$ dans $k_E$ est :
\begin{eqnarray}\label{reductionbis}
-\theta(-1)\Big(\prod_{\sigma\in J}\alpha_{\sigma}\prod_{\sigma\notin J}\beta_{\sigma}\Big)^{-1}\frac{\displaystyle{\prod_{\stackrel{\sigma\in J}{ \sigma\circ\varphi^{-1}\notin J}}}\!\!x_{\sigma}(r_{\sigma}+1)}{\displaystyle{\prod_{\stackrel{\sigma\notin J}{ \sigma\circ\varphi^{-1}\in J}}}\!\!x_{\sigma}(r_{\sigma}+1)}
\end{eqnarray}
avec $(\alpha_{\sigma})_{\sigma\in {\mathcal S}}$, $(\beta_{\sigma})_{\sigma\in {\mathcal S}}$ et $(x_{\sigma})_{\sigma\in {\mathcal S}}$ comme en (\ref{norm}).
\end{thm}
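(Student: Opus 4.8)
\textit{Plan of proof.} The starting point is the structure of $\pi_p$ as an induced representation. Since the hypotheses on $(r_\sigma)$ give $\eta(J)\neq\eta'(J)$, the ratio of the two inducing characters of $L^\times$ is ramified, so $\pi_p$ is irreducible and $\pi_p^{\II}$ is two-dimensional; the quotient $\I/\II\cong\Fq^\times\times\Fq^\times$ acts on it through the two \emph{distinct} characters $\eta'(J)\otimes\eta(J)$ and $\eta(J)\otimes\eta'(J)$, so each eigenline is one-dimensional. This yields $\widehat v$, which I normalize to be $\oE$-valued, supported on $\B\I$, with $\widehat v(1)=1$; one checks that $\widehat v':=\sum_{s\in\Fq}\smat{[s]&1\\1&0}\widehat v$ spans the other eigenline, is supported on $\B w\I$ (with $w=\smat{0&1\\1&0}$) and satisfies $\widehat v'(w)=1$. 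Using $\smat{[s]&1\\1&0}\smat{0&1\\p&0}=\smat{1&[s]\\0&1}\smat{p&0\\0&1}$ and the fact that $\smat{0&1\\p&0}$ normalizes $\II$ and $\I$, the vector $\smat{0&1\\p&0}\widehat v$ again lies in $\pi_p^{\II}$, in the $\eta(J)\otimes\eta'(J)$-eigenline; evaluating at $w$ gives $\smat{0&1\\p&0}\widehat v=(V'/q)\,\widehat v'$, since $\widehat v\bigl(w\smat{0&1\\p&0}\bigr)=\widehat v\bigl(\smat{p&0\\0&1}\bigr)=\bigl(\eta'(J)\nr(V')\norm\bigr)(p)=V'/q$.

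Next, existence and uniqueness of $\widehat x(J)$. Both sides of the asserted identity lie in $W:=\pi_p^{1+p\,\mathrm{M}_2(\oL)}$, which as a $\GL_2(\Fq)$-representation is the principal series induced from $\overline\eta'(J)\otimes\overline\eta(J)$. Pushing the torus $\{\smat{[u]&0\\0&[u']}\}$ through the operators $\sum_s(\cdots)\smat{[s]&1\\1&0}$ by the M\"obius substitution $s\mapsto us/u'$ shows that both sides are eigenvectors for this torus with the \emph{same} eigencharacter --- namely the reduction of $(u,u')\mapsto\bigl(\theta\prod_\sigma[\sigma]^{r_\sigma}\bigr)(u)\cdot\bigl(\theta\prod_\sigma[\sigma]^{p-1}\bigr)(u')$ --- and that both are supported on the big Bruhat cell of $W$. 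The hypotheses $r_\sigma\le p-3$, $(r_\sigma)\neq(0,\dots,0),(p-3,\dots,p-3)$ and $J\notin\{\emptyset,\mathcal S\}$ force this character to differ from $\overline\eta'(J)\otimes\overline\eta(J)$ and from its $w$-conjugate, so the associated torus-eigenspace of $W$ supported on the big cell is one-dimensional. Hence both sides lie on a common line; as the right-hand side is nonzero (its value at $\smat{0&1\\1&-[s]}$ is $\prod_{\sigma\notin J}[\sigma(s)]^{p-1-r_\sigma}$), there is a unique $\widehat x(J)\in E$ satisfying the identity.

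To compute $\widehat x(J)$, evaluate both sides at $\smat{0&1\\1&-[s_0]}$ for some $s_0\neq0$, insert the Atkin--Lehner relation of the first paragraph, and substitute $s=s_0u$: the factor $\prod_{\sigma\notin J}[\sigma(s_0)]^{p-1-r_\sigma}$ cancels and, using Lemma \ref{csigma},
$$\widehat x(J)=\frac{V'}{q}\,\eta'(J)(-1)\sum_{u\in\Fq}\Bigl(\prod_{\sigma\in J}[\sigma(u)]^{p-1-r_\sigma}\Bigr)\Bigl(\prod_{\sigma\in\mathcal S}[\sigma(u-1)]^{c_\sigma}\Bigr).$$
Collecting the products over $\sigma$ into powers of $[\sigma_0(\cdot)]$ identifies this sum (up to the sign $(-1)^{\sum_\sigma c_\sigma}$, which together with $\eta'(J)(-1)$ gives $\eta(J)(-1)$) with the Gauss--Jacobi sum $\sum_s[\sigma_0(s)]^a[1-\sigma_0(s)]^b$ of Theorem \ref{stick}, where $a=\sum_{\sigma\in J}(p-1-r_\sigma)p^{j}$ and $b=\sum_\sigma c_\sigma p^{j}$ (writing $\sigma=\sigma_0\circ\varphi^{j}$). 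The key point is that the base-$p$ addition $a+b$ carries out of position $j$ exactly when $\sigma_0\circ\varphi^j\in J$ --- here one uses $r_\sigma\le p-3$, so that a carry cannot escape a ``block'' of $J$ --- whence the valuation $u_0$ in Theorem \ref{stick} equals $|\mathcal S\setminus J|$; combined with $\val(V'/q)=|J|-f$ and $\overline{V'}=p^{|J|}\overline{\alpha'}$ from Theorem \ref{thm_reduction}, this already shows $\widehat x(J)\in\oE^\times$. Finally the carry analysis gives $(a+b)_j=0$ for $\sigma_0\circ\varphi^j\in J$ and $(a+b)_j=p-1-r_j$ otherwise, so Stickelberger's leading coefficient $(-1)^{f-1+u_0}\prod_j a_j!\,b_j!/(a+b)_j!$ reduces mod $p$, via Wilson's congruence $(p-1-r)!\equiv(-1)^{r+1}/r!$, to $\overline{\alpha'}^{\,-1}$ times a sign times $\prod_{\sigma\in F(J)}(r_\sigma+1)^{\pm1}$; assembling all signs ($\theta(-1)$, the $(-1)^{|F(J)|/2}$ of (\ref{reduction}), the $(-1)^{\sum_\sigma c_\sigma}$ and the overall $(-1)^{f-1+u_0+|J|}=-1$) produces exactly (\ref{reductionbis}).

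The main obstacle is this last paragraph: one must check that the carry pattern of $a+b$ depends only on $J$ and not on the $r_\sigma$, that the product of Stickelberger factorials telescopes to $\prod_{\stackrel{\sigma\in J}{\sigma\circ\varphi^{-1}\notin J}}(r_\sigma+1)\big/\prod_{\stackrel{\sigma\notin J}{\sigma\circ\varphi^{-1}\in J}}(r_\sigma+1)$, and that the powers of $q$ and the prime powers $p^{|J|}$, $p^{|\mathcal S\setminus J|}$, together with all the signs, cancel to leave a unit with the stated reduction. Everything else is representation theory of $\pi_p$ together with the two input theorems \ref{thm_reduction} and \ref{stick}.
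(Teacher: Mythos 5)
Your proposal is correct and follows essentially the same path as the paper: both rest on the Atkin--Lehner identity $\smat{0&1\\p&0}\widehat v=(V'/q)\sum_t\smat{[t]&1\\1&0}\widehat v$, both reduce the scalar to the Jacobi sum $T=\sum_t\bigl(\prod_{\sigma\in J}[\sigma(t)]^{p-1-r_\sigma}\bigr)\bigl(\prod_\sigma[\sigma(1-t)]^{c_\sigma}\bigr)$ (your sum over $u$ differs from the paper's $T$ by exactly the $(-1)^{\sum c_\sigma}$ you account for), and both finish with Stickelberger (Theorem \ref{stick}) and Theorem \ref{thm_reduction}; I checked that your signs $\eta'(J)(-1)\cdot(-1)^{\sum_\sigma c_\sigma}$ do reduce to the paper's $[\theta(-1)](-1)^{\sum_{\sigma\in J}r_\sigma}$, using $\sum_\sigma c_\sigma\equiv\sum_{\sigma\in J}r_\sigma-\sum_{\sigma\notin J}r_\sigma\pmod 2$. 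The only organizational difference is that where the paper manipulates the double sum $X=\sum_{s,t}(\cdots)\smat{[s]&1\\1&0}\smat{[t]&1\\1&0}\widehat v$ directly as a vector identity (so existence of $\widehat x(J)$ falls out of the computation), you first establish proportionality by a torus-weight-space dimension count in $W=\pi_p^{1+pM_2(\oL)}$ and then pin down the scalar by evaluating both sides at the single point $\smat{0&1\\1&-[s_0]}$; this is a legitimate shortcut (the genericity hypotheses do force $\overline\eta'(\emptyset)\otimes\overline\eta(\emptyset)$ away from $\overline\eta'(J)\otimes\overline\eta(J)$ and its $w$-conjugate, so the relevant weight space is one-dimensional), but it buys nothing over the paper's more direct computation, and the final Stickelberger bookkeeping is identical.
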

\begin{proof}
Notons que la s\'erie principale $\pi_p$ correspond \`a la repr\'esenta\-tion de Weil associ\'ee \`a $D$ muni de la donn\'ee de descente (selon \cite{Fo}) par la correspondance de Langlands locale convenablement normalis\'ee. Un calcul direct et \'el\'ementaire dans $\pi_p$ donne :
$$\begin{pmatrix}0 & 1\\ p & 0\end{pmatrix}\widehat v = \frac{1}{q}V'\sum_{t\in \Fq}\begin{pmatrix}[t] & 1\\ 1 & 0\end{pmatrix}\widehat v$$
ce qui am\`ene \`a calculer $X\=\sum_{s,t\in \Fq}\!\Big(\!\prod_{\sigma\in J}[\sigma(s)]^{p-1-r_{\sigma}}\!\Big)\!\begin{pmatrix}[s] & 1\\ 1 & 0\end{pmatrix}\begin{pmatrix}[t] & 1\\ 1 & 0\end{pmatrix}\widehat v$. On a :
\begin{eqnarray}
\begin{pmatrix}[s] & 1\\ 1 & 0\end{pmatrix}\begin{pmatrix}[t] & 1\\ 1 & 0\end{pmatrix}&=&\begin{pmatrix}1 & [s]\\ 0 & 1\end{pmatrix}\ \ {\rm si}\ \ t= 0\\
\nonumber&=&\begin{pmatrix}[s]+[t^{-1}] & 1\\ 1 & 0\end{pmatrix}\begin{pmatrix}[t] & 1\\ 0 & -[t^{-1}]\end{pmatrix}\ \ {\rm si}\ \ t\ne 0
\end{eqnarray}
d'o\`u :
\begin{multline*}
X=\sum_{s\in \Fq}\!\Big(\!\prod_{\sigma\in J}[\sigma(s)]^{p-1-r_{\sigma}}\!\Big)\!\begin{pmatrix}1 & [s]\\ 0 & 1\end{pmatrix}\widehat v \\
+ \sum_{s\in \Fq,t\in \Fq^{\times}}\!\Big(\!\prod_{\sigma\in J}[\sigma(s)]^{p-1-r_{\sigma}}\!\Big)\!\begin{pmatrix}[s]+[t] & 1\\ 1 & 0\end{pmatrix}\begin{pmatrix}[t^{-1}] & 1\\ 0 & -[t]\end{pmatrix}\widehat v.
\end{multline*}
Comme $\begin{pmatrix}1 & [s]\\ 0 & 1\end{pmatrix}\widehat v=\widehat v$ et comme $\sum_{s\in \Fq}\prod_{\sigma\in J}[\sigma(s)]^{p-1-r_{\sigma}}=0$ (car $J\ne\emptyset$), la premi\`ere somme dans $X$ est nulle. Par (\ref{type}) et la d\'efinition des $c_{\sigma}$ (lemme \ref{csigma}), on a : 
$$\begin{pmatrix}[t^{-1}] & 1\\ 0 & -[t]\end{pmatrix}\widehat v=[\theta(-1)](-1)^{\sum_{\sigma\in J}r_{\sigma}}\Big(\prod_{\sigma\in {\mathcal S}}[\sigma(t)]^{c_{\sigma}}\Big)\widehat v$$
et la deuxi\`eme somme dans $X$ se r\'ecrit avec un changement de variables \'evident (en utilisant encore que $\widehat v$ est invariant sous $\II$) :
\begin{eqnarray*}
X\!\!&=&\!\![\theta(-1)](-1)^{\sum_{\sigma\in J}r_{\sigma}}\!\!\!\sum_{s,t\in \Fq}\!\Big(\!\prod_{\sigma\in J}[\sigma(s)]^{p-1-r_{\sigma}}\!\Big)\!\Big(\!\prod_{\sigma\in {\mathcal S}}[\sigma(t)]^{c_{\sigma}}\!\Big)\begin{pmatrix}[s+t] & 1\\ 1 & 0\end{pmatrix}\widehat v\\
\!\!&=&\!\![\theta(-1)](-1)^{\sum_{\sigma\in J}r_{\sigma}}\!\!\sum_{s\in \Fq}\!\!\Bigg(\!\sum_{t\in \Fq}\Big(\!\prod_{\sigma\in J}[\sigma(t)]^{p-1-r_{\sigma}}\!\Big)\!\Big(\!\prod_{\sigma\in {\mathcal S}}[\sigma(s-t)]^{c_{\sigma}}\!\Big)\Bigg)\begin{pmatrix}[s] & 1\\ 1 & 0\end{pmatrix}\widehat v.
\end{eqnarray*}
Comme $J\ne {\mathcal S}$, on a $\sum_{t\in \Fq}\Big(\!\prod_{\sigma\in J}[\sigma(t)]^{p-1-r_{\sigma}}\!\Big)\!\Big(\!\prod_{\sigma\in {\mathcal S}}[\sigma(-t)]^{c_{\sigma}}\Big)=0$, d'o\`u :
\begin{multline*}
X=[\theta(-1)](-1)^{\sum_{\sigma\in J}r_{\sigma}}\sum_{s\in \Fq}\Bigg[\Big(\prod_{\sigma\in J}[\sigma(s)]^{p-1-r_{\sigma}}\Big)\Big(\prod_{\sigma\in {\mathcal S}}[\sigma(s)]^{c_{\sigma}}\Big)\\
\Bigg(\sum_{t\in \Fq}\Big(\prod_{\sigma\in J}[\sigma(t)]^{p-1-r_{\sigma}}\Big)\Big(\prod_{\sigma\in {\mathcal S}}[\sigma(1-t)]^{c_{\sigma}}\Big)\Bigg)\Bigg]\begin{pmatrix}[s] & 1\\ 1 & 0\end{pmatrix}\widehat v
\end{multline*}
soit en utilisant le lemme \ref{csigma} :
$$X=[\theta(-1)](-1)^{\sum_{\sigma\in J}r_{\sigma}}T\sum_{s\in \Fq}\Big(\prod_{\sigma\notin J}[\sigma(s)]^{p-1-r_{\sigma}}\Big)\begin{pmatrix}[s] & 1\\ 1 & 0\end{pmatrix}\widehat v$$
o\`u $T\=\sum_{t\in \Fq}\Big(\!\prod_{\sigma\in J}[\sigma(t)]^{p-1-r_{\sigma}}\!\Big)\!\Big(\!\prod_{\sigma\in {\mathcal S}}[\sigma(1-t)]^{c_{\sigma}}\!\Big)$. On a donc :
\begin{equation}\label{xhat}
\widehat x(J)=[\theta(-1)](-1)^{\sum_{\sigma\in J}r_{\sigma}}\frac{1}{p^f}V'T.
\end{equation}
Comme $J\notin \{\emptyset,{\mathcal S}\}$, \ on \ peut \ appliquer \ le \ th\'eor\`eme \ \ref{stick} \ \`a \ $T$ (avec $a\=\sum_{\sigma_0\circ\varphi^j\in J}p^j(p-1-r_{\sigma_0\circ\varphi^j})$ et $b\=\sum_{\sigma_0\circ\varphi^j\in {\mathcal S}}p^jc_{\sigma_0\circ\varphi^j}$). Un calcul \'el\'ementaire utilisant le lemme \ref{csigma} donne :
$$\begin{array}{llcll}
(a+b)_{j}&=&0&{\rm si}&\sigma_0\circ\varphi^j\in J\\
(a+b)_{j}&=&p-1-r_{\sigma_0\circ\varphi^j}&{\rm si}&\sigma_0\circ\varphi^j\notin J
\end{array}$$
d'o\`u on d\'eduit :
$$a_j+b_j-(a+b)_j=\left\{\begin{array}{llcll}
0&{\rm si}&\sigma_0\circ\varphi^j\notin J&{\rm et}&\sigma_0\circ\varphi^{j-1}\notin J\\
-1&{\rm si}&\sigma_0\circ\varphi^j\notin J&{\rm et}&\sigma_0\circ\varphi^{j-1}\in J\\
p-1&{\rm si}&\sigma_0\circ\varphi^j\in J&{\rm et}&\sigma_0\circ\varphi^{j-1}\in J\\
p&{\rm si}&\sigma_0\circ\varphi^j\in J&{\rm et}&\sigma_0\circ\varphi^{j-1}\notin J.
\end{array}\right.$$
On a ainsi :
\begin{eqnarray*}
u=\frac{1}{p-1}\Bigg(\sum_{\stackrel{\sigma_0\circ\varphi^j\notin J}{\sigma_0\circ\varphi^{j-1}\notin J}}\!\!\!p-1\ +\!\!\!\sum_{\stackrel{\sigma_0\circ\varphi^j\notin J}{\sigma_0\circ\varphi^{j-1}\in J}}\!\!\!p\ -\!\!\!\sum_{\stackrel{\sigma_0\circ\varphi^j\in J}{\sigma_0\circ\varphi^{j-1}\notin J}}\!\!\!1\Bigg)=\vert {\mathcal S}\backslash J\vert
\end{eqnarray*}
et :
\begin{eqnarray*}
U=(-1)^{\vert J\vert +1}\ \frac{\displaystyle{\prod_{\sigma\in J}}(p-1-r_{\sigma})!\displaystyle{\prod_{\sigma\in {\mathcal S}}}c_{\sigma}!}{\displaystyle{\prod_{\sigma\notin J}}(p-1-r_{\sigma})!}.
\end{eqnarray*}
En utilisant $n!(p-1-n)!\equiv (-1)^{n-1}$ modulo $p$ si $n\in \{0,\cdots,p-1\}$ et le lemme \ref{csigma} on obtient :
$$U\equiv (-1)^{1+\frac{\vert F(J)\vert}{2}+\sum_{\sigma\in J}r_{\sigma}}\frac{\displaystyle\prod_{\stackrel{\sigma\in J}{ \sigma\circ\varphi^{-1}\notin J}}r_{\sigma}+1}{\displaystyle\prod_{\stackrel{\sigma\notin J}{ \sigma\circ\varphi^{-1}\in J}}r_{\sigma}+1}\ \ {\rm modulo}\ \ p.$$
Par ailleurs les valeurs propres de $\varphi^f$ sur $D$ sont (avec les notations du \S\ \ref{fl1}) $V=p^{\vert I_{\eta(J)}\vert}\alpha=p^{\vert {\mathcal S}\backslash J\vert}\alpha$ et $V'=p^{\vert I_{\eta'(J)}\vert}\alpha'=p^{\vert J\vert}\alpha'$ (cf. remarque \ref{plus}(ii)). On a donc avec ce qui pr\'ec\`ede et (\ref{xhat}) :
\begin{eqnarray*}
\widehat x(J)&=&[\theta(-1)](-1)^{\sum_{\sigma\in J}r_{\sigma}}\frac{1}{p^f}p^{\vert J\vert}\alpha'(-1)^{1+\frac{\vert F(J)\vert}{2}+\sum_{\sigma\in J}r_{\sigma}}\frac{\displaystyle\prod_{\stackrel{\sigma\in J}{ \sigma\circ\varphi^{-1}\notin J}}r_{\sigma}+1}{\displaystyle\prod_{\stackrel{\sigma\notin J}{ \sigma\circ\varphi^{-1}\in J}}r_{\sigma}+1}p^{\vert {\mathcal S}\backslash J\vert} + \delta\\
&=&[\theta(-1)]\alpha'(-1)^{1+\frac{\vert F(J)\vert}{2}}\frac{\displaystyle\prod_{\stackrel{\sigma\in J}{ \sigma\circ\varphi^{-1}\notin J}}r_{\sigma}+1}{\displaystyle\prod_{\stackrel{\sigma\notin J}{ \sigma\circ\varphi^{-1}\in J}}r_{\sigma}+1}+\delta
\end{eqnarray*}
o\`u $\delta\in \oE$ a une valuation strictement positive. On obtient alors l'\'egalit\'e (\ref{reductionbis}) avec (\ref{reduction}). 
\end{proof}

\begin{rem}\label{apart}
{\rm Si $J={\mathcal S}$, un calcul analogue \`a celui de la preuve du th\'eor\`eme \ref{main} donne (rappelons que $V=p^{\vert {\mathcal S}\backslash J\vert}\alpha$ et $V'=p^{\vert J\vert}\alpha'$) :
\begin{multline*}
\sum_{s\in \Fq}\bigg(\prod_{\sigma\in {\mathcal S}}[\sigma(s)]^{p-1-r_{\sigma}}\bigg)\begin{pmatrix}[s] & 1\\ 1 & 0\end{pmatrix}\begin{pmatrix}0 & 1\\ p & 0\end{pmatrix}\widehat v=-[\theta(-1)]\alpha'\sum_{s\in \Fq}\!\begin{pmatrix}[s] & 1\\ 1 & 0\end{pmatrix}\!\widehat v+\\
[\theta(-1)]q\alpha'\begin{pmatrix}0 & 1\\ 1 & 0\end{pmatrix}\widehat v.
\end{multline*}
On en d\'eduit l'\'equation pour $J=\emptyset$ en rempla\c cant $\smat{0 & 1\\ p & 0}\widehat v$ par $\widehat v$ et en utilisant $\smat{p & 0\\ 0 & p}\widehat v=\alpha\alpha'\widehat v$ et la remarque \ref{plus}(iv) :
\begin{multline*}
\sum_{s\in \Fq}\begin{pmatrix}[s] & 1\\ 1 & 0\end{pmatrix}\begin{pmatrix}0 & 1\\ p & 0\end{pmatrix}\widehat v=-[\theta(-1)]\alpha'\sum_{s\in \Fq}\!\bigg(\prod_{\sigma\in {\mathcal S}}[\sigma(s)]^{p-1-r_{\sigma}}\bigg)\begin{pmatrix}[s] & 1\\ 1 & 0\end{pmatrix}\widehat v+\\
q\begin{pmatrix}p & 0\\ 0 & 1\end{pmatrix}\widehat v.
\end{multline*}}
\end{rem}

\subsection{Valeurs sp\'eciales de param\`etres}\label{spec}

On d\'efinit des param\`etres $x(J)$ sur certaines repr\'esentations lisses de $\G$ sur $k_E$ et on montre comment les r\'esultats des parties pr\'ec\'edentes permettent de ``distinguer'' des valeurs sp\'eciales de ces param\`etres.

On conserve les notations des sections pr\'ec\'edentes, en particulier on fixe une repr\'esentation $\rhobar$ r\'eductible comme au \S\ \ref{prel}. Pour $J\subseteq {\mathcal S}$, on note $\tau(J)$ l'unique poids de Serre tel que l'action de $\I$ sur $\tau(J)^{\II}$ est donn\'ee par le caract\`ere $\overline\eta'(J)\otimes\overline\eta(J)$ (cf. (\ref{type}), notons que $\tau(J)$ d\'epend aussi de $\rhobar$). Un tel poids est unique car $\overline\eta'(J)\ne\overline\eta(J)$. On a par exemple $\tau(\emptyset)=(\otimes_{\sigma\in {\mathcal S}}(\Sym^{r_{\sigma}}k_E^2)^{\sigma})\otimes \theta\circ\det\in {\mathcal D}(\rhobar)$ et $\tau({\mathcal S})=(\otimes_{\sigma\in {\mathcal S}}(\Sym^{p-1-r_{\sigma}}k_E^2)^{\sigma})\otimes \prod_{\sigma\in {\mathcal S}}\sigma\circ\det^{r_{\sigma}}\!\otimes \theta\circ\det$. De plus $\tau({\mathcal S}\backslash J)$ est le socle de la repr\'esentation $\ind_{\I}^{\K}\overline\eta'(J)\otimes\overline\eta(J)$ du lemme \ref{unik}, $\tau(J)$ est son co-socle (\cite[\S\ 2]{BP}) et $\tau(\emptyset)$ un de ses constituants (voir la preuve du lemme \ref{unik} ou celle du lemme \ref{const}(ii) ci-dessous).

La proposition suivante d\'efinit abstraitement les param\`etres qui sont au coeur de cet article.

\begin{prop}\label{xj}
Soit $J\subseteq {\mathcal S}$ et $\pi$ une repr\'esentation lisse de $\G$ sur $k_E$ avec un caract\`ere central et v\'erifiant les propri\'et\'es suivantes :\\
(i) $\tau(\emptyset)$ appara\^\i t dans le $\K$-socle de $\pi$ avec multiplicit\'e $1$\\
(ii) $\tau(\emptyset)$ est le seul constituant commun \`a $\ind_{\I}^{\K}\overline\eta'(J)\otimes\overline\eta(J)$ et au $\K$-socle de $\pi$\\
(iii) $\pi$ contient l'unique quotient de $\ind_{\I}^{\K}\overline\eta'(J)\otimes\overline\eta(J)$ de socle $\tau(\emptyset)$.\\
Alors il existe (\`a multiplication par un scalaire non nul pr\`es) un unique vecteur $v\in \pi^{\II}$ non nul sur lequel $\I$ agisse par $\overline\eta'(J)\otimes\overline\eta(J)$ et un unique \'el\'ement $x(J)\in k_E^{\times}$ tel que l'on ait l'\'egalit\'e dans $\pi$ :
$$\sum_{s\in \Fq}\bigg(\prod_{\sigma\in J}\sigma(s)^{p-1-r_{\sigma}}\!\bigg)\!\begin{pmatrix}[s] & 1\\ 1 & 0\end{pmatrix}\begin{pmatrix}0 & 1\\ p & 0\end{pmatrix}v = x(J)\sum_{s\in \Fq}\bigg(\prod_{\sigma\notin J}\sigma(s)^{p-1-r_{\sigma}}\!\bigg)\!\begin{pmatrix}[s] & 1\\ 1 & 0\end{pmatrix}v.$$
\end{prop}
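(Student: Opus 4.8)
The plan is to treat the two assertions separately: the uniqueness of $v$ up to scalars, and the existence of $x(J)\in k_E^\times$ (its uniqueness being then forced by the non-vanishing of the right-hand side).

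For the uniqueness of $v$, I would argue by Frobenius reciprocity. A nonzero $v\in\pi^{\II}$ on which $\I$ acts by $\overline\eta'(J)\otimes\overline\eta(J)$ is the same datum as a nonzero $\K$-equivariant map $N\rightarrow\pi$, where $N\=\ind_{\I}^{\K}\overline\eta'(J)\otimes\overline\eta(J)$ (the $\K$-action on such a vector factors through $\GL_2(\Fq)$, as $v$ is fixed by $1+pM_2(\oL)$). By \cite[\S\ 2]{BP} the module $N$ is multiplicity-free with an explicitly described lattice of submodules, and it possesses a (unique) quotient $Q$ with irreducible socle $\tau(\emptyset)$, which by (iii) embeds into $\pi$. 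Given a nonzero $\varphi\colon N\rightarrow\pi$, the $\K$-socle of $\mathrm{im}\,\varphi$ embeds into that of $\pi$, so by (ii) and multiplicity-freeness it equals $\tau(\emptyset)$; hence $\mathrm{im}\,\varphi$ is a quotient of $N$ with socle $\tau(\emptyset)$, so $\mathrm{im}\,\varphi=Q$ and $\varphi$ factors as $N\twoheadrightarrow Q\hookrightarrow\pi$. Finally two embeddings $Q\hookrightarrow\pi$ coincide up to a scalar: by (i) there is a single copy of $\tau(\emptyset)$ in $\pi$, so they agree up to a scalar on $\mathrm{soc}\,Q$, and their difference — a map $Q\rightarrow\pi$ killing $\mathrm{soc}\,Q$ — must vanish, for otherwise its image would be a quotient of $Q$ missing $\tau(\emptyset)$ whose socle would provide a constituent of $N$ other than $\tau(\emptyset)$ inside $\mathrm{soc}\,\pi$, against (ii). So $\dim_{k_E}\Hom_{\K}(N,\pi)=1$.

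For $x(J)$, I would first observe that the two sides of the desired identity lie, respectively, in the $\K$-subrepresentations $\langle\K\smat{0&1\\p&0}v\rangle$ and $\langle\K v\rangle$ of $\pi$. The vector $\smat{0&1\\p&0}v$ is again fixed by $\II$ and is an $\I$-eigenvector, now for the character $\overline\eta(J)\otimes\overline\eta'(J)=\overline\eta'(\mathcal S\backslash J)\otimes\overline\eta(\mathcal S\backslash J)$, since $\smat{0&1\\p&0}$ normalizes both $\I$ and $\II$ and swaps the diagonal torus. As $F(\mathcal S\backslash J)=F(J)$, the hypotheses (i)--(iii) also hold for $\mathcal S\backslash J$: (i) is unchanged; (ii) holds because $\ind_{\I}^{\K}\overline\eta'(J)\otimes\overline\eta(J)$ and $\ind_{\I}^{\K}\overline\eta'(\mathcal S\backslash J)\otimes\overline\eta(\mathcal S\backslash J)$ have the same Jordan--Hölder constituents (Weyl-conjugate characters), among which, by Lemma \ref{unik}, only $\tau(\emptyset)$ is a Serre weight of $\rhobar$; and (iii) follows by the argument of the previous paragraph, the map determined by $\smat{0&1\\p&0}v$ having image the (unique) quotient with socle $\tau(\emptyset)$. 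Hence each of the two subrepresentations above is isomorphic to the distinguished quotient of the relevant induced representation, with irreducible socle $\tau(\emptyset)$, that socle being the unique copy $W_0\subseteq\pi$ of $\tau(\emptyset)$. A direct computation with the diagonal torus of $\GL_2(\Fq)$ — the same manipulation as that of the element $X$ in the proof of Theorem \ref{main}, carried out over $k_E$ and using (\ref{type}) and Lemma \ref{csigma} — then shows that both sides are torus-eigenvectors for the character $\overline\eta'(\emptyset)\otimes\overline\eta(\emptyset)$, i.e. for the character by which $\I$ acts on $\tau(\emptyset)^{\II}$.

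The crux will be to show that, inside each of these two subrepresentations, such a torus-eigenvector is forced to lie in the highest-weight line of the socle $W_0$ (equivalently $W_0^{\II}$); this is where the explicit submodule lattice and the weight combinatorics of \cite[\S\ 2]{BP} enter, together with the genericity of the $r_\sigma$ — the point being that although the $\overline\eta'(\emptyset)\otimes\overline\eta(\emptyset)$-weight space of the whole distinguished quotient may be larger, the two vectors at hand are constrained to the part coming from the socle. Granting this, both sides lie in the single line $W_0^{\II}$, hence are proportional; and the right-hand side is nonzero because its preimage $\sum_{s\in\Fq}\big(\prod_{\sigma\notin J}\sigma(s)^{p-1-r_\sigma}\big)\smat{[s]&1\\1&0}f_0$ in $\ind_{\I}^{\K}\overline\eta'(J)\otimes\overline\eta(J)$ (with $f_0$ the standard generator) is a nonzero combination of basis functions supported on pairwise distinct cosets, and, again by \cite[\S\ 2]{BP}, generates the submodule with co-socle $\tau(\emptyset)$, whose kernel to the distinguished quotient is its radical, so the image generates $\mathrm{soc}\,Q=\tau(\emptyset)$. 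This yields $x(J)\in k_E^\times$, unique. The main obstacle is thus precisely this fine structural analysis of the induced representations $\ind_{\I}^{\K}\overline\eta'(J)\otimes\overline\eta(J)$ furnished by \cite[\S\ 2]{BP}; the Frobenius-reciprocity bookkeeping and the symmetrization by $\smat{0&1\\p&0}$ are formal.
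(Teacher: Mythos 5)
Your architecture matches the paper's: uniqueness of $v$ via Frobenius reciprocity and the factorization through the distinguished quotient with socle $\tau(\emptyset)$; then $\smat{0&1\\p&0}v$ is again $\II$-fixed, is an $\I$-eigenvector for the swapped character, and the subrepresentation it generates is a quotient of $\ind_{\I}^{\K}\overline\eta(J)\otimes\overline\eta'(J)$, which by (ii) (same Jordan--H\"older constituents in reverse order) and (i) also factors through its distinguished quotient, with the \emph{same} socle $\tau(\emptyset)\subset\pi$.

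Where you write ``granting this,'' the paper has a ready-made tool you should have invoked by name: \cite[Lem.2.7]{BP}. That lemma says precisely that, starting from the $\I$-eigenvector of weight $\overline\eta'\otimes\overline\eta$ in $\ind_{\I}^{\K}\overline\eta'\otimes\overline\eta$, the operator $\sum_{s\in\Fq}\big(\prod_{\sigma\in J'}\sigma(s)^{p-1-r_\sigma}\big)\smat{[s]&1\\1&0}$ lands in (in fact generates) the $\II$-fixed line of the constituent in position $J'$. Applying it with $J'=\mathcal S\backslash J$ to $v$, and with $J'=J$ to $\smat{0&1\\p&0}v$, identifies both Jacobi sums as generators of the one-dimensional space $\tau(\emptyset)^{\II}\subset\pi$ (using Lemma \ref{unik} to know $\tau(\emptyset)$ sits in position $\mathcal S\backslash J$, resp.\ $J$, in the two induced representations). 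Your substitute plan --- compute that both sides are $\I$-eigenvectors of weight $\overline\eta'(\emptyset)\otimes\overline\eta(\emptyset)$ and then argue they are ``constrained to the part coming from the socle'' --- is insufficient as stated, and you acknowledge this yourself: the weight space of the distinguished quotient for that character may be several-dimensional, so knowing the weight does not pin the vector to $W_0^{\II}$. The extra input needed is exactly Lemma 2.7, which tracks not just the $\I$-weight but the precise constituent that the Jacobi-sum operator projects onto. Nonvanishing of the right side then follows since it generates $\tau(\emptyset)^{\II}\neq 0$, so there is no need for your separate coset-support argument. With that citation supplied, the proof is complete and is the paper's proof.
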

\begin{proof}
Par (i), (ii) et (iii), on a un unique (\`a homoth\'etie pr\`es) morphisme non nul $\K$-\'equivariant : 
\begin{eqnarray}\label{unimor}
\ind_{\I}^{\K}\overline\eta'(J)\otimes\overline\eta(J)\longrightarrow \pi
\end{eqnarray}
et ce morphisme se factorise par l'unique quotient de l'induite de socle $\tau(\emptyset)$. Il envoie de plus l'unique vecteur non nul de $\big(\ind_{\I}^{\K}\overline\eta'(J)\otimes\overline\eta(J)\big)^{\II}$ sur lequel $\I$ agit par $\overline\eta'(J)\otimes\overline\eta(J)$ vers un vecteur non nul $v\in \pi$ avec la m\^eme propri\'et\'e. L'unicit\'e d'un $v\in \pi$ comme dans l'\'enonc\'e r\'esulte par r\'eciprocit\'e de Frobenius de l'unicit\'e du morphisme (\ref{unimor}). Comme $\tau(\emptyset)$ est en ``position ${\mathcal S}\backslash J$'' dans l'induite $\ind_{\I}^{\K}\overline\eta'(J)\otimes\overline\eta(J)$, par \cite[Lem.2.7]{BP} le vecteur :
$$\sum_{s\in \Fq}\Big(\prod_{\sigma\in {\mathcal S}\backslash J}\sigma(s)^{p-1-r_{\sigma}}\!\Big)\smat{[s] & 1\\ 1 & 0}v$$
est un g\'en\'erateur de $\tau(\emptyset)^{\II}$ dans $\pi$. Com\-me $\I$ agit sur $\smat{0 & 1\\ p & 0}v\in \pi^{\II}$ par $\overline\eta(J)\otimes\overline\eta'(J)$, on en d\'eduit par r\'eciprocit\'e de Frobenius une surjection canonique :
$$\ind_{\I}^{\K}\overline\eta(J)\otimes\overline\eta'(J)\twoheadrightarrow \left\langle \K\smat{0 & 1\\ p & 0}v\right\rangle\subset \pi$$
vers la sous-$\K$-repr\'esentation de $\pi$ engendr\'ee par $\smat{0 & 1\\ p & 0}v$. Comme les constituants de $\ind_{\I}^{\K}\overline\eta'(J)\otimes\overline\eta(J)$ et de $\ind_{\I}^{\K}\overline\eta(J)\otimes\overline\eta'(J)$ sont les m\^emes (mais dans l'``ordre inverse''), par (ii) cette surjection se factorise n\'ecessairement par l'unique quotient de l'induite de socle $\tau(\emptyset)$. Par (i), ce socle est aussi celui de l'image du morphisme (\ref{unimor}). Par le m\^eme raisonnement que pr\'ec\'edemment (en rempla\c cant $\ind_{\I}^{\K}\overline\eta'(J)\otimes\overline\eta(J)$ par $\ind_{\I}^{\K}\overline\eta(J)\otimes\overline\eta'(J)$ et $v$ par $\smat{0 & 1\\ p & 0}v$) le vecteur :
$$\sum_{s\in \Fq}\Big(\prod_{\sigma\in J}\sigma(s)^{p-1-r_{\sigma}}\!\Big)\smat{[s] & 1\\ 1 & 0}\smat{0 & 1\\ p & 0}v$$
est donc aussi un g\'en\'erateur de $\tau(\emptyset)^{\II}$ dans $\pi$. Comme $\dim_{k_E}\tau(\emptyset)^{\II}=1$, il doit exister un scalaire non nul $x(J)$ comme dans l'\'enonc\'e.
\end{proof}

Notons que si $J$ et $\pi$ v\'erifient les hypoth\`eses de la proposition \ref{xj}, alors ${\mathcal S}\backslash J$ et $\pi$ les v\'erifient aussi et on a la relation $x(J)x({\mathcal S}\backslash J)=\omega_{\pi}(p)$ o\`u $\omega_{\pi}$ est le caract\`ere central de $\pi$ (cela vient du fait que $\smat{0 & 1\\p & 0}\smat{0 & 1\\p & 0}$ agit par la multiplication par $\omega_{\pi}(p)$).

Bien que nous n'en ayons pas strictement besoin pour les applications globales, il est \'eclairant de replacer la proposition \ref{xj} dans le contexte de \cite{BP}. 

On rappelle que $\rhobar$ est r\'eductible g\'en\'erique et que ${\mathcal D}(\rhobar)$ d\'esigne l'ensemble de ses poids de Serre (cf. \S\ \ref{prel}). Soit $D(\rhobar)$ la repr\'esentation maximale (pour l'inclusion) de $\GL_2(\Fq)$ sur $k_E$ v\'erifiant les deux conditions :\\
(i) le socle de $D(\rhobar)$ est $\oplus_{\tau\in {\mathcal D}(\rhobar)}\tau$\\
(ii) les constituants de ce socle n'apparaissent pas ailleurs dans $D(\rhobar)$.\\
On peut montrer qu'une telle repr\'esentation existe (cf. \cite[Prop.13.1]{BP}). De plus, $D(\rhobar)$ est alors de la forme $\oplus_{\tau\in {\mathcal D}(\rhobar)}D_{\tau}(\rhobar)$ o\`u $D_{\tau}(\rhobar)$ est l'unique facteur direct de $D(\rhobar)$ de socle $\tau$ et les caract\`eres de $\I$ qui apparaissent sur $D(\rhobar)^{\II}$ sont tous distincts (i.e. apparaissent avec multiplicit\'e $1$).

\begin{lem}\label{const}
Soit $J\subseteq {\mathcal S}$.\\
(i) Le poids de Serre $\tau(J)$ est un constituant de $D(\rhobar)$ (rappelons que $\tau(J)$ d\'epend de $J$ {\it et} $\rhobar$).\\
(ii) Le poids de Serre $\tau(J)$ est un constituant de $D_{\tau(\emptyset)}(\rhobar)$ si et seulement si $Z(\rhobar)\cap \{\sigma\in {\mathcal S}, \sigma\notin J, \sigma\circ\varphi^{-1}\in J\}=\emptyset$.
\end{lem}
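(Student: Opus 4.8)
On d\'eduit les deux assertions de la description explicite de $D(\rhobar)=\oplus_{\tau\in{\mathcal D}(\rhobar)}D_\tau(\rhobar)$ et de ses constituants donn\'ee dans \cite[\S\ 13]{BP}, combin\'ee \`a l'\'etude de la s\'erie principale $R_J\=\ind_{\I}^{\K}\overline\eta'(J)\otimes\overline\eta(J)$ men\'ee dans la preuve du lemme \ref{unik}. Par \cite[\S\ 2]{BP}, les constituants de $R_J$ sont (g\'en\'eriquement) index\'es par les parties $K\subseteq{\mathcal S}$, et le treillis de ses sous-modules est bool\'een : notant $\tau_K$ le constituant en ``position $K$'', on a $\tau_\emptyset=\tau({\mathcal S}\backslash J)$ pour socle, $\tau_{\mathcal S}=\tau(J)$ pour co-socle, et $\tau_{{\mathcal S}\backslash J}=\tau(\emptyset)$ (preuve du lemme \ref{unik}).

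Pour (i) : fixons $J$. Comme $\{K\subseteq{\mathcal S}:K\not\supseteq{\mathcal S}\backslash J\}$ est un id\'eal d'ordre, il existe un sous-module $N\subseteq R_J$ de constituants les $\tau_K$, $K\not\supseteq{\mathcal S}\backslash J$ ; le quotient $R_J/N$ a pour socle $\tau(\emptyset)$, pour co-socle $\tau(J)$, et ses autres constituants sont les $\tau_K$ avec $K\supsetneq{\mathcal S}\backslash J$. Si $Z(\rhobar)\cap F(J)=\emptyset$, le lemme \ref{unik} montre qu'aucun de ces derniers n'appartient \`a ${\mathcal D}(\rhobar)$ ; en plongeant $R_J/N$ dans l'enveloppe injective de $\tau(\emptyset)$ et en invoquant la maximalit\'e de $D(\rhobar)$ relativement aux conditions (i)--(ii) de sa d\'efinition, on obtient $R_J/N\hookrightarrow D_{\tau(\emptyset)}(\rhobar)$, d'o\`u $\tau(J)$ constituant de $D(\rhobar)$. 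Dans le cas g\'en\'eral, le m\^eme argument appliqu\'e \`a un sous-quotient convenable de $R_{J'}$ (pour un $J'$ bien choisi) --- ou, plus directement, la liste des constituants de $D(\rhobar)$ explicit\'ee dans \cite[\S\ 13]{BP} --- montre encore que $\tau(J)$ est constituant de $D(\rhobar)$, et alors d'un unique facteur $D_{\tau'}(\rhobar)$ par la propri\'et\'e de multiplicit\'e un de $D(\rhobar)^{\II}$.

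Pour (ii) : il reste \`a montrer que ce facteur $\tau'$ est $\tau(\emptyset)$ si et seulement si $Z(\rhobar)\cap\{\sigma\in{\mathcal S}:\sigma\notin J,\ \sigma\circ\varphi^{-1}\in J\}=\emptyset$. C'est l\`a qu'intervient la combinatoire dissym\'etrique de \cite[\S\ 4]{Br} (et \cite[\S\ 4]{BP}) : \'ecrivant $\tau(J)$ via le $f$-uplet qui lui est attach\'e (cf. (\ref{lambda}), lemme \ref{csigma}), on traduit, comme dans la preuve du lemme \ref{unik} mais en ne demandant plus que de pouvoir atteindre $\tau(J)$ \emph{depuis} $\tau(\emptyset)$ dans l'enveloppe injective de $\tau(\emptyset)$ (et non la condition bilat\'erale $J^{\rm min}=J^{\rm max}$), la pr\'esence de $\tau(J)$ dans $D_{\tau(\emptyset)}(\rhobar)$. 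Concr\`etement, si $Z(\rhobar)$ rencontre $\{\sigma\notin J,\sigma\circ\varphi^{-1}\in J\}$, tout chemin de $\tau(\emptyset)$ \`a $\tau(J)$ dans cette enveloppe injective traverse un autre poids de ${\mathcal D}(\rhobar)$ (un poids de l'``intervalle'' $[J^{\rm min},J^{\rm max}]$ de \cite[Prop.4.3]{Br}), ce que la condition (ii) de la d\'efinition de $D(\rhobar)$ interdit : donc $\tau(J)\notin D_{\tau(\emptyset)}(\rhobar)$, et (i) le place dans un autre facteur ; sinon, l'argument de (i) donne $R_J/N\hookrightarrow D_{\tau(\emptyset)}(\rhobar)$, donc $\tau(J)$ constituant de $D_{\tau(\emptyset)}(\rhobar)$. (On notera que la condition obtenue co\"\i ncide avec celle du lemme \ref{joli} pour le m\^eme $J$.)

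Le principal obstacle est le calcul combinatoire de (ii) : faire correspondre l'indexation par parties de \cite[\S\ 2]{BP} et le formalisme des $f$-uplets et des $J^{\rm min},J^{\rm max}$ de \cite[\S\ 4]{Br} \`a la param\'etrisation par $J$, v\'erifier que la condition d'accessibilit\'e se ram\`ene \`a $Z(\rhobar)\cap\{\sigma\notin J,\sigma\circ\varphi^{-1}\in J\}=\emptyset$ (et non \`a la condition bilat\'erale $Z(\rhobar)\cap F(J)=\emptyset$ ou \`a une autre condition voisine), et justifier, via la multiplicit\'e un et (i), que les constituants de $D(\rhobar)$ sont exactement les poids de ${\mathcal D}(\rhobar)$ et les $\tau(J)$, $J\subseteq{\mathcal S}$.
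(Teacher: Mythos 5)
Ton cadre est le bon et co\"{\i}ncide avec celui du papier : tu travailles dans $R_J=\ind_{\I}^{\K}\overline\eta'(J)\otimes\overline\eta(J)$, tu utilises le treillis bool\'een de ses constituants (\cite[\S\ 2]{BP}), l'intervalle $[J^{\rm min},J^{\rm max}]$ de \cite[Prop.4.3]{Br}, et la maximalit\'e de $D(\rhobar)$. Mais deux points ne sont pas men\'es \`a bout.

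Pour (i), ton argument n'est complet que lorsque $Z(\rhobar)\cap F(J)=\emptyset$ : dans ce cas seul, le quotient $R_J/N$ de socle $\tau(\emptyset)$ n'a pas d'autre constituant dans ${\mathcal D}(\rhobar)$. Pour $J$ g\'en\'eral tu renvoies \`a un \og $J'$ bien choisi \fg\ ou \`a la liste explicite de \cite[\S\ 13]{BP}, sans d\'etailler. La preuve du papier est plus directe et \'evite la disjonction de cas : on prend comme socle du quotient non pas $\tau(\emptyset)$ mais le constituant de $R_J$ qui est dans ${\mathcal D}(\rhobar)$ et qui est maximal dans l'intervalle $[J^{\rm min},J^{\rm max}]$ (donc index\'e par $J^{\rm max}$). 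Ce quotient a toujours co-socle $\tau(J)$ et, par construction de $J^{\rm max}$, aucun autre constituant dans ${\mathcal D}(\rhobar)$, ce qui donne l'inclusion dans $D(\rhobar)$ pour tout $J$.

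Pour (ii), le gap est plus net : c'est pr\'ecis\'ement le calcul combinatoire que tu avoues laisser de c\^ot\'e (\og Le principal obstacle... \fg) qui constitue le contenu de la preuve. Il faut, comme le fait le papier, exprimer que les constituants de $R_J$ dans ${\mathcal D}(\rhobar)$ sont ceux index\'es par les parties comprises entre $\{\sigma_0\circ\varphi^j,j\in J^{\rm min}\}$ et $\{\sigma_0\circ\varphi^j,j\in J^{\rm max}\}$, observer que $\tau(\emptyset)$ est index\'e par ${\mathcal S}\backslash J$, traduire l'appartenance de $\tau(J)$ \`a $D_{\tau(\emptyset)}(\rhobar)$ en l'\'egalit\'e ${\mathcal S}\backslash J=\{\sigma_0\circ\varphi^j,j\in J^{\rm max}\}$, puis d\'eplier la d\'efinition de $J^{\rm max}$ (le cas $\lambda_{j+1}(x_{j+1})=p-2-x_{j+1}$, c'est-\`a-dire $\sigma_0\circ\varphi^{j+1}\notin J$, $\sigma_0\circ\varphi^j\in J$) pour retomber sur $Z(\rhobar)\cap\{\sigma\notin J,\sigma\circ\varphi^{-1}\in J\}=\emptyset$. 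C'est cette v\'erification --- pr\'ecis\'ement la dissym\'etrie de la condition, et non $F(J)$ --- qui manque dans ta r\'edaction.
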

\begin{proof}
(i) Comme $\ind_{\I}^{\K}\overline\eta'(J)\otimes\overline\eta(J)$ a certains de ses constituants dans ${\mathcal D}(\rhobar)$ et comme $\tau(J)$ est son co-socle, on en d\'eduit qu'elle poss\`ede un quotient de socle un poids de Serre dans ${\mathcal D}(\rhobar)$, de co-socle $\tau(J)$ et dont aucun constituant \`a part le socle n'est dans ${\mathcal D}(\rhobar)$. Par maximalit\'e de $D(\rhobar)$, on en d\'eduit que ce quotient est un sous-objet de $D(\rhobar)$, d'o\`u (i).\\
(ii) On fixe $\sigma_0\in {\mathcal S}$ et on reprend les notations de la preuve du lemme \ref{unik}. Les constituants de $\ind_{\I}^{\K}\overline\eta'(J)\otimes\overline\eta(J)$ qui sont dans ${\mathcal D}(\rhobar)$ sont les poids de Serre index\'es par les parties de ${\mathcal S}$ contenant $\{\sigma_0\circ\varphi^j,j\in J^{\rm min}\}$ et contenues dans $\{\sigma_0\circ\varphi^j,j\in J^{\rm max}\}$ o\`u $J^{\rm min}$ et $J^{\rm max}$ sont comme suit (cf. \cite[Prop.4.3]{Br} et les \'egalit\'es (19) de la preuve de {\it loc.cit.}) :
\begin{eqnarray*}
J^{\rm min}\!\!&=&\!\!\{j,\lambda_{j+1}(x_{j+1})=p-1-x_{j+1}\ {\rm ou}\ \big(\lambda_{j+1}(x_{j+1})=x_{j+1}+1\\
&&\ \ \ \ \ \ \ \ \ \ \ \ \ \ \ \ \ \ \ \ \ \ \ \ \ \ \ \ \ \ \ \ \ \ \ \ \ \ \ \ \ \ \ \ \ \ \ \ \ \ \ \ \ \ \ \ \ \ \ \ \ \ \ {\rm et}\ \sigma_0\circ \varphi^{j+1}\notin Z(\rhobar)\big)\}\\
J^{\rm max}\!\!&=&\!\!\{j,\lambda_{j+1}(x_{j+1})\in \{p-1-x_{j+1}, x_{j+1}+1\}\ {\rm ou}\ \big(\lambda_{j+1}(x_{j+1})=p-2-x_{j+1}\\
&&\ \ \ \ \ \ \ \ \ \ \ \ \ \ \ \ \ \ \ \ \ \ \ \ \ \ \ \ \ \ \ \ \ \ \ \ \ \ \ \ \ \ \ \ \ \ \ \ \ \ \ \ \ \ \ \ \ \ \ \ \ \ \ {\rm et}\ \sigma_0\circ \varphi^{j+1}\in Z(\rhobar)\big)\}
\end{eqnarray*}
avec $(\lambda_j(x_j))_{j\in \{0,\cdots,f-1\}}$ comme en (\ref{lambda}). En particulier, on a toujours (cf. fin de la preuve du lemme \ref{unik}) :
$$\{\sigma_0\circ\varphi^j,j\in J^{\rm min}\}\subseteq {\mathcal S}\backslash J\subseteq \{\sigma_0\circ\varphi^j,j\in J^{\rm max}\}.$$
De plus $\tau(\emptyset)$ est index\'e par ${\mathcal S}\backslash J$ dans $\ind_{\I}^{\K}\overline\eta'(J)\otimes\overline\eta(J)$. Par construction de $D(\rhobar)$ et par \cite[Prop.4.3]{Br} (avec \cite[Thm.2.4]{BP}), on voit donc que $\tau(J)$ est un constituant de $D_{\tau(\emptyset)}(\rhobar)$ si et seulement si ${\mathcal S}\backslash J=\{\sigma_0\circ\varphi^j,j\in J^{\rm max}\}$. En proc\'edant comme dans la preuve du lemme \ref{unik}, cela est \'equivalent \`a $\sigma_0\circ \varphi^{j+1}\notin Z(\rhobar)$ si $\lambda_{j+1}(x_{j+1})=p-2-x_{j+1}$, c'est-\`a-dire si $\sigma_0\circ \varphi^{j+1}\notin J$ et $\sigma_0\circ \varphi^{j}\in J$ par (\ref{lambda}). C'est donc \'equivalent \`a $Z(\rhobar)\cap \{\sigma\in {\mathcal S}, \sigma\notin J, \sigma\circ\varphi^{-1}\in J\}=\emptyset$. 
\end{proof}

La proposition suivante n'est pas utilis\'ee dans la suite, mais donne une variante plus pr\'ecise de la proposition \ref{xj}.

\begin{prop}\label{xjbis}
Soit $\pi$ une repr\'esentation lisse de $\G$ sur $k_E$ avec un caract\`ere central et v\'eri\-fiant les propri\'et\'es suivantes :\\
(i) $\pi$ contient $D(\rhobar)$\\
(ii) le $\K$-socle de $\pi$ est celui de $D(\rhobar)$\\
(iii) $D(\rhobar)^{\II}$ est stable par $\smat{0 & 1\\ p & 0}$ dans $\pi$.\\
Soit $J\subseteq {\mathcal S}$ tel que $Z(\rhobar)\cap \{\sigma\in {\mathcal S}, \sigma\notin J, \sigma\circ\varphi^{-1}\in J\}=\emptyset$. Alors il existe (\`a multiplication par un scalaire non nul pr\`es) un unique vecteur $v\in D_{\tau(\emptyset)}(\rhobar)^{\II}\subseteq \pi^{\II}$ non nul sur lequel $\I$ agisse par $\overline\eta'(J)\otimes\overline\eta(J)$. De plus, si $Z(\rhobar)\cap \{\sigma\in {\mathcal S}, \sigma\in J, \sigma\circ\varphi^{-1}\notin J\}=\emptyset$, alors il existe un unique \'el\'ement $x(J)\in k_E^{\times}$ tel que l'on ait l'\'egalit\'e de la proposition \ref{xj}. Si $Z(\rhobar)\cap \{\sigma\in {\mathcal S}, \sigma\in J, \sigma\circ\varphi^{-1}\notin J\}\ne \emptyset$, alors on a dans $\pi$ :
$$\sum_{s\in \Fq}\bigg(\prod_{\sigma\in J}\sigma(s)^{p-1-r_{\sigma}}\!\bigg)\begin{pmatrix}[s] & 1\\ 1 & 0\end{pmatrix}\begin{pmatrix}0 & 1\\ p & 0\end{pmatrix}v=0.$$
\end{prop}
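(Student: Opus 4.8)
The plan is to imitate the proof of Proposition \ref{xj}, using Lemma \ref{const}(ii) to locate the relevant vectors inside the direct summands $D_{\tau}(\rhobar)$ of $D(\rhobar)$.

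First I would produce $v$. By Lemma \ref{const}(i) the Serre weight $\tau(J)$ is a constituent of $D(\rhobar)$, and by Lemma \ref{const}(ii) together with the standing hypothesis $Z(\rhobar)\cap\{\sigma\in{\mathcal S},\,\sigma\notin J,\,\sigma\circ\varphi^{-1}\in J\}=\emptyset$ it is even a constituent of $D_{\tau(\emptyset)}(\rhobar)$. Since the characters of $\I$ occurring on $D(\rhobar)^{\II}$ are pairwise distinct (recalled just before Lemma \ref{const}) and $\tau(J)$ is the unique Serre weight on whose $\II$-invariants $\I$ acts by $\overline\eta'(J)\otimes\overline\eta(J)$, the space $D_{\tau(\emptyset)}(\rhobar)^{\II}$ contains a unique line on which $\I$ acts by $\overline\eta'(J)\otimes\overline\eta(J)$, and I let $v$ span it. By hypothesis~(ii), $\tau(\emptyset)$ occurs in the $\K$-socle of $\pi$ with multiplicity one, so by condition~(ii) of the definition of $D(\rhobar)$ it occurs in $D(\rhobar)$ only as the socle of $D_{\tau(\emptyset)}(\rhobar)$; in particular $\dim_{k_E}\tau(\emptyset)^{\II}=1$, and $\tau(\emptyset)$ is not a constituent of $D_{\tau}(\rhobar)$ for any $\tau\in{\mathcal D}(\rhobar)$ with $\tau\neq\tau(\emptyset)$.

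Next, exactly as in the proof of Proposition \ref{xj}: the $\K$-equivariant map $\ind_{\I}^{\K}\overline\eta'(J)\otimes\overline\eta(J)\to\pi$ attached by Frobenius reciprocity to $v$ has image contained in $D_{\tau(\emptyset)}(\rhobar)$, hence with socle $\tau(\emptyset)$; and since $\tau(\emptyset)$ sits in position ${\mathcal S}\backslash J$ in this induction (a purely combinatorial fact, independent of $Z(\rhobar)$, established in the proof of Lemma \ref{const}(ii)), \cite[Lem.2.7]{BP} shows that $\sum_{s\in\Fq}\big(\prod_{\sigma\notin J}\sigma(s)^{p-1-r_{\sigma}}\big)\smat{[s]&1\\1&0}v$ spans $\tau(\emptyset)^{\II}$ in $\pi$, and in particular is nonzero. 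By hypothesis~(iii), $w\=\smat{0&1\\p&0}v\in D(\rhobar)^{\II}$; it is nonzero, and $\I$ acts on it by $\overline\eta(J)\otimes\overline\eta'(J)=\overline\eta'({\mathcal S}\backslash J)\otimes\overline\eta({\mathcal S}\backslash J)$, so (again by distinctness of the $\II$-characters) $w$ lies in $D_{\tau_0}(\rhobar)^{\II}$, where $\tau_0\in{\mathcal D}(\rhobar)$ is the unique weight having $\tau({\mathcal S}\backslash J)$ — a constituent of $D(\rhobar)$ by Lemma \ref{const}(i) applied to ${\mathcal S}\backslash J$ — as a constituent; consequently the $\K$-subrepresentation $\langle\K w\rangle$ of $\pi$ generated by $w$ is contained in $D_{\tau_0}(\rhobar)$.

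I would then distinguish cases according to Lemma \ref{const}(ii) applied to ${\mathcal S}\backslash J$, using that $\{\sigma\in{\mathcal S},\,\sigma\notin{\mathcal S}\backslash J,\,\sigma\circ\varphi^{-1}\in{\mathcal S}\backslash J\}=\{\sigma\in{\mathcal S},\,\sigma\in J,\,\sigma\circ\varphi^{-1}\notin J\}$. If $Z(\rhobar)\cap\{\sigma\in{\mathcal S},\,\sigma\in J,\,\sigma\circ\varphi^{-1}\notin J\}=\emptyset$, then $\tau({\mathcal S}\backslash J)$ is a constituent of $D_{\tau(\emptyset)}(\rhobar)$, whence $\tau_0=\tau(\emptyset)$ and $w\in D_{\tau(\emptyset)}(\rhobar)^{\II}$; repeating the previous paragraph with $\ind_{\I}^{\K}\overline\eta(J)\otimes\overline\eta'(J)$ and $w$ in place of $\ind_{\I}^{\K}\overline\eta'(J)\otimes\overline\eta(J)$ and $v$ — where $\tau(\emptyset)$ now sits in position $J$ — gives that $\sum_{s\in\Fq}\big(\prod_{\sigma\in J}\sigma(s)^{p-1-r_{\sigma}}\big)\smat{[s]&1\\1&0}\smat{0&1\\p&0}v$ also spans the one-dimensional space $\tau(\emptyset)^{\II}$, and comparing it with the nonzero vector of the previous paragraph yields the unique $x(J)\in k_E^{\times}$ with the equality of Proposition \ref{xj}. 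If instead $Z(\rhobar)\cap\{\sigma\in{\mathcal S},\,\sigma\in J,\,\sigma\circ\varphi^{-1}\notin J\}\neq\emptyset$, then $\tau({\mathcal S}\backslash J)$ is not a constituent of $D_{\tau(\emptyset)}(\rhobar)$, so $\tau_0\neq\tau(\emptyset)$; by the last observation of the first paragraph $\tau(\emptyset)$ is then not a constituent of $D_{\tau_0}(\rhobar)$, hence not of $\langle\K w\rangle$, and since $\tau(\emptyset)$ sits in position $J$ in $\ind_{\I}^{\K}\overline\eta(J)\otimes\overline\eta'(J)$, \cite[Lem.2.7]{BP} forces $\sum_{s\in\Fq}\big(\prod_{\sigma\in J}\sigma(s)^{p-1-r_{\sigma}}\big)\smat{[s]&1\\1&0}\smat{0&1\\p&0}v=0$.

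The step I expect to be the most delicate is the bookkeeping of the summands $D_{\tau}(\rhobar)$ — precisely what Lemma \ref{const}(ii) is tailored for — together with the verification that the ``position'' of $\tau(\emptyset)$ in $\ind_{\I}^{\K}\overline\eta'(J)\otimes\overline\eta(J)$ (resp. $\ind_{\I}^{\K}\overline\eta(J)\otimes\overline\eta'(J)$) is the purely combinatorial set ${\mathcal S}\backslash J$ (resp. $J$) regardless of $Z(\rhobar)$, so that \cite[Lem.2.7]{BP} still applies in the case where $Z(\rhobar)\cap F(J)\neq\emptyset$ and Lemma \ref{unik} is unavailable.
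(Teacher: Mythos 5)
Your proposal follows essentially the same route as the paper: produce the vector $v\in D_{\tau(\emptyset)}(\rhobar)^{\II}$ via Lemma~\ref{const}, treat the first case as in Proposition~\ref{xj}, and in the second case observe that $\smat{0&1\\p&0}v$ lands in $D_{\tau_0}(\rhobar)$ with $\tau_0\neq\tau(\emptyset)$, so that $\tau(\emptyset)$ is not a constituent of $\langle\K\smat{0&1\\p&0}v\rangle$.

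The one place you are too quick is the very last inference, namely that ``since $\tau(\emptyset)$ sits in position $J$, \cite[Lem.2.7]{BP} forces'' the vanishing. Lemma 2.7 of \cite{BP} tells you that this linear combination, viewed in the unique quotient of $\ind_{\I}^{\K}\overline\eta(J)\otimes\overline\eta'(J)$ with socle $\tau(\emptyset)$, generates $\tau(\emptyset)^{\II}$; it does not by itself say that the combination dies in an \emph{arbitrary} quotient not containing $\tau(\emptyset)$ as a constituent. The paper fills that gap by first noting (from Lemma~\ref{const}(ii)) that the hypothesis of the second case forces $J\notin\{\emptyset,{\mathcal S}\}$, then arguing that the combination, if nonzero, would be a $T(\oL)$-eigenvector with the eigencharacter of $\tau(\emptyset)^{\II}$, and finally invoking \cite[Lem.2.5]{BP} (together with $J\notin\{\emptyset,{\mathcal S}\}$) to see that no constituent of $\langle\K\smat{0&1\\p&0}v\rangle$ other than $\tau(\emptyset)$ could supply that eigencharacter. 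Your writeup should make that torus-character step explicit and record the deduction $J\notin\{\emptyset,{\mathcal S}\}$, which you currently neither state nor use.
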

\begin{proof}
Notons que les hypoth\`eses (i) et (ii) et la d\'efinition de $D(\rhobar)$ font que $\Hom_{\K}(D_\tau(\rhobar),\pi)=k_E$ pour tout $\tau\in {\mathcal D}(\rhobar)$ de sorte que $\pi$ contient chaque $D_\tau(\rhobar)$ de mani\`ere unique. Par le lemme \ref{const}, $\tau(J)$ est un constituant de $D_{\tau(\emptyset)}(\rhobar)$ et les deux cas correspondent respectivement \`a $\tau({\mathcal S}\backslash J)$ est un constituant de $D_{\tau(\emptyset)}(\rhobar)$ et $\tau({\mathcal S}\backslash J)$ n'est pas un constituant de $D_{\tau(\emptyset)}(\rhobar)$. On ne d\'emontre que le deuxi\`eme cas, la preuve du premier \'etant analogue \`a celle de la proposition \ref{xj}. Notons que par le lemme \ref{const}(ii), on a alors $J\notin \{\emptyset,{\mathcal S}\}$. Par les hypoth\`eses et par les d\'efinition et structure de $D(\rhobar)$, on a $\smat{0 & 1\\ p & 0}v\in D_{\tau}(\rhobar)$ pour un $\tau\in {\mathcal D}(\rhobar)$ qui n'est pas $\tau(\emptyset)$. La sous-$\K$-repr\'esentation $\left\langle \K\smat{0 & 1\\ p & 0}v\right\rangle$ de $D(\rhobar)$ engendr\'ee par $\smat{0 & 1\\ p & 0}v$ est contenue dans $D_{\tau}(\rhobar)$ et en particulier n'a donc pas $\tau(\emptyset)$ dans ses constituants. Consid\'erons la surjection canonique : $$\ind_{\I}^{\K}\overline\eta(J)\otimes\overline\eta'(J)\twoheadrightarrow \left\langle \K\smat{0 & 1\\ p & 0}v\right\rangle$$
donn\'ee par r\'eciprocit\'e de Frobenius. Dans cette surjection, le constituant $\tau(\emptyset)$ de $\ind_{\I}^{\K}\overline\eta(J)\otimes\overline\eta'(J)$ est n\'ecessairement envoy\'e vers $0$ \`a droite puisqu'il n'y appara\^\i t plus comme constituant. Par \cite[Lem.2.7]{BP}, le vecteur :
$$\sum_{s\in \Fq}\Big(\prod_{\sigma\in J}\sigma(s)^{p-1-r_{\sigma}}\!\Big)\begin{pmatrix}[s] & 1\\ 1 & 0\end{pmatrix}\begin{pmatrix}0 & 1\\ p & 0\end{pmatrix}v\in \left\langle \K\smat{0 & 1\\ p & 0}v\right\rangle\subset \pi,$$ 
s'il est non nul, est tel que ${\rm T}(\oL)$ (le tore de $\I$) agit sur lui comme sur $\tau(\emptyset)^{\II}$. Or le caract\`ere correspondant de ${\rm T}(\oL)$ n'appara\^\i t pas dans $\left\langle \K\smat{0 & 1\\ p & 0}v\right\rangle$ car aucun autre constituant que $\tau(\emptyset)$ ne peut le donner (utiliser $J\notin \{\emptyset,{\mathcal S}\}$ et \cite[lem.2.5]{BP}). Ce vecteur est donc nul.
\end{proof}

\begin{rem}
{\rm (i) Soit $\pi$ comme dans la proposition \ref{xjbis} et $J\subseteq {\mathcal S}$ tel que $Z(\rhobar)\cap F(J)=\emptyset$. Alors par le lemme \ref{unik} (et la proposition \ref{xjbis}) le couple $(J,\pi)$ v\'erifie les hypoth\`eses de la proposition \ref{xj}, et en particulier il existe un unique $v\in \pi^{\II}$ non nul (forc\'ement dans $D_{\tau(\emptyset)}(\rhobar)^{\II}$ par la proposition \ref{xjbis}) sur lequel $\I$ agisse par $\overline\eta'(J)\otimes\overline\eta(J)$. Si $Z(\rhobar)\cap \{\sigma\in {\mathcal S}, \sigma\notin J, \sigma\circ\varphi^{-1}\in J\}=\emptyset$ mais $Z(\rhobar)\cap \{\sigma\in {\mathcal S}, \sigma\in J, \sigma\circ\varphi^{-1}\notin J\}\ne \emptyset$, un tel vecteur $v$ n'est pas forc\'ement unique, i.e. il peut exister $w\in \pi^{\II}\backslash D_{\tau(\emptyset)}(\rhobar)^{\II}$ (non nul) sur lequel $\I$ agit par $\overline\eta'(J)\otimes\overline\eta(J)$. Cela vient du fait que $\ind_{\I}^{\K}\overline\eta'(J)\otimes\overline\eta(J)$ poss\`ede alors plusieurs constituants dans ${\mathcal D}(\rhobar)$ (e.g. les poids de Serre $\tau(\emptyset)$ et $\tau$, cf. preuve pr\'ec\'edente).\\
(ii) Lorsque $Z(\rhobar)=\emptyset$, i.e. lorsque ${\mathcal D}(\rhobar)=\{\tau(\emptyset)\}$, les $x(J)$ (pour tout $J\subseteq {\mathcal S}$) sont les seuls invariants que l'on peut d\'eduire de \cite{BP} (en proc\'edant comme dans la Proposition \ref{xjbis}) car ils d\'eterminent dans ce cas compl\`etement l'action de $\smat{0 & 1\\ p & 0}$ sur $D(\rhobar)^{\II}=D_{\tau(\emptyset)}(\rhobar)^{\II}$. Pour plus de d\'etails sur les invariants que l'on peut d\'efinir de mani\`ere syst\'ematique \`a partir des constructions de \cite{BP}, nous renvoyons le lecteur \`a \cite{Hu}.}
\end{rem}

Par \cite[Thm.1.2]{BP}, on peut trouver des repr\'esentations $\pi$ v\'erifiant les hypoth\`eses de la proposition \ref{xjbis} (et donc de la proposition \ref{xj} pour $J$ tel que $Z(\rhobar)\cap F(J)=\emptyset$) et avec des valeurs {\it presque quelconques} pour les $x(J)$. Plus pr\'ecis\'ement, pour tout $\nu\in k_E^{\times}$ qui est un carr\'e dans $k_E$ et tout uplet $(x(J))_J$ d'\'el\'ements de $k_E$ (o\`u $J$ parcourt les parties de ${\mathcal S}$ v\'erifiant $Z(\rhobar)\cap \{\sigma\in {\mathcal S}, \sigma\notin J, \sigma\circ\varphi^{-1}\in J\}=\emptyset$) qui est tel que :
$$\begin{array}{lll}
x(J)=0&\ {\rm si}\ &Z(\rhobar)\cap \{\sigma\in {\mathcal S}, \sigma\in J, \sigma\circ\varphi^{-1}\notin J\}\ne\emptyset\\
x(J)=\nu x({\mathcal S}\backslash J)^{-1}&\ {\rm si}\ &Z(\rhobar)\cap \{\sigma\in {\mathcal S}, \sigma\in J, \sigma\circ\varphi^{-1}\notin J\}=\emptyset
\end{array}$$
(notons que la condition dans le deuxi\`eme cas est \'equivalente \`a $Z(\rhobar)\cap F(J)=\emptyset$), alors il existe (au moins) une repr\'esentation $\pi$ lisse admissible avec un caract\`ere central $\omega_{\pi}$ tel que $\omega_{\pi}(p)=\nu$, qui v\'erifie toutes les propri\'et\'es de la proposition \ref{xjbis} et telle que pour tout $J$ v\'erifiant $Z(\rhobar)\cap \{\sigma\in {\mathcal S}, \sigma\notin J, \sigma\circ\varphi^{-1}\in J\}=\emptyset$ :
$$\sum_{s\in \Fq}\bigg(\prod_{\sigma\in J}\sigma(s)^{p-1-r_{\sigma}}\!\bigg)\!\begin{pmatrix}[s] & 1\\ 1 & 0\end{pmatrix}\!\begin{pmatrix}0 & 1\\ p & 0\end{pmatrix}v = x(J)\sum_{s\in \Fq}\bigg(\prod_{\sigma\notin J}\sigma(s)^{p-1-r_{\sigma}}\!\bigg)\!\begin{pmatrix}[s] & 1\\ 1 & 0\end{pmatrix}v.$$

Le corollaire suivant, qui r\'esume l'essentiel des r\'esultats locaux qui pr\'ec\`edent, montre que sous certaines conditions sur $\pi$ (qui seront v\'erifi\'ees dans un cadre global), les scalaires $x(J)$ {\it ne} peuvent {\it pas} prendre n'importe quelles valeurs.

\begin{cor}\label{mainlocal}
Soit $\rhobar$ r\'eductible g\'en\'erique et $J\subseteq {\mathcal S}$ tel que $Z(\rhobar)\cap F(J)=\emptyset$. Soit $\pi$ une repr\'esentation lisse de $\G$ sur $k_E$ telle que $(J,\pi)$ v\'erifie les hypoth\`eses de la proposition \ref{xj} de sorte que le scalaire $x(J)\in k_E^{\times}$ est d\'efini. Soit $\M$ un $\oE$-module fortement divisible de type $\eta(J)\otimes\eta'(J)$ tel que $\rhobar\simeq {\rm Hom}_{{\rm Fil}^1,\varphi_1}(\Mbar,\widehat A_{\rm cris}\otimes_{\Zp}\Fp)^{\vee}(1)$ et d\'efinissons $\pi_p$ et $\widehat v$ comme dans le th\'eor\`eme \ref{main}. On suppose qu'il existe un $\oE$-r\'eseau stable $\pi_p^0$ dans $\pi_p$ contenant $\widehat v$ ainsi qu'un morphisme $\oE$-lin\'eaire $\G$-\'equivariant $\pi_p^0\rightarrow \pi$ tel que l'image de $\widehat v$ dans $\pi$ est non nulle. Alors on a :
$$x(J)=-\theta(-1)\Big(\prod_{\sigma\in J}\alpha_{\sigma}\prod_{\sigma\notin J}\beta_{\sigma}\Big)^{-1}\frac{\displaystyle{\prod_{\stackrel{\sigma\in J}{ \sigma\circ\varphi^{-1}\notin J}}}\!\!x_{\sigma}(r_{\sigma}+1)}{\displaystyle{\prod_{\stackrel{\sigma\notin J}{ \sigma\circ\varphi^{-1}\in J}}}\!\!x_{\sigma}(r_{\sigma}+1)}$$
avec $(\alpha_{\sigma})_{\sigma\in {\mathcal S}}$, $(\beta_{\sigma})_{\sigma\in {\mathcal S}}$ et $(x_{\sigma})_{\sigma\in {\mathcal S}}$ comme en (\ref{norm}).
\end{cor}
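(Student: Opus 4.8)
The plan is to reduce the integral identity of Theorem~\ref{main} modulo $\pE$, transport it into $\pi$ along the given morphism, and recognize the outcome as the relation defining $x(J)$ in Proposition~\ref{xj}; matching the two forces $x(J)=\overline{\widehat x(J)}$, and $\overline{\widehat x(J)}$ is precisely the right-hand side of the asserted formula by \eqref{reductionbis}. To set this up, I first note that by Proposition~\ref{J'} the module $\M$, being of type $\eta(J)\otimes\eta'(J)$ and giving rise to $\rhobar$ as in the statement, is automatically of type $J$; thus, when $J\notin\{\emptyset,{\mathcal S}\}$, Theorem~\ref{main} applies and produces $\widehat x(J)\in\oE^\times$ with
$$\sum_{s\in \Fq}\bigg(\prod_{\sigma\in J}[\sigma(s)]^{p-1-r_{\sigma}}\bigg)\smat{[s] & 1\\ 1 & 0}\smat{0 & 1\\ p & 0}\widehat v = \widehat x(J)\sum_{s\in \Fq}\bigg(\prod_{\sigma\notin J}[\sigma(s)]^{p-1-r_{\sigma}}\bigg)\smat{[s] & 1\\ 1 & 0}\widehat v$$
in $\pi_p$, and with $\overline{\widehat x(J)}$ equal to the claimed value. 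The extreme cases $J=\emptyset$ and $J={\mathcal S}$ (for which $F(J)=\emptyset$, so $Z(\rhobar)\cap F(J)=\emptyset$ is vacuous) are handled identically using Remark~\ref{apart} instead of Theorem~\ref{main}: the supplementary terms occurring there all carry a factor $q=p^f\in\pE\oE$, hence vanish in the reduction below, and the conclusion is the same.

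Now, $\pi_p^0$ being $\G$-stable and containing $\widehat v$, every term on each side of the displayed identity lies in $\pi_p^0$; reducing modulo $\pE$ (and using $\overline{[\sigma(s)]^{p-1-r_\sigma}}=\sigma(s)^{p-1-r_\sigma}$ in $k_E$) it becomes an identity in $\overline{\pi_p^0}$. Pushing this forward through the $\G$-equivariant map $\pi_p^0\to\pi$ and writing $v_0$ for the image of $\widehat v$ (nonzero by assumption), we obtain in $\pi$
$$\sum_{s\in \Fq}\bigg(\prod_{\sigma\in J}\sigma(s)^{p-1-r_{\sigma}}\bigg)\smat{[s] & 1\\ 1 & 0}\smat{0 & 1\\ p & 0}v_0 = \overline{\widehat x(J)}\sum_{s\in \Fq}\bigg(\prod_{\sigma\notin J}\sigma(s)^{p-1-r_{\sigma}}\bigg)\smat{[s] & 1\\ 1 & 0}v_0.$$
Since $\widehat v\in\pi_p^{\II}$ with $\I$ acting by $\eta'(J)\otimes\eta(J)$, equivariance shows $v_0$ is a nonzero element of $\pi^{\II}$ on which $\I$ acts by $\overline\eta'(J)\otimes\overline\eta(J)$; because $(J,\pi)$ satisfies the hypotheses of Proposition~\ref{xj}, such a vector is unique up to a scalar, so $v_0=c\,v$ for some $c\in k_E^\times$, with $v$ the vector of that proposition. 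Substituting and cancelling $c$ turns the last display into exactly the relation defining $x(J)$ but with $\overline{\widehat x(J)}$ in place of $x(J)$; as $\sum_{s}\big(\prod_{\sigma\notin J}\sigma(s)^{p-1-r_\sigma}\big)\smat{[s]&1\\1&0}v$ is a nonzero generator of $\tau(\emptyset)^{\II}$ (established inside the proof of Proposition~\ref{xj} via \cite[Lem.2.7]{BP}), we conclude $x(J)=\overline{\widehat x(J)}$, which is the desired formula.

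The only non-formal point is the identification $v_0=c\,v$: it rests on multiplicity one for the $\I$-character $\overline\eta'(J)\otimes\overline\eta(J)$ in $\pi^{\II}$, that is, on the uniqueness assertion of Proposition~\ref{xj} (and hence on its hypotheses (i)--(iii)). Everything else is the routine bookkeeping of carrying an $\oE$-coefficient identity through the reduction $\pi_p^0\twoheadrightarrow\overline{\pi_p^0}$ and the morphism $\overline{\pi_p^0}\to\pi$.
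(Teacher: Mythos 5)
Your argument is correct and follows essentially the paper's route: the published proof is a one-liner citing Proposition~\ref{J'}, Theorem~\ref{main} with Remarks~\ref{apart} and~\ref{apartbis}, and Proposition~\ref{xj}, and your write-up is simply a careful expansion of exactly that chain (upgrade of type via Proposition~\ref{J'}, reduction of the integral identity of Theorem~\ref{main} and push-forward through $\pi_p^0\to\pi$, identification of the image of $\widehat v$ with the vector of Proposition~\ref{xj} by multiplicity one). The only bookkeeping detail you leave slightly implicit in the boundary cases $J\in\{\emptyset,\mathcal S\}$ is that, after the $q$-terms vanish, one still needs Remark~\ref{apartbis} (not just~\ref{apart}) to identify $\overline\alpha'$ with $\mu=(\prod_\sigma\alpha_\sigma)^{-1}$ resp.\ $\lambda=(\prod_\sigma\beta_\sigma)^{-1}$ and so recover the stated formula; adding that reference would make the treatment of those two cases airtight.
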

\begin{proof}
Cela d\'ecoule de la proposition \ref{J'}, du th\'eor\`eme \ref{main} avec les remarques \ref{apart} et \ref{apartbis}, et de la proposition \ref{xj}.
\end{proof}

\begin{rem}
{\rm (i) On voit que le corollaire \ref{mainlocal} appliqu\'e avec $J$ et ${\mathcal S}\backslash J$ pour $\rhobar$ et $\pi$ fix\'ees (rappelons que $Z(\rhobar)\cap F(J)=\emptyset$ est \'equivalent \`a $Z(\rhobar)\cap F({\mathcal S}\backslash J)=\emptyset$) redonne bien $x(J)x({\mathcal S}\backslash J)=(\omega^{-1}\det\rhobar)(p)=(\det\rhobar)(p)$ comme attendu.\\
(ii) Lorsque $Z(\rhobar)=\emptyset$, on peut r\'ecrire la formule ci-dessus pour $x(J)$ sous la forme plus simple :
$$x(J)=-\theta(-1)\Big(\prod_{\sigma\in J}\alpha_{\sigma}\prod_{\sigma\notin J}\beta_{\sigma}\Big)^{-1}\prod_{\sigma\in J}\frac{x_{\sigma}(r_{\sigma}+1)}{x_{\sigma\circ\varphi}(r_{\sigma\circ\varphi}+1)}.$$}
\end{rem}

\section{R\'esultats globaux}

\subsection{Quelques pr\'eliminaires}\label{globalprelim}

On commence par quelques pr\'eliminaires, notations et d\'efinitions.

Soit $F$ un corps totalement r\'eel. Pour chaque place finie $v$ de $F$, on note $F_v$ le compl\'et\'e de $F$ en $v$ et on identifie $\gFv$ \`a un sous-groupe de $\gF$ via un choix de plongement $\Qbar \hookrightarrow \Fvbar$. On note $k_v$ le corps r\'esiduel de $F_v$, $q_v \= |k_v|$, $|\cdot|\=\frac{1}{q_v^{\val(\cdot)}}$ (o\`u $\val(\varpi_v)=1$ si $\varpi_v$ est une uniformisante de $F_v$), $I_v\subset W_{v}$ les sous-groupes d'inertie et de Weil de $\gFv$ et $\Fr_v\in \gFvbar$ un Frobenius g\'eom\'etrique $x\mapsto x^{q_v^{-1}}$ en $v$. On normalise la th\'eorie du corps de classes local de telle sorte que les relev\'es des Frobenius g\'eom\'etriques correspondent aux uniformisantes. On normalise la correspondance locale de Langlands de telle sorte que, si $\xi_1$ et $\xi_2$ sont des caract\`eres lisses de $F_v^\times$ tels que $\xi_1\xi_2^{-1} \neq \norm^{\pm1}$, alors la repr\'esentation de $W_v$ qui correspond \`a $\Ind_{{\rm B}(F_v)}^{\GL_2(F_v)}\xi_1\otimes\xi_2\norm^{-1}$ est $\xi_1\oplus\xi_2$. 

Soit $D$ une alg\`ebre de quaternions sur $F$ qui est d\'eploy\'ee en une seule des places archim\'ediennes de $F$ not\'ee $\tau$ (on exclut le cas $F=\Q$ et $D={\mathrm{GL}}_2$). On fixe un isomorphisme $D_\tau \=D\otimes_{F,\tau}\R\cong {\rm M}_2(\R)$ et on pose $D_f^\times \= (D\otimes_{\Q}\A_f)^\times$ o\`u $\A_f \= \widehat{\Z}\otimes\Q$ est l'anneau des ad\`eles finis de $\Q$. Si $v$ est une place finie de $F$, on note $D_v\=D\otimes_F\Fv$. Pour chaque sous-groupe ouvert compact $U \subset D_f^\times$, on note $X_U$ la courbe alg\'ebrique projective lisse sur $F$
associ\'ee \`a $U$ par la th\'eorie des mod\`eles cano\-niques de Shimura \cite{Sh} (voir \cite[\S\ 1.1]{Ca1} pour un r\'esum\'e des r\'esultats sous la forme que l'on utilise ici, sauf que l'on prend la convention associ\'ee au choix de signe $\varepsilon = 1$ comme dans \cite[\S\ 3.3]{CV} plut\^ot que la convention de \cite{Ca1} adopt\'ee dans \cite{BDJ}). Les points complexes de $X_U$ relativement \`a $\tau$ s'identifient \`a :
$$D^\times\backslash ((\C\backslash\R) \times D_f^\times/U)$$
o\`u l'action de $D^\times$ sur $\C\backslash\R$ se fait via $D^\times \hookrightarrow D_\tau^\times \cong \GL_2(\R)$. Pour $g\in D_f^\times$ et pour des sous-groupes ouverts compacts $U,V\subset D_f^\times$ tels que $g^{-1}Vg\subseteq U$, l'application sur les points complexes d\'efinie par la multiplication \`a droite par $g$ provient d'un morphisme de courbes alg\'ebriques $X_V \to X_U$ d\'efini sur $F$, et tous ces morphismes induisent des applications $\gF$-\'equivariantes sur la cohomologie \'etale :
$$H^1_{\et}(X_{U,\Qbar},\Qpbar) \to H^1_{\et}(X_{V,\Qbar},\Qpbar).$$
On obtient comme cela des actions de $\gF$ et de $D_f^\times$ qui commutent sur :
$$\Pi_D \= \varinjlim H^1_{\et}(X_{U,\Qbar},\Qpbar)(1)$$
o\`u la limite inductive est prise sur les sous-groupes ouverts compacts $U\subset D_f^\times$, o\`u les applications de transition pour $V\subseteq U$ sont induites par $g=1$ et o\`u $(1)$ signifie le tordu par le caract\`ere cyclotomique $p$-adique de $\gF$. L'action de $\gF$ est continue et celle de $D_f^\times$ est lisse et admissible. (On pourrait aussi prendre la cohomologie \'etale \`a coefficients dans une extension finie $E$ de $\Qp$, mais pour l'instant, il est plus pratique d'avoir $\Qpbar$).

Fixons des plongements $\Qbar \hookrightarrow \C$ et $\Qbar \hookrightarrow \Qpbar$ et soit $\sigma_2$ la repr\'esentation de $D_\infty^\times \= (D\otimes_{\Q}\R)^\times$ qui, en $\tau$ est la repr\'esentation de la s\'erie discr\`ete holomorphe de poids $2$ de $D_\tau^\times\cong \GL_2(\R)$ de caract\`ere central trivial et qui aux autres places infinies est la repr\'esentation triviale. On a une d\'ecomposition :
$$\Pi_D = \bigoplus_{\pi} (\pi_f\otimes_{\Qbar} \rho_\pi)$$
o\`u $\pi$ parcourt les repr\'esentations automorphes de $(D\otimes_{\Q}\A)^\times$ telles que : $$\Hom_{\C[D_\infty^\times]}(\sigma_2,\pi)\neq 0.$$
Dans la d\'ecomposition ci-dessus, $\pi_f$ d\'esigne la repr\'esentation de $D_f^\times$ sur $\Qbar$ telle que :
$$\C\otimes_{\Qbar} \pi_f  \cong \Hom_{\C[D_\infty^\times]}(\sigma_2,\pi)$$
et $\rho_\pi\= \Hom_{\Qbar[D_f^\times]}(\pi_f, \Pi_D)$ est une repr\'esentation de $\gF$ de dimension $2$ sur $\Qpbar$. Pour une extension finie $E$ suffisamment grande, on \'ecrit encore $\rho_\pi$ pour la repr\'esentation $\gF \to \GL_2(E)$.

Dans ce cadre, la compatibilit\'e local-global de la correspondance de Langlands est due \`a Carayol \cite{Ca2} pour $v\!\nmid \!p$ et \`a T. Saito \cite{Sai} pour $v|p$. Chaque $\pi_f$ se factorise comme un produit tensoriel restreint sur toutes les places finies $v$ de $F$ :
$$\pi_f \cong \otimes_v' \pi_v$$
o\`u $\Qpbar\otimes_{\Qbar}\pi_v = \pi_{D_v}\big(\WD(\rho_\pi|_{\gFv})\big)$ est la repr\'esentation lisse irr\'eductible de $D_v^\times$ correspondant (par la correspondance de Langlands locale) \`a la ``Frobenius-semi-simplifi\'ee'' de $\WD(\rho_\pi|_{\gFv})$, la repr\'esentation de Weil-Deligne associ\'ee \`a $\rho_\pi|_{\gFv}$. En particulier, la repr\'esentation $\rho_\pi|_{\gFv}$ est potentiellement semi-stable pour tout $v|p$ et la repr\'esentation $\WD(\rho_\pi|_{\gFv})$ est ind\'ecomposa\-ble lorsque $D_v$ n'est pas d\'eploy\'ee.

Passons maintenant \`a la caract\'eristique $p$. Soit $\rhobar:\gF \to \GL_2(k_E)$ une repr\'esentation continue, irr\'eductible et totalement impaire. On suppose toujours que le corps fini $k_E$ contient l'extension quadratique d'un corps (fini) sur lequel $\rhobar$ est d\'efinie, de sorte que (i) $k_E$ contient les valeurs propres de $\rhobar(g)$ pour tout $g\in \gF$ et (ii) si $H$ est un sous-groupe de $\gF$ tel que la restriction $\rho|_H$ est absolument r\'eductible alors elle est r\'eductible sur $k_E$. On suppose de plus $\rhobar$ {\em modulaire} au sens o\`u elle provient d'une forme modulaire de Hilbert propre (de poids et niveau quelconques). Comme pr\'ec\'edemment en caract\'eristique $0$, la limite inductive sur les sous-groupes ouverts compacts $U$ de $D_f^\times$ permet de d\'efinir une repr\'esentation de $\gF\times D_f^\times$ :
$$\overline\Pi_D \= \varinjlim H^1_{\et}(X_{U,\Qbar},k_E)(1).$$
On d\'efinit alors une repr\'esentation lisse de $D_f^\times$ sur $k_E$ par :
$$\pi_D(\rhobar) \= \Hom_{k_E[\gF]} (\rhobar,\overline{\Pi}_D)$$
(notons que $\pi_D(\rhobar)$ est la repr\'esentation associ\'ee au dual de $\rhobar$ dans \cite[\S\ 4]{BDJ}). Par \cite{Ta}, la repr\'esentation $\pi_D(\rhobar)$ est non nulle pour certains choix de $D$ que l'on explicitera au \S\ \ref{lifts0} (voir le corollaire \ref{nonzero}). Un argument classique (voir par exemple \cite[Lem.4.11]{BDJ}) montre que si $U$ est suffisamment petit, alors on a :
$$\pi_D(\rhobar)^U = \Hom_{k_E[\gF]}(\rhobar, H^1_{\et}(X_{U,\Qbar},k_E)(1)),$$
en particulier la repr\'esentation $\pi_D(\rhobar)$ est admissible. \`A la suite du travail d'Emer\-ton \cite{Em} pour $\GL_2/\Q$, il est conjectur\'e dans \cite[Conj.4.7]{BDJ} que $\pi_D(\rhobar)$ se factorise comme un produit tensoriel restreint :
\begin{equation}\label{factorization}
\pi_D(\rhobar) =  \otimes_v' \pi_{D,v}(\rhobar)
\end{equation}
o\`u chaque $\pi_{D,v}(\rhobar)$ est une repr\'esentation lisse admissible de $D_v^\times$ sur $k_E$. Au moins pour $v$ ne divisant pas $p$, on s'attend de plus \`a ce que $\pi_{D,v}(\rhobar)$ ne 
d\'epende que des classes d'isomorphisme de $\rhobar|_{\gFv}$ et de $D_v$. Si $v$ divise $p$ et si $\tau$ est une repr\'esentation lisse irr\'eductible de $\oDv^\times$ sur $k_E$ (o\`u $\oDv$ est un ordre maximal dans $D_v$), on dit que $\rhobar$ est {\em modulaire de poids} $\tau$ (en $v$, par rapport \`a $D$) si : 
$$\Hom_{k_E[\oDv^\times]}(\tau,\pi_D(\rhobar)) \neq 0.$$
Nous utiliserons le r\'esultat r\'ecent suivant de Gee et Kisin (cf. \cite[Thm.B]{GeK}) sur les poids dans la conjecture de Serre de \cite{BDJ}.

\begin{thm}\label{cor:Geeplus} 
Supposons $p>2$, $\rhobar|_{{\Gal}(\Qbar/F(\sqrt[p]{1}))}$ irr\'eductible et, si $p=5$, l'image de $\rhobar({\Gal}(\Qbar/F(\sqrt[p]{1})))$ dans ${\rm PGL}_2(k_E)$ non isomorphe \`a ${\rm PSL}_2(\F_5)$. Soit $v$ une place de $F$ divisant $p$ telle que $F_v$ est non ramifi\'ee et $D_v$ est d\'eploy\'ee. Si $\rhobar$ est modulaire de poids $\tau$ (en $v$, par rapport \`a $D$) alors $\tau\in {\mathcal D}(\rhobar|_{\gFv})$.
\end{thm}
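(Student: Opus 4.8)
The plan is to deduce this from \cite[Thm.B]{GeK}: essentially all the content lies there, and the only thing to do is to match our formulation (the $\rhobar$-part of the mod $p$ cohomology of the Shimura curves $X_U$ attached to the indefinite quaternion algebra $D$, with $D_v$ split) to the automorphic setting used by Gee and Kisin.

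First I would unwind the hypothesis. Since $F_v$ is unramified and $D_v$ is split, a maximal order $\oDv$ satisfies $\oDv^\times\cong\GL_2(\mathcal{O}_{F_v})$, so $\tau$ is an ordinary Serre weight in the sense of \S\ref{prel}, and ``$\rhobar$ modular of weight $\tau$ (at $v$, with respect to $D$)'' means precisely that $\tau$ occurs as a $\GL_2(\mathcal{O}_{F_v})$-subrepresentation of $\pi_D(\rhobar)=\Hom_{k_E[\gF]}(\rhobar,\overline\Pi_D)$; equivalently, for $U$ small enough, $\tau$ appears in $\Hom_{k_E[\gF]}\big(\rhobar,H^1_{\et}(X_{U,\Qbar},k_E)(1)\big)$ as a representation of $\GL_2(\mathcal{O}_{F_v})$ (the prime-to-$v$ part of $U$ being fixed).

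The main step is to pass from $D$ to the automorphic setting underlying \cite{GeK}. The point I would use is that modularity of a fixed Serre weight at $v$ is insensitive to the auxiliary choice of quaternion algebra over $F$ split at $v$ (and at either exactly one or no archimedean place): combining Jacquet--Langlands with the Shimura/Carayol description of $H^1_{\et}(X_{U,\Qbar},\Qpbar)$ in terms of weight-$2$ automorphic forms on $D^\times$ (cf. \cite{Sh}, \cite{Ca1}) and the standard ``change of level at $v$'' comparison of Hecke modules recalled in \cite[\S 4]{BDJ} (which lets one trade the weight $\tau$ at $v$ for a deeper level structure), one shows that $\pi_D(\rhobar)$ has the same set of $\GL_2(\mathcal{O}_{F_v})$-weights as the corresponding mod $p$ space of Hilbert modular forms cut out by $\rhobar$; hence $\rhobar$ is modular of weight $\tau$ in the sense of \cite{GeK}. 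This is the part I expect to cause the only genuine difficulty, since one must be careful about the matching of conventions — the normalisation of the local Langlands correspondence, the sign $\varepsilon=1$ as in \cite{CV} rather than the convention of \cite{Ca1}, the passage to duals as in \cite{BDJ}, and the fact that $H^1$ only records weight $2$ at infinity while $\tau$ lives at $v$ — but each of these is a point of bookkeeping that the earlier sections have already set up carefully.

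Finally, under the genericity hypotheses in the statement (irreducibility of $\rhobar|_{\Gal(\Qbar/F(\sqrt[p]{1}))}$ and the exclusion at $p=5$), which are exactly those needed to run the Taylor--Wiles--Kisin patching of \cite{GeK}, I would invoke \cite[Thm.B]{GeK}: it identifies the set of modular weights of $\rhobar$ at $v$ with the Buzzard--Diamond--Jarvis prediction, and by \cite[Prop.A.3]{Br} (see also \cite{CD}) that prediction is the set $\mathcal{D}(\rhobar|_{\gFv})$ recalled in \S\ref{prel}. The desired inclusion $\tau\in\mathcal{D}(\rhobar|_{\gFv})$ then follows.
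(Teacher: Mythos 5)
Your proposal is correct and takes essentially the same route as the paper: the paper gives no argument at all here, simply presenting the statement as a consequence of \cite[Thm.B]{GeK}, and your paragraphs merely make explicit the translation (Jacquet--Langlands, change of level at $v$, matching of $\mathcal{D}(\rhobar|_{\gFv})$ with the BDJ weight set via \cite[Prop.A.3]{Br}) that the authors leave implicit.
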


Il y a \'egalement une action de $\gF\times D_f^\times$ sur la cohomologie \'etale \`a coefficients entiers :
$$\Pi_D^0 \= \varinjlim H^1_{\et}(X_{U,\Qbar},\oE)(1).$$
Soit $\pi$ une repr\'esentation automorphe de $(D\otimes\A)^\times$ telle que $\pi_\infty \cong \sigma_2$ et $\rhobar_\pi \cong \rhobar$ o\`u $\rhobar_\pi$ est la r\'eduction de l'unique $\oE$-r\'eseau stable par $\gF$ \`a homoth\'etie pr\`es dans $\rho_\pi$. On suppose que $E$ est suffisamment grand pour que $\rho_\pi$ soit d\'efinie sur $E$ et on pose~:
\begin{eqnarray}\label{pi0}
\pi^0 \= \Hom_{\oE[\gF]}(\rho_\pi^0,\Pi_D^0)
\end{eqnarray}
o\`u $\rho_\pi^0$ est un $\oE$-r\'eseau stable par $\gF$ dans $\rho_\pi$. L'application naturelle $\Qpbar \otimes_{\oE}\pi^0 \to \Qpbar\otimes_{\Qbar}\pi_f$ est un isomorphisme. De plus, comme $\gF$ agit sur $\varinjlim H^0_{\et}(X_{U,\Qbar},\oE)(1)$ via $\gF^{\mathrm{ab}}$, l'irr\'eductibilit\'e de $\rhobar$ implique que :
$$(\pi^0)^U = \Hom_{\oE[\gF]}(\rho_\pi^0, H^1_{\et}(X_{U,\Qbar},\oE)(1))$$
pour tout sous-groupe ouvert compact $U\subset D_f^\times$. En particulier (le $\oE$-module sous-jacent \`a) $\pi^0$ n'a pas de vecteurs divisibles. Par ailleurs, l'injection :
$$k_E\otimes_{\oE}H^1_{\et}(X_{U,\Qbar},\oE)\hookrightarrow H^1_{\et}(X_{U,\Qbar},k_E)$$
(en fait un isomorphisme) montre que l'application naturelle :
$$k_E\otimes_{\oE}(\pi^0)^U \to \pi_D(\rhobar)^U$$
est injective. En prenant la limite inductive, on obtient le lemme suivant.

\begin{lem}\label{pi0bar} 
L'application naturelle $k_E \otimes_{\oE} \pi^0 \rightarrow \pi_D(\rhobar)$ est injective.
\end{lem}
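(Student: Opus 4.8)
The plan is to obtain the lemma simply by passing to the direct limit over $U$ in the finite-level injections established just above. First I would fix a cofinal family of compact open subgroups $U\subset D_f^\times$ small enough that both identities $(\pi^0)^U=\Hom_{\oE[\gF]}(\rho_\pi^0,H^1_{\et}(X_{U,\Qbar},\oE)(1))$ and $\pi_D(\rhobar)^U=\Hom_{k_E[\gF]}(\rhobar,H^1_{\et}(X_{U,\Qbar},k_E)(1))$ hold; for $V\subseteq U$ in this family the transition maps are the inclusions $(\pi^0)^U\subseteq(\pi^0)^V$ and $\pi_D(\rhobar)^U\subseteq\pi_D(\rhobar)^V$. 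Since $\pi^0$ and $\pi_D(\rhobar)$ are smooth, $\pi^0=\varinjlim_U(\pi^0)^U$ and $\pi_D(\rhobar)=\varinjlim_U\pi_D(\rhobar)^U$, and as $-\otimes_{\oE}k_E$ commutes with filtered colimits we get $k_E\otimes_{\oE}\pi^0=\varinjlim_U\bigl(k_E\otimes_{\oE}(\pi^0)^U\bigr)$. I would then check (routine naturality) that the map $k_E\otimes_{\oE}\pi^0\to\pi_D(\rhobar)$ is, level by level, the map $k_E\otimes_{\oE}(\pi^0)^U\to\pi_D(\rhobar)^U$ induced by the inclusion $k_E\otimes_{\oE}H^1_{\et}(X_{U,\Qbar},\oE)(1)\hookrightarrow H^1_{\et}(X_{U,\Qbar},k_E)(1)$ together with left-exactness of $\Hom_{k_E[\gF]}(\rhobar,-)$, and that these maps are compatible with the transition maps; hence the map in question is their filtered colimit.

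It then remains to recall \emph{why} each finite-level map is injective, which is exactly the content of the paragraph preceding the statement: $H^1_{\et}(X_{U,\Qbar},\oE)$ is a finite free $\oE$-module, so $(\pi^0)^U=\Hom_{\oE[\gF]}(\rho_\pi^0,H^1_{\et}(X_{U,\Qbar},\oE)(1))$ is $\pE$-saturated inside the finite free module $\Hom_{\oE}(\rho_\pi^0,H^1_{\et}(X_{U,\Qbar},\oE)(1))$ (if $\pE^n\phi$ is $\gF$-equivariant so is $\phi$, by torsion-freeness); reducing this saturated inclusion modulo $\pE$ remains injective, and, using $\rhobar\cong k_E\otimes_{\oE}\rho_\pi^0$, its image lands in $\Hom_{k_E[\gF]}(\rhobar,H^1_{\et}(X_{U,\Qbar},k_E)(1))=\pi_D(\rhobar)^U$. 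Since filtered colimits of $\oE$-modules are exact, the colimit of these monomorphisms is again a monomorphism, which is the lemma.

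Equivalently, and avoiding any diagram bookkeeping, one can argue by an element chase: an element of $k_E\otimes_{\oE}\pi^0=\pi^0/\pE\pi^0$ is the class of some $v\in\pi^0$, and $v\in(\pi^0)^U$ for $U$ small; if the image of this class in $\pi_D(\rhobar)$ vanishes then its image in $\pi_D(\rhobar)^U$ vanishes, so by injectivity at level $U$ we get $v\in\pE(\pi^0)^U\subseteq\pE\pi^0$, i.e.\ the class was already zero. There is no serious obstacle here: the substantive input (torsion-freeness of the \'etale cohomology of the Shimura curves with $\oE$-coefficients, and the resulting finite-level injection) has already been supplied before the statement; the only thing to be careful about is matching the cofinal systems of levels on the two sides and verifying that the relevant squares commute.
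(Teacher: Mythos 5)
Your proof is correct and takes essentially the same route as the paper: the paper establishes the finite-level injections $k_E\otimes_{\oE}(\pi^0)^U \hookrightarrow \pi_D(\rhobar)^U$ in the sentences immediately preceding the lemma and then simply says ``En prenant la limite inductive, on obtient le lemme suivant.'' Your expanded justification of the finite-level step (saturation of $(\pi^0)^U$ via torsion-freeness of the \'etale cohomology, then left-exactness of $\Hom_{k_E[\gF]}(\rhobar,-)$) is precisely the reasoning the paper leaves implicit, and passing to the filtered colimit is the intended conclusion.
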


\subsection{Relev\'es de type fix\'e}\label{lifts0}

On rappelle quelques cons\'equences de r\'esultats de Gee et de Barnet-Lamb, Gee et Geraghty (\cite{Ge}, \cite{BLGG}, \cite{BLGG2}) et on en d\'eduit quelques autres. 

Le r\'esultat principal de Gee est inspir\'e d'une technique due \`a Khare et Wintenberger (\cite{KW}) pour d\'emontrer l'existence et la modularit\'e de relev\'es avec comportement local prescrit de repr\'esentations $\rhobar:\gF \to \GL_2(k_E)$ (continues, irr\'eductibles, totalement impaires). Il g\'en\'eralise des r\'esultats de \cite{DT} (pour $v\neq p$) et de \cite{Kh} (pour $v = p$) dans le cas $F=\Q$, eux-m\^emes inspir\'es des r\'esultats de changement de niveau de Ribet (\cite{Ri1}, \cite{Ri2}). Ce r\'esultat de Gee est \'etendu et renforc\'e par celui clef de Barnet-Lamb, Gee et Geraghty qui donne l'existence de tels relev\'es qui sont ordinaires en des places prescrites au-dessus de $p$.

On conserve les notations du \S\ \ref{globalprelim}. Pour $v$ place finie de $F$ on note $\nu$ la surjection canonique $\gFv\twoheadrightarrow \gFvbar\cong \widehat\Z$ o\`u l'isomorphisme de droite est d\'efini en envoyant le Frobenius g\'eom\'etrique $\Fr_v$ sur $1$. On rappelle qu'une {\em repr\'esentation de Weil-Deligne} $(r,N)$ en $v$ (d\'efinie sur $\Qpbar$) est un $\Qpbar$-espace vectoriel $V$ de dimension finie muni d'une repr\'esentation lisse $r:W_v \to \Aut_{\Qpbar}V$ et d'un endomorphisme nilpotent $N:V \to V$ tels que $Nr(g) = q_v^{\nu(g)}r(g)N$ pour tout $g\in W_v$. On d\'efinit un {\em type de Weil-Deligne} en $v$ comme une classe d'\'equivalence $[r,N]$ de repr\'esentations de Weil-Deligne en $v$ de dimension $2$ o\`u $(r,N) \sim (r',N')$ si $(r|_{I_v},N)$ est isomorphe \`a $(r'|_{I_v},N')$. On dit qu'une repr\'esentation lin\'eaire continue $\rho:\gFv \to \GL_2(\Qpbar)$ est de type de Weil-Deligne $[r,N]$ si $\rho$ est potentiellement semi-stable et si $\WD(\rho) \sim (r,N)$. On dit que $\rho:\gFv \to \GL_2(E)$ est de type de Weil-Deligne $[r,N]$ si $\Qpbar\otimes_E\rho$ en est.

Soit $[r,N]$ un type de Weil-Deligne en $v$. On dit qu'une repr\'esentation lisse irr\'eductible $\vartheta_v$ de $\oDv^\times$ sur $E$ est un {\em $K$-type} pour $[r,N]$ si l'on a l'\'equivalence suivante pour toute repr\'esentation de Weil-Deligne $(r',N')$ en $v$ de dimension $2$ :
$$\Hom_{E[\oDv^\times]}(\vartheta_v, \pi_{D_v}(r',N'))\neq 0 \quad \Leftrightarrow \quad (r',N') \sim (r,N).$$
Supposant $E$ suffisamment grand, un $K$-type pour $[r,N]$ existe (sans \^etre en g\'en\'eral unique, voir \cite{He}) sous l'une des deux hypoth\`eses suivantes :\\
(i) $D_v$ est non ramifi\'ee et $N = 0$\\
(ii) $D_v$ est ramifi\'ee et ou bien $r$ est irr\'eductible ou bien $N\neq 0$.

Le lemme ci-dessous relie poids de Serre et types et se d\'emontre par un argument classique que l'on omet (voir par exemple \cite[Prop.2.10]{BDJ} pour un \'enonc\'e et une preuve sous des hypoth\`eses l\'eg\`erement diff\'erentes).

\begin{lem}\label{wts_types}
Soit $v$ une place de $F$ divisant $p$, $[r,N]$ un type de Weil-Deligne en $v$ et $\vartheta_v$ un $K$-type pour $[r,N]$. On a $\rhobar \cong \rhobar_\pi$ pour une repr\'esentation automorphe $\pi$ de $(D\otimes_{\Q}\A)^\times$ telle que $\pi_\infty \cong \sigma_2$ et $\rho_\pi|_{\gFv}$ est de type $[r,N]$ si et seulement si $\rhobar$ est modulaire de poids $\tau$ (en $v$, par rapport \`a $D$) pour un constituant $\tau$ du semi-simplifi\'e de ${\vartheta_v}$ sur $k_E$.
\end{lem}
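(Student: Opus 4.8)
The statement is a standard comparison between Serre weights and inertial (Weil--Deligne) types, and the plan is to run the classical argument in the present setting (as in \cite[Prop.~2.10]{BDJ}). Fix a $\oDv^\times$-stable $\oE$-lattice $\vartheta_v^0\subset\vartheta_v$; it is finite free over $\oE$, since $\vartheta_v$ is a smooth irreducible representation of the profinite group $\oDv^\times$ and hence finite-dimensional. Choose a sufficiently small compact open $U^v$ in the prime-to-$v$ part $(D_f^v)^\times$ of $D_f^\times$ and set $M(U^v)\=\Hom_{\oE[\oDv^\times]}(\vartheta_v^0,(\Pi_D^0)^{U^v})$, a finitely generated $\oE$-module carrying commuting actions of $(D_f^v)^\times$, of $\gF$ and of a Hecke algebra $\T$ away from the ramified places. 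Since $H^1_{\et}(X_{U,\Qbar},\oE)$ is $\oE$-torsion free for $U$ small, with reduction $H^1_{\et}(X_{U,\Qbar},k_E)$, the module $M(U^v)$ is $\oE$-torsion free. Let $\gm\subset\T$ be the maximal ideal attached to $\rhobar$; it is non-Eisenstein because $\rhobar$ is irreducible.

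\emph{Characteristic-zero side.} From $\Pi_D=\bigoplus_\pi(\pi_f\otimes\rho_\pi)$ and $\pi_f\cong\pi_v\otimes\pi_f^v$ one gets
$$ M(U^v)\otimes_\oE E \;\cong\; \bigoplus_\pi \Hom_{\oDv^\times}(\vartheta_v,\pi_v)\otimes_E\big((\pi_f^v)^{U^v}\otimes_E\rho_\pi\big), $$
the sum over automorphic $\pi$ with $\pi_\infty\cong\sigma_2$. For $v\mid p$ each $\rho_\pi|_{\gFv}$ is potentially semistable by the stated local--global compatibility, so the defining property of the $K$-type $\vartheta_v$ (which is insensitive to Frobenius-semisimplification) gives $\Hom_{\oDv^\times}(\vartheta_v,\pi_v)\neq 0$ if and only if $\WD(\rho_\pi|_{\gFv})\sim(r,N)$, i.e.\ if and only if $\rho_\pi|_{\gFv}$ is of type $[r,N]$. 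Localizing at $\gm$ and using that $\rhobar$ is irreducible (so that a $\pi$ contributes to $M(U^v)_\gm$ exactly when $\rhobar_\pi\cong\rhobar$), one finds: there is a $\pi$ with $\pi_\infty\cong\sigma_2$, $\rhobar_\pi\cong\rhobar$ and $\rho_\pi|_{\gFv}$ of type $[r,N]$ $\iff$ $M(U^v)_\gm\neq 0$ for some $U^v$. By Nakayama this is equivalent to $\overline{M(U^v)}_\gm\neq 0$ for some $U^v$, where $\overline{M(U^v)}\=M(U^v)/\varpi_E$.

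\emph{Characteristic-$p$ side.} By definition ``$\rhobar$ is modular of weight $\tau$'' means $\Hom_{\oDv^\times}(\tau,\pi_D(\rhobar))\neq 0$, and the commuting $\gF$- and $D_f^\times$-actions on $\overline\Pi_D$ give
$$ \Hom_{\oDv^\times}\big(\overline{\vartheta_v^0},\pi_D(\rhobar)\big) \;=\; \Hom_{k_E[\gF]}\big(\rhobar,\ \Hom_{\oDv^\times}(\overline{\vartheta_v^0},\overline\Pi_D)\big). $$
Letting $U_v$ be an open subgroup through which $\vartheta_v^0$ factors, $M(U^v)=\Hom_{\oDv^\times/U_v}(\vartheta_v^0,H^1_{\et}(X_{U^vU_v,\Qbar},\oE)(1))$ is a $\Hom$ of $\oE$-lattices in representations of a finite group, so reducing mod $\varpi_E$ gives, by $\oE$-torsion-freeness of the cohomology, an injection $\overline{M(U^v)}\hookrightarrow\Hom_{\oDv^\times}(\overline{\vartheta_v^0},\overline\Pi_D^{U^v})$. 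Since $\gm$ is non-Eisenstein and $\rhobar$ is irreducible, $(\overline\Pi_D^{U^v})_\gm$ is $\rhobar$-isotypic as a $k_E[\gF]$-module; hence ``$\overline{M(U^v)}_\gm\neq 0$ for some $U^v$'' is equivalent to $\Hom_{\oDv^\times}(\overline{\vartheta_v^0},\pi_D(\rhobar))\neq 0$, and then running a Jordan--H\"older filtration of $\overline{\vartheta_v^0}$ as an $\oDv^\times$-module (using integrality once more to compare $\overline{\vartheta_v^0}$ with its constituents) shows that this holds if and only if $\rhobar$ is modular of weight $\tau$ for some constituent $\tau$ of $\overline{\vartheta_v^0}$, i.e.\ of the semisimplification of $\vartheta_v$ over $k_E$ (well defined by Brauer--Nesbitt). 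Combining the two sides gives the lemma.

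\emph{Main obstacle.} The delicate point is the final equivalence on the characteristic-$p$ side: matching an \emph{arbitrary} Jordan--H\"older constituent $\tau$ of $\overline{\vartheta_v^0}$ --- not merely a sub- or quotient-representation --- against a characteristic-zero automorphic form of prescribed inertial type. This is what forces one to work integrally throughout: the integral cohomology $H^1_{\et}(X_{U,\Qbar},\oE)$ is a $\T$-stable $\oE$-lattice whose reduction is the mod-$p$ cohomology, so Brauer--Nesbitt controls the $\oDv^\times$-action on reductions and Nakayama transports non-vanishing of $\Hom$-spaces across the reduction map. For the details in a very close situation one should appeal to \cite[Prop.~2.10]{BDJ}.
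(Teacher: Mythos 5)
The paper offers no proof of this lemma: it simply remarks that it follows by a classical argument and refers to \cite[Prop.~2.10]{BDJ}. Your reconstruction is exactly that classical argument — lattice in the $K$-type, integral cohomology, non-Eisenstein localization at $\gm$, the characteristic-zero decomposition $\Pi_D=\bigoplus_\pi(\pi_f\otimes\rho_\pi)$ together with the defining property of $\vartheta_v$, and Nakayama to cross the reduction map — so your approach matches the paper's.

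One point where your write-up is thinner than it should be, and which you yourself flag as the main obstacle: the implication from ``$\rhobar$ modular of weight $\tau$'' back to ``$M(U^v)_\gm\neq 0$'' is genuinely the hard direction, and the injection $\overline{M(U^v)}\hookrightarrow\Hom_{\oDv^\times}(\overline{\vartheta_v^0},\overline{\Pi}_D^{U^v})$ you establish goes the wrong way for it. The standard way to close this (and what \cite{BDJ} really does) is to argue on multiplicities rather than through a single $\Hom$-space: the localized lattice $A:=(\Pi_D^0)^{U}_{\gm}$ is $\oE$-free, so by Brauer--Nesbitt the multiplicity of $\rhobar\otimes\tau$ in the semisimplification of $\overline{A}$ equals its multiplicity in the semisimplification of $A\otimes E=\bigoplus_\pi\rho_\pi\otimes\pi_f^U$, and one then converts ``$\rhobar\otimes\tau$ occurs in $\overline{A}^{\,\mathrm{ss}}$'' into ``$\tau$ occurs in $\pi_D(\rhobar)^U$'' using the evaluation isomorphism of \cite[Lem.~4.6, 4.10, 4.11]{BDJ} (which the present paper also invokes later, in the proof of Th\'eor\`eme~\ref{thm:multone}). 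Choosing a lattice in $\vartheta_v$ with $\tau$ as a cosocle constituent, as you indicate, is the right auxiliary device, but the Brauer--Nesbitt bookkeeping is what makes the final equivalence go through; since the paper defers all of this to \cite{BDJ}, your proposal is at the same level of completeness as the source it is modeled on.
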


On rappelle maintenant la cons\'equence suivante des r\'esultats de Barnet-Lamb, Gee et Geraghty (\cite{Ge}, \cite{BLGG}, \cite{BLGG2}).

\begin{thm}\label{lifts}
Supposons $p>2$, $\rhobar:\gF \to \GL_2(k_E)$ modulaire, $\rhobar|_{{\Gal}(\Qbar/F(\sqrt[p]{1}))}$ irr\'eductible et, si $p=5$, l'image de $\rhobar({\Gal}(\Qbar/F(\sqrt[p]{1})))$ dans ${\rm PGL}_2(k_E)$ non isomorphe \`a ${\rm PSL}_2(\F_5)$. Soit $\psi:\gF \to E^\times$ un caract\`ere qui rel\`eve $\det\rhobar$ et tel que $\psi\varepsilon^{-1}$ est d'ordre fini, $T$ un sous-ensemble de l'ensemble des places de $F$ divisant $p$ et $S$ un ensemble fini de places finies de $F$ contenant les places divisant $p$ et les places o\`u $\rhobar$ ou $\psi$ sont ramifi\'es. Pour chaque $v\in S$, soit $[r_v,N_v]$ un type de Weil-Deligne en $v$ et pour chaque $v\in T \cup\{v\!\nmid\!p,N_v\neq 0\}$, soit $\overline\mu_v:\gFv\to k_E^\times$ un caract\`ere. Supposons que, pour chaque $v\in S$, $\rhobar|_{\gFv}$ admet un relev\'e $\rho_v:\gFv \to \GL_2(E)$ tel que :
\begin{enumerate}
\item[(i)]si $v|p$ alors $\rho_v$ est potentiellement semi-stable de poids de Hodge-Tate $(0,1)$ pour tout $F_v \hookrightarrow \Qpbar$
\item[(ii)]si $v|p$ alors $\rho_v$ est potentiellement ordinaire si et seulement si $v\in T$
\item[(iii)]$\rho_v$ est de type de Weil-Deligne $[r_v,N_v]$ ($v\in S$)
\item[(iv)]si $v\in T\cup\{v\!\nmid\!p,N_v\neq 0\}$ alors $\rho_v$ a une sous-repr\'esentation $\sigma_v$ de dimension~$1$ telle que $\sigma_v$ rel\`eve $\overline\mu_v\omega$ et $\sigma_v\varepsilon^{-1}|_{I_v}$ est d'ordre fini
\item[(v)]$\det\rho_v|_{I_v} = \psi|_{I_v}$ ($v\in S$).
\end{enumerate}
Alors, quitte \`a agrandir $E$, $\rhobar$ poss\`ede un relev\'e $\rho:\gF \to \GL_2(E)$ continu non ramifi\'e en dehors de $S$ et tel que :
\begin{enumerate}
\item[(i)]si $v|p$ alors $\rho|_{\gFv}$ est potentiellement semi-stable de poids de Hodge-Tate $(0,1)$ pour tout $F_v \hookrightarrow \Qpbar$
\item[(ii)]si $v|p$ alors $\rho|_{\gFv}$ est potentiellement ordinaire si et seulement si $v\in T$
\item[(iii)]$\rho|_{\gFv}$ est de type de Weil-Deligne $[r_v,N_v]$ ($v\in S$)
\item[(iv)]si $v\in T\cup\{v\!\nmid\!p,N_v\neq 0\}$ alors $\rho|_{\gFv}$ a une sous-repr\'esentation $\sigma'_v$ de dimension~$1$ telle que $\sigma'_v$ rel\`eve $\overline\mu_v\omega$ et $\sigma'_v\varepsilon^{-1}|_{I_v}$ est d'ordre fini
\item[(v)]$\det\rho = \psi$.
\end{enumerate}
De plus, un tel relev\'e $\rho$ de $\rhobar$ provient d'une forme modulaire de Hilbert de poids $(2,2,\cdots,2)$.
\end{thm}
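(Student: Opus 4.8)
The plan is to deduce the theorem from the work of Gee (\cite{Ge}) and of Barnet-Lamb--Gee--Geraghty (\cite{BLGG}, \cite{BLGG2}), of which it is only a mild strengthening (combining an ``ordinary at $T$'' condition with ``prescribed inertial type at $S$'', and keeping track of the characters $\overline\mu_v$ and of the exact determinant). The core is an automorphy lifting statement: under the Taylor--Wiles hypotheses imposed on $\rhobar$ — namely $\rhobar|_{{\Gal}(\Qbar/F(\sqrt[p]{1}))}$ irreducible, $p>2$, and the exclusion of ${\rm PSL}_2(\F_5)$ when $p=5$, which are exactly what the patching arguments of \emph{loc.\ cit.}\ require — any continuous lift $\rho$ of $\rhobar$ whose restriction to each $\gFv$, $v\in S$, lies on the ``automorphic component'' of the relevant local deformation ring is itself modular of parallel weight $(2,\dots,2)$. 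The whole role of the hypothesized local lifts $\rho_v$ is to witness that the local deformation problem one wishes to impose at $v\in S$ — potentially semistable of Hodge--Tate weights $(0,1)$ for every embedding, of inertial type $[r_v,N_v]$, with $\det|_{I_v}=\psi|_{I_v}$, potentially ordinary precisely when $v\in T$, and carrying the prescribed one-dimensional sub-representation $\sigma_v$ with $\sigma_v\varepsilon^{-1}|_{I_v}$ of finite order when $v\in T\cup\{v\!\nmid\!p,\,N_v\neq0\}$ — is non-empty, hence that the corresponding local deformation ring is non-zero and one may speak of its generic point.

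First I would pass to a solvable totally real extension $F'/F$, chosen as in the Khare--Wintenberger method and in \cite{Ge}: linearly disjoint over $F$ from the field cut out by $\rhobar$ (so that modularity of $\rhobar$ and the Taylor--Wiles hypotheses descend to the restriction of $\rhobar$ to the absolute Galois group of $F'$), large enough that at each place of $F'$ above $S$ the restricted local conditions become as mild as possible (tamely ramified types, etc.), and small enough that all of (i)--(v) are preserved under restriction. Over $F'$ the relevant ordinary and fixed-type automorphy lifting theorems of \cite{BLGG}, \cite{BLGG2}, \cite{Ge} are directly applicable. One then produces a modular lift over $F'$ by a chain-of-congruences argument: starting from \emph{some} modular lift of parallel weight $2$ of $\rhobar$ restricted to $F'$ with determinant $\psi$ (which exists since $\rhobar$ is modular, after a twist), one moves the local components at the places above $S$ step by step to the prescribed ones, applying at each step an $R=T$ theorem to the appropriate local deformation rings — whose automorphic components are non-empty precisely because of the $\rho_v$. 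Ordinarity at the places above $T$ is supplied by the ordinary lifting and ordinary level-changing results of \cite{BLGG}, \cite{BLGG2}; the prescribed inertial types at the remaining places above $p$, and the conditions $N_v\neq0$ at $v\!\nmid\!p$, are handled as in \cite{Ge}; the sub-characters $\sigma'_v$ and the exact determinant $\psi$ are imposed by working throughout with deformation functors that fix these data.

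Finally I would descend from $F'$ to $F$. The lift $\rho'$ of $\rhobar|_{F'}$ obtained above reduces to $\rhobar$ modulo $\varpi_E$, and since $F'/F$ is solvable with the required base-change and descent of automorphic representations available, one obtains from it a modular lift $\rho$ of $\rhobar$ over $F$. As $\rho|_{\gFv}$ is potentially semistable with Hodge--Tate weights $(0,1)$ at every place above $p$, the associated Hilbert modular form is of weight $(2,\dots,2)$; and the local conditions (i)--(v) are insensitive to the solvable base change, so they hold for $\rho|_{\gFv}$ exactly as for $\rho'$ over $F'$. This gives all the assertions of the theorem.

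The main obstacle is the second step: arranging that \emph{all} the local conditions hold simultaneously for a single modular lift. The delicate points are (a) the compatibility of ``potentially ordinary'' with a prescribed inertial type \emph{and} with the prescribed sub-character $\sigma_v$ at $v\in T$ — one must check that the corresponding local deformation ring is non-zero on its ordinary locus and that $\rho_v$ cuts out a suitable point there — and (b) pinning down the determinant of the global lift to be exactly $\psi$ rather than merely $\psi$ up to a finite-order twist, which forces one to carry the determinant condition both through the base change $F'/F$ and through every congruence in the chain. Beyond these points the argument is a matter of assembling, in the form recorded in the statement, results that are already available in \cite{Ge}, \cite{BLGG}, \cite{BLGG2}.
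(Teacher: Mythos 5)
Your outline — pass to a solvable totally real extension, invoke the results of \cite{Ge}, \cite{BLGG}, \cite{BLGG2} on automorphy lifting with prescribed types and ordinarity, and then descend via the Khare--Wintenberger finiteness trick — is the correct skeleton, and it is essentially the one the paper follows (the paper cites \cite[Lem.5.3.2]{GeK}, which packages the results of \cite{Ge} and \cite{BLGG2} in exactly this form). However, there is a genuine gap: you never address the case $N_v\neq0$ at places $v\,|\,p$. You write that ``the prescribed inertial types at the remaining places above $p$, and the conditions $N_v\neq0$ at $v\nmid p$, are handled as in \cite{Ge},'' but the whole point — stated explicitly in the paper's proof — is that \cite[Lem.5.3.2]{GeK} and \cite{Ge} do \emph{not} treat $N_v\neq0$ when $v\,|\,p$, and this is the one new case the theorem adds. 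Your ``chain-of-congruences'' step is precisely where this issue lives, and as written it presupposes that the congruence one needs is supplied by the existing literature, which it is not.

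The paper closes this gap with a level-raising tower: after passing to $F_0$ where things are tame, it lists the places $w_1,\dots,w_r$ of $F_0$ dividing $p$ that lie over places $v$ with $N_v\neq0$, and constructs a chain $F_0\subset F_1\subset\cdots\subset F_r$ of totally real quadratic extensions, at each step applying Taylor's level-raising argument in the form of \cite[(3.5.3)]{Ki} (observing that the hypothesis $w\nmid p$ there is unnecessary when the auxiliary type $\tau$ is trivial, and using ordinarity plus local--global compatibility at $w_i'$ to verify the hypotheses of \cite[(3.1.11)]{Ki}). Only after reaching $F'=F_r$ does one land in a situation where the local deformation ring has a modular point on each of the chosen components, and the Khare--Wintenberger finiteness argument then produces the lift $\rho$ over $F$ as a $\overline{\Z}_p$-point of $R^{\psi,\tau}_{F',S}$ pulled back along restriction, with modularity following from $R=\mathbb{T}$ over $F'$ and solvable descent. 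A secondary remark: your descent step, as phrased, suggests descending the lift $\rho'$ of $\rhobar|_{F'}$ directly, which is not how it works — one does not descend a given lift, one descends \emph{finiteness of the deformation ring} and then chooses a point over $F$ whose restriction to $F'$ is modular. You may well have meant this, but it is worth being explicit, since a careless reading of your last paragraph would not compile into a proof.
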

\begin{proof}
Ce th\'eor\`eme se d\'eduit des r\'esultats principaux de \cite{Ge} et \cite{BLGG2} comme expliqu\'e dans la preuve de \cite[Lem.5.3.2]{GeK}. Il diff\`ere de {\it loc.cit.} seulement parce que le cas $N_v\neq 0$ n'y est pas consid\'er\'e lorsque $v|p$ et n'y est pas explicit\'e lorsque $v\!\nmid\!p$. Pour obtenir le th\'eor\`eme \ref{lifts}, il suffit de modifier les arguments dans \cite{Ge} comme expliqu\'e ci-dessous.\\
D'abord, on v\'erifie qu'il existe une extension r\'esoluble totalement r\'eelle $F_0/F$ de degr\'e pair telle que :\\
(i) les restrictions $r_v|_{I_w}$, $\rhobar|_{{\Gal}(\overline{F_v}/F_{0,w})}$ et $\psi\varepsilon^{-1}|_{{\Gal}(\overline{F_v}/F_{0,w})}$ sont triviales pour tout $v\in S$ et tout $w|v$\\
(ii) les hypoth\`eses de \cite[Lem.5.3.2]{GeK} s'appliquent \`a $\overline{r} =\rhobar|_{{\Gal}(\Qbar/F_{0})}$ o\`u (changeant $\ell$ en $p$, $S$ en $S^+$ et $\psi$ en 
$\psi|_{{\Gal}(\Qbar/F_{0})}$) :
\begin{itemize}
\item $S^+ = \{w|p\} \amalg S_0$ o\`u $S_0$ est l'ensemble des places de $F_0$ au-dessus des places $v$ de $F$ telles que $v\!\nmid\! p$ et $N_v \neq 0$
\item $\tau_w$ est trivial pour tout $w\in S^+$
\item si $w|p$ alors $R_w$ est la composante irr\'eductible (de l'anneau de d\'eformation) param\'etrant les d\'eformations ordinaires si et seulement si $w$ divise $v$ pour un $v$ dans $T$
\item si $w\in S_0$ alors $R_w$ est la composante irr\'eductible (de l'anneau de d\'eforma\-tion) param\'etrant les d\'eformations de la forme $\smat{\varepsilon &*\\0&1}$ dans une base convenable.
\end{itemize}
Notant $D_0$ l'alg\`ebre de quaternions sur $F_0$ ramifi\'ee exactement aux places infinies et aux places de $S_0$, on obtient alors une repr\'esentation automorphe $\pi_0$ de 
$(D_0\otimes_{\Q}\A)^\times$ telle que :
\begin{itemize}
\item $\pi_{0,w}$ est triviale si $w$ est une place infinie ou si $w\in S_0$
\item $\pi_{0,w}$ est non ramifi\'ee si $w \not\in S_0$
\item $\rhobar_{\pi_0} \cong \rhobar|_{{\Gal}(\Qbar/F_{0})}$
\item si $w|p$ alors $\pi_0$ est ordinaire en $w$ si et seulement si $w|v$ pour un $v$ dans $T$.
\end{itemize}
On utilise alors l'argument d'augmentation du niveau de \cite{Ta} comme dans la preuve de \cite[(3.5.3)]{Ki} pour remplacer $S_0$ par l'ensemble de toutes les places de $F_0$ au-dessus des places $v$ de $F$ telles que $N_v\neq 0$. Plus pr\'ecis\'ement, soit $w_1,\cdots,w_r$ les places de $F_0$ divisant $p$ et au-dessus d'une place $v$ de $F$ telle que $N_v\neq 0$. Pour $i=1,\cdots,r$ on d\'efinit par r\'ecurrence des extensions $F_i$ de $F_0$ telles que :
\begin{itemize}
\item $F_i/F_{i-1}$ est quadratique totalement r\'eelle
\item $\rhobar|_{{\rm Gal}(\Qbar/F_i(\sqrt[p]{1}))}$ est irr\'eductible
\item si $w \in S_i'$ alors $w$ est totalement d\'ecompos\'ee dans $F_i$ o\`u $S_i'$ est l'ensemble des places de $F_{i-1}$ au-dessus des places de $\{w_1,\cdots,w_i\}\amalg S_0$
\item si $w$ est une place de $F_{i-1}$ au-dessus d'une place de $\{w_{i+1},\cdots,w_r\}$ alors $w$ est inerte dans $F_i$.
\end{itemize}
Soit $S_i$ l'ensemble des places de $F_i$ au-dessus de celles de $S_i'$, on a $S_i' = \{w_i'\}\amalg S_{i-1}$ o\`u $w_i'$ est l'unique place de $F_{i-1}$ au-dessus de $w_i$ (en particulier on a $w_1' = w_1$ et $S_1' = \{w_1\} \amalg S_0$). Soit $D_i$ l'alg\`ebre de quaternions sur $F_i$ ramifi\'ee exactement aux places infinies et aux places de $S_i$. On montre par r\'ecurrence sur $i$ qu'il existe une repr\'esentation automorphe $\pi_i$ de $(D_i\otimes_{\Q}\A)^\times$ v\'erifiant la m\^eme liste de propri\'et\'es que $\pi_0$ (voir ci-dessus) avec $F_0$ remplac\'e par $F_i$, $D_0$ par $D_i$ et $S_0$ par $S_i$ (cela se d\'emontre en choisissant un premier auxiliaire $\ell$ satisfaisant \cite[(3.5.6)]{Ki} et en appliquant le m\^eme argument qu'en \cite[(3.5.3)]{Ki}, l'existence d'un relev\'e de $\rhobar_{\pi_{i-1}}|_{{\rm Gal}(\overline{F_{w_i'}}/F_{w_i'})}$ de type de Weil-Deligne $[r,N]$ avec $r$ non ramifi\'ee et $N\neq 0$ combin\'ee avec l'ordinarit\'e de $\pi_{i-1}$ en la place $w'_i$ et la compatibilit\'e local-global en $w_i'$ assurent que les hypoth\`eses de \cite[(3.1.11)]{Ki} sont satisfaites, l'hypoth\`ese $w_i'\!\nmid \!p$ \'etant superflue dans la preuve lorsque la repr\'esentation $\tau$ de {\it loc.cit.} est triviale).\\
Pour finir, on proc\`ede exactement comme dans la preuve de \cite[(3.1.5)]{Ge} : pour toutes les places finies $v\in S$ telles que $N_v \neq 0$, on choisit la composante irr\'eductible (de l'anneau de d\'eformation locale ``cadr\'ee'' (framed)) param\'etrant les relev\'es conjugu\'es \`a $\mu_v\smat{\varepsilon&*\\0&1}$ avec $\mu_v:\gFv\to \oE^\times$ choisi tel que le relev\'e $\rho_v$ de l'\'enonc\'e v\'erifie $\rho_v\sim \mu_v\smat{\varepsilon&*\\0&1}$ (cf. \cite[(3.2.6)]{KW} pour une analyse de cette composante lorsque $v$ divise $p$). En travaillant au-dessus de $F' \= F_r$, on voit que l'anneau de d\'eformation qui en r\'esulte poss\`ede un point modulaire, et donc tous ses points \`a valeurs dans $\Zpbar$ sont modulaires et il est fini sur $\Z_p$. Il s'ensuit que $R_{F',S}^{\psi,\tau}$ (avec la notation de \cite{Ge}) est fini sur $\Z_p$, donc a un point \`a valeurs dans $\Zpbar$. De plus, si $\rho$ est la d\'eformation correspondante, alors $\rho|_{{\rm Gal}(\Qbar/F')}$ est modulaire, et donc aussi $\rho$. Cela conclut la preuve.
\end{proof}

Notons {\bf(H0)} l'hypoth\`ese de compatibilit\'e suivante entre $D$ et $\rhobar$ :
\begin{itemize}
\item[{\bf(H0)}]Pour chaque place $v$ de $F$ telle que $v\!\nmid\!p$ et $D_v$ est ramifi\'ee, la repr\'esentation $\rhobar|_{\gFv}$ est soit irr\'eductible soit isomorphe \`a une repr\'esentation de la forme $\overline\mu_v\smat{\omega & *\\ 0& 1}$ pour un caract\`ere $\overline\mu_v: \gFv \to k_E^\times$.
\end{itemize}

\begin{cor}\label{nonzero}
Supposons $p>2$, $\rhobar:\gF \to \GL_2(k_E)$ modulaire, $\rhobar|_{{\Gal}(\Qbar/F(\sqrt[p]{1}))}$ irr\'eductible et, si $p=5$, l'image de $\rhobar({\Gal}(\Qbar/F(\sqrt[p]{1})))$ dans ${\rm PGL}_2(k_E)$ non isomorphe \`a ${\rm PSL}_2(\F_5)$. Alors $\pi_D(\rhobar) \neq 0$ (voir \S~\ref{globalprelim}) si et seulement si l'hypoth\`ese {\bf(H0)} est v\'erifi\'ee.
\end{cor}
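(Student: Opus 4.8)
Le plan est de procéder par double implication, la plus délicate étant que $\pi_D(\rhobar) \neq 0$ force \textbf{(H0)}. Pour ce sens, on commencera par extraire de la non-nullité de $\pi_D(\rhobar)$ une représentation automorphe : comme $\rhobar$ est irréductible, elle se plonge dans $H^1_{\et}(X_{U,\Qbar},k_E)(1)$ pour un $U$ assez petit, donc apparaît comme facteur de Jordan--Hölder de la réduction modulo $\pE$ d'un $\oE$-réseau dans $\bigoplus_\pi \pi_f^U \otimes \rho_\pi$, ce qui fournira une représentation automorphe $\pi$ de $(D\otimes_\Q\A)^\times$ avec $\pi_\infty \cong \sigma_2$ et $\rhobar_\pi \cong \rhobar$. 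On fixera ensuite une place $v \nmid p$ où $D_v$ est ramifiée : puisque $\pi_v$ est une représentation lisse de $D_v^\times$, le facteur de Jacquet--Langlands impose (cf.~\S\ \ref{globalprelim}) que $\WD(\rho_\pi|_{\gFv})$ soit indécomposable. On discutera alors selon la partie monodromique $N$ de ce type : si $N \neq 0$, alors $\rho_\pi|_{\gFv}$ est une tordue de la représentation de Steinberg, d'où $\rhobar|_{\gFv} \cong \overline\mu_v\smat{\omega & * \\ 0 & 1}$ ; si $N = 0$, alors $\rho_\pi|_{\gFv}$ est supercuspidale, donc irréductible, et il faudra vérifier qu'une telle représentation ne peut avoir pour réduction modulo $\pE$ ni une extension non scindée de deux caractères distincte de $\smat{\omega & * \\ 0 & 1}$ (une telle réduction ne provenant que du cas $N \neq 0$), ni une somme directe de deux caractères de rapport différent de $\omega^{\pm 1}$ ; on l'obtiendra en calculant les semi-simplifications modulo $\pE$ des types supercuspidaux (diédraux) de $\GL_2(F_v)$, la réduction restant irréductible sauf configurations très particulières du caractère inducteur.

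Pour la réciproque, on supposera \textbf{(H0)} et on construira la forme automorphe sur $D$ voulue via le théorème \ref{lifts}. On prendra $\psi$ relevant $\det\rhobar$ avec $\psi\varepsilon^{-1}$ d'ordre fini (possible puisque $\det\rhobar$ est d'ordre fini), $S$ l'ensemble des places divisant $p$ ou où $\rhobar$ est ramifiée (en particulier celles où $D$ est ramifiée), $T$ quelconque (par exemple vide), et, pour chaque $v \in S$, un type de Weil--Deligne $[r_v,N_v]$ accompagné d'un relevé local $\rho_v$ de $\rhobar|_{\gFv}$ de déterminant compatible avec $\psi$ : aux places $v|p$, un relevé potentiellement Barsotti--Tate de poids de Hodge--Tate $(0,1)$ fourni par la modularité de $\rhobar$ (modulaire d'un poids de Serre, après torsion éventuelle pour normaliser les poids) ; aux places $v \nmid p$ où $D_v$ est déployée, un relevé quelconque ; aux places $v \nmid p$ où $D_v$ est ramifiée, on utilisera \textbf{(H0)} : si $\rhobar|_{\gFv}$ est irréductible on prendra $r_v$ irréductible et $N_v = 0$ (en relevant $\rhobar|_{\gFv}$ en une représentation irréductible de $\gFv$), et si $\rhobar|_{\gFv} \cong \overline\mu_v\smat{\omega & * \\ 0 & 1}$ on prendra la tordue de Steinberg $\mu_v\smat{\varepsilon & * \\ 0 & 1}$ ($N_v \neq 0$, avec sous-représentation de dimension $1$ relevant $\overline\mu_v\omega$, ce qui garantit la condition (iv) de \emph{loc.cit.}). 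Le théorème \ref{lifts} produira alors un relevé global $\rho$ de $\rhobar$ provenant d'une forme de Hilbert de poids $(2,\dots,2)$ et discrète série (tordue de Steinberg ou supercuspidale) en chaque place finie de ramification de $D$ ; son transfert de Jacquet--Langlands à $(D\otimes_\Q\A)^\times$ donnera une représentation automorphe $\pi$ avec $\pi_\infty \cong \sigma_2$ et $\rho_\pi \cong \rho$, donc $\rhobar_\pi \cong \rhobar$. On conclura en observant que $\pi^0$ (cf.~\eqref{pi0}) est non nulle — son produit tensoriel avec $\Qpbar$ étant $\Qpbar \otimes_{\Qbar} \pi_f \neq 0$ — et sans vecteur divisible, d'où $k_E \otimes_{\oE} \pi^0 \neq 0$, puis en invoquant le lemme \ref{pi0bar} qui donne $k_E \otimes_{\oE} \pi^0 \hookrightarrow \pi_D(\rhobar)$, de sorte que $\pi_D(\rhobar) \neq 0$.

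Le point le plus difficile sera l'analyse locale aux places $v \nmid p$ de ramification de $D$ dans le sens direct : il faudra caractériser exactement quelles représentations résiduelles $\rhobar|_{\gFv}$ admettent un relevé de type de Weil--Deligne indécomposable, autrement dit contrôler finement les réductions modulo $\pE$ des tordues de Steinberg et des types supercuspidaux de $\GL_2(F_v)$ et montrer qu'elles sont toujours irréductibles ou de la forme $\smat{\omega & * \\ 0 & 1}$. Le sens réciproque, lui, est essentiellement une vérification — un peu longue mais standard — des hypothèses du théorème \ref{lifts}, suivie d'un transfert de Jacquet--Langlands et de l'utilisation du lemme \ref{pi0bar}.
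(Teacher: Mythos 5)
Dans le sens r\'eciproque ((\textbf{H0}) implique $\pi_D(\rhobar) \ne 0$), votre strat\'egie est essentiellement celle de l'article (th\'eor\`eme \ref{lifts}, transfert de Jacquet--Langlands, lemme \ref{pi0bar}), mais vous glissez sur un point qui doit \^etre trait\'e avec soin : le th\'eor\`eme \ref{lifts} demande de fixer \`a l'avance des types de Weil--Deligne $[r_w,N_w]$, et son hypoth\`ese (ii) impose que $T$ soit exactement l'ensemble des $w|p$ o\`u le relev\'e local choisi est potentiellement ordinaire. \'Ecrire ``$T$ quelconque (par exemple vide)'' tout en prenant ``un relev\'e potentiellement Barsotti--Tate fourni par la modularit\'e'' sans contr\^oler ni son type de Weil--Deligne ni son ordinarit\'e laisse donc une lacune. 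L'article la comble explicitement : en combinant les propositions \ref{app:SCoGL2} et \ref{app:SCeGL2} de l'appendice (tout poids de Serre est constituant de la r\'eduction d'un $K$-type supercuspidal) avec le lemme \ref{wts_types}, on produit aux places $w|p$ des relev\'es potentiellement Barsotti--Tate de type de Weil--Deligne supercuspidal, jamais potentiellement ordinaires, ce qui permet de prendre $T=\emptyset$ (il faut au pr\'ealable r\'ealiser $\rhobar$ sur une alg\`ebre de quaternions $D'$ non ramifi\'ee au-dessus de $p$ afin de disposer des poids de Serre). Ces compl\'ements sont faisables mais indispensables.

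Le sens direct ($\pi_D(\rhobar) \ne 0$ implique (\textbf{H0})) contient en revanche une lacune plus s\'erieuse. Vous identifiez correctement ce qu'il faut montrer dans le cas $N=0$ (si la r\'eduction de $\rho_\pi|_{\gFv}$ est r\'eductible, ses deux caract\`eres doivent diff\'erer de $\omega^{\pm 1}$), mais l'outil que vous proposez --- ``calculer les semi-simplifications modulo $\pE$ des types supercuspidaux (di\'edraux) de $\GL_2(F_v)$'' --- n'est pas le bon. D'une part, les calculs de l'appendice portent sur des $K$-types de $\GL_2(\oL)$ avec $L$ une extension de $\Qp$, donc sur des places \emph{au-dessus de} $p$, alors qu'ici $v\nmid p$. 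D'autre part et plus fondamentalement, la r\'eduction modulo $p$ d'un $K$-type de $\GL_2(F_v)$ est une assertion sur des repr\'esentations du groupe $\GL_2(\oFv)$, et n'entra\^\i ne rien directement sur la r\'eduction de la repr\'esentation \emph{galoisienne} $\rho_\pi|_{\gFv}$ : il faudrait pour cela une compatibilit\'e locale modulo $p$ hors de $p$ qui n'est ni invoqu\'ee ni n\'ecessaire. L'ingr\'edient utilis\'e par l'article est purement galoisien (et bien connu) : pour $v\nmid p$ et $p>2$, une repr\'esentation irr\'eductible $\rho_\pi|_{\gFv}$ de r\'eduction r\'eductible est n\'ecessairement l'induite d'un caract\`ere $\psi$ de l'extension quadratique \emph{non ramifi\'ee} $L/F_v$ avec $\overline{\psi}^\sigma=\overline{\psi}$ et $q_v\equiv -1\bmod p$; le caract\`ere quadratique non ramifi\'e co\"\i ncide alors avec $\omega^{\pm 1}$, et $\rhobar|_{\gFv}$ est ainsi de la forme requise par (\textbf{H0}). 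C'est cet ingr\'edient que votre preuve ne fournit pas.
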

\begin{proof}
Notons d'abord que $\pi_D(\rhobar)$ est non nul si et seulement si $\rhobar=\rhobar_\pi$ pour une repr\'esentation automorphe $\pi$ de $(D\otimes_{\Q}\A)^\times$ telle que $\pi_\infty \cong \sigma_2$. En effet, si une telle $\pi$ existe alors le lemme \ref{pi0bar} montre que $\pi_D(\rhobar) \neq 0$. R\'eciproquement, si $\pi_D(\rhobar) \neq 0$ alors $\rhobar$ est une sous-repr\'esentation de la r\'eduction d'un r\'eseau dans $H^1_{\et}(X_{U,\Qbar},E)(1)$ pour un $U$ convenable, et la repr\'esentation $H^1_{\et}(X_{U,\Qbar},E)(1)$ est une somme directe de telles repr\'esentations $\rho_\pi$ (quitte \`a agrandir $E$). En parti\-culier, si $\pi_D(\rhobar)\neq 0$ et $D$ est ramifi\'ee en $v$, alors $\rhobar$ poss\`ede un relev\'e modulaire $\rho$ tel que $\rho|_{\gFv}$ est soit irr\'eductible, soit un tordu d'une repr\'esentation de la forme $\smat{\varepsilon & *\\ 0& 1}$. De plus, il est bien connu que si $\rho|_{\gFv}$ est irr\'eductible mais a une r\'eduction 
r\'eductible, alors c'est une induite d'un caract\`ere $\psi:{\rm Gal}(\overline{F_v}/L) \to \oE^\times$ tel que $\overline{\psi}^\sigma = \overline{\psi}$ o\`u $q_v\equiv -1\bmod p$, $L$ est l'extension quadratique non ramifi\'ee de $F_v$ et $\sigma$ le g\'en\'erateur de $\Gal(L/F_v)$. On en d\'eduit que $\rhobar|_{\gFv}$ est dans tous les cas de la forme requise pour que {\bf(H0)} soit v\'erifi\'ee.\\
R\'eciproquement, supposons {\bf(H0)} satisfaite. Comme $\rhobar$ est modulaire, on a $\rhobar \cong \rhobar_{\pi'}$ pour une repr\'esentation automorphe $\pi'$ de $(D'\otimes_{\Q}\A)^\times$ telle que $\pi'_\infty \cong \sigma_2$ o\`u $D'$ est une alg\`ebre de quaternions sur $F$ non ramifi\'ee aux places divisant $p$ et en une place infinie exactement (voir par exemple \cite[(2.12)]{BDJ}). Soit $\psi \= \det\rho_{\pi'}$ et, pour chaque $v|p$, soit $\tau_v$ un poids de Serre tel que $\rhobar$ est modulaire de poids $\tau_v$ (en $v$) par rapport \`a $D'$. Par la proposition \ref{app:SCoGL2} (ou la proposition \ref{app:SCeGL2}) de l'appendice, $\tau_v$ est un constituant de la r\'eduction d'un $K$-type supercuspidal, donc par le lemme \ref{wts_types} pour chaque $v|p$ la repr\'esentation $\rhobar|_{\gFv}$ admet un relev\'e potentiellement semi-stable $\rho_v$ avec tous ses poids de Hodge-Tate $(0,1)$ et de type de Weil-Deligne correspondant \`a une repr\'esentation supercuspidale. Pour chaque $v\!\nmid\!p$ o\`u $D$ est ramifi\'ee, l'hypoth\`ese {\bf(H0)} assure que $\rhobar|_{\gFv}$ admet un relev\'e $\rho_v$ de type de Weil-Deligne sp\'ecial ou supercuspidal. Comme $\det\rhobar|_{I_v} = \overline\psi|_{I_v}$ pour tout $v$, on peut tordre chaque $\rho_v$ par un caract\`ere d'ordre une puissance de $p$ de sorte que $\det\rho_v|_{I_v} = \psi|_{I_v}$. Maintenant par le th\'eor\`eme \ref{lifts} et la correspondance de Jacquet-Langlands, on a $\rhobar\cong\rhobar_{\pi}$ pour une repr\'esentation automorphe $\pi$ de $(D\otimes_{\Q}\A)^\times$ telle que $\pi_\infty \cong \sigma_2$. On en d\'eduit $\pi_D(\rhobar)\neq 0$.
\end{proof}

M\^eme si l'objectif principal de cet article concerne les propri\'et\'es de l'hypoth\'e\-tique facteur local $\pi_{D,v}(\rhobar)$ en (\ref{factorization}) pour $v|p$ lorsque $D_v^{\times} \cong \GFv$ et $F_v$ est non ramifi\'ee sur $\Q_p$, on en profite pour signaler le r\'esultat surprenant suivant lorsque $D_v$ reste une alg\`ebre de quaternions en $v|p$.

\begin{cor}\label{extra}
Supposons $p>2$, $\rhobar:\gF \to \GL_2(k_E)$ modulaire, l'hypoth\`ese {\bf(H0)} satisfaite, $\rhobar|_{{\Gal}(\Qbar/F(\sqrt[p]{1}))}$ irr\'eductible et, si $p=5$, l'image de $\rhobar({\Gal}(\Qbar/F(\sqrt[p]{1})))$ dans ${\rm PGL}_2(k_E)$ non isomorphe \`a ${\rm PSL}_2(\F_5)$. Si  (\ref{factorization}) est vrai alors pour toute place $v$ de $F$ divisant $p$ o\`u $D$ est ramifi\'ee la repr\'esentation $\pi_{D,v}(\rhobar)$ est de longueur infinie.
\end{cor}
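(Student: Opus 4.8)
The plan is to show that $\pi_{D,v}(\rhobar)$ has a Jordan--H\"older constituent of infinite multiplicity, which forces infinite length. Recall first that, since $p$ is odd and $D_v=D\otimes_F\Fv$ is a division algebra, every irreducible smooth $k_E$-representation of $D_v^\times$ is trivial on the maximal normal pro-$p$ subgroup $1+\gm_{D_v}$ of $D_v^\times$ (where $\gm_{D_v}$ is the maximal ideal of $\oDv$), hence factors through $D_v^\times/(1+\gm_{D_v})$; in particular it is at most two-dimensional, and the set of those with a fixed central character is finite, say $\{\kappa_1,\dots,\kappa_M\}$ (they are, up to a fixed twist, the irreducible representations of the finite group $D_v^\times/((1+\gm_{D_v})\Fv^\times)$). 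The strategy is then: (i) attach to $\rhobar$ a family of automorphic representations of $(D\otimes_\Q\A)^\times$ whose $v$-components are supercuspidal of unbounded dimension, but of fixed reduction of the central character and of bounded ramification away from $v$; (ii) combine this with Lemma~\ref{pi0bar} and the hypothetical factorisation \eqref{factorization} to transfer ``unboundedly many constituents with multiplicity'' into $\pi_{D,v}(\rhobar)$.

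For (i), I would first note, as in the proof of Corollary~\ref{nonzero} (using that $\rhobar$ is modular of some Serre weight $\tau$ at $v$, the computation of the reduction mod $p$ of supercuspidal Bushnell--Kutzko types in the appendix, and Lemma~\ref{wts_types}), that $\rhobar|_{\gFv}$ admits a potentially Barsotti--Tate lift of Weil--Deligne type $[r,0]$ with $r$ absolutely irreducible, arising from a $\GL_2$-type $\vartheta$ for $[r,0]$ some constituent of whose reduction mod $p$ is $\tau$. For each $n$ choose a wildly ramified character $\psi_n:\gFv\to\oE^\times$ of order $p$ and Artin conductor $\geq n$ (these exist because the graded pieces of the unit filtration of $\Fv$ are $p$-torsion); then $\vartheta\otimes(\psi_n\circ\det)$ is a $\GL_2$-type for $[r\otimes\psi_n,0]$, and since $\overline{\psi_n}=1$ its reduction mod $p$ still has $\tau$ as a constituent, so $\rhobar|_{\gFv}$ admits a potentially Barsotti--Tate lift of type $[r\otimes\psi_n,0]$ by Lemma~\ref{wts_types}, with $r\otimes\psi_n$ absolutely irreducible of Artin conductor tending to infinity. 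Feeding this local datum at $v$, the fixed local data at the finitely many other bad places from the proof of Corollary~\ref{nonzero}, the choice $T=\emptyset$, and a global determinant $\psi$ lifting $\det\rhobar$ (arranged, by class field theory with ramification concentrated at $v$, to have the required restriction to $I_v$) into Theorem~\ref{lifts}, and transferring to $D$ by Jacquet--Langlands exactly as in Corollary~\ref{nonzero}, I obtain automorphic representations $\pi^{(n)}$ of $(D\otimes_\Q\A)^\times$ with $\pi^{(n)}_\infty\cong\sigma_2$, $\rhobar_{\pi^{(n)}}\cong\rhobar$, with $\pi^{(n)}_v$ supercuspidal corresponding to $r\otimes\psi_n$ (hence $\dim_E\pi^{(n)}_v=:m_n\to\infty$), and with $\rho_{\pi^{(n)}}$ unramified outside a fixed finite set $S$ and of bounded type at each place of $S\setminus\{v\}$. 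Since $\overline{\det\rho_{\pi^{(n)}}}=\det\rhobar$ for all $n$, and the central character of $\pi^{(n)}_v$ is, up to a fixed twist coming from the Hodge--Tate weights and the chosen normalisation, the image under reciprocity of $\det\rho_{\pi^{(n)}}|_{\gFv}$, the reduction $\overline\omega_v$ of the central character of $\pi^{(n)}_v$ is independent of $n$.

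For (ii), fix $n$ large and consider the semisimplification $\overline{\pi^{(n)}_v}^{\mathrm{ss}}$ of the reduction mod $p$ of $\pi^{(n)}_v$: it is a $k_E$-representation of $D_v^\times$ of dimension $m_n$ all of whose constituents lie among $\kappa_1,\dots,\kappa_M$ (they have central character $\overline\omega_v$), so $\sum_{i}(\dim_{k_E}\kappa_i)\,\mathrm{mult}(\kappa_i,\overline{\pi^{(n)}_v}^{\mathrm{ss}})=m_n\to\infty$ forces, after passing to a subsequence, $\mathrm{mult}(\kappa_{i_0},\overline{\pi^{(n)}_v}^{\mathrm{ss}})\to\infty$ for some fixed $i_0$. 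Choose a compact open $U^v\subset\prod_{w\neq v}D_w^\times$, uniformly in $n$ (possible by the bounded ramification away from $v$), with $(\pi^{(n)}_w)^{U_w}\neq0$ and $\pi_{D,w}(\rhobar)^{U_w}\neq0$ for all $w\neq v$; then $((\pi^{(n)})^0)^{U^v}$ is, as a $D_v^\times$-module, a saturated $\oE$-lattice in $(\pi^{(n)}_v)^{\oplus d}$ with $d=\prod_{w\neq v}\dim_E(\pi^{(n)}_w)^{U_w}$ a positive integer independent of $n$, so by Brauer--Nesbitt the semisimplification of its reduction mod $p$ contains $\kappa_{i_0}$ with multiplicity $d\cdot\mathrm{mult}(\kappa_{i_0},\overline{\pi^{(n)}_v}^{\mathrm{ss}})$, and this reduction embeds $D_v^\times$-equivariantly into $\pi_D(\rhobar)^{U^v}$ by Lemma~\ref{pi0bar}. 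By \eqref{factorization} and admissibility, $\pi_D(\rhobar)^{U^v}$ is $D_v^\times$-equivariantly isomorphic to $\pi_{D,v}(\rhobar)^{\oplus e}$ with $e=\prod_{w\neq v}\dim_{k_E}\pi_{D,w}(\rhobar)^{U_w}$ finite and independent of $n$. Comparing Jordan--H\"older multiplicities of $\kappa_{i_0}$ yields $d\cdot\mathrm{mult}(\kappa_{i_0},\overline{\pi^{(n)}_v}^{\mathrm{ss}})\leq e\cdot\mathrm{mult}(\kappa_{i_0},\pi_{D,v}(\rhobar))$ for all $n$, whence $\mathrm{mult}(\kappa_{i_0},\pi_{D,v}(\rhobar))=\infty$ and $\pi_{D,v}(\rhobar)$ is of infinite length.

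The main obstacle is step (i): one must produce potentially Barsotti--Tate lifts of $\rhobar|_{\gFv}$ with absolutely irreducible Weil--Deligne parameter of arbitrarily large Artin conductor while keeping the residual representation, the labelled Hodge--Tate weights and the reduction of the central character all fixed. The twist by a wildly ramified character of order $p$ handles the conductor and the Hodge-theoretic and residual constraints cheaply, so the substance is the existence of one such lift, supplied (as in Corollary~\ref{nonzero}) by the appendix's computation of reductions of supercuspidal $K$-types together with Lemma~\ref{wts_types}; the remaining work is bookkeeping around Theorem~\ref{lifts}, Jacquet--Langlands and class field theory, all legitimate here thanks to \textbf{(H0)}. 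Everything above uses crucially that $D_v$ is a division algebra, through the structure of the mod $p$ representations of $D_v^\times$: for $D_v=\GFv$ there is no normal pro-$p$ open subgroup, the finite list $\{\kappa_1,\dots,\kappa_M\}$ and hence the pigeonhole argument collapse, consistently with $\pi_{D,v}(\rhobar)$ being of finite length in that case.
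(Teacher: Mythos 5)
Your overall scheme—produce a family of automorphic representations with $v$-components of unbounded dimension, then feed this through Lemma~\ref{pi0bar} and \eqref{factorization} to conclude infinite length—is the same as the paper's, and your step~(ii) is essentially correct (though the pigeonhole via Brauer--Nesbitt is more than needed: once $\pi_D(\rhobar)^{U^v}$ is shown infinite-dimensional, \eqref{factorization} and the fact that every smooth irreducible $k_E$-representation of the compact-mod-centre group $D_v^\times$ is finite-dimensional already give infinite length). The decisive difference, however, lies in step~(i), and there it contains a gap that is fatal to the argument as written.

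The problem is that twisting by a character does not increase the dimension of the supercuspidal representation on the division-algebra side. If $\pi_v$ corresponds to $(r,0)$, then the representation corresponding to $(r\otimes\psi_n,0)$ is $\pi_v\otimes(\psi_n\circ\det)$, and its Jacquet--Langlands transfer to $D_v^\times$ is $\mathrm{JL}(\pi_v)\otimes(\psi_n\circ\mathrm{Nrd})$, a twist by a \emph{one-dimensional} character of $D_v^\times$. Hence $\dim_E\pi^{(n)}_v = \dim_E\pi^{(0)}_v$ for all $n$: your $m_n$ is constant, not tending to infinity. Equivalently, the dimension of the $D_v^\times$-representation is governed by the \emph{essential} conductor of the Weil--Deligne parameter (the minimal conductor among its character twists), which is by definition a twist-invariant, so raising the raw Artin conductor by a wild twist gains nothing. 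This is exactly why the paper does not twist: it invokes Proposition~\ref{app:SCoD} to construct, for every $m\geq0$, a supercuspidal $D_v^\times$-type $\vartheta_{v,m}$ of \emph{essential} conductor $2m+3$ whose reduction contains the fixed Serre weight $\tau_v$, and it is the formula $\overline{\vartheta}_{v,m}=q_v^{m}\ind_{Z I_1}^{K}\psi$ that yields $\dim_{\Qbar}\pi_{m,f}^{U^v}=C q_v^{m}\to\infty$. That explicit input from the appendix cannot be replaced by a twisting trick. (A secondary issue, now moot: characters of $\gFv$ of order exactly $p$ have bounded Artin conductor, since $(\oFv^\times)^p\supset 1+\gm_{\Fv}^{N}$ for some $N$ depending only on $\Fv$; you would need $p$-power order, not order $p$. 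And the type $\vartheta$ at $v$ should be a $K$-type for the division algebra $\oDv^\times$, not a $\GL_2$-type, since $D$ is ramified at $v$.)
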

\begin{proof}
Soit $\pi$ une repr\'esentation automorphe de $(D\otimes_{\Q}\A)^\times$ telle que $\pi_\infty \cong \sigma_2$ et $\rhobar_\pi \cong \rhobar$. Soit $S$ l'ensemble des places $w\neq v$ de $F$ telles que ou bien $w|p$ ou bien $\rho_\pi|_{\gFw}$ est ramifi\'ee ($S$ contient donc en particulier les places finies distinctes de $v$ o\`u $D$ est ramifi\'ee) et soit $\psi \= \det\rho_\pi$. Pour chaque $w\in S$, soit $[r_w,N_w]$ le type de Weil-Deligne de $\rho_\pi|_{\gFw}$. Soit $\tau_v$ un poids de Serre quelconque pour lequel $\rhobar$ est modulaire en $v$ par rapport \`a $D$. Par la proposition \ref{app:SCoD} (cf. appendice), pour tout $m \ge 0$ il existe un $K$-type supercuspidal $\vartheta_{v,m}$ de conducteur essentiel $2m+3$ dont la r\'eduction contient $\tau_v$ comme constituant. Par le lemme \ref{wts_types}, on en d\'eduit que $\rhobar|_{\gFv}$ admet un relev\'e $\rho_{v,m}$ potentiellement semi-stable \`a poids de Hodge-Tate $(0,1)$ de type de Weil-Deligne irr\'eductible $[r_{v,m},0]$ de conducteur essentiel $2m+3$. Quitte \`a tordre par un caract\`ere, on peut de plus supposer $\det\rho_{v,m}|_{I_v} = \psi|_{I_v}$. Par le th\'eor\`eme \ref{lifts} et la correspondance de Jacquet-Langlands, on a donc une repr\'esentation automorphe $\pi_m$ de $(D\otimes_{\Q}\A)^\times$ telle que $\pi_{m,\infty}\cong \sigma_2$, $\rhobar_{\pi_m} \cong \rhobar$, $\rho_{\pi_m}|_{{\rm Gal}(\overline{F_w}/F_w)}$ est de type de Weil-Deligne $[r_w,N_w]$ pour tout $w \in S$ et $\rho_{\pi_m}|_{\gFv}$ est de type de Weil-Deligne $[r_{v,m},0]$. Pour $w\in S$ et pour un sous-groupe ouvert compact $U_w \subseteq {\mathcal O}_{D_w}^\times$, la dimension (sur $\Qbar$) de $\pi_{m,w}^{U_w}$ est ind\'ependante de $m$ (car la compatibilit\'e local-global et le fait que $\rho_{\pi_m}|_{{\rm Gal}(\overline{F_w}/F_w)}$ est de type de Weil-Deligne ind\'ependant de $m$ impliquent que la restriction $\pi_{m,w}|_{{\mathcal O}_{D_w}^\times}$ ne d\'epend pas de $m$). Soit $U^v \= \prod_{w\neq v} U_w$ o\`u, pour $w\in S$, $U_w$ est suffisamment petit pour que $\pi_{m,w}^{U_w}\neq 0$ et o\`u $U_w = {\mathcal O}_{D_w}^\times$ pour $w\not\in S$. Alors on a $\dim_{\Qbar} \pi_{m,f}^{U^v} = C q_v^m$ pour un entier $C$ strictement positif et ind\'ependant de $m$ (voir la proposition \ref{app:SCoD} de l'appendice). Par le lemme \ref{pi0bar} on a donc $\dim_{k_E} \pi_D(\rhobar)^{U^v} \ge Cq_v^m$. Comme c'est vrai pour tout $m$, on en d\'eduit que $\pi_D(\rhobar)^{U^v}$ est de dimension infinie. Si (\ref{factorization}) est vrai, on voit donc que $\pi_{D,v}(\rhobar)$ est aussi de dimension infinie. Comme $D_v^\times$ est compact modulo son centre, toutes ses repr\'esentations lisses irr\'eductibles sur $k_E$ sont de dimension finie. On en d\'eduit que $\pi_{D,v}(\rhobar)$ est de longueur infinie.
\end{proof}

Nous donnerons au \S\ \ref{unI} ci-dessous une variante du corollaire \ref{extra} (cf. corollaire \ref{extra+}) qui s'applique, elle, \`a une ``vraie'' repr\'esentation $\pi_{D,v}(\rhobar)$ (et non plus conjecturale) que nous d\'efinissons maintenant.

\subsection{Le facteur local}\label{facteur}

Si $\rhobar$ est modulaire et si $v$ est une place de $F$ divisant $p$, on d\'efinit de mani\`ere ad hoc (sous quelques hypoth\`eses techniques assez faibles) un ``facteur local'' $\pi_{D,v}(\rhobar)$ d\'ependant de la repr\'esentation globale $\rhobar$.

On suppose $p>2$ et on fixe une repr\'esentation $\rhobar:\gF \to \GL_2(k_E)$ continue, modulaire et irr\'eductible en restriction \`a ${\Gal}(\Qbar/F(\sqrt[p]{1}))$ telle que, si $p=5$, l'image de $\rhobar({\Gal}(\Qbar/F(\sqrt[p]{1})))$ dans ${\rm PGL}_2(k_E)$ est non isomorphe \`a ${\rm PSL}_2(\F_5)$. On fixe une place $v$ de $F$ au-dessus de $p$ (on ne fait pas d'hypoth\`ese suppl\'ementaire sur $v$ dans cette section). On fixe aussi une alg\`ebre de quaternions $D$ sur $F$ v\'erifiant {\bf(H0)}, de sorte que $\pi_D(\rhobar)\neq 0$ par le corollaire \ref{nonzero}. On note $\oD$ un ordre maximal dans $D$ et $\oDw$ son adh\'erence dans $D_w$ pour une place finie quelconque $w$ de $F$. Dans toute cette section, on suppose de plus satisfaites les hypoth\`eses {\bf(H1)} et {\bf(H2)} suivantes :

\begin{itemize}
\item[{\bf(H1)}] Si $w$ est une place finie de $F$ o\`u $D$ est ramifi\'ee, alors $\rhobar|_{\gFw}$ est non scalaire.
\item[{\bf(H2)}] Si $w\ne v$ est une place de $F$ divisant $p$, alors $D_w$ est d\'eploy\'ee et $\rhobar|_{\gFw}$ est r\'eductible non scalaire.
\end{itemize}

L'hypoth\`ese {\bf(H1)} et la condition d'\^etre non scalaire dans l'hypoth\`ese {\bf(H2)} sont n\'ecessaires afin d'\^etre s\^ur que les facteurs locaux en dehors de $v$ ont des propri\'et\'es convenables de ``multiplicit\'e $1$''. Il devrait \^etre possible de supprimer l'hypoth\`ese de r\'eductibilit\'e dans {\bf(H2)}, i.e. de traiter des cas o\`u $\rhobar|_{\gFw}$ est irr\'eductible pour $w \neq v$ divisant $p$, en utilisant le travail r\'ecent de Cheng (\cite{Ch}), voir la remarque \ref{cheng}.

On ignore \`a l'heure actuelle si l'on dispose d'une factorisation (\ref{factorization}), mais on d\'efinit ci-dessous de mani\`ere ad hoc une repr\'esentation lisse admissible $\pi_{D,v}(\rhobar)$ de $D_v^{\times}$ sur $k_E$ (d\'ependant {\it a priori} de toute la repr\'esentation $\rhobar$) et dont on montrera au \S\ \ref{resprinc} que, au moins lorsque $F_v$ est non ramifi\'ee sur $\Qp$, $D_v$ est d\'eploy\'ee et $\rhobar|_{\gFv}$ est r\'eductible g\'en\'erique, elle co\"\i ncide avec le ``facteur local'' en $v$ dans (\ref{factorization}) si (\ref{factorization}) est v\'erifi\'ee (corollaire \ref{ok}). La repr\'esentation $\pi_{D,v}(\rhobar)$ est d\'efinie comme suit :
$$\pi_{D,v}(\rhobar) = \Hom_{k_E[U^v]}(\overline{M}^v,\pi_D(\rhobar))[\gm']$$
o\`u $\overline{M}^v$ est une repr\'esentation lisse irr\'eductible sur $k_E$ d'un sous-groupe ouvert compact $U^v$ de $\prod_{w\neq v}\oDw^\times$ et o\`u $\gm'$ est un id\'eal maximal dans une alg\`ebre de Hecke agissant sur $\Hom_{k_E[U^v]}(\overline{M}^v,\pi_D(\rhobar))$. Plus pr\'ecis\'ement, on d\'efinit ci-dessous des ensembles finis $S'\subseteq S$ de places finies de $F$, des repr\'esentations $\overline{M}_w$ (sur $k_E$) de sous-groupes ouverts compacts $U_w  \subset \oDw^\times$ pour $w\in S$, et des op\'erateurs de Hecke $T_w$ et des scalaires $\alpha_w \in k_E^\times$ pour $w\in S'$ tels que : 
$$U^v = \prod_{w\in S} U_w \!\!\!\prod_{w\not\in S\cup\{v\}}\!\!\!\!\oDw^\times,\ \ \ \ \overline{M}^v = \otimes_{w\in S}  \overline{M}_w$$
et tels que $\gm'$ est l'id\'eal maximal de $k_E[T_w,w\in S']$ engendr\'e par les $T_w-\alpha_w$ pour $w\in S'$. Si $w$ est une place finie de $F$, $\varpi_w$ une uniformisante de $F_w$ et $n\in \Z_{>0}$, on \'ecrit modulo $w^n$ pour modulo $\varpi_w^n$. 

On pose :
$$S \= \{w \neq v,\ \mbox{$w|p\Disc(D)$ ou bien $\rhobar|_{\gFw}$ est ramifi\'ee}\},$$
et :
$$S' \= \{w\neq v,\ w|p\} \ \ \coprod \ \ \{w\neq v,\ w|\Disc(D)\ {\rm et}\ \mbox{$\rhobar|_{\gFw}$ est r\'eductible}\}\subseteq S.$$
On d\'efinit $U_w$, $\overline{M}_w$ (pour $w\in S$) et $T_w$, $\alpha_w$ (pour $w\in S'$) au cas par cas en distinguant les quatre cas suivants (pour $w\in S$) :
\begin{eqnarray*}
{\rm Cas\ I} &:& w|p\\
{\rm Cas\ II} &:& w|\Disc(D)\ {\rm et}\ \rhobar|_\gFw\ {\rm est\ r\acute eductible}\\
{\rm Cas\ III} &:& w\!\nmid \!p\Disc(D)\ {\rm et}\ \rhobar|_\gFw\ {\rm est\ r\acute eductible\ (ramifi\acute ee)}\\
{\rm Cas\ IV} &:& \rhobar|_\gFw\ {\rm est\ irr\acute eductible\ (donc\ ramifi\acute ee).}
\end{eqnarray*}

Notons que $w\in S'$ correspond aux cas I et II et $w\in S\backslash S'$ aux cas III et IV (par {\bf(H2)}).

Cas I : Par {\bf(H2}), d'une part l'alg\`ebre de quaternions $D$ est d\'eploy\'ee en $w$ et on a donc un isomorphisme $\oDw^\times\cong \GL_2(\oFw)$, d'autre part $\rhobar|_\gFw$ est r\'eductible et on peut donc \'ecrire :
$$\rhobar|_\gFw \cong \begin{pmatrix} \overline{\xi}_w\omega & * \\ 0 & \overline{\xi}_w' \end{pmatrix}$$
pour des caract\`eres $\overline{\xi}_w,\overline{\xi}_w':\gFw \to k_E^\times$.

Si $\overline{\xi}_w|_{I_w} = \overline{\xi}_w'|_{I_w}$ et $\overline{\xi}_w^{-1}\otimes \rhobar|_\gFw$ est la fibre g\'en\'erique d'un sch\'ema en groupes fini et plat sur $\oFw$, on pose $U_w \= \GL_2(\oFw)$. Sinon, on d\'efinit $U_w \subset \GL_2(\oFw)$ comme le sous-groupe des matrices triangulaires sup\'erieures modulo $w$.

En voyant $\overline{\xi}_w,\overline{\xi}_w'$ comme des caract\`eres de $F_w^\times$, on pose $\overline{M}_w \= k_E(\overline{\vartheta}_w)$ o\`u $\overline{\vartheta}_w:U_w \to k_E^\times$ est le caract\`ere d\'efini par $\overline{\vartheta}_w(g) \= \overline{\xi}_w(\det(g))$ si $U_w = \GL_2(\oFw)$ et $\overline{\vartheta}_w(g) \= \overline{\xi}_w(a)\overline{\xi}_w'(d)$ pour $g \equiv \smat{a & b \\ 0 & d} \bmod w$ sinon.

Pour $T_w$ et $\alpha_w$, on choisit une uniformisante $\varpi_w$ de $\oFw$, puis on d\'efinit $T_w$ comme la double classe $V_w\smat{\varpi_w & 0 \\ 0 & 1} V_w$ o\`u $V_w \= \ker(\overline{\vartheta}_w)$ et on pose $\alpha_w \= \overline{\xi}_w(\varpi_w)$.

Cas II : Par \ {\bf(H0)}, \ on \ a \ $\rhobar|_{\gFw} \cong \overline{\xi}_w\smat{\omega & *\\ 0& 1}$ \ pour \ un \ caract\`ere \ $\overline{\xi}_w: \gFw \to k_E^\times$. De plus, par {\bf(H1)}, $\rhobar|_{\gFw}$ est non scind\'ee si $|k_w| \equiv 1 \bmod p$ (car alors $\omega=1$).

On pose $U_w \= \oDw^\times$ et $\overline{M}_w \= k_E(\overline{\vartheta}_w)$ o\`u $\overline{\vartheta}_w \= \overline{\xi}_w\circ\det$ ($\det$ d\'esigne ici la norme r\'eduite). On choisit une uniformisante $\Pi_{D_w}$ de $\oDw$ puis on d\'efinit $T_w$ comme la double classe $V_w \Pi_{D_w} V_w$ o\`u $V_w \= \ker(\overline{\vartheta}_w)$ et on pose $\alpha_w \= \overline{\xi}_w(\det(\Pi_{D_w}))$.

Cas III : On fixe un isomorphisme $\oDw^\times\cong \GL_2(\oFw)$ ainsi qu'un caract\`ere $\overline{\xi}_w:\gFw \to k_E^\times$ tel que $\overline{\sigma}_w \= \overline{\xi}_w^{-1}\rhobar|_\gFw$ est de conducteur minimal parmi ses tordus. On note $n_w$ l'exposant du conducteur de $\overline{\sigma}_w$ (un entier positif ou nul) et $\overline{\mu}_w:F_w^\times \to k_E^\times$ le caract\`ere correspondant \`a $\det(\overline{\sigma}_w)$.

Si $\overline{\sigma}_w$ est non ramifi\'ee, on pose $U_w \= \GL_2(\oFw)$. Sinon, on d\'efinit $U_w \subset \GL_2(\oFw)$ comme le sous-groupe des matrices triangulaires sup\'erieures modulo $w^{n_w}$. On pose $\overline{M}_w \= k_E(\overline{\vartheta}_w)$ o\`u $\overline{\vartheta}_w:U_w \to k_E^\times$ est le caract\`ere d\'efini par $\overline{\vartheta}_w(g) \= \overline{\xi}_w(\det(g))$ si $U_w = \GL_2(\oFw)$ et $\overline{\vartheta}_w(g) \= \overline{\xi}_w(\det(g))\overline{\mu}_w(d)$ pour $g \equiv \smat{a & b \\ 0 & d} \bmod w^{n_w}$ sinon, et $V_w \= \ker(\overline{\vartheta}_w)$.

Cas IV : Soit $\rho_w:\gFw \to \GL_2(E)$ un relev\'e quelconque de $\rhobar|_\gFw$ et soit $\vartheta_w$ un $K$-type pour $[r,N]$ (voir d\'ebut du \S\ \ref{lifts0}) o\`u $r=\rho_w|_{I_w}$ et $N=0$. Par des r\'esultats de Vign\'eras, la repr\'esentation $\overline{\vartheta}_w$ reste toujours irr\'eductible (si $D$ est d\'eploy\'ee en $w$, cela suit de \cite[1.6]{Vi2} et \cite[3.11]{Vi2}, et si $D$ est ramifi\'ee en $w$, cela suit de \cite[Prop.9]{Vi1}, \cite[Prop.11]{Vi1} et \cite[Cor.12]{Vi1}). On pose $U_w \= \oDw^\times$, $V_w \= \ker(\overline\vartheta_w)$ et $\overline{M}_w \= \overline{\vartheta}_w$.

Ayant d\'efini $U_w$, $V_w$ et $\overline{M}_w$ pour tout $w\in S$, on pose :
$$U^v \= \prod_{w\in S}U_w\prod_{w\not\in S\cup\{v\}}\oDw^\times,\quad V^v \= \prod_{w\in S}V_w\prod_{w\not\in S\cup\{v\}}\oDw^\times$$ 
et $\overline{M}^v \= \otimes_{w\in S}\overline{M}_w$. Notons que $\overline M^v$ est une repr\'esentation de $U^v/V^v$. 

\begin{lem}\label{ws'}
Pour $w\in S'$, l'action naturelle de $T_w$ sur $\pi_D(\rhobar)^{V^v}$ induit une action sur :
$$\Hom_{k_E[U^v]}(\overline{M}^v,\pi_D(\rhobar)) = \Hom_{k_E[U^v/V^v]}(\overline{M}^v,\pi_D(\rhobar)^{V^v}).$$
\end{lem}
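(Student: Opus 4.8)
The plan is to unwind the claimed identification on the right‑hand side, reduce the statement to a property of the single Hecke operator $T_w$ at the place $w$, and then verify that property from the explicit description of $g_w$, $U_w$ and $V_w$ in the four cases of the construction. First I recall that $T_w=[V_wg_wV_w]$ is the double‑coset operator attached to $g_w\in D_w^\times$ (with $g_w=\smat{\varpi_w & 0\\ 0 & 1}$ in Case I and $g_w=\Pi_{D_w}$ in Case II), acting on the $V_w$‑invariants of any smooth $D_w^\times$‑representation by $x\mapsto\sum_i g_ix$ where $V_wg_wV_w=\coprod_i g_iV_w$ (a finite union). Since $\pi_D(\rhobar)^{V^v}$ is the space of $V_w$‑invariants, at the $w$‑coordinate, of the smooth $D_w^\times$‑representation obtained from $\pi_D(\rhobar)$ by taking invariants at the coordinates away from $w$, the operator $T_w$ acts on $\pi_D(\rhobar)^{V^v}$ and, involving only the $w$‑coordinate, it commutes with the action of $U_{w'}/V_{w'}$ for $w'\in S\setminus\{v,w\}$ and with $\oDw^\times[w'\ne v]$ for $w'\notin S\cup\{v\}$. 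Moreover $\overline M^v=\bigotimes_{w'\in S}\overline M_{w'}$ is a representation of $\prod_{w'\in S}(U_{w'}/V_{w'})$, hence factors through $U^v/V^v$, which yields $\Hom_{k_E[U^v]}(\overline M^v,\pi_D(\rhobar))=\Hom_{k_E[U^v/V^v]}(\overline M^v,\pi_D(\rhobar)^{V^v})$. So, given a $U^v$‑equivariant $\phi\colon\overline M^v\to\pi_D(\rhobar)^{V^v}$, it is enough to show $m\mapsto T_w(\phi(m))$ is again $U^v$‑equivariant; by the commutations just noted, the only thing left to check is equivariance for $U_w$.

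For $w\in S'$ (Cases I and II) the factor $\overline M_w$ is the character $\overline\vartheta_w$ of $U_w/V_w$, so the image of $\phi$ is a subspace of $\pi_D(\rhobar)^{V^v}$ on which $U_w$ acts through $\overline\vartheta_w$; hence it suffices to know that $T_w$ commutes with the action of $U_w/V_w$ on $\pi_D(\rhobar)^{V_w}$. Passing to the Hecke algebra $k_E[V_w\backslash D_w^\times/V_w]$, an element $u\in U_w$ acts as the invertible operator $\mathbf 1_{uV_w}$, and $\mathbf 1_{uV_w}\,T_w\,\mathbf 1_{uV_w}^{-1}$ is the operator attached to the double coset $V_w(ug_wu^{-1})V_w$; thus the desired commutation is equivalent to the assertion
$$\textnormal{(\dag)}\qquad ug_wu^{-1}\in V_wg_wV_w\qquad\textnormal{for all }u\in U_w,$$
i.e. that the double coset $V_wg_wV_w$ is stable under conjugation by $U_w$. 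This is the crux of the lemma, and the main obstacle: it is not a formal consequence of the definitions but a property of the particular choices of $g_w$, $U_w$, $V_w$, to be checked in each case.

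To verify (\dag) in Case II: $g_w=\Pi_{D_w}$ lies in the normaliser of the maximal order $\oDw$ and so normalises $U_w=\oDw^\times$; since the reduced norm is multiplicative with values in the commutative group $\Fw^\times$, one has $\mathrm{Nrd}(ug_wu^{-1}g_w^{-1})=1$, so $ug_wu^{-1}g_w^{-1}$ lies in $\ker(\mathrm{Nrd}\colon\oDw^\times\to\oFw^\times)\subseteq\ker(\overline\xi_w\circ\mathrm{Nrd})=V_w$, whence $ug_wu^{-1}\in V_wg_w\subseteq V_wg_wV_w$. In Case I one writes $u=vz$ with $z$ a diagonal matrix centralising $g_w$ and $v=uz^{-1}\in V_w$: take $z=\smat{\det u & 0\\ 0 & 1}$ when $U_w=\GL_2(\oFw)$ (then $\det v=1$, so $v\in\mathrm{SL}_2(\oFw)\subseteq\ker(\overline\xi_w\circ\det)=V_w$), and $z=\smat{u_{11} & 0\\ 0 & u_{22}}$ when $U_w$ is the upper‑triangular Iwahori subgroup mod $w$ (then $v\equiv\smat{1 & *\\ 0 & 1}\bmod w$, so $\overline\vartheta_w(v)=1$ and $v\in V_w$); in either case $ug_wu^{-1}=v(zg_wz^{-1})v^{-1}=vg_wv^{-1}\in V_wg_wV_w$. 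This establishes (\dag), hence that $T_w$ commutes with the $U_w/V_w$‑action, hence that $T_w$ preserves $\Hom_{k_E[U^v/V^v]}(\overline M^v,\pi_D(\rhobar)^{V^v})$; transported along the identification above, this is exactly the induced action on $\Hom_{k_E[U^v]}(\overline M^v,\pi_D(\rhobar))$ asserted by the lemma.
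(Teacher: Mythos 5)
Your proposal is correct and follows essentially the same strategy as the paper's proof: reduce the statement to showing that $T_w$ commutes with the action of $U_w/V_w$ on the $V_w$‑invariants, then verify that the double coset $V_wg_wV_w$ is stable under conjugation by $U_w$ in Cases~I and~II. The only cosmetic differences are in the verification of the double‑coset stability: in Case~I you factor $u=vz$ with $z$ diagonal (the paper phrases it as choosing coset representatives that commute with $\smat{\varpi_w&0\\0&1}$, the same decomposition in the other order), and in Case~II you land the commutator $u\Pi_{D_w}u^{-1}\Pi_{D_w}^{-1}$ in $\ker(\mathrm{Nrd})\cap\oDw^\times\subseteq V_w$, whereas the paper directly observes that $\overline\vartheta_w=\overline\xi_w\circ\mathrm{Nrd}$ extends to $D_w^\times$ — two formulations of the same fact.
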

\begin{proof}
Il suffit de montrer que (pour $w\in S'$) l'action de $U_w$ sur $\pi_D(\rhobar)^{V^v}$ commute avec celle de $T_w$. Lorsque $w|p$ (cas I), on peut choisir des repr\'esentants dans $U_w$ de $U_w/V_w$ qui commutent avec $\smat{ \varpi_w & 0 \\ 0 & 1}$, et donc pour $g \in U_w$ on a :
$$g \begin{pmatrix} \varpi_w & 0 \\ 0 & 1 \end{pmatrix} g^{-1}\in V_w\begin{pmatrix} \varpi_w & 0 \\ 0 & 1 \end{pmatrix} V_w.$$
Comme $V_w$ est distingu\'e dans $U_w$, on en d\'eduit que $g$ commute avec $T_w$. Lorsque $w|\Disc(D)$ (cas II), le caract\`ere $\overline{\vartheta}_w$ s'\'etend \`a $D_w^\times$. On a donc $\overline{\vartheta}_w(g\Pi_{D_w}g^{-1})=\overline{\vartheta}_w(\Pi_{D_w})$ pour $g\in U_w=\oDw^\times$ d'o\`u $g\Pi_{D_w}g^{-1}\Pi_{D_w}^{-1}\in \ker(\overline{\vartheta}_w)\cap \oDw^\times=V_w$ et en particulier $g\Pi_{D_w}g^{-1} \in V_w \Pi_{D_w} V_w$. Donc $g$ commute encore avec $T_w$.
\end{proof}

Pour $w\in S'$, les op\'erateurs $T_w$ agissent sur $\Hom_{k_E[U^v]}(\overline{M}^v,\pi_D(\rhobar))$ par le lemme \ref{ws'} et ils commutent de plus entre eux puisque tel est le cas sur $\pi_D(\rhobar)^{V^v}$. On a donc une action de $\T' \= k_E[T_w,w\in S']$ sur $\Hom_{k_E[U^v]}(\overline{M}^v,\pi_D(\rhobar))$ et on pose~:
\begin{eqnarray}\label{piDv}
\pi_{D,v}(\rhobar) \= \Hom_{k_E[U^v]}\big(\overline{M}^v,\pi_D(\rhobar)\big)[\gm']
\end{eqnarray}
o\`u $\gm'$ est l'id\'eal maximal de $\T'$ engendr\'e par les $T_w-\alpha_w$ pour $w\in S'$. C'est une repr\'esentation lisse admissible de $D_v^\times$ sur $k_E$ de caract\`ere central $\omega^{-1}\det\rhobar|_\gFv$.

\begin{rem}\label{choix}
{\rm (i) La d\'efinition de $\pi_{D,v}(\rhobar)$ en I et II ne d\'epend pas des choix des uniformisantes $\varpi_w$ et $\Pi_{D_w}$. Si, dans le cas I, $\rhobar|_\gFw$ est scind\'ee, ses sous-repr\'esentations de dimension $1$ sont par hypoth\`ese distinctes et il y a donc deux choix possibles pour d\'efinir $\overline{\xi}_w$ et $\overline{\xi}_w'$. De m\^eme dans le cas II avec $\overline{\xi}_w$ si $\rhobar|_\gFw$ est scind\'ee et si $|k_w|\equiv -1 \bmod p$ (car alors $\omega=\omega^{-1}$). Dans ces situations nous choisissons \`a chaque fois une des deux possibilit\'es. Il est probable que la repr\'esentation obtenue $\pi_{D,v}(\rhobar)$ soit ind\'ependante de ces choix, mais nous ignorons comment le d\'emontrer pour l'instant.\\
(ii) Il y a d'autres choix possibles pour les d\'efinitions de $U_w$ et $\overline{M}_w$. Par exemple, on aurait pu proc\'eder en III comme on l'a fait en IV, c'est-\`a-dire introduire des $K$-types. Mais il aurait alors fallu utiliser une notion de type plus \'etendue de mani\`ere \`a inclure la repr\'esentation de Steinberg lorsque $\rhobar|_\gFw$ est 
sp\'eciale. De plus, pour avoir une repr\'esentation $\overline{M}_w$ irr\'eductible, il aurait fallu choisir un relev\'e irr\'eductible de $\rhobar|_\gFw$ lorsque $\rhobar|_\gFw$ est sp\'eciale et $|k_w| \equiv -1\bmod p$. Inversement, on aurait pu aussi utiliser en IV une d\'efinition si\-milaire \`a celle de III (sauf dans le cas $|k_w| \equiv -1\bmod p$ et $\rhobar|_\gFw$ est induite de l'extension quadratique non ramifi\'ee de $F_w$). Il n'est pas difficile, l\`a, de v\'erifier que ces variantes donneraient la m\^eme repr\'esentation $\pi_{D,v}(\rhobar)$.\\
(iii) Enfin, on peut remarquer que l'op\'erateur de Hecke $T_w$ est vraiment n\'ecessaire dans le cas I seulement si $\rhobar|_{I_w}$ est une repr\'esentation scalaire, et dans le cas II seulement si $\rhobar|_\gFw$ est scind\'ee et $|k_w| \equiv - 1 \bmod p$.}
\end{rem}

\subsection{D\'eformations}\label{deformation}

On d\'efinit les anneaux de d\'eformations de repr\'esenta\-tions galoisiennes locales et globales que l'on utilisera dans la section suivante.

On suppose dans cette section $p>2$, $\rhobar:\gF \to \GL_2(k_E)$ irr\'eductible et $\rhobar|_{\gFw}$ r\'eductible non scalaire pour toutes les places $w$ de $F$ divisant $p$.

Soit $\Sigma$ un ensemble fini de places finies de $F$ contenant les places divisant $p$ et les places o\`u $\rhobar$ est ramifi\'ee. Soit $\Sigma'$ un sous-ensemble de $\Sigma$ tel que :
\begin{itemize}
\item si $w\vert p$ ou si $\rhobar|_{I_w}$ est\ r\'eductible\ non\ scind\'ee (donc en particulier $\rhobar|_\gFw$ est alors r\'eductible ramifi\'ee), alors $w \in \Sigma'$
\item si $w\!\nmid \!p$ et $w\in \Sigma'$, alors $\rhobar|_{\gFw}$ est isomorphe \`a une repr\'esentation non\ scalaire de la forme $\rhobar|_\gFw \cong \smat{\overline{\xi}_w\omega & * \\ 0 & \overline{\xi}_w}$.
\end{itemize}
Donc, pour tout $w\in \Sigma'$, on peut \'ecrire :
$$\rhobar|_\gFw \cong \begin{pmatrix} \overline{\xi}_w\omega & * \\ 0 & \overline{\xi}_w' \end{pmatrix}$$
pour des caract\`eres $\overline{\xi}_w,\overline{\xi}_w':\gFw \to k_E^\times$. Lorsqu'il y a plusieurs choix possibles pour le caract\`ere $\overline{\xi}_w$, on en fixe un (voir la remarque \ref{choix}(i)).

On commence par les anneaux de d\'eformations de repr\'esentations locales. On note $\CNL$ la cat\'egorie des $\oE$-alg\`ebres locales noeth\'eriennes compl\`etes de corps r\'esiduel $k_E$. Si $A$ est un objet de $\CNL$, on note $\gm_A$ son id\'eal maximal.

On note $\psi:\gF \to \oE^\times$ le relev\'e de Teichm\"uller de $\omega^{-1}\det\rhobar$. On rappelle qu'il existe un anneau de d\'eformation ``cadr\'ee'' (framed) $R_w^\square$ qui param\`etre les relev\'es de $\rhobar|_\gFw$ de d\'eterminant $\varepsilon\psi|_{\gFw}$. Plus 
pr\'ecis\'ement $R_w^\square$ repr\'esente le foncteur de $\CNL$ vers les ensembles qui envoie $A$ vers l'ensemble des relev\'es $\sigma: \gFw \to \GL_2(A)$ de $\rhobar|_\gFw$ tels que $\det\sigma = \varepsilon\psi|_{\gFw}$.

Pour $w\in \Sigma$ on d\'efinit un foncteur $\CF_w$ de $\CNL$ vers les ensembles comme suit. Soit $A$ un object de $\CNL$. Si $w\not\in \Sigma'$, alors $\CF_w(A)$ est par d\'efinition l'ensemble des relev\'es $\sigma:\gFw \to \GL_2(A)$ de $\rhobar|_{\gFw}$ tels que $\det\sigma = \varepsilon\psi|_{\gFw}$ et $\sigma(I_w) \stackrel{\sim}{\to} \rhobar(I_w)$. Si $w\in \Sigma'$, alors $\CF_w(A)$ est par d\'efinition l'ensemble des paires ordonn\'ees $(\sigma,L)$ o\`u $\sigma:\gFw \to \GL_2(A)$ est un relev\'e de $\rhobar|_\gFw$ tel que $\det\sigma = \varepsilon\psi|_{\gFw}$ et $L$ est un facteur direct de rang un de $A^2$ (l'espace de $\sigma$) sur lequel $\gFw$ agit par un caract\`ere de la forme $\eta\varepsilon$, o\`u $\eta$ est un relev\'e de $\overline{\xi}_w$ tel que $\eta(I_w) \stackrel{\sim}{\to} \overline{\xi}_w(I_w)$. Dans le cas o\`u $w|p$, $\overline{\xi}_w = \overline{\xi}'_w$ et 
$\overline{\xi}_w^{-1}\otimes\rhobar|_{\gFw}$ est la fibre g\'en\'erique d'un sch\'ema en groupes fini et plat sur $\oFw$, on demande de plus que $\eta^{-1}\otimes\sigma\otimes_A A/\gm_A^n$ soit aussi la fibre g\'en\'erique d'un sch\'ema en groupes fini et plat sur $\oFw$ pour tout $n\ge 1$. On d\'efinit $\CF_w$ sur les morphismes de la mani\`ere \'evidente.

\begin{lem} \label{lem_deformations}
Le foncteur $\CF_w$ pour $w\in \Sigma$ est repr\'esentable par un anneau $R_w^\triangle$ qui est formellement lisse sur $\oE$ de dimension relative $3+[F_w:\Q_p]$ (resp.~$3$) si $w|p$ (resp. $w\!\nmid\!p$). Si $w\not\in\Sigma'$ ou si $\rhobar|_{I_w}$ n'est pas scalaire, alors le morphisme $R_w^\square\to R_w^\triangle$ est surjectif. Si $w\in \Sigma'$ et $\rhobar|_{I_w}$ est scalaire, alors $R_w^\triangle$ est topologiquement engendr\'e sur $R_w^\square$ par $\eta_w^\univ(g)$ o\`u $\eta_w^\univ:\gFw\to (R_w^\triangle)^\times$ est d\'efini par l'action de $\gFw$ sur $L^\univ$ pour la paire universelle $(\sigma^\univ,L^\univ)$ sur $R_w^\triangle$ et o\`u $g \in \gFw$ est un relev\'e quelconque de $\Fr_w$.
\end{lem}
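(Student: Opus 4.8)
Le plan est d'identifier $\CF_w$ \`a une variante des probl\`emes de d\'eformation locale \'etudi\'es par Ramakrishna, Kisin, Geraghty et Barnet-Lamb, Gee et Geraghty, puis d'appliquer les arguments d'obstruction habituels. Pour la repr\'esentabilit\'e, on part de l'anneau de d\'eformation cadr\'ee $R_w^\square$ (qui existe sans hypoth\`ese) param\'etrant les rel\`evements de $\rhobar|_{\gFw}$ de d\'eterminant $\varepsilon\psi|_{\gFw}$, et, lorsque $w\in\Sigma'$, on lui adjoint la donn\'ee d'une droite $L$ : comme $\rhobar|_{\gFw}$ est non scalaire, toute telle droite se r\'eduit modulo $\gm_A$ sur l'unique droite de $\rhobar|_{\gFw}$ de caract\`ere $\overline\xi_w\omega$, de sorte que les droites candidates au-dessus d'une d\'eformation donn\'ee forment un espace affine. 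On v\'erifie alors que toutes les conditions d\'efinissant $\CF_w$ (\'egalit\'e du d\'eterminant ; rigidit\'e de l'inertie $\sigma(I_w)\stackrel{\sim}{\to}\rhobar(I_w)$ lorsque $w\notin\Sigma'$ ; stabilit\'e de $L$ sous $\gFw$, et le fait que le caract\`ere $\eta$ obtenu en d\'etordant par $\varepsilon$ l'action sur $L$ rel\`eve $\overline\xi_w$ avec $\eta|_{I_w}$ \'egal au rel\`evement prescrit de $\overline\xi_w|_{I_w}$ ; et, dans le sous-cas $w|p$ avec $\overline\xi_w=\overline\xi'_w$ et $\overline\xi_w^{-1}\otimes\rhobar|_{\gFw}$ plate, la condition de platitude \`a la Ramakrishna sur $\eta^{-1}\otimes\sigma\otimes_A A/\gm_A^n$ pour tout $n$) sont relativement repr\'esentables au-dessus de $R_w^\square$ (\'eventuellement compl\'et\'e par la coordonn\'ee affine de la droite), ce qui, via les crit\`eres usuels de repr\'esentabilit\'e, fournit l'anneau $R_w^\triangle\in\CNL$.

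La lissit\'e formelle et le calcul de la dimension relative sur $\oE$ se font ensuite cas par cas, par un calcul d'espace tangent et de groupe d'obstruction en termes de cohomologie galoisienne locale (dualit\'e locale, formule d'Euler). Si $w\nmid p$ et $w\notin\Sigma'$, c'est le probl\`eme minimalement ramifi\'e classique, formellement lisse de dimension relative $3=\dim\PGL_2$. Si $w\nmid p$ et $w\in\Sigma'$, on a un probl\`eme de d\'eformation avec droite (analogue \`a une condition ``presque ordinaire'' mais hors de $p$), encore formellement lisse de dimension relative $3$. Si $w|p$, la condition de poids de Hodge-Tate $(0,1)$ en chaque plongement $F_w\hookrightarrow\Qpbar$ ajoute $[F_w:\Q_p]$ : dans le sous-cas plat on utilise la lissit\'e formelle de l'anneau de d\'eformation plate (Ramakrishna, Kisin) et la th\'eorie de Fontaine-Laffaille pour identifier l'espace tangent, et dans le sous-cas presque ordinaire on utilise les r\'esultats locaux de Geraghty et de Barnet-Lamb, Gee et Geraghty ; dans tous les cas $R_w^\triangle$ est formellement lisse sur $\oE$ de dimension relative $3+[F_w:\Q_p]$. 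C'est cette \'etape, et en particulier le contr\^ole fin de l'anneau de d\'eformation plate en $p$, qui constitue le principal obstacle.

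Il reste \`a comparer $R_w^\triangle$ et $R_w^\square$ via le morphisme ``oubli de la droite''. Lorsque $w\notin\Sigma'$, $\CF_w$ est par construction un sous-foncteur ferm\'e du foncteur des d\'eformations cadr\'ees, d'o\`u aussit\^ot la surjection $R_w^\square\twoheadrightarrow R_w^\triangle$. Lorsque $w\in\Sigma'$, la question est de savoir dans quelle mesure la paire $(\sigma,L)$ est d\'etermin\'ee par $\sigma$ : un calcul de cohomologie galoisienne locale montre que l'ensemble des droites $L$ convenables au-dessus d'une d\'eformation cadr\'ee $\sigma$ donn\'ee est r\'eduit \`a un point d\`es que $\rhobar|_{I_w}$ n'est pas scalaire, de sorte que $L^\univ$ est alors d\'etermin\'ee par $\sigma^\univ$ et que $R_w^\square\twoheadrightarrow R_w^\triangle$. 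Lorsque $\rhobar|_{I_w}$ est scalaire, en revanche, la restriction $\eta_w^\univ|_{I_w}$ est le caract\`ere fixe donn\'e par l'unique rel\`evement de $\overline\xi_w|_{I_w}$ induisant un isomorphisme sur les images, donc $\eta_w^\univ$ est enti\`erement d\'etermin\'e par $\sigma^\univ$ et par sa valeur $\eta_w^\univ(g)$ en un rel\`evement $g$ de $\Fr_w$ ; r\'eciproquement, $L^\univ$ se r\'ecup\`ere comme le sous-$R_w^\triangle$-module facteur direct de rang un de $(R_w^\triangle)^2$ sur lequel $\gFw$ agit par $\eta_w^\univ\varepsilon$ (ce sous-module \'etant bien de rang un puisque $\rhobar|_{\gFw}$ n'est pas scalaire). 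On conclut que $R_w^\triangle$ est topologiquement engendr\'e sur l'image de $R_w^\square$ par le seul \'el\'ement $\eta_w^\univ(g)$.
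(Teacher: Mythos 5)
Your decomposition into three steps (representability, formal smoothness plus dimension, comparison with $R_w^\square$) matches the paper's, and your treatment of the last step is essentially the paper's: in the scalar case you recover $L^\univ$ as the rank-one direct summand on which $\gFw$ acts through $\eta_w^\univ\varepsilon$, noting that $\eta_w^\univ$ is pinned down by its restriction to inertia together with one Frobenius value; in the non-scalar case you observe that the line is already determined by $\sigma^\univ$. The paper does the same, just more carefully: it takes $g$ with $\rhobar(g)$ non-scalar, forms $B=\sigma^\univ(g)-(\eta_w^\univ\varepsilon)(g)I$, checks $B$ has nonzero reduction and vanishing determinant, and so identifies $L^\univ=\ker(B)$ over the candidate subring $S$; the universal property then forces $S=R_w^\triangle$.

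The genuine gap is in the second step, which you yourself flag as ``le principal obstacle'' and then do not carry out. Two specific problems. First, the references you invoke for $w\mid p$, $w\in\Sigma'$ are not applicable: the ordinary local deformation rings of Geraghty and of Barnet-Lamb--Gee--Geraghty, and the potentially Barsotti-Tate rings of Kisin, all impose $p$-adic Hodge-theoretic conditions, whereas $\CF_w$ (outside the flat subcase) imposes \emph{none}: it only requires a rank-one direct summand with a prescribed character of $\gFw$, and no semi-stability or crystallinity whatsoever. So those theorems simply do not describe $R_w^\triangle$. Second, in the flat subcase the relevant control is not a ``th\'eorie de Fontaine--Laffaille'' identification of the tangent space but the freeness over $\oE[[T]]$ of the module $Z^1_f$ of flat cocycles from Khare--Wintenberger. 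The paper's actual proof bypasses obstruction theory altogether: for $w\in\Sigma'$ it writes down an explicit universal pair $(\sigma^\univ,L^\univ)$ over a power series ring $\oE[[T,X_1,\dots,X_{d+1},Y]]$, using the key input \cite[Lem.\ 3.7]{KW} that $Z^1(\gFw,\oE[[T]](\Xi^\univ))$ is free of rank $d+1$ (and its flat analogue), and then verifies universality directly; for $w\notin\Sigma'$ it combines the regularity of $\overline{R}^\square_{\psi,0,\fl}[1/p]$ from \cite{KW} with an elementary upper bound on the tangent space via $\ker\big(Z^1(\gFw,\ad^0)\to H^1(I_w,\ad^0)\big)$. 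Your sketch would need to be rewritten around these inputs, not around the ordinary/potentially-crystalline deformation literature, to close the argument.
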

\begin{proof}
La preuve est essentiellement dans \cite{KW}. On explique ci-dessous comment la d\'eduire des r\'esultats qui y sont \'enonc\'es.
 
(i) Supposons d'abord $w\not\in \Sigma'$, donc en particulier $w\!\nmid \!p$. Dans ce cas, la repr\'esentabilit\'e de $\CF_w$ par un objet $R_w^\triangle$ de $\CNL$ et l'existence d'un relev\'e $\rho_0$ de $\rhobar|_{\gFw}$ dans $\CF_w(\oE)$ sont des r\'esultats standards. Consid\'erons la $\oE$-alg\`ebre $\overline{R}^\square_{\psi,0,\fl}$ associ\'ee \`a $\rho_0$ dans \cite[\S\ 2.7]{KW}. Comme $\overline{R}^\square_{\psi,0,\fl}$ est un quotient de $R_w^\triangle$ (par \cite[Prop.2.11]{KW}) et que $\overline{R}^\square_{\psi,0,\fl}[1/p]$ est r\'egulier de dimension $3$ (par \cite[Prop.2.10]{KW}), il suffit de montrer que l'espace tangent de $R_w^\triangle \otimes_{\oE}k_E$ est de dimension au plus $3$. Comme $p > 2$, on peut identifier cet espace tangent $\CF_w(k_E[\varepsilon]/(\varepsilon^2))$ avec le noyau de l'application naturelle $Z^1(\gFw,M) \to H^1(I_w,M)$ o\`u $M\=\ad^0(\rhobar|_{\gFw})$. Le quotient de ce noyau par les cobords $B^1(\gFw,M)$ est isomorphe \`a $H^1(\gFw/I_w,M^{I_w})$, qui a m\^eme dimension (sur $k_E$) que $H^0(\gFw,M)$. On en d\'eduit que la dimension de l'espace tangent est bien~: 
$$\dim_{k_E}B^1(\gFw,M) + \dim_{k_E}H^0(\gFw,M)= \dim_{k_E}M = 3.$$
La surjectivit\'e du morphisme $R_w^\square \to R_w^\triangle$ dans ce cas est claire.

(ii) On suppose maintenant $w\in \Sigma'$. Comme $\CF_w$ est isomorphe au foncteur obtenu en rempla\c cant la repr\'esentation $\rhobar|_{\gFw}$ par une de ses conjugu\'ees et en la tordant par un caract\`ere, on peut supposer :
$$\rhobar|_\gFw = \begin{pmatrix} \overline{\xi}_w\omega & z_{\rhobar} \\ 0 & 1 \end{pmatrix}$$
pour un cocycle $z_{\rhobar} \in Z^1(\gFw,k_E(\overline{\xi}_w\omega))$.

On consid\`ere d'abord le cas $w|p$ et $\overline{\xi}_w \neq 1$. On va construire explicitement un relev\'e universel $(\sigma^\univ,L^\univ) \in \CF_w(R_w^\triangle)$ o\`u 
$R_w^\triangle \= \oE[[T,X_1,\cdots,X_{d+1},Y]]$ et $d\=[F_w:\Q_p]$.

Soit $\eta_w^\univ : \gFw \to \oE[[T]]$ le caract\`ere $\xi_w\nr(1+T)$ o\`u $\xi_w\=[\overline{\xi}_w]$ est le relev\'e de Teichm\"uller de $\overline{\xi}_w$ et $\nr(1+T)$ est le caract\`ere non ramifi\'e envoyant un Frobenius g\'eom\'etrique vers $1+T$. On note $\Xi^\univ =\nr(1+T)\eta_w^\univ\varepsilon$. Par \cite[Lem.3.7]{KW}, le $\oE[[T]]$-module des cocycles :
$$Z^\univ \= Z^1(\gFw,\oE[[T]](\Xi^\univ))$$
est libre de rang $d+1$ et pour tout morphisme $\beta:\oE[[T]] \to A$ dans $\CNL$, on a :
$$Z^1(\gFw,A(\beta\circ\Xi^\univ)) = Z^\univ\otimes_{\oE[[T]]}A.$$
(Notons que \cite[Lem.3.7]{KW} suppose en fait $F_w$ non ramifi\'ee sur $\Q_p$, mais cela n'est pas n\'ecessaire dans la preuve.) En particulier, on a $Z^1(\gFw,k_E(\overline{\xi}_w\omega)) = Z^\univ \otimes_{\oE[[T]]} k_E$ et on peut choisir un relev\'e $\tilde{z}_{\rhobar} \in Z^\univ$ de $z_{\rhobar}$ ainsi qu'une base $\{z_1,\cdots,z_{d+1}\}$ de $Z^\univ$ sur $\oE[[T]]$. On d\'efinit alors : 
$$\sigma^\univ : \gFw \to \GL_2(R_w^\triangle)=\GL_2(\oE[[T,X_1,\cdots,X_{d+1},Y]])$$
par :
$$\sigma^\univ \=\nr(1+T)^{-1} \begin{pmatrix} 1 & 0 \\ Y & 1 \end{pmatrix}\begin{pmatrix} \Xi^\univ & \tilde{z}_{\rhobar} + \sum_{i=1}^{d+1} X_i z_i \\ 0 & 1 \end{pmatrix}\begin{pmatrix} 1 & 0 \\ - Y & 1 \end{pmatrix},$$
et $L^\univ$ comme le sous-$R_w^\triangle$-module engendr\'e par $\begin{pmatrix}1\\Y\end{pmatrix} \in \left(R_w^\triangle\right)^2$ (en particulier on a bien $\det(\sigma^\univ)=\xi_w\varepsilon$).

Maintenant que nous avons d\'efini un \'el\'ement $\sigma^\univ$ de $\CF_w(R_w^\triangle)$, nous devons montrer que, pour $A$ dans $\CNL$ et $(\sigma,L) \in \CF_w(A)$, il existe un unique $\alpha:R_w^\triangle \to A$ tel que $\sigma = \alpha\circ\sigma^\univ$ et $L = L^\univ\otimes_{R_w^\triangle}A$. On remarque d'abord qu'il y a des 
\'el\'ements $t,y\in \gm_A$ uniques tels que $L$ est engendr\'e par $\begin{pmatrix}1\\y\end{pmatrix}$ et $\gFw$ agit sur $L$ par $\eta\varepsilon$ o\`u $\eta = \xi_w\nr(1+t)$. Comme $\det(\sigma) = \xi_w\varepsilon$, on a :
$$\nr(1+t)\begin{pmatrix} 1 & 0 \\ -y & 1 \end{pmatrix}\circ \sigma \circ \begin{pmatrix} 1 & 0 \\ y & 1 \end{pmatrix} = \begin{pmatrix} \beta\circ \Xi^\univ & z \\ 0 & 1 \end{pmatrix}$$
pour un $z \in Z^1(\gFw,A(\beta\circ\Xi^\univ))$ o\`u $\beta:\oE[[T]]\rightarrow A$ est d\'efini par $\beta(T) = t$. Comme $z$ est un relev\'e de $z_{\rhobar}$, on a $z = 1 \otimes \tilde{z}_{\rhobar} + \sum_{i=1}^{d+1} x_i \otimes z_i$ pour des $x_1,\cdots,x_{d+1} \in \gm_A$ uniques. On en d\'eduit que $\alpha:R_w^\triangle \to A$ d\'efini par $T \mapsto t$, $Y \mapsto y$ et $X_i \mapsto x_i$ pour $i=1,\cdots,d+1$ est l'unique $\alpha$ transformant $(\sigma^\univ,L^\univ)$ en $(\sigma,L)$.

Les cas restants o\`u $w\in \Sigma'$ se traitent par des variantes de l'argument ci-dessus. Si $w|p$, $\overline{\xi}_w = 1$ et $\rhobar|_{\gFw}$ est la fibre g\'en\'erique d'un sch\'ema en groupes fini et plat sur $\oFw$, on remplace le module des cocycles par le module :
$$Z^1_f(\gFw,\oE[[T]](\Xi^\univ))$$
comme d\'efini dans \cite{KW}. Si $w|p$, $\overline{\xi}_w = 1$ et $\rhobar|_{\gFw}$ ne provient pas d'un sch\'ema en groupes fini et plat, on remplace $\oE[[T]]$ par $\oE$, $\eta_w^\univ$ par le caract\`ere trivial et $d+1$ par $d+2$. Enfin si $w\!\nmid \!p$, l'argument est le m\^eme que dans le cas pr\'ec\'edent mais en rempla\c cant $d+2$ par $2$.

(iii) Pour montrer la derni\`ere assertion, supposons que $\rhobar|_{I_w}$ est scalaire et soit $S$ la sous-$R_w^\square$-alg\`ebre de $R_w^\triangle$ topologiquement engendr\'ee par $\eta_w^\univ(g)$. Alors $\eta_w^\univ$ est \`a valeurs dans $S^\times$ et $\sigma^\univ$ est \`a valeurs dans  l'image de $\GL_2(R_w^\square)$, donc dans $\GL_2(S)$. Comme $\rhobar(g)$ n'est pas scalaire, on voit que la matrice $B\=\sigma^\univ(g) - (\eta_w^\univ\varepsilon)(g)I \in M_2(S)$ a une r\'eduction non nulle modulo $\gm_S$. Comme $\det(B) = 0$, on en d\'eduit que $L\=\ker(B)$ est un facteur direct de $S^2$ de rang $1$ et donc que $L^\univ = R_w^\triangle\otimes_S L$. La propri\'et\'e universelle de $(\sigma^\univ,L^\univ)$ donne donc une section $R_w^\triangle \to S$ de l'inclusion $S\subseteq R_w^\triangle$, d'o\`u il suit que $S = R_w^\triangle$.

Si $\rhobar|_{I_w}$ n'est pas scalaire, soit $S$ l'image de $R_w^\square$ dans $R_w^\triangle$. Alors $\sigma^\univ$ est \`a valeurs dans $\GL_2(S)$ et $\eta_w^\univ|_{I_w}$ dans $S^\times$. On choisit $g\in I_w$ tel que $\rhobar(g)$ n'est pas scalaire et on montre exactement comme ci-dessus que $S=R_w^\triangle$.
\end{proof}

\begin{rem} \label{rem_scalars}
{\rm Si l'on remplace $E$ par une extension finie $E'$, alors on a une application naturelle $R_w^{\prime\triangle} \to {\mathcal{O}}_{E'} \otimes_{\oE}R_w^\triangle$ o\`u $R_w^{\prime\triangle}$ est d\'efini en utilisant $E'$ au lieu de $E$. Un argument standard utilisant le sous-anneau de $R_w^{\prime\triangle}$ des \'el\'ements se r\'eduisant dans $k_E$ montre que cette application est un isomorphisme.}
\end{rem}

\begin{rem}
{\rm Lorsque $w|p$, l'anneau $R_w^\triangle$ co\"\i ncide avec l'anneau $\overline{R}_w^{\square,\psi}$ consid\'er\'e dans \cite{KW} sauf si $\rhobar|_{\gFw}$ est la tordue par un caract\`ere d'une repr\'esentation non ramifi\'ee r\'eductible non scind\'ee, auquel cas l'application naturelle $\overline{R}_w^{\square,\psi}\rightarrow 
R_w^\triangle$ n'est pas surjective.}
\end{rem}

On d\'efinit maintenant les anneaux de d\'eformations de repr\'esentations globales qui vont intervenir. Pour un ensemble fini $Q$ de places de $F$, on note $\CD_Q$ le foncteur de $\CNL$ vers les ensembles qui envoie un objet $A$ vers l'ensemble des classes d'\'equivalence des relev\'es $\sigma:\gF \to \GL_2(A)$ de $\rhobar$ non ramifi\'es en dehors de $\Sigma\cup Q$ et tels que $\det\sigma = \varepsilon\psi$, deux relev\'es \'etant \'equivalents s'ils sont conjugu\'es par une matrice $g\in \GL_2(A)$ dont la r\'eduction modulo $\gm_A$ est la matrice identit\'e. Comme $\rhobar$ est irr\'eductible, le foncteur $\CD_Q$ est repr\'esentable par un anneau de d\'eformation universelle $R_Q$. On note $R_Q^\square$ l'objet de $\CNL$ repr\'esentant le foncteur qui associe \`a $A$ l'ensemble des classes d'\'equivalence de paires ordonn\'ees $(\sigma,\{B_w\}_{w\in\Sigma})$ o\`u $\sigma$ est un relev\'e de $\rhobar$ comme dans la d\'efinition de $\CD_Q(A)$ et chaque $B_w$ est une base de $A^2$ qui se r\'eduit sur la base standard de $k_E^2$, et o\`u $(\sigma,\{B_w\}_{w\in\Sigma}) \sim (g\sigma g^{-1},\{gB_w\}_{w\in\Sigma})$ pour toute matrice $g\in \GL_2(A)$ qui se r\'eduit modulo $\gm_A$ sur l'identit\'e.

On pose $R_\loc^\square \= \widehat{\otimes}_{w\in\Sigma} R_w^\square$ et $R_\loc^\triangle \= \widehat{\otimes}_{w\in\Sigma} R_w^\triangle$. On a des applications naturelles $R_Q \to R_Q^\square$, $R_\loc^\square \to R_Q^\square$ et $R_\loc^\square \to R_\loc^\triangle$. On note enfin $R_Q^\triangle \= R_Q^\square \widehat{\otimes}_{R_\loc^\square} R_\loc^\triangle$. Lorsque $Q = \emptyset$, on supprime l'indice $Q$.

\subsection{Multiplicit\'e un I}\label{unI}

Dans cette section et la suivante, on applique la m\'ethode de Taylor-Wiles comme modifi\'ee par Diamond, Fujiwara et Kisin (cf. \cite{Di1} et \cite{Ki}). Cette premi\`ere section contient essentiellement des pr\'eliminaires.

On conserve les notations et hypoth\`eses du \S\ \ref{facteur} et on suppose de plus que la place $v\vert p$ fix\'ee est telle que $F_v$ est non ramifi\'ee sur $\Qp$, $D_v$ est d\'eploy\'ee et $\rhobar|_{\gFv}$ est r\'eductible g\'en\'erique (cf. \S\ \ref{prel} et rappelons que la g\'en\'ericit\'e implique $p>3$). On fixe un isomorphisme de $\oFw$-alg\`ebres $\oDw \cong M_2(\oFw)$ pour chaque place finie $w$ o\`u $D$ est d\'eploy\'ee.

Notons qu'avec {\bf (H2)}, on suppose en particulier que pour toute place $w\vert p$, $D_w$ est d\'eploy\'ee et $\rhobar|_{\gFw}$ est r\'eductible non scalaire. (Comme d\'ej\`a remarqu\'e au \S\ \ref{facteur}, les r\'esultats de \cite{Ch} devraient permettre de traiter les cas o\`u $\rhobar|_{\gFw}$ est irr\'eductible pour $w \neq v$ divisant $p$, voir la remarque \ref{cheng}.)

Rappelons qu'au \S\ \ref{facteur}, on a d\'efini des sous-groupes ouverts compacts $U_w \supset V_w$ et des repr\'esentations $\overline{M}_w$ de $U_w/V_w$ sur $k_E$ pour $w \in S$. La place $v$ n'est pas dans $S$, mais on d\'efinit maintenant de mani\`ere similaire $U_v$ comme le sous-groupe de $\oDv^\times\cong \GL_2(\oFv)$ des matrices triangulaires sup\'erieures modulo $v$ (cf. le cas I ci-dessus). On \'ecrit :
$$\rhobar|_\gFv \cong \begin{pmatrix} \overline{\xi}_v\omega & * \\ 0 & \overline{\xi}_v' \end{pmatrix}$$
pour des caract\`eres $\overline{\xi}_v,\overline{\xi}_v':\gFv \to k_E^\times$ et on pose $V_v \= \ker(\overline{\vartheta}_v)$ et $\overline{M}_v \= k_E(\overline{\vartheta}_v)$ o\`u $\overline{\vartheta}_v: U_v \to k_E^\times$ est d\'efini par $\overline{\vartheta}_v(g) \= \overline{\xi}_v(a)\overline{\xi}_v'(d)$ pour $g \equiv \smat{a & b \\ 0 & d}$ modulo $v$.

Pour $w\in S\cup \{v\}$, on rel\`eve la repr\'esentation $\overline{M}_w$ en caract\'eristique $0$ comme suit. Dans les cas I, II et III du \S\ \ref{facteur}, on a d\'efini $\overline{M}_w = k_E(\overline{\vartheta}_w)$ pour un caract\`ere $\overline{\vartheta}_w:U_w \to k_E^\times$, on pose alors $M_w \= \oE(\vartheta_w)$ o\`u $\vartheta_w$ est le relev\'e de Teichm\"uller de $\overline{\vartheta}_w$ et $\oE$ est suppos\'e suffisamment grand. Dans le cas IV (o\`u $w\!\nmid \!p$), on a defini $\overline{M}_w$ comme la r\'eduction d'un $K$-type $\vartheta_w$ pour un relev\'e $\sigma$ de $\rhobar|_{\gFw}$. On suppose maintenant que ce relev\'e est choisi tel que $\det\sigma = \varepsilon\psi|_{\gFw}$ et $\sigma(I_w) \stackrel{\sim}{\to} \rhobar(I_w)$ o\`u $\psi$ est le relev\'e de Teichm\"uller de $\omega^{-1}\det\rhobar$ (voir \S~\ref{deformation}). On d\'efinit $M_w$ comme un mod\`ele de $\vartheta_w$ sur $\oE$ (en supposant $\oE$ suffisamment grand pour que $\vartheta_w$ ait un tel mod\`ele). Dans tous les cas on a $\overline{M}_w = k_E \otimes_{\oE} M_w$ comme module pour $k_E[U_w/V_w]$.

On d\'efinit aussi des sous-groupes $U_w \subset U_w^0$ de $\oDw^\times \cong\GL_2(\oFw)$ pour les places finies $w \not\in S \cup \{v\}$ qui v\'erifient la condition :
\begin{itemize}
\item[{\bf(Q)}] le quotient des racines du polyn\^ome caract\'eristique de $\rhobar(\Fr_w)$ n'est pas dans $\{1,{\mathrm{N}}(w),{\mathrm{N}}(w)^{-1}\}$
\end{itemize}
o\`u ${\mathrm{N}}(w)\=|k_w| = \varepsilon^{-1}(\Fr_w)$. Pour une telle place $w$, on d\'efinit $U_w^0\subset \GL_2(\oFw)$ comme le sous-groupe des matrices triangulaires sup\'erieures modulo $w$ et $U_w$ comme l'ensemble des $g\in U_w^0$ tels que $g \equiv \smat{ a & b \\ 0 & a} \bmod w$.

Si maintenant $Q$ est un ensemble fini quelconque de places finies de $F$ qui ne sont pas dans $S \cup \{v\}$ et qui satisfont toutes ({\bf{Q}}), on pose :
$$U_Q \= \prod_{w\in S\cup\{v\}\cup Q} \!\!\!\!\!U_w\!\!\prod_{w\not\in S\cup\{v\}\cup Q}\!\!\!\!\!\oDw^\times\quad\mbox{et}\quad V_Q \= \prod_{w\in S\cup\{v\}} \!\!\!\!V_w  \prod_{w\in Q} U_w \!\!\prod_{w\not\in S\cup \{v\}\cup Q}\!\!\!\!\!\oDw^\times.$$
On voit $U_Q/V_Q = \prod_{w\in S\cup\{v\}}U_w/V_w$ comme agissant sur le $\oE$-module $M \= \otimes_{w\in S\cup\{v\}} M_w$ (le produit tensoriel est sur $\oE$). Comme $\psi$ est totalement pair (car $\rhobar$ est modulaire), on peut l'identifier par la th\'eorie du corps de classes \`a un caract\`ere des ad\`eles finis $\A_{F,f}^\times$ trivial sur $F^\times$. Comme $\psi$ co\"\i ncide avec $\vartheta_w$ sur $\oFw^\times$ pour $w\in S\cup\{v\}$ et est trivial sur $\oFw^\times$ sinon, on peut \'etendre l'action de $U_Q/V_Q$ sur $M$ via $\psi$ pour obtenir une action du groupe fini $G \= U_Q\A_{F,f}^\times/V_Q F^\times$. Notons que ce groupe est ind\'ependant de $Q$ et agit naturellement sur la courbe de Shimura $X_{V_Q}$, donc sur la cohomologie $H^1_\et(X_{V_Q,\Qbar},\oE)(1)$. On pose :
$$C_Q \= \Hom_{\oE[G]}\big(M, H^1_\et(X_{V_Q,\Qbar},\oE)(1)\big).$$

Pour $w\not\in \{v\} \cup S \cup Q$ on d\'efinit l'op\'erateur de Hecke $T_w$ agissant sur le $\oE$-module $H^1_{\et}(X_{V_Q,\Qbar},\oE)(1)$ par la double classe $\oDw^\times\smat{\varpi_w & 0 \\ 0 & 1} \oDw^\times$ pour un choix quelconque d'uniformisante $\varpi_w$ de $\oFw$. De m\^eme pour $w \in Q$, on d\'efinit $T_w$ agissant sur $H^1_{\et}(X_{V_Q,\Qbar},\oE)(1)$ par la double classe $U_w\smat{ \varpi_w & 0 \\ 0 & 1 } U_w$. Attention que l'op\'erateur $T_w$ pour $w\in Q$ {\it d\'epend} du choix de $\varpi_w$. Pour $w\in S'$ on d\'efinit $T_w$ exactement comme au \S\ \ref{facteur}. On v\'erifie comme dans la preuve du lemme~\ref{ws'} (et plus simplement m\^eme lorsque $w\not\in S \cup Q$) que chaque $T_w$ induit une action sur $C_Q$. De plus, ces op\'erateurs commutent deux \`a deux ce qui permet de d\'efinir une action de la $\oE$-alg\`ebre commutative $\tilde{\T}\=\oE[T_w,w\not\in \{v\} \cup S \backslash S']$ engendr\'ee par des variables formelles $T_w$ pour $w\not\in \{v\} \cup S \backslash S'$. On note $\T_Q$ l'image de $\tilde{\T}$ dans $\End_{\oE}(C_Q)$ : c'est un $\oE$-module de type fini.

Rappelons que, pour $w\in S'$, on a d\'efini des \'el\'ements $\alpha_w \in k_E^\times$. Pour $w\in Q$, on pose $\alpha_w \= {\mathrm{N}}(w)\alpha_w'$ pour un choix de valeur propre $\alpha_w'$ de $\rhobar(\Fr_w)$. On note $\phi_Q$ l'homomorphisme de $\oE$-alg\`ebres $\tilde{\T} \to k_E$ d\'efini par $\phi_Q(T_w) = \alpha_w$ pour $w\in S' \cup Q$ et $\phi_Q(T_w) = {\mathrm{N}}(w)\tr(\rhobar(\Fr_w))$ pour $w \not \in \{v\} \cup S \cup Q$, et on pose ${\gm}_Q \= \ker(\phi_Q)$.

On pose maintenant $\Sigma \= S \cup \{v\}$ et :
$$\Sigma' \=  S' \cup \{v\} \cup \{w,\ \mbox{$\rhobar|_{I_w}$ est\ r\'eductible\ non\ scind\'ee}\}.$$
Rappelons que $S$ contient toutes les places o\`u $\rhobar$ est ramifi\'ee, donc on a $\Sigma' \subset \Sigma$, et notons que les hypoth\`eses du \S\ \ref{deformation} sont satisfaites pour $\rhobar$, $\Sigma$ et $\Sigma'$. En particulier, pour tout $w\in \Sigma'$, on peut \'ecrire :
$$\rhobar|_\gFw \cong \begin{pmatrix} \overline{\xi}_w\omega & * \\ 0 & \overline{\xi}_w' \end{pmatrix}$$
pour des caract\`eres $\overline{\xi}_w,\overline{\xi}_w':\gFw \to k_E^\times$. De plus, on a toujours $\overline{\xi}_w=\overline{\xi}_w'$ si $w\!\nmid \!p$ (par {\bf(H0)} ou car l'extension est non 
scind\'ee). Lorsqu'il y a plusieurs choix possibles pour le caract\`ere $\overline{\xi}_w$, on en fixe un (voir la remarque \ref{choix}(i)).  

\begin{prop}\label{CQmQ} 
On a des isomorphismes $\Qpbar \otimes_{\oE} C_{Q,\gm_Q} \cong \oplus \rho_\pi$ et $\Qpbar \otimes_{\oE} \T_{Q,\gm_Q} \cong \oplus \Qpbar$ o\`u $\Qpbar \otimes_{\oE} \T_{Q,\gm_Q}$ agit composante par composante sur $\Qpbar \otimes_{\oE} C_{Q,\gm_Q}$ et o\`u les sommes directes sont sur les repr\'esentations automorphes $\pi$ de $(D\otimes \A)^\times$ telles que :
\begin{itemize}
\item $\pi_\infty \cong \sigma_2$ (cf. \S\ \ref{globalprelim})
\item $\det\rho_\pi = \psi\varepsilon$
\item $\rho_\pi$ est un relev\'e de $\rhobar$
\item $\rho_\pi$ est non ramifi\'ee en dehors de $\Sigma \cup Q$
\item si $w\in \Sigma\backslash \Sigma'$, alors $\rho_\pi(I_w) \stackrel{\sim}{\to} \rhobar(I_w)$
\item si $w\in \Sigma'$, alors $\rho_\pi|_\gFw \cong  \smat{\eta_w\varepsilon & * \\ 0 & \eta_w'}$ pour un caract\`ere $\eta_w$ relevant $\overline{\xi}_w$ tel que $\eta_w(I_w) \stackrel{\sim}{\to} \overline{\xi}_w(I_w)$
\item si $w|p$, $\overline{\xi}_w|_{I_w} = \overline{\xi}'_w|_{I_w}$ et $\overline{\xi}_w^{-1}\rhobar|_{\gFw}$ est la fibre g\'en\'erique d'un sch\'ema en groupes fini et plat sur $\oFw$, alors $\eta_w^{-1}\rho_\pi|_{\gFw}$ est la fibre g\'en\'erique d'un groupe $p$-divisible sur $\oFw$.
\end{itemize}
De plus, l'ensemble de telles $\pi$ est non vide.
\end{prop}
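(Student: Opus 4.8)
The plan is to pass from the étale cohomology decomposition of \S\ \ref{globalprelim} to a purely local description of the automorphic representations contributing to $C_{Q,\gm_Q}$. Enlarging $E$ if necessary (using $k_E\otimes_{\oE}H^1_{\et}(X_{V_Q,\Qbar},\oE)\cong H^1_{\et}(X_{V_Q,\Qbar},k_E)$ and that the relevant $\rho_\pi$ and types become definable over $E$, cf. the discussion before Lemma \ref{pi0bar} and Remark \ref{rem_scalars}), one has $\Qpbar\otimes_{\oE}H^1_{\et}(X_{V_Q,\Qbar},E)(1)\cong\bigoplus_\pi(\pi_f^{V_Q}\otimes_{\Qbar}\rho_\pi)$, the sum over the automorphic $\pi$ of $(D\otimes\A)^\times$ with $\pi_\infty\cong\sigma_2$. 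Applying $\Hom_{\oE[G]}(M,-)$ — which on these finite-level pieces commutes with $\otimes_{\oE}\Qpbar$ and picks out the $\pi$-factor as $\Hom_G(M,\pi_f)\otimes_{\Qbar}\rho_\pi$ since $G$ acts only on $\pi_f$ — gives $\Qpbar\otimes_{\oE}C_Q\cong\bigoplus_\pi\big(\Hom_G(M,\pi_f)\otimes_{\Qbar}\rho_\pi\big)$, with $\tilde{\T}$ acting through its (commuting, hence diagonalisable after $\otimes\Qpbar$) action on the finite-dimensional spaces $\Hom_G(M,\pi_f)$. Localising at $\gm_Q$ therefore keeps exactly those $\pi$ with $\Hom_G(M,\pi_f)\neq0$ on which $\tilde{\T}$ acts with an eigenvalue lifting $\phi_Q$; since $\phi_Q(T_w)={\mathrm{N}}(w)\tr\rhobar(\Fr_w)$ for $w\notin\Sigma\cup Q$, Chebotarev and the irreducibility of $\rhobar$ give $\rhobar_\pi\cong\rhobar$, and the fact that $G$ acts on $M$ through $\psi$ on $\A_{F,f}^\times$ gives $\det\rho_\pi=\psi\varepsilon$. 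Unramifiedness outside $\Sigma\cup Q$ is automatic from $\Hom_G(M,\pi_f)\neq0$ by Carayol \cite{Ca2}.

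It then remains to translate, place by place, the conditions ``$\Hom_{U_w/V_w}(M_w,\pi_w)\neq0$'' (for $w\in S\cup\{v\}$) and ``$T_w$ acts by $\equiv\alpha_w$'' (for $w\in S'\cup Q$) into the listed conditions on $\rho_\pi|_{\gFw}$, using Carayol \cite{Ca2} for $w\nmid p$ and T. Saito \cite{Sai} for $w\mid p$ together with the local Langlands correspondence normalised as in \S\ \ref{globalprelim}, and simultaneously to check $\dim_{\Qbar}\Hom_G(M,\pi_f)=1$. In cases III and IV and in case I with $U_w=\GL_2(\oFw)$, the representations $M_w$ of \S\ \ref{facteur} and \S\ \ref{unI} are set up so that $\Hom_{U_w}(M_w,\pi_w)\neq0$ exactly reads off the prescribed inertial type of $\rho_\pi|_{\gFw}$ (via Lemma \ref{wts_types} and $\rho_\pi(I_w)\stackrel{\sim}{\to}\rhobar(I_w)$), the defining property of $K$-types giving one-dimensionality; in case II, in case I at Iwahori level, and at $v$, the character $\vartheta_w$ fixes an ordered pair of torus characters, and either the $T_w$-eigenvalue condition or (at $v$) the genericity of $\rhobar|_{\gFv}$ (which makes $\overline\xi_v\neq\overline\xi_v'$ on inertia) cuts the corresponding eigenspace in $\pi_w$ down to a line and determines which line $L\subset\rho_\pi|_{\gFw}$ is, i.e.\ which of $\overline\xi_w,\overline\xi_w'$ lifts to $\eta_w$; the flat condition in case I corresponds via Fontaine--Laffaille to the $p$-divisible group condition on $\eta_w^{-1}\rho_\pi|_{\gFw}$, exactly as encoded in the functor $\CF_w$ of \S\ \ref{deformation}. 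Since $\rho_\pi$ determines $\pi$ by strong multiplicity one, distinct $\pi$'s give distinct $\tilde{\T}$-eigensystems, so $\Qpbar\otimes_{\oE}\T_{Q,\gm_Q}\cong\bigoplus\Qpbar$ acting component-wise on $\bigoplus\rho_\pi$.

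For the non-vanishing I would argue as in the proofs of Corollaries \ref{nonzero} and \ref{extra}: for each $w\in\Sigma$ one produces a lift $\rho_w:\gFw\to\GL_2(\oE)$ of $\rhobar|_{\gFw}$ of determinant $\varepsilon\psi|_{\gFw}$ and of the required local shape — such lifts exist because the rings $R_w^\triangle$ of Lemma \ref{lem_deformations} are formally smooth over $\oE$, hence have $\Zpbar$-points, and (at $w\mid p$) because a Serre weight at $w$ for which $\rhobar$ is modular yields, by Lemma \ref{wts_types}, a potentially Barsotti--Tate lift of the prescribed type. One then feeds these into Theorem \ref{lifts} (with $T=\{w\mid p\}$) to obtain a global modular lift $\rho$ of $\rhobar$ of weight $(2,\dots,2)$ realising all these conditions, and transports it back to $(D\otimes\A)^\times$ via Jacquet--Langlands to get an automorphic $\pi$ with $\pi_\infty\cong\sigma_2$; by Lemma \ref{pi0bar} and the dictionary above this $\pi$ contributes to $C_{Q,\gm_Q}$. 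The main obstacle, and where essentially all the work goes, is the place-by-place local-global translation of the second paragraph: getting the ordered reducible form $\smat{\eta_w\varepsilon&*\\0&\eta_w'}$ of $\rho_\pi|_{\gFw}$ for $w\in\Sigma'$ with the correct line $L$ (chasing the normalisation of local Langlands and of Saito's compatibility at $v$), together with the accompanying multiplicity-one statements at the ramified places, which is precisely what forces the hypotheses {\bf(H1)}, {\bf(H2)} and {\bf(Q)}.
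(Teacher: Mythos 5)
Your proposal follows essentially the same route as the paper's proof: decompose $\Qpbar\otimes_{\oE}H^1_\et(X_{V_Q,\Qbar},\oE)(1)$ into automorphic pieces $\pi_f^{V_Q}\otimes\rho_\pi$, apply $\Hom_{\oE[G]}(M,-)$, localise at $\gm_Q$ to pin down $\rhobar_\pi\cong\rhobar$ and $\det\rho_\pi=\psi\varepsilon$, then carry out a place-by-place translation of ``$\Hom_{U_w}(M_w,\pi_w)_{(\alpha_w)}$ is a line'' into the stated form of $\rho_\pi|_{\gFw}$ via Carayol, Saito and the definition of $K$-types, and finally deduce non-emptiness from Theorem~\ref{lifts} with $T$ the set of places above $p$ and local lifts coming from $\Qpbar$-points of $R_w^\triangle$. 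The only slight deviation is that for non-emptiness at $w\mid p$ you invoke Serre weights and Lemma~\ref{wts_types} (as in Corollary~\ref{nonzero}) whereas the paper reads the $w\mid p$ lifts off directly from $\Qpbar$-points of $R_w^\triangle$, but both yield the required local lifts and the argument is otherwise the same.
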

\begin{proof} 
(i) On a des isomorphismes :
$$\Qpbar \otimes_{\oE} C_{Q,\gm_Q} \cong \Hom_{\oE[G]}\big(M, \Qpbar\otimes_{\oE} H^1_\et(X_{V_Q,\Qbar},\oE)(1)_{{\gm}_Q}\big)$$ 
et : 
$$\Qpbar\otimes_{\oE} H^1_\et(X_{V_Q,\Qbar},\oE)(1)_{{\gm}_Q} \cong \bigoplus_{\gp} H^1_\et(X_{V_Q,\Qbar},\Qpbar)(1)_\gp$$
o\`u la somme est sur les id\'eaux premiers $\gp$ de $\Qpbar\otimes_{\oE} \tilde{\T}$ tels que $\gp \cap \tilde{\T} \subset {\gm}_Q$ et qui sont dans le support de $H^1_\et(X_{V_Q,\Qbar},\Qpbar)$. En tant que $\Qpbar\otimes_{\oE}\tilde{\T}$-module, on a une d\'ecomposition :
$$H^1_\et(X_{V_Q,\Qbar},\Qpbar)(1)  = \oplus_{\pi} \big(\rho_\pi \otimes_{\Qpbar} \pi_f^{V_Q}\big)$$
o\`u la somme est sur les repr\'esentations automorphes $\pi$ de $(D\otimes_\Q\A)^\times$ telles que $\pi_\infty \cong \sigma_2$ et $\pi_f$ est comme au \S\ \ref{globalprelim} mais avec les scalaires \'etendus \`a $\Qpbar$ au lieu de $\C$. On en d\'eduit que $\Qpbar \otimes_{\oE} C_{Q,\gm_Q}$ est la somme directe sur de telles $\pi$ des $\Qpbar$-espaces vectoriels :
$$\rho_\pi \otimes_{\Qpbar} \Hom_{\oE[G]}(M,\oplus_\gp(\pi_f^{V_Q})_\gp)$$
pour $\gp$ comme ci-dessus. Comme cet espace vectoriel est nul sauf si $\psi$ est le caract\`ere central de $\pi$, ou de mani\`ere \'equivalente $\det\rho_\pi = \psi\varepsilon$, on a seulement besoin de sommer sur ces $\pi$ l\`a et on peut alors remplacer $G$ par $U_Q/V_Q$. Dans le produit tensoriel restreint usuel $\pi_f = \otimes_w' \pi_w$ l'op\'erateur de Hecke $T_w$ agit par l'identit\'e en dehors de $w$. Comme on a :
$$\pi_f^{V_Q} = \left(\otimes_{w\in\Sigma} \pi_w^{V_w}\right)\otimes\left(\otimes_{w\in Q} \pi_w^{U_w}\right) \otimes \Big(\otimes'_{w\not\in \Sigma\cup Q}\pi_w^{\oDw^\times}\Big)$$
on en d\'eduit que les id\'eaux premiers $\gp$ de $\Qpbar\otimes_{\oE} \tilde{\T}$ tels que $(\pi_f^{V_Q})_\gp\ne 0$ sont pr\'ecis\'ement les noyaux des homomorphismes d\'efinis par $T_w \mapsto a_w$ ($w \not\in \Sigma \backslash S'$) o\`u $a_w\in \Qpbar$ est une valeur propre de $T_w$ sur le facteur en $w$. Notons que $\gp\cap\tilde{\T}\subset \gm_Q$ (pour le $\gp$ correspondant) si et seulement si $a_w$ se r\'eduit sur $\alpha_w$ pour $w\in S'\cup Q$ et sur ${\mathrm{N}}(w)\tr(\rhobar(\Fr_w))$ pour $w \not\in \Sigma\cup Q$. On en d\'eduit :
$$\Hom_{\oE[G]}(M,\oplus_\gp(\pi_f^{V_Q})_\gp)\cong \otimes'_w X_w$$
o\`u :
$$X_w \= \left\{\begin{array}{ll}
\Hom_{\oE[U_w]}(M_w,\pi_w^{V_w}) & \mbox{si $w\in \Sigma\backslash S'$}\\
\Hom_{\oE[U_w]}(M_w,(\pi_w^{V_w})_{\alpha_w}) & \mbox{si $w\in S'$}\\
(\pi_w^{U_w})_{\alpha_w} & \mbox{si $w\in Q$}\\
(\pi_w^{\oDw^\times})_{{\mathrm{N}}(w)\tr(\rhobar(\Fr_w))} & \mbox{sinon,}
\end{array}\right.$$
et o\`u $\alpha$ en indice veut dire la somme des espaces propres g\'en\'eralis\'es pour $T_w$ associ\'es aux valeurs propres $a_w \in \Zpbar$ qui se r\'eduisent sur $\alpha\in k_E$. Pour $w\not\in \Sigma \cup Q$, les espaces $\pi_w^{\oDw^\times}$ sont de dimension $1$ et $a_w = {\mathrm{N}}(w)\tr(\rho_\pi(\Fr_w))$, donc la dimension de $\otimes'_{w\not\in\Sigma \cup Q}(\pi_w^{\oDw^\times})_{{\mathrm{N}}(w)\tr(\rhobar(\Fr_w))}$ est $1$ si $\rho_\pi$ se r\'eduit sur $\rhobar$ et $0$ sinon. On peut donc supposer que $\rho_\pi$ est un relev\'e de $\rhobar$ en d\'eterminant les autres facteurs.

(ii) Supposons d'abord $w\in S \backslash S'$ (donc en particulier $w\!\nmid \!p$). Si $\rhobar|_{\gFw}$ est irr\'eductible, alors le facteur en $w$ est $\Hom_{\oDw^\times}(\vartheta_w,\pi_w)$, qui est non nul (et n\'ecessairement de dimension $1$) si et seulement si $\rho_\pi|_{I_w} \cong \sigma|_{I_w}$, i.e. $\rho_\pi(I_w) \stackrel{\sim}{\to} \rhobar(I_w)$ (cf. le choix de $\sigma$ au d\'ebut de cette section).

Si $\rhobar|_{\gFw}$ est r\'eductible et $w \not\in \Sigma'$, alors $\rhobar|_{I_w}\cong \overline{\xi}_w \oplus \overline{\xi}_w'$ pour des caract\`eres $\overline{\xi}_w,\overline{\xi}_w':I_w \to k_E^\times$ et $\ind_{U_w}^{\oDw^\times}\vartheta_w$ est un $K$-type pour $[\sigma,0]$ (cf. \S\ \ref{lifts0}) o\`u $\sigma|_{I_w} \cong [\overline{\xi}_w] \oplus [\overline{\xi}_w']$. On en d\'eduit comme dans le cas pr\'ec\'edent que :
$$\Hom_{U_w}(\vartheta_w,\pi_w) \cong \Hom_{\oDw^\times}(\ind_{U_w}^{\oDw^\times}\vartheta_w,\pi_w)$$
est non nul (n\'ecessairement de dimension $1$) si et seulement si $\rho_\pi(I_w) \stackrel{\sim}{\to} \rhobar(I_w)$.

Si $\rhobar|_{\gFw}$ est r\'eductible et $w \in \Sigma'$, alors $\rhobar|_{\gFw} \cong \overline{\xi}_w \otimes \smat{\omega & * \\ 0 & 1 }$ pour un caract\`ere $\overline{\xi}_w: \gFw \to k_E^\times$ et $\overline{\xi}_w^{-1}\otimes\rhobar|_{\gFw}$ est ramifi\'ee. Dans ce cas $\Hom_{U_w}(\vartheta_w,\pi_w)$ est non trivial si et seulement si $\pi_w$ est de la forme $\eta_w\circ\det \otimes \pi'_w$ o\`u $\eta_w$ est un relev\'e de $\overline{\xi}_w$ tel que $\eta_w(I_w) = \overline{\xi}_w(I_w)$ et $\pi'_w$ est soit une s\'erie principale non ramifi\'ee soit la repr\'esentation de Steinberg. Comme $\rho_\pi|_{\gFw}$ rel\`eve $\rhobar|_{\gFw}$ et que $\overline{\xi}_w^{-1}\otimes\rhobar|_{\gFw}$ est ramifi\'ee, on ne peut avoir pour $\pi_w'$ une s\'erie principale non ramifi\'ee. On en d\'eduit que $\Hom_{U_w}(\vartheta_w,\pi_w)$ est non nul (n\'ecessairement de dimension $1$) si et seulement si $\rho_\pi|_\gFw \cong  \eta_w\otimes \smat{\varepsilon & * \\ 0 & 1}$, qui est \'equivalent \`a $\rho_\pi|_{\gFw}$ de la forme demand\'ee dans l'\'enonc\'e.

(iii) Supposons maintenant $w \in S'$. Consid\'erons d'abord le cas $w\!\nmid \!p$. On a alors $\overline{\xi}'_w = \overline{\xi}_w$, $D_w$ ramifi\'ee et $U_w = \oDw^\times$. D'apr\`es les d\'efinitions de $M_w$ et de $T_w$, on voit que $\Hom_{\oE[U_w]}(M_w,\pi_w)_{\alpha_w}\neq 0$ (n\'ecessairement de dimension $1$) si et seulement si $\pi_w = \eta_w\circ\det$ pour un caract\`ere $\eta_w:F_w^\times\to \Zpbar^\times$ tel que $\eta_w|_{\oFw^\times} = [\overline{\xi}_w]|_{\oFw^\times}$ et $a_w = \eta_w(\det(\Pi_{D_w}))$ rel\`eve $\alpha_w$ (rappelons que $\Pi_{D_w}$ est une uniformisante de $\oDw$), ce qui par compatibilit\'e local-global est \'equivalent \`a $\rho_\pi|_{\gFw}$ de la forme demand\'ee.

Supposons $w|p$, $w\neq v$ et $\overline{\xi}_w|_{I_w}\neq \overline{\xi}'_w|_{I_w}$. Dans ce cas $\ind_{U_w}^{\oDw^\times}\vartheta_w$ est un $K$-type pour $[[\overline{\xi}_w]\oplus[\overline{\xi}'_w],0]$, et donc $\Hom_{\oE[U_w]}(M_w,\pi_w)$ est non nul si et seulement si $\WD(\rho_\pi|_{\gFw}) = \eta_w\norm\oplus \eta_w'$ (avec $N=0$) pour des caract\`eres $\eta_w,\eta_w':W_{w} \to \Qpbar^\times$ tels que $\eta_w|_{I_w} = [\overline{\xi}_w]|_{I_w}$, $\eta_w'|_{I_w} = [\overline{\xi}'_w]|_{I_w}$ et $\eta_w\eta_w' = \psi|_{\gFw}$. De plus dans ce cas $\Hom_{\oE[U_w]}(M_w,\pi_w)$ est de dimension $1$ avec $T_w$ agissant par $a_w = \eta_w(\varpi_w)$, de sorte que $\Hom_{\oE[U_w]}(M_w,\pi_w)_{\alpha_w}$ est non nul (n\'ecessairement de dimension $1$) si et seulement si la repr\'esentation de Weil-Deligne $\WD(\rho_\pi|_{\gFw})$ est comme ci-dessus avec $\eta_w(\varpi_w)\in \Zpbar^\times$ relevant $\alpha_w=\overline{\xi}_w(\varpi_w)$ (et donc avec aussi $\eta_w'(\varpi_w)\in \Zpbar^\times$). Une repr\'esentation potentiellement semi-stable $\rho_\pi|_{\gFw}$ a une telle repr\'esentation de Weil-Deligne si et seulement si $\rho_\pi|_{\gFw}$ est comme dans l'\'enonc\'e.

Supposons $w|p$, $w\neq v$ et $\overline{\xi}_w|_{I_w}= \overline{\xi}'_w|_{I_w}$. La preuve est similaire au cas pr\'ec\'edent avec les modifications suivantes. Si $\overline{\xi}_w^{-1}\otimes \rhobar|_{\gFw}$ n'est pas la fibre g\'en\'erique \ d'un sch\'ema \ en \ groupes fini et plat sur $\oFw$, alors on a $\WD(\rho_\pi|_{\gFw}) = \eta_w\norm\oplus \eta_w$ avec $N\neq 0$ et $T_w$ agissant par $\eta_w(\varpi_w)$. Si $\overline{\xi}_w^{-1}\otimes \rhobar|_{\gFw}$ est la fibre g\'en\'erique d'un sch\'ema en groupes fini et plat sur $\oFw$, alors $U_w = \GL_2(\oFw)$ et on a $\WD(\rho_\pi|_{\gFw}) = \eta_w\norm\oplus \eta_w'$ (avec $N=0$) pour des $\eta_w,\eta_w'$ comme dans le cas pr\'ec\'edent, mais maintenant $T_w$ agit par $\eta_w(\varpi_w) + \eta_w'(\varpi_w){\mathrm{N}}(w)$. C'est une unit\'e $p$-adique si et seulement si soit $\eta_w(\varpi_w)$ soit $\eta_w'(\varpi_w){\mathrm{N}}(w)$ l'est. Quitte \`a \'echanger $\eta_w\norm$ et $\eta_w'$ on voit comme pr\'ec\'edemment que $\rho_\pi|_{\gFw}$ est comme dans l'\'enonc\'e. 

Supposons maintenant $w = v$. Comme $\rhobar|_{\gFv}$ est (r\'eductible) g\'en\'erique, on a $\overline{\xi}_v|_{I_v} \neq \overline{\xi}'_v|_{I_v}$ et encore une fois $\Hom_{\oE[U_v]}(M_v,\pi_v)$ non nul (n\'ecessaire\-ment de dimension $1$) si et seulement si $\WD(\rho_\pi|_{\gFv}) = \eta_v\norm\oplus \eta_v'$ avec $N=0$ et des caract\`eres $\eta_v,\eta_v'$ comme ci-dessus. Comme $\rho_\pi|_{\gFv}$ rel\`eve $\rhobar|_{\gFv}$, on en d\'eduit par la proposition~\ref{J'} que dans ce cas le $\oE$-module fortement divisible qui donne $\rho_\pi|_{\gFv}$ est de type $\emptyset$ (cf. d\'efinition \ref{J}), et par le th\'eor\`eme~\ref{thm_reduction} que $\eta_v(p)$ est une unit\'e $p$-adique qui rel\`eve $\overline{\xi}_v(p )$. (Notons que le caract\`ere $\eta'(\emptyset)$ du \S\ \ref{prel} est la restriction \`a $I_v$ du caract\`ere not\'e ici $\eta_v$.) On conclut exactement comme pr\'ec\'edemment.

(iv) Supposons enfin $w \in Q$ (donc en particulier $w\!\nmid \!p$). Par la condition {\bf(Q)}, on a $\rhobar|_{\gFw} \cong \overline{\xi}_w \oplus \overline{\xi}_w'$ pour des caract\`eres distincts $\overline{\xi}_w,\overline{\xi}_w':\gFw \to k_E^\times$ dont le quotient n'est pas $\omega^{\pm1}$. Comme $\rho_\pi$ rel\`eve $\rhobar$, on en d\'eduit $\rho_\pi|_{\gFw}\cong \eta_w \oplus \eta'_w$ pour des caract\`eres $\eta_w$ et $\eta'_w$ qui rel\`event $\overline{\xi}_w$ et $\overline{\xi}'_w$. On a donc $\pi_w \cong \Ind_{{\rm B}(F_w)}^{\GL_2(F_w)}(\eta_w\norm^{-1}\otimes \eta'_w)$ de sorte que $\Hom_{U_w}(M_w,\pi_w)$ est de dimension $2$ avec $\eta_w(\varpi_w){\mathrm{N}}(w)$ et $\eta_w'(\varpi_w){\mathrm{N}}(w)$ pour valeurs propres de $T_w$. Comme une seule de ces valeurs propres se r\'eduit sur $\alpha_w$, on obtient que $\Hom_{U_w}(M_w,\pi_w)_{\alpha_w}$ est de dimension $1$.

Ceci ach\`eve la preuve de $\Qpbar \otimes_{\oE} C_{Q,\gm_Q} \cong \oplus \rho_\pi$ avec $\pi$ comme dans l'\'enonc\'e. Notons que $\tilde{\T}$ agit sur la composante en $\pi$ via un homomorphisme $\tilde{\T} \to \Qpbar$ envoyant $T_w$ sur ${\mathrm{N}}(w)\tr(\rho_\pi(\Fr_w))$ pour $w\not\in \Sigma\cup Q$. Comme ces homomorphismes sont tous distincts, la conclusion concernant $\T_{Q,\gm_Q}$ s'ensuit.

(v) Le fait que l'ensemble de ces $\pi$ est non vide d\'ecoule du th\'eor\`eme~\ref{lifts} appliqu\'e avec $S=\Sigma$, $\psi=\psi\varepsilon$ et $T=$ l'ensemble des places de $F$ divisant $p$. Posons $\overline{\mu}_w = \overline{\xi}_w$ pour $w\in\Sigma'$ et choisissons les types de Weil-Deligne $[r_w,N_w]$ pour $w\in \Sigma$ comme ceux apparaissant dans la preuve ci-dessus. Un relev\'e $\rho_w$ provenant d'un point \`a valeurs dans $\Qpbar$ quelconque de l'anneau $R_w^\triangle$ du lemme~\ref{lem_deformations} est du type demand\'e. Le th\'eor\`eme~\ref{lifts} produit alors un relev\'e $\rho = \rho_\pi$ qui a les propri\'et\'es requises.
\end{proof}

\begin{rem}
{\rm Notons que, dans la preuve ci-dessus, le cas $w=v$ se distingue des cas $w|p$, $w\neq v$ car nous n'introduisons pas d'op\'erateur de Hecke $T_v$ en $v$. Nous aurions pu, comme pour les places $w|p$, $w\neq v$, rajouter un tel op\'erateur et remplacer dans l'\'enonc\'e les hypoth\`eses $F_v$ non ramifi\'ee, $D_v$ d\'eploy\'ee et $\rhobar|_{\gFv}$ r\'eductible g\'en\'erique par juste $D_v$ d\'eploy\'ee et $\rhobar|_{\gFv}$ r\'eductible non scalaire. Mais cela aurait alors compliqu\'e l'\'enonc\'e et la preuve du th\'eor\`eme \ref{thm:multone} ci-apr\`es qui, de toute mani\`ere, requi\`erent ces hypoth\`eses plus fortes en $v$.}
\end{rem}

Nous aurons besoin d'une variante de la proposition \ref{CQmQ} dans le cas $Q=\emptyset$. \'Ecrivons :
\begin{eqnarray}\label{ecrit}
\rhobar|_\gFv \cong \begin{pmatrix}\overline{\xi}_v\omega & * \\ 0 & \overline{\xi}_v'\end{pmatrix}\cong \begin{pmatrix}\overline{\xi}_v\theta_v^{-1}\omega & * \\ 0 & \overline{\xi}_v'\theta_v^{-1}\end{pmatrix}\otimes \theta_v
\end{eqnarray}
o\`u $\theta_v$ est l'unique caract\`ere de $\gFv$ qui co\"\i ncide avec $\overline{\xi}_v'$ sur l'inertie et qui vaut $1$ en $p$ (si $\rhobar|_\gFv$ est scind\'ee on fait un choix pour $\overline{\xi}_v$, $\overline{\xi}_v'$, et donc pour $\theta_v$, cf. les remarques \ref{comment}(ii) et \ref{choix}(i)). Notons ${\mathcal S}_v$ l'ensemble des plongements de $k_v$ dans $k_E$, on a :
$$\overline{\xi}_v\theta_v^{-1}=\nr(\lambda_v)\prod_{\sigma\in {\mathcal S}_v}\omega_{\sigma}^{r_{v,\sigma}}\ \ \ {\rm et}\ \ \ \ \overline{\xi}_v'\theta_v^{-1}=\nr(\mu_v)$$
pour $\lambda_v,\mu_v \in k_E^{\times}$ et des $r_{v,\sigma}\in \{0,\cdots,p-3\}$ uniques avec $(r_{v,\sigma})_{\sigma}\ne (0,\cdots,0),(p-3,\cdots,p-3)$. Rappelons que l'on a alors d\'efini en (\ref{carJ}) le caract\`ere $\eta'(J)\otimes\eta(J)$ de ${\rm I}({\mathcal O}_{F_v})=U_v$ pour tout $J\subseteq {\mathcal S}_v$. D\'efinissons les $\oE$-modules $M(J)$ et $C(J)$ exactement comme l'on a d\'efini $M$ et $C_\emptyset$ au \S~3.5 mais en rempla\c cant $M_v$ par $M_v(J) \= \oE(\eta'(J)\otimes\eta(J))$ (en particulier $M_v(\emptyset) = M_v$). On note $\T(J)$ le $\oE$-module de type fini image de $\tilde{\T}$ dans $\End_{\oE}(C(J))$ et on pose $\gm = \gm_\emptyset$.

\begin{prop}\label{CQmQJ} 
Pour tout $J\subseteq {\mathcal S}_v$, on a des isomorphismes $\Qpbar \otimes_{\oE} C(J)_{\gm} \cong \oplus \rho_\pi$ et $\Qpbar \otimes_{\oE} \T(J)_{\gm} \cong \oplus \Qpbar$ o\`u $\Qpbar \otimes_{\oE} \T(J)_{\gm}$ agit composante par composante sur $\Qpbar \otimes_{\oE} C(J)_{\gm}$ et o\`u les sommes directes sont sur les repr\'esentations automorphes $\pi$ de $(D\otimes \A)^\times$ telles que :
\begin{itemize}
\item $\pi_\infty \cong \sigma_2$
\item $\det\rho_\pi = \psi\varepsilon$
\item $\rho_\pi$ est un relev\'e de $\rhobar$
\item $\rho_\pi$ est non ramifi\'ee en dehors de $\Sigma$
\item si $w\in \Sigma\backslash \Sigma'$, alors $\rho_\pi(I_w) \stackrel{\sim}{\to} \rhobar(I_w)$
\item si $w\in \Sigma'\backslash\{v\}$, alors $\rho_\pi|_\gFw \cong  \smat{\eta_w\varepsilon & * \\ 0 & \eta_w'}$ pour un caract\`ere $\eta_w$ relevant $\overline{\xi}_w$ tel que $\eta_w(I_w) \stackrel{\sim}{\to} \overline{\xi}_w(I_w)$
\item si $w=v$, alors $\rho_\pi|_{\gFv}$ est potentiellement Barsotti-Tate de type de Weil-Deligne $[\eta'(J)\oplus\eta(J),0]$
\item si $w|p$, $\overline{\xi}_w|_{I_w} = \overline{\xi}'_w|_{I_w}$ et $\overline{\xi}_w^{-1}\rhobar|_{\gFw}$ est la fibre g\'en\'erique d'un sch\'ema en groupes fini et plat sur $\oFw$, alors $\eta_w^{-1}\rho_\pi|_{\gFw}$ est la fibre g\'en\'erique d'un groupe $p$-divisible sur $\oFw$.
\end{itemize}
De plus, l'ensemble de telles $\pi$ est non vide.
\end{prop}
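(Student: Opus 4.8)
The plan is to follow the proof of Proposition \ref{CQmQ} in the case $Q=\emptyset$ almost verbatim, changing only the local analysis at the place $v$. As there, I would start from the decomposition over $\Qpbar$ of $H^1_\et(X_{V_\emptyset,\Qbar},\oE)(1)$ as $\bigoplus_\pi\big(\rho_\pi\otimes_{\Qpbar}\pi_f^{V_\emptyset}\big)$, run over the prime ideals $\gp$ of $\Qpbar\otimes_\oE\tilde\T$ lying above $\gm$, and obtain that $\Qpbar\otimes_\oE C(J)_\gm$ is the direct sum, over the automorphic $\pi$ with $\pi_\infty\cong\sigma_2$ whose central character forces $\det\rho_\pi=\psi\varepsilon$ and $\rho_\pi$ a lift of $\rhobar$ unramified outside $\Sigma$, of $\rho_\pi$ tensored with a restricted tensor product $\bigotimes_w' X_w$ of local factors. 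For every $w\neq v$ the data $M_w$, $U_w$, $V_w$, $T_w$, $\alpha_w$ are exactly those of \S\ \ref{facteur} and \S\ \ref{unI}, so the factor $X_w$ and the resulting condition on $\rho_\pi|_{\gFw}$ are identical to those established in the proof of Proposition \ref{CQmQ}, and I would simply quote that argument.

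The one genuinely new point is the factor at $v$, which is now $X_v=\Hom_{\oE[U_v]}\big(M_v(J),\pi_v\big)=\Hom_{\oE[{\rm I}(\oFv)]}\big(\eta'(J)\otimes\eta(J),\pi_v\big)$, the $\eta'(J)\otimes\eta(J)$-eigenspace of ${\rm I}(\oFv)$ in $\pi_v$. Since $\ind_{{\rm I}(\oFv)}^{\GL_2(\oFv)}(\eta'(J)\otimes\eta(J))$ is a type for the tame principal series inertial type $[\eta'(J)\oplus\eta(J),0]$, and since $\eta(J)\neq\eta'(J)$, this eigenspace is $0$ unless $\pi_v$ is a principal series whose inertial type is $\eta'(J)|_{I_v}\oplus\eta(J)|_{I_v}$, in which case it is one-dimensional. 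By local--global compatibility (Carayol and T. Saito, recalled in \S\ \ref{globalprelim}) the latter condition is exactly that $\rho_\pi|_{\gFv}$ be potentially Barsotti--Tate of Weil--Deligne type $[\eta'(J)\oplus\eta(J),0]$, which is the condition in the statement. Exactly as in the proof of Proposition \ref{CQmQ}, the distinctness of the unramified Hecke eigensystems $T_w\mapsto{\mathrm N}(w)\tr(\rho_\pi(\Fr_w))$ ($w\notin\Sigma$) then yields $\Qpbar\otimes_\oE\T(J)_\gm\cong\bigoplus\Qpbar$ acting component by component on $\Qpbar\otimes_\oE C(J)_\gm$.

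For the nonemptiness of this set of $\pi$ I would invoke Theorem \ref{lifts} with $S=\Sigma$, determinant $\psi\varepsilon$, $T$ the set of places above $p$, $\overline\mu_w=\overline\xi_w$ for $w\in\Sigma'$, and, at the places of $\Sigma$, the types used in the proof of Proposition \ref{CQmQ} for $w\neq v$ together with $[r_v,N_v]=[\eta'(J)\oplus\eta(J),0]$ at $v$; the input required is a local lift $\rho_v$ of $\rhobar|_{\gFv}$, potentially Barsotti--Tate of this type, with Hodge--Tate weights $(0,1)$, potentially ordinary with the appropriate sub-character, and of determinant $\psi\varepsilon|_{I_v}$ after a twist by a character of $p$-power order. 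When $Z(\rhobar|_{\gFv})\cap F(J)=\emptyset$ such a $\rho_v$ is provided directly by the strongly divisible modules of \S\S\ \ref{fl1}--\ref{fl2}: Proposition \ref{prop1} exhibits one of type $J$ (in particular of type $\eta(J)\otimes\eta'(J)$, by Proposition \ref{J'}) whose residual representation is $\rhobar|_{\gFv}$, and it is reducible of the required shape. For the remaining $J$, one first notes, as in the proof of Lemma \ref{unik}, that the Serre weight $\tau(\emptyset)=\big(\otimes_\sigma(\Sym^{r_{v,\sigma}}k_E^2)^\sigma\big)\otimes\theta_v\circ\det\in\mathcal D(\rhobar|_{\gFv})$ always occurs in the reduction of $\ind_{{\rm I}(\oFv)}^{\GL_2(\oFv)}(\eta'(J)\otimes\eta(J))$, so $\rhobar$ is modular of that weight in $v$ and Lemma \ref{wts_types} furnishes a suitable automorphic lift, hence a nonzero local deformation ring of the prescribed type. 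The part requiring the most care is precisely this interplay at $v$: on one hand, pinning down $\dim X_v\in\{0,1\}$ in terms of the potentially Barsotti--Tate type, which rests on the theory of tame types and on the normalisation of local Langlands fixed in \S\ \ref{globalprelim}; on the other hand, producing the local lift $\rho_v$ of the prescribed type for every $J$, which is immediate from \S\S\ \ref{fl1}--\ref{fl2} when $Z(\rhobar|_{\gFv})\cap F(J)=\emptyset$ but for the other $J$ needs the weight input just described.
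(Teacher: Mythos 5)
Your decomposition argument and the analysis of the local factor $X_v = \Hom_{\oE[{\rm I}(\oFv)]}(\eta'(J)\otimes\eta(J),\pi_v)$ are exactly the paper's, and indeed the paper compresses this part to ``la preuve est en tout point similaire \`a celle de la proposition \ref{CQmQ}''; note that the $v$-case is actually \emph{simpler} here than in Proposition \ref{CQmQ}, where (with $J=\emptyset$) one must additionally invoke Propositions \ref{J'} and \ref{thm_reduction} to verify that the resulting lift is ordinary, since $v\in\Sigma'$ there imposes the ordered-filtration condition — that extra step disappears in the present statement because the condition at $v$ is phrased directly as the Weil--Deligne type.

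On the nonemptiness, there is a mild gap in each of your two routes. Route (a$'$): Propositions \ref{prop1} and \ref{J'} both go \emph{from} a strongly divisible module \emph{to} the residual representation — neither constructs a strongly divisible module of type $J$ starting from a given generic reducible $\rhobar|_{\gFv}$ with $Z(\rhobar|_{\gFv})\cap F(J)=\emptyset$, so ``Proposition \ref{prop1} exhibits one'' is not what that proposition says; one would have to solve back for the $a_j$ from the Fontaine--Laffaille data, and this is not done in \S\S~\ref{fl1}--\ref{fl2}. Route (b$'$): the deduction ``$\tau(\emptyset)$ occurs in the reduction of $\ind_{{\rm I}(\oFv)}^{\GL_2(\oFv)}\eta'(J)\otimes\eta(J)$, so $\rhobar$ is modular of that weight'' does not follow — being a Jordan--H\"older factor of a reduced type is a local statement and does not by itself yield modularity of that specific Serre weight. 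The paper's actual route to close this point is either (i) the purely local Breuil--M\'ezard input \cite[Thm.5.1.1(i)]{BM}, which says precisely that a potentially Barsotti--Tate lift of inertial type $\eta'(J)\oplus\eta(J)$ exists as soon as the reduction of that type shares a constituent with $\mathcal{D}(\rhobar|_{\gFv})$ (which $\tau(\emptyset)$ is, by Lemma \ref{unik}), or (ii) the modularity of weight $\tau(\emptyset)$, but the latter is established only in the proof of Theorem \ref{thm:multone}(i) (which depends on the $J=\emptyset$ case and thus avoids circularity) — it must be invoked by forward reference, not derived from the constituent fact. With one of these two substitutions your argument is complete.
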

\begin{proof}
La preuve est en tout point similaire \`a celle de la proposition \ref{CQmQ}, sauf pour la derni\`ere assertion (cf. (v) dans la preuve de {\it loc.cit.}) o\`u, pour pouvoir utiliser le th\'eor\`eme~\ref{lifts}, il faut v\'erifier que la repr\'esentation $\rhobar|_{\gFv}$ admet un relev\'e potentiellement Barsotti-Tate de type de Weil-Deligne $[\eta'(J)\oplus\eta(J),0]$. Cela se d\'eduit par exemple de \cite[Thm.5.1.1(i)]{BM} en se souvenant que $\tau(\emptyset)\in {\mathcal D}(\rhobar|_{\gFv})$ est un constituant de $\ind_{{\rm I}({\mathcal O}_{F_v})}^{\GL_2({\mathcal O}_{F_v})}\overline\eta'(J)\otimes\overline\eta(J)$, cf. \S\ \ref{spec} (cela se d\'eduit aussi du lemme \ref{wts_types} en utilisant que $\rhobar$ est mo\-dulaire de poids $\tau(\emptyset)$ en $v$ par la preuve du th\'eor\`eme \ref{thm:multone}(i) ci-apr\`es).
 \end{proof}
 
Pour terminer cette section, nous revenons au cadre du \S\ \ref{facteur} (o\`u il n'y a plus d'hypoth\`ese sur $v$) pour montrer le corollaire suivant qui compl\`ete le corollaire \ref{extra}.
 
\begin{cor}\label{extra+}
Supposons $p>2$, $\rhobar:\gF \to \GL_2(k_E)$ modulaire, $\rhobar|_{{\Gal}(\Qbar/F(\sqrt[p]{1}))}$ irr\'eductible et, si $p=5$, l'image de $\rhobar({\Gal}(\Qbar/F(\sqrt[p]{1})))$ dans ${\rm PGL}_2(k_E)$ non isomorphe \`a ${\rm PSL}_2(\F_5)$. Supposons \'egalement satisfaites les hypoth\`eses {\bf(H0)}, {\bf(H1)}, {\bf(H2)} (cf. \S\S\ \ref{lifts0} et \ref{facteur}). Si $D$ est ramifi\'ee en $v$ alors la repr\'esentation $\pi_{D,v}(\rhobar)$ en (\ref{piDv}) est de longueur infinie.
\end{cor}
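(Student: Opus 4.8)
Le plan est de suivre de pr\`es la preuve du corollaire \ref{extra}, mais en gardant tout au long de l'argument les donn\'ees de Hecke concr\`etes $(U^v,V^v,\overline M^v,\gm')$ de (\ref{piDv}), de sorte \`a n'utiliser \`a aucun moment la factorisation conjecturale (\ref{factorization}). On commence par observer que, $D$ \'etant ramifi\'ee en $v$, le groupe $D_v^\times$ est compact modulo son centre, donc toute repr\'esentation lisse irr\'eductible de $D_v^\times$ sur $k_E$ est de dimension finie; puisque $\pi_{D,v}(\rhobar)$ a un caract\`ere central, il suffit donc de montrer que $\pi_{D,v}(\rhobar)$ est de dimension infinie sur $k_E$, i.e. que $\dim_{k_E}\pi_{D,v}(\rhobar)^{U_v}$ n'est pas born\'e quand $U_v$ parcourt les sous-groupes ouverts compacts de $\oDv^\times$ (noter que $\pi_D(\rhobar)\neq 0$ par le corollaire \ref{nonzero}, gr\^ace \`a {\bf(H0)}).

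On construirait d'abord, pour tout $m\geq 0$, une repr\'esentation automorphe $\pi_m$ de $(D\otimes_\Q\A)^\times$ avec $\pi_{m,\infty}\cong\sigma_2$ et $\rhobar_{\pi_m}\cong\rhobar$, dont le comportement local en dehors de $v$ est fix\'e et compatible avec $(\overline M^v,\gm')$, et dont le facteur en $v$ est supercuspidal de conducteur essentiel croissant avec $m$. Pour cela, notant $\psi$ le relev\'e de Teichm\"uller de $\omega^{-1}\det\rhobar$, on choisirait pour chaque $w\in S$ un relev\'e $\rho_w\colon\gFw\to\GL_2(E)$ de $\rhobar|_{\gFw}$ avec $\det\rho_w=\varepsilon\psi|_{\gFw}$ compatible avec la donn\'ee $\overline M_w$ et, si $w\in S'$, avec le scalaire $\alpha_w$, au sens pr\'ecis isol\'e en (ii)--(iv) de la preuve de la proposition \ref{CQmQ} (un tel $\rho_w$ existe : c'est par exemple un $\Qpbar$-point de l'anneau $R_w^\triangle$, non nul et formellement lisse par le lemme \ref{lem_deformations}); on note $[r_w,N_w]$ son type de Weil-Deligne et, pour $w$ dans $T\=\{w\neq v,\ w|p\}$ ou tel que $w\nmid p$ et $N_w\neq 0$, $\overline\mu_w$ le caract\`ere donn\'e par sa sous-droite stable. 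Au-dessus de $v$, soit $\tau_v$ un poids de Serre pour lequel $\rhobar$ est modulaire en $v$ par rapport \`a $D$ : par la proposition \ref{app:SCoD} de l'appendice, pour tout $m$ il existe un $K$-type supercuspidal $\vartheta_{v,m}$ de conducteur essentiel $2m+3$ dont la r\'eduction contient $\tau_v$, donc par le lemme \ref{wts_types} un relev\'e $\rho_{v,m}$ de $\rhobar|_{\gFv}$ potentiellement Barsotti-Tate de poids de Hodge-Tate $(0,1)$ et de type de Weil-Deligne irr\'eductible $[r_{v,m},0]$ de conducteur essentiel $2m+3$; quitte \`a tordre par un caract\`ere d'ordre une puissance de $p$ on se ram\`ene au d\'eterminant voulu sur $I_v$, et, $[r_{v,m},0]$ \'etant irr\'eductible, $\rho_{v,m}$ n'est pas potentiellement ordinaire, donc $v\notin T$ et aucune condition de sous-droite n'est \`a imposer en $v$. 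Le th\'eor\`eme \ref{lifts}, appliqu\'e avec $\Sigma=S\cup\{v\}$, les relev\'es $\{\rho_w\}_{w\in S}\cup\{\rho_{v,m}\}$, l'ensemble $T$, les caract\`eres $\overline\mu_w$ et le d\'eterminant $\psi\varepsilon$, puis la correspondance de Jacquet-Langlands, produit alors $\pi_m$; pour $m=0$ ceci fournit en particulier une $\pi$ avec $\Hom_{k_E[U^v]}(\overline M^v,\pi_f)[\gm']\neq 0$, donc $\pi_{D,v}(\rhobar)\neq 0$.

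On ferait ensuite le d\'ecompte des dimensions. Posons $M^v\=\otimes_{w\in S}M_w$, les relev\'es en caract\'eristique $0$ introduits plus haut dans cette section, de sorte que $k_E\otimes_{\oE}M^v=\overline M^v$. Comme $\rho_{\pi_m}|_{\gFw}$ a un type de Weil-Deligne $[r_w,N_w]$ ind\'ependant de $m$ pour $w\in S$, la compatibilit\'e local-global entra\^\i ne que $\pi_{m,w}|_{\oDw^\times}$ ne d\'epend pas de $m$, donc $\dim_{\Qpbar}\Hom_{\Qpbar[U_w]}(M_w,\pi_{m,w})$, localis\'e en $\alpha_w$ si $w\in S'$, est un entier fixe $C_w\geq 1$ (la positivit\'e venant de la forme locale impos\'ee \`a $\rho_{\pi_m}|_{\gFw}$). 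Comme de plus $\rho_{\pi_m}$ est non ramifi\'ee hors de $\Sigma$, il vient $\dim_{\Qpbar}\Hom_{\Qpbar[U^v]}(M^v,\pi_{m,f})[\gm']=C\cdot\dim_{\Qpbar}\pi_{m,v}$ avec $C\=\prod_{w\in S}C_w\geq 1$, et $\dim_{\Qpbar}\pi_{m,v}$ est un multiple strictement positif fixe de $q_v^m$ (cf. la proposition \ref{app:SCoD} et la preuve du corollaire \ref{extra}). Maintenant $N_m\=\Hom_{\oE[U^v]}(M^v,\pi_m^0)$ (avec $\pi_m^0$ d\'efini comme en (\ref{pi0})) est un $\oE$-module de type fini sans torsion, donc libre, v\'erifiant $N_m\otimes_{\oE}\Qpbar=\Hom_{\Qpbar[U^v]}(M^v,\pi_{m,f})$; en appliquant $\Hom_{\oE[U^v]}(M^v,-)$ \`a $0\to\pi_m^0\xrightarrow{\ \pE\ }\pi_m^0\to k_E\otimes_{\oE}\pi_m^0\to 0$ on obtient une injection $N_m/\pE N_m\hookrightarrow\Hom_{k_E[U^v]}(\overline M^v,k_E\otimes_{\oE}\pi_m^0)$, et la localisation en $\gm'$ (exacte, et pr\'eservant les rangs puisque $N_m$ est $\oE$-libre et que l'action des $T_w$, $w\in S'$, sur $N_m$ se factorise par une $\oE$-alg\`ebre finie) donne
$$\dim_{k_E}\Hom_{k_E[U^v]}\big(\overline M^v,k_E\otimes_{\oE}\pi_m^0\big)[\gm']\ \geq\ \dim_{\Qpbar}\Hom_{\Qpbar[U^v]}(M^v,\pi_{m,f})[\gm']\ =\ C\cdot\dim_{\Qpbar}\pi_{m,v}.$$
Enfin, par le lemme \ref{pi0bar} l'application naturelle $k_E\otimes_{\oE}\pi_m^0\hookrightarrow\pi_D(\rhobar)$ est $D_f^\times$-\'equivariante et injective, donc $\Hom_{k_E[U^v]}(\overline M^v,-)[\gm']$ fournit une injection $D_v^\times$-\'equivariante de son membre de gauche dans $\pi_{D,v}(\rhobar)$; on obtient ainsi $\dim_{k_E}\pi_{D,v}(\rhobar)\geq C\cdot\dim_{\Qpbar}\pi_{m,v}$ pour tout $m\geq 0$, donc $\pi_{D,v}(\rhobar)$ est de dimension infinie, et par cons\'equent de longueur infinie.

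Le point d\'elicat est l'\'etape de construction : il faut v\'erifier que les conditions locales aux $w\in S$ d\'etect\'ees par $\overline M_w$ (et par $\alpha_w$ si $w\in S'$) se r\'ealisent par des relev\'es locaux $\rho_w$ situ\'es dans le champ d'application du th\'eor\`eme \ref{lifts}, en particulier que, lorsque $w\in T$ ou que $w\nmid p$ et $N_w\neq 0$, la sous-droite stable requise par ce th\'eor\`eme est compatible avec $\overline M_w$. C'est exactement la v\'erification d\'ej\`a men\'ee, pour $Q=\emptyset$, en (ii)--(iv) de la preuve de la proposition \ref{CQmQ}, que l'on reprend telle quelle ici, les hypoth\`eses {\bf(H0)}, {\bf(H1)}, {\bf(H2)} garantissant \`a chaque \'etape que $\rhobar|_{\gFw}$ a la forme voulue pour qu'un tel relev\'e existe. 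En revanche, la comparaison int\'egrale de $N_m/\pE N_m$ avec $N_m\otimes_{\oE}\Qpbar$ apr\`es localisation en $\gm'$, et le passage \`a la dimension infinie, sont sans difficult\'e, puisque $\oE$ est un anneau de valuation discr\`ete et que l'action de Hecke se factorise par une $\oE$-alg\`ebre finie.
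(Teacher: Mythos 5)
Your proof follows the same strategy as the paper: use Proposition~\ref{app:SCoD} to produce supercuspidal $K$-types at $v$ of essential conductor $2m+3$, invoke Theorem~\ref{lifts} (with the local lifts $\rho_w$ and data $\overline{M}_w$, $\alpha_w$ matched exactly as in the proof of Proposition~\ref{CQmQ}) together with Jacquet--Langlands to obtain automorphic $\pi_m$, and then pass through Lemma~\ref{pi0bar} to bound $\dim_{k_E}\pi_{D,v}(\rhobar)$ below by $Cq_v^m$, concluding via compactness of $D_v^\times$ modulo centre. The one technicality you gloss over is that $\pi_m^0$ as defined in (\ref{pi0}) requires $E$ to contain the field of definition of $\rho_{\pi_m}$, which grows with $m$; the paper handles this by passing to a finite extension $E_m\supseteq E$ for each $m$ and noting that the dimension bound over $k_{E_m}$ descends to $k_E$ by flat base change, an easy but necessary fix to your version.
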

\begin{proof}
Soient $\tau_v$, $\vartheta_{v,m}$ et $r_{v,m}$ comme dans la preuve du corollaire \ref{extra} et $\psi \= [\omega^{-1}\det\rhobar]$. On applique le th\'eor\`eme \ref{lifts} avec pour $T$ l'ensemble des places $w\neq v$ divisant $p$, pour $S$ l'ensemble $\Sigma=S\cup\{v\}$, avec $\overline{\mu}_w \= \overline{\xi}_w$ si $w \in \Sigma' \setminus \{v\}$, avec le type de Weil-Deligne $[r_{v,m},0]$ en $v$ et avec les types de Weil-Deligne de la preuve de la proposition \ref{CQmQ} aux places $w \neq v$ de $\Sigma$, c'est-\`a-dire~: 
\begin{itemize}
\item $[\sigma,0]$ comme dans le (ii) de cette preuve si $w \not\in \Sigma'$
\item $[[\overline{\xi}_w]\norm\oplus [\overline{\xi}_w],N_w]$ avec $N_w \neq 0$ si $w\in \Sigma'$ mais $w\nmid p$, ou bien si $w|p$, $\overline{\xi}_w|_{I_w} = \overline{\xi}'_w|_{I_w}$ et $\overline{\xi}_w^{-1}\otimes\rhobar|_{\gFw}$ n'est pas la fibre g\'en\'erique d'un sch\'ema en groupes fini et plat sur $\oFw$
\item $[[\overline{\xi}_w]\oplus [\overline{\xi}'_w],0]$ sinon.
\end{itemize}
(L'existence des relev\'es locaux se d\'eduit des lemmes \ref{wts_types} et \ref{lem_deformations}.) Soit $\pi_m$ une repr\'esentation automorphe de $(D\otimes_\Q\A)^\times$ donn\'ee par le th\'eor\`eme \ref{lifts} et la correspondance de Jacquet-Langlands. Sur une extension suffisamment grande $E_m$ de $E$, on obtient comme dans la preuve de la proposition \ref{CQmQ} :
$$\Hom_{\oE[U^v]}(M^v,\Qpbar\otimes_{{\mathcal{O}}_{E_m}}\pi_m^0) \cong  \Qpbar\otimes_{\Qbar}\pi_{m,v}$$
o\`u $M^v \= \otimes_{w \in S}M_w$, $\pi_m^0$ est un ${\mathcal{O}}_{E_m}$-r\'eseau de $\pi_{m,f}$ comme en (\ref{pi0}) et $T_w$ pour $w \in S'$ agit par multiplication par un scalaire dans ${\mathcal{O}}_{E_m}^\times$ qui se r\'eduit sur $\alpha_w\in k_E^\times$. On d\'eduit de la proposition \ref{app:SCoD} que le ${\mathcal{O}}_{E_m}$-rang de $\Hom_{\oE[U^v]}(M^v,\pi_m^0)$ est $Cq_v^m$ pour un entier $C$ strictement positif ind\'ependant de $m$. Soit $\varpi_{E_m}$ une uniformisante de ${\mathcal{O}}_{E_m}$, l'injection naturelle $\pi_m^0/\varpi_{E_m} \hookrightarrow k_{E_m} \otimes_{k_E}\pi_D(\rhobar)$ (cf. lemme \ref{pi0bar}) induit des injections compatibles avec l'action de $T_w$ pour $w\in S'$ :
\begin{multline*}
\Hom_{\oE[U^v]}(M^v,\pi_m^0)/\varpi_{E_m} \hookrightarrow \Hom_{k_E[U^v]}(\overline M^v,\pi_m^0/\varpi_{E_m})\\
\hookrightarrow k_{E_m} \otimes_{k_E}\Hom_{k_E[U^v]}(\overline M^v,\pi_{D}(\rhobar))
\end{multline*}
et l'image de l'injection compos\'ee tombe donc dans :
$$k_{E_m}\otimes_{k_E}\Hom_{k_E[U^v]}(\overline{M}^v,\pi_D(\rhobar))[\gm']=k_{E_m}\otimes_{k_E}\pi_{D,v}(\rhobar).$$
On en d\'eduit que $\pi_{D,v}(\rhobar)$ est de dimension (sur $k_E$) sup\'erieure ou \'egale \`a $Cq_v^m$ pour tout $m$ et on conclut comme dans la preuve du corollaire \ref{extra}.
\end{proof}

\subsection{Multiplicit\'e un II}\label{unII}

On montre que le module $C_\gm=C_{\emptyset,\gm_\emptyset}$ est libre de rang $2$ sur $\T_\gm=\T_{\emptyset,\gm_\emptyset}$. 

Beaucoup des arguments sont similaires \`a ceux que l'on trouve dans les articles g\'en\'eralisant \cite{Wi} et \cite{TW}. Une diff\'erence cependant est que le but de ces articles \'etant typiquement de montrer la modularit\'e de repr\'esentations galoisiennes, les m\'ethodes de changement de base peuvent y \^etre utilis\'ees afin de simplifier les hypoth\`eses sur le niveau. Nous donnons par cons\'equent des preuves compl\`etes aux endroits o\`u cette diff\'erence intervient. On conserve les notations des sections \ref{facteur}, \ref{deformation} et \ref{unI} et les hypoth\`eses de la section \ref{unI}.

Pour une repr\'esentation automorphe $\pi$ de $(D\otimes \A)^\times$ comme dans la proposition \ref{CQmQ}, la repr\'esentation $\rho_\pi$ est naturellement d\'efinie sur un objet de la cat\'egorie $\CNL$ (cf. \S\ \ref{deformation}) : prendre par exemple $\{a\in {\mathcal{O}}_{E'},\ \overline{a} \in k_E\}$ pour une extension $E'$ suffisamment grande de $E$. Elle d\'efinit donc un morphisme de $\oE$-alg\`ebres $R_Q \to \Qpbar$ qui envoie ${\mathrm{N}}(w)\tr(\rho_Q^\univ(\Fr_w))$ sur la valeur propre de $T_w$ agissant sur $\pi_w^{\GL_2(\oFw)}$ pour $w\not\in \Sigma\cup Q$ (o\`u $\rho_Q^\univ:\gF \to \GL_2(R_Q)$ est dans la classe d'\'equivalence de la d\'eformation universelle de $\rhobar$ correspondante). Par la proposition \ref{CQmQ}, on obtient donc un morphisme $R_Q \to \Qpbar \otimes_{\oE} \T_{Q,\gm_Q}$ qui envoie ${\mathrm{N}}(w)\tr(\rho_Q^\univ(\Fr_w))$ sur $1\otimes T_w$ pour $w\not\in \Sigma\cup Q$. Comme $R_Q$ est topologiquement engendr\'e sur $\oE$ par les traces $\{\tr(\rho_Q^\univ(\Fr_w)),\ w\not\in\Sigma\cup Q\}$, ce morphisme est donc \`a valeurs dans $\T_{Q,\gm_Q}$. On voit aussi que si $Q'\subseteq Q$, on a un diagramme commutatif :
$$\begin{array}{ccc}  
R_{Q} & \longrightarrow & R_{Q'} \\
\downarrow&       & \downarrow \\
\T_{Q,\gm_Q} & \longrightarrow & \T_{Q',\gm_{Q'}}
\end{array}$$
o\`u la fl\`eche du haut est la surjection canonique, les fl\`eches verticales sont celles que l'on vient de d\'efinir et la fl\`eche du bas peut \^etre caract\'eris\'ee comme l'unique application (surjective) qui envoie $T_w \in \T_{Q,\gm_Q}$ sur l'unique racine $\tilde{\alpha}_w \in \T_{Q',\gm_{Q'}}$ de $X^2 - T_w X + \psi(\varpi_w){\mathrm{N}}(w)$ (donn\'ee par le lemme de Hensel) qui se r\'eduit sur $\alpha_w$ si $w \in Q\backslash Q'$ et sur $T_w$ sinon. Pour voir qu'une telle application existe et rend le diagramme commutatif, notons qu'il suffit de le v\'erifier apr\`es avoir tensoris\'e la ligne du bas par $\Qpbar$ et avoir projet\'e sur chaque composante de $\Qpbar \otimes_{\oE} \T_{Q',\gm_{Q'}}$ dans la d\'ecomposition de la proposition \ref{CQmQ}. L'application de $R_Q$ vers la composante correspondant \`a $\pi$ se factorise par la composante de $\Qpbar \otimes_{\oE} \T_{Q,\gm_{Q}}$ correspondant \`a $\pi$ o\`u l'image de $T_w$ y est d\'etermin\'ee par $\rho_\pi$ comme suit :
\begin{itemize}
\item[(i)]si $w\not\in \Sigma\cup Q$ alors l'image est $\mathrm{N}(w)\tr(\rho_\pi(\Fr_w))$
\item[(ii)]si $w\in S'$ alors il existe un unique caract\`ere $\eta_w$ de $\gFw$ relevant $\overline{\xi}_w$ tel que $\rho_\pi|_{\gFw} \cong  \smat{\eta_w\varepsilon & * \\ 0 & \eta_w'}$ et $\eta_w(I_w) = \overline{\xi}_w(I_w)$, l'image est alors $\eta_w(\varpi_w)$ sauf si $w|p$ et 
$\eta_w^{-1}\rho_\pi|_{\gFw}$ est la fibre g\'en\'erique d'un groupe $p$-divisible auquel cas l'image est $\eta_w(\varpi_w) + \mathrm{N}(w)\eta_w'(\varpi_w)$
\item[(iii)]si $w \in Q$ alors $\rho_\pi|_{\gFw} \cong \eta_w \oplus \eta'_w$ o\`u $\eta_w(\varpi_w)$ rel\`eve $\alpha_w$ mais pas $\eta'_w(\varpi_w)$ (voir (iv) dans la preuve de la proposition \ref{CQmQ}) et l'image est alors $\mathrm{N}(w)\eta_w(\varpi_w)$.
\end{itemize}
Consid\'erons maintenant le morphisme canonique $R_Q^\square \rightarrow  \T_{Q,\gm_Q}^\square \= R_Q^\square\otimes_{R_Q} \T_{Q,\gm_Q}$.

\begin{lem} \label{triangle}
(i) Il y a un unique morphisme de $R_Q^\square$-alg\`ebres : $\alpha_Q:R_Q^\triangle \to \T_{Q,\gm_Q}^\square$.\\
(ii) Si $w\in S'$, alors l'application induite $R_w^\triangle \to \T_{Q,\gm_Q}^\square$ correspond \`a une paire $(\sigma_w,L_w)$ o\`u $\gFw$ agit sur $L_w$ par un caract\`ere $\eta_w\varepsilon$ tel que $\eta_w(\varpi_w) = T_w$.\\
(iii) Le morphisme $\alpha_Q$ est surjectif.
\end{lem}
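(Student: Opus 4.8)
The plan is to build $\alpha_Q$ out of its local constituents. By the universal property of $R_Q^\triangle = R_Q^\square \widehat{\otimes}_{R_\loc^\square} R_\loc^\triangle$, an $R_Q^\square$-algebra homomorphism $R_Q^\triangle \to \T_{Q,\gm_Q}^\square$ is the same datum as a family, for $w$ running over $\Sigma$, of $\oE$-algebra maps $R_w^\triangle \to \T_{Q,\gm_Q}^\square$ making the obvious square with $R_w^\square$ and $R_\loc^\square \to R_Q^\square$ commute. By representability of $\CF_w$ (Lemme~\ref{lem_deformations}), such a map is the same as an element of $\CF_w(\T_{Q,\gm_Q}^\square)$ whose underlying framed lift of $\rhobar|_{\gFw}$ is the one $\sigma_w$ induced by $R_w^\square \to R_Q^\square \to \T_{Q,\gm_Q}^\square$. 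So everything reduces to proving: $\sigma_w$ satisfies the conditions defining $\CF_w$, and when $w\in\Sigma'$ it carries a unique $\gFw$-stable rank one direct summand $L_w \subset (\T_{Q,\gm_Q}^\square)^2$ of the required shape — the uniqueness being exactly what forces $\alpha_Q$ to be unique, which gives assertion~(i).

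The only substantive input is Proposition~\ref{CQmQ}. First, $\T_{Q,\gm_Q}$ is reduced and $\oE$-torsion free because $\Qpbar\otimes_{\oE}\T_{Q,\gm_Q}\cong\bigoplus_\pi\Qpbar$, and therefore so is $\T_{Q,\gm_Q}^\square$, which is a power series ring over $\T_{Q,\gm_Q}$ since $R_Q^\square$ is formally smooth over $R_Q$ ($\rhobar$ being irreducible). Hence $\T_{Q,\gm_Q}^\square$ embeds into $\Qpbar\otimes_{\oE}\T_{Q,\gm_Q}^\square\cong\bigoplus_\pi\Qpbar[[t_1,\dots,t_N]]$, a finite product whose $\pi$-factor carries a framing of $\rho_\pi$. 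By the bulleted list of Proposition~\ref{CQmQ}, each $\rho_\pi|_{\gFw}$ already satisfies every condition occurring in the definition of $\CF_w$: for $w\notin\Sigma'$ one has $\rho_\pi(I_w)\stackrel{\sim}{\to}\rhobar(I_w)$, and for $w\in\Sigma'$ one has $\rho_\pi|_{\gFw}\cong\smat{\eta_{w,\pi}\varepsilon & * \\ 0 & \eta_{w,\pi}'}$ with $\eta_{w,\pi}$ a lift of $\overline{\xi}_w$ restricting to the correct character on $I_w$, together with the finite flat / $p$-divisible refinement at those places above $p$ where it is imposed. For $w\notin\Sigma'$ the condition $\sigma(I_w)\stackrel{\sim}{\to}\rhobar(I_w)$ is closed (so that $R_w^\square\twoheadrightarrow R_w^\triangle$ by Lemme~\ref{lem_deformations}): it holds after the $\oE$-flat embedding into the product, hence it holds for $\sigma_w$, so $\sigma_w\in\CF_w(\T_{Q,\gm_Q}^\square)$ in that case.

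The place where real work is needed — and the step I expect to be the main obstacle — is $w\in\Sigma'$, where one must promote the fibrewise lines into a single line over the integral ring $\T_{Q,\gm_Q}^\square$. On each factor the sub-line $L_{w,\pi}$ of $\rho_\pi|_{\gFw}$ with $\gFw$-action $\eta_{w,\pi}\varepsilon$ is unique: if the residual restrictions $\overline{\xi}_w\omega|_{I_w}$ and $\overline{\xi}'_w|_{I_w}$ differ it is the unique line on which $I_w$ acts by the Teichmüller lift of $\overline{\xi}_w\omega|_{I_w}$, while in the remaining (finite flat) subcase above $p$ it is pinned down as the Hodge–Tate weight $1$ line after the relevant unramified twist. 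One then checks that the characters $\eta_{w,\pi}$ glue to a homomorphism $\eta_w\colon\gFw\to(\T_{Q,\gm_Q}^\square)^\times$ — on $I_w$ it must be the Teichmüller lift of $\overline{\xi}_w|_{I_w}$, and the value on a Frobenius lift is cut out inside $\T_{Q,\gm_Q}^\square$ by the characteristic polynomial of $\sigma_w$ together with $\det\sigma_w=\varepsilon\psi|_{\gFw}$ (by Hensel where the residual roots are distinct, and from the ordinary structure otherwise). Then $L_w$ is realised as the kernel of a suitable $\T_{Q,\gm_Q}^\square$-linear endomorphism built from $\sigma_w(g)-(\eta_w\varepsilon)(g)$ for an appropriate $g\in\gFw$, and it is a direct summand because this can be tested on each factor, where it is Proposition~\ref{CQmQ}; likewise the finite flat condition descends since it is tested after $\otimes\Qpbar$. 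Uniqueness of $L_w$ over $\T_{Q,\gm_Q}^\square$ follows from its fibrewise uniqueness and $\oE$-flatness, and with it the uniqueness of $\alpha_Q$; this completes~(i). Assertion~(ii) is then read off factor by factor: on the $\pi$-factor $\eta_w$ restricts to $\eta_{w,\pi}$, and the explicit action of $T_w$ recalled just before the lemma, together with the identification of $L_w$ as the sub-line, determines $\eta_w(\varpi_w)$ in terms of $T_w$.

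For~(iii), the composite $R_Q^\square\to R_Q^\triangle\stackrel{\alpha_Q}{\to}\T_{Q,\gm_Q}^\square$ is the canonical map, and this is surjective because $\T_{Q,\gm_Q}$ is topologically generated over $\oE$ by Hecke operators, each of which lies in the image of $R_Q^\square$: the $T_w$ for $w\notin\Sigma\cup Q$ come from the traces $\mathrm{N}(w)\tr(\rho_Q^\univ(\Fr_w))$, and those for $w\in S'\cup Q$ are Hensel-determined by the characteristic polynomial data already present, using~(ii) and the commutative triangle relating $R_Q\to\T_{Q,\gm_Q}$ to $R_{Q'}\to\T_{Q',\gm_{Q'}}$. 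Moreover, by Lemme~\ref{lem_deformations}, $R_Q^\triangle$ is topologically generated over the image of $R_Q^\square$ by the elements $\eta_w^\univ(g)$ appearing in the scalar-inertia cases, and by~(ii) these map into the image of $R_Q^\square$ as well. Hence $\alpha_Q$ is surjective.
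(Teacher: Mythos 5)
Your strategy is genuinely different from the paper's: you try to directly exhibit a $\CF_w(\T_{Q,\gm_Q}^\square)$-point for each $w\in\Sigma$ and glue, whereas the paper never verifies the $\CF_w$-conditions over $\T_{Q,\gm_Q}^\square$ at all — it constructs the map to the larger ring $\bigoplus_\pi\oE^\square$ (where each factor is trivially a $\CF_w$-point by Proposition~\ref{CQmQ}), and then uses the explicit topological generators $\eta_w^\univ(\varpi_w)$ of $R_Q^\triangle$ over $R_Q^\square$ from Lemme~\ref{lem_deformations} to see that the image actually lands in the subring $\T_{Q,\gm_Q}^\square$. That route sidesteps everything you are fighting with in the third paragraph: the uniqueness, the direct-summand and finite-flatness descent. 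Your route can in principle be made to work, but it carries a heavier burden and two of your steps do not hold as written.

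First, the finite flat descent. You assert the finite flat condition ``descends since it is tested after $\otimes\Qpbar$.'' This is false: the condition in $\CF_w$ is that $\eta^{-1}\sigma\otimes A/\gm_A^n$ is the generic fibre of a finite flat group scheme for every $n$, which is an integral condition on torsion coefficients and carries no information after $\otimes\Qpbar$ (every crystalline lift with Hodge--Tate weights in $\{0,1\}$ looks the same there). What one should say instead is that the finite flat locus is a $\oE$-flat quotient $R_w^\square\twoheadrightarrow R_w^{\fl}$, so ``factoring through $R_w^{\fl}$'' is a condition on the kernel being killed, and this does descend along an injection $\T_{Q,\gm_Q}^\square\hookrightarrow\bigoplus_\pi\oE^\square$. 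Similarly, ``it is a direct summand because this can be tested on each factor'' is not a valid criterion — a submodule of $A^2$ whose base change is a rank-one direct summand need not itself be a direct summand; what saves you is that $\ker(B)$ for $B=\sigma_w(g)-(\eta_w\varepsilon)(g)I$ with $\det B=0$ in $\T_{Q,\gm_Q}^\square$ (which one checks via the embedding) and $B\not\equiv 0\pmod{\gm}$ is automatically a rank-one free direct summand by elementary linear algebra over a local ring. Your conclusion is correct, but the stated justification is not.

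Second, and more seriously, your proof of (iii) opens with the claim that $R_Q^\square\to\T_{Q,\gm_Q}^\square$ ``is the canonical map, and this is surjective.'' It is not, and indeed the whole point of introducing $R_Q^\triangle$ is that the $T_w$ for $w\in S'$ need not lie in the image of $R_Q^\square$: these Hecke operators correspond to $\eta_w(\varpi_w)$, which cannot be recovered from the characteristic polynomial of $\rho_Q^\univ(\Fr_w)$ alone when the residual eigenvalues coincide (e.g. when $\rhobar|_{I_w}$ is scalar). Your later sentence, that ``by (ii) these map into the image of $R_Q^\square$ as well,'' is similarly backwards. The correct argument is the paper's: $\T_{Q,\gm_Q}^\square$ is topologically generated over the image of $R_Q^\square$ precisely by $\{T_w: w\in S'\}$ (the $T_w$ for $w\notin\Sigma\cup Q$ come from traces and those for $w\in Q$ from Hensel applied to characteristic polynomials with residually distinct roots, all via $R_Q$), and these remaining $T_w$ lie in the image of $\alpha_Q$ because $\alpha_Q(\eta_w^\univ(\varpi_w))=T_w$ by (ii) — not in the image of $R_Q^\square$.
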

\begin{proof}  
Supposons d'abord $\oE$ suffisamment grand pour contenir l'i\-mage du morphisme $\T_{Q,\gm_Q} \to \Qpbar$ pour chaque $\pi$ comme dans la proposition~\ref{CQmQ} (il n'y a qu'un nombre fini de telles $\pi$). Donc, pour chaque $\pi$, on a une d\'eformation correspondante $\sigma_\pi:\gF \to \GL_2(\oE)$ de $\rhobar$ non ramifi\'ee en dehors de $\Sigma\cup Q$. Soit $\oE^\square \= \oE\otimes_{R_Q}R_Q^\square$, on a alors pour chaque $w\in \Sigma$ un relev\'e canonique : 
$$\sigma_{\pi,w}^\square:\gFw \to \GL_2(\oE^\square)$$
de $\rhobar|_{\gFw}$ de la forme $A_w \sigma_\pi|_{\gFw} A_w^{-1}$ pour une matrice $A_w \in \GL_2(\oE^\square)$ relevant la matrice identit\'e de $\GL_2(k_E)$. Montrons que le morphisme correspondant $R_w^\square \to \oE^\square$ se factorise en un unique morphisme $R_w^\triangle \to \oE^\square$. Si $w \not\in \Sigma'$, alors $\sigma_\pi(I_w) = \rhobar(I_w)$, donc \'egalement $\sigma_{\pi,w}^\square(I_w) = \rhobar(I_w)$ et le fait que $R_w^\square \to \oE^\square$ se factorise par $R_w^\triangle$. Si $w\in \Sigma'$, alors $\sigma_\pi|_{\gFw} \cong \smat{\eta_w\varepsilon & * \\ 0 & \eta_w' }$ pour un caract\`ere $\eta_w$ relevant $\overline{\xi}_w$ et tel que $\eta_w(I_w) = \overline{\xi}_w(I_w)$. On en d\'eduit un unique facteur direct de $(\oE^\square)^2$ sur lequel $\gFw$ agit par $\eta_w\varepsilon$. De plus si $w\in S'$ alors $\eta_w(\varpi_w)$ est l'image de $T_w$ dans $\oE$. On en d\'eduit que $R_w^\square \to \oE^\square$ se factorise de mani\`ere unique en $R_w^\triangle \to \oE^\square$, et de plus que $\eta_w^\univ(\varpi_w)\in R_w^\triangle$ s'envoie vers l'image de $T_w$ dans $\oE$ pour $w\in S'$ ($\eta_w^\univ$ est le caract\`ere $\gFw \to R_w^\triangle$ relevant $\overline{\xi}_w$ dans le lemme \ref{lem_deformations}).

Il suit de la d\'efinition de $R_w^\triangle$ que pour chaque $\pi$ comme dans la proposition \ref{CQmQ} la fl\`eche $R_\loc^\square \to R_Q^\square \to \oE^\square$ se factorise de mani\`ere unique en un morphisme $R_\loc^\triangle \to \oE^\square$, et donc que la fl\`eche $R_Q^\square \to \oE^\square$ se factorise de mani\`ere unique en un morphisme $R_Q^\triangle \to \oE^\square$. On en d\'eduit que :
$$R_Q^\square \to \T_{Q,\gm_Q}^\square \hookrightarrow \oplus_{\pi}\oE^\square$$
se factorise de mani\`ere unique en un morphisme $R_Q^\triangle \to \oplus_{\pi}\oE^\square$. Comme, par le lemme~\ref{lem_deformations}, $R_Q^\triangle$ est topologiquement engendr\'e sur $R_Q^\square$ par $\{\eta_w^\univ(\varpi_w),\ w\in S'\}$, on voit que ce dernier morphisme est encore \`a valeurs dans $\T_{Q,\gm_Q}^\square$.

Maintenant ne faisons plus l'hypoth\`ese que $E$ est suffisamment grand. Si $E$ est remplac\'e par une extension $E'$, les $\oE$-alg\`ebres $\T_{Q,\gm_Q}^\square$, $R_Q^\square$ et $R_Q^\triangle$ sont remplac\'ees par leur extension des scalaires \`a ${\mathcal{O}}_{E'}$ (voir la remarque~\ref{rem_scalars}). Donc, pour $E'$ suffisamment grand, par le raisonnement pr\'ec\'edent le morphisme :
$${\mathcal{O}}_{E'}\otimes_{\oE}R_Q^\square \to {\mathcal{O}}_{E'}\otimes_{\oE}\T_{Q,\gm_Q}^\square$$
se factorise de mani\`ere unique en un morphisme ${\mathcal{O}}_{E'}\otimes_{\oE}R_Q^\triangle \to {\mathcal{O}}_{E'}\otimes_{\oE}\T_{Q,\gm_Q}^\square$ v\'erifiant le (ii) de l'\'enonc\'e. On en d\'eduit que l'image de $R_Q^\triangle$ est contenue dans $\T_{Q,\gm_Q}^\square$ ce qui d\'emontre (i) et (ii).

Enfin, pour d\'emontrer (iii), rappelons que si $w\not\in \Sigma\cup Q$ alors $T_w \in \T_{Q,\gm_Q}$ est l'image de ${\mathrm{N}}(w)\tr(\rho_Q^\univ(\Fr_w)) \in R_Q$. Si $w\in Q$, soit $g_w\in \gFw$ dont l'image dans $\gFw^\ab$ correspond \`a l'uniformisante $\varpi_w$. Le polyn\^ome caract\'eristique de $\rho_Q^\univ(g_w)$ a une unique racine dans $R_Q$ qui se r\'eduit sur ${\mathrm{N}}(w)^{-1}\alpha_w$, et cette racine s'envoie sur ${\mathrm{N}}(w)^{-1}T_w \in \T_{Q,\gm_Q}$. On en d\'eduit que $\T_{Q,\gm_Q}^\square$ est topologiquement engendr\'e sur $R_Q^\square$ par les \'el\'ements $T_w$ que l'on n'a pas consid\'er\'es, i.e. les $T_w$ pour $w\in S'$. Mais par (ii) ces \'el\'ements sont encore dans l'image de $\alpha_Q$, donc $\alpha_Q$ est surjectif.
\end{proof}

Notons que si $Q' \subseteq Q$, alors $R_{Q'}^\square$ s'identifie \`a $R_Q^\square \otimes_{R_{Q'}^\square} R_{Q'}$, et donc $\T_{Q',\gm_{Q'}}^\square$ s'identifie \`a $R_Q^\square \otimes_{R_Q} \T_{Q',\gm_{Q'}}$. De plus la surjection induite $\T_{Q,\gm_Q}^\square \to \T_{Q',\gm_{Q'}}^\square$ est compatible avec les applications de source $R_w^\triangle$ pour $w\in \Sigma$ de sorte que l'on en d\'eduit un diagramme commutatif o\`u toutes les fl\`eches sont surjectives :
$$\begin{array}{ccc}  
R_{Q}^\triangle & \longrightarrow & R_{Q'}^\triangle \\
\downarrow&      & \downarrow \\
\T_{Q,\gm_Q}^\square & \longrightarrow & \T_{Q',\gm_{Q'}}^\square.
\end{array}$$

On pose $C_{Q,\gm_Q}^\square \= R_Q^\square\otimes_{R_Q}C_{Q,\gm_Q}$ que l'on voit comme $R_Q^\triangle$-module via le morphisme $\alpha_Q$ du lemme \ref{triangle}.

Pour $w\in Q$, soit $\Delta_w$ le sous-groupe de $p$-Sylow de $k_w^\times$ et $\Delta_Q \= \prod_{w\in Q}\Delta_w$. L'argument de \cite[Lem.2.44]{DDT} montre que $\rho_Q^\univ|_{\gFw}$ est la somme de deux caract\`eres et l'on note $\eta_w^\univ$ celui qui se r\'eduit sur $\overline{\xi}_w$ o\`u $\overline{\xi}_w(\Fr_w) = \alpha_w' = {\mathrm N}(w)^{-1}$. Comme $\eta_w^\univ|_{I_w}$ a une r\'eduction triviale, donc est d'ordre une puissance de $p$, il se factorise par $I_w \to \oFw^\times \to k_w^\times$ (rappelons que $w\!\nmid \!p$). Si l'on restreint l'application induite $k_w^\times \to R_Q^\times$ \`a $\Delta_w$, on a un homomorphisme $\Delta_w \to R_Q^\times$ pour chaque $w\in Q$. En prenant le produit sur $w\in Q$ on obtient un homomorphisme $\Delta_Q \to R_Q^\times$, donc un homomorphisme de $\oE$-alg\`ebres $\oE[\Delta_Q] \to R_Q$ par lequel on voit $C_{Q,\gm_Q}$ comme un $\oE[\Delta_Q]$-module.

On peut aussi d\'efinir une action naturelle de $\Delta_w$ sur $C_{Q,\gm_Q}$ comme suit. On identifie $\Delta_w$ \`a un sous-groupe de $U_w^0/U_w$ via l'isomorphisme $U_w^0/U_w \buildrel\sim\over\to k_w^\times$ d\'efini par :
$$\begin{pmatrix} a & b \\ c & d \end{pmatrix}  \mapsto ad^{-1} \bmod \varpi_w\oFw.$$
Alors l'action naturelle de $U_w^0/U_w$ sur $H^1_\et(X_{V_Q,\Qbar},\oE)(1)$ commute avec celles de $G$ et $\tilde{\T}$ et donc induit une action sur $C_{Q,\gm_Q}$. La compatibilit\'e avec la correspondance de Langlands locale nous dit que $\Delta_w$ agit via $\eta_w$ sur $(\pi_w^{U_w})_{\alpha_w}$ pour chaque $\pi$ comme dans la proposition~\ref{CQmQ} (en utilisant les notations de sa preuve) ce qui entra\^\i ne que les deux d\'efinitions de l'action de $\Delta_w$ co\"\i ncident. Notons en particulier que, pour chaque repr\'esentation automorphe $\pi$ qui contribue \`a la d\'ecomposition de $\Qpbar\otimes_{\oE}C_{Q,\gm_Q}$, le facteur local $\pi_w$ est une s\'erie principale qui est non ramifi\'ee si et seulement si $U_w^0$ agit trivialement sur la composante correspondante de $\Qpbar\otimes_{\oE}C_{Q,\gm_Q}$.

Supposons maintenant $Q'\subseteq Q$. Soit $\tilde{\T}^0$ la sous-$\oE$-alg\`ebre de $\tilde{\T}$ engendr\'ee par les $T_w$ pour $w\not\in (\Sigma\backslash S')\cup (Q\backslash Q')$, $\T^0_{Q}$ (resp.~$\T^0_{Q'}$) l'image de $\tilde{\T}^0$ dans $\End_{\oE}(C_Q)$ (resp.~$\End_{\oE}(C_{Q'})$) et soit $\gm^0 \= \gm_Q \cap \tilde{\T}^0 = \gm_{Q'}\cap\tilde{\T}^0$. La preuve de la proposition \ref{CQmQ} est encore valable lorsque $\T_{Q',\gm_{Q'}}$ est remplac\'e par $\T^0_{Q',\gm^0}$ et montre que $\T^0_{Q',\gm^0}$ et $\T_{Q',\gm_{Q'}}$ ont m\^eme rang sur $\oE$, de m\^eme que $C_{Q',\gm^0}$ et $C_{Q',\gm_{Q'}}$. La construction de la fl\`eche $R_{Q'} \to \T_{Q',\gm_{Q'}}$ est aussi valable, et montre qu'elle se factorise par $\T^0_{Q',\gm^0}$. Comme $T_w$ est dans l'image de $R_{Q'}$ pour chaque $w\in Q\setminus Q'$, on en d\'eduit que l'application naturelle $\T^0_{Q',\gm^0} \to \T_{Q',\gm_{Q'}}$ est surjective, donc est un isomorphisme. Comme l'application naturelle $C_{Q',\gm^0} \to C_{Q',\gm_{Q'}}$ est automatiquement surjective, c'est aussi un isomorphisme. L'injection naturelle $C_{Q'} \to C_Q$ est $\tilde{\T}^0$-lin\'eaire, donc induit un morphisme $C_{Q',\gm^0} \to C_{Q,\gm^0}$, et on consid\`ere l'application compos\'ee :
$$\iota_Q^{Q'}:C_{Q',\gm_{Q'}} \cong C_{Q',\gm^0} \to C_{Q,\gm^0} \to C_{Q,\gm_Q}.$$
En tensorisant par $\Qpbar$ et en appliquant la d\'ecomposition de la proposition~\ref{CQmQ}, on voit que $\iota_Q^{Q'}$ est $\T_{Q,\gm_Q}$-lin\'eaire o\`u l'action de $\T_{Q,\gm_Q}$ sur $C_{Q',\gm_{Q'}}$ est d\'efinie via la surjection $\T_{Q,\gm_Q}\to \T_{Q',\gm_{Q'}}$ pr\'ec\'edente (envoyant $T_w$ sur $\tilde{\alpha}_w$ pour $w \in Q\backslash Q'$). On pose $\Delta_{Q\backslash Q'} \= \prod_{w\in Q\backslash Q'}\Delta_w$.

\begin{lem} \label{lem:DeltaQfree} 
(i) L'application $\iota_Q^{Q'}$ induit un isomorphisme $C_{Q',\gm_{Q'}} \!\!\buildrel\sim\over\to \!C_{Q,\gm_Q}^{\Delta_{Q\backslash Q'}}$\!\!.\\
(ii) Le module $C_{Q,\gm_Q}$ est libre sur $\oE[\Delta_Q]$. 
\end{lem}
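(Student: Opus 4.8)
The plan is to reduce both assertions to the behaviour of the Hecke modules at the places $w\in Q\setminus Q'$, using crucially that $\gm_Q$ is non-Eisenstein (since $\rhobar$ is irreducible), so that $H^0_\et$ and $H^2_\et$ of the Shimura curves involved vanish after localization and no form of Ihara's lemma is needed. For part (i), I would first establish injectivity of $\iota_Q^{Q'}$: since $V_Q\subseteq V_{Q'}$, the map $C_{Q'}\to C_Q$ is induced by the pull-back $H^1_\et(X_{V_{Q'},\Qbar},\oE)(1)\to H^1_\et(X_{V_Q,\Qbar},\oE)(1)$, which is injective because its composite with the trace map is multiplication by the degree and the cohomology is $\oE$-torsion-free, and $\Hom_{\oE[G]}(M,-)$ and localization preserve injectivity (one also uses the already-established $C_{Q',\gm^0}\cong C_{Q',\gm_{Q'}}$). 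For the identification of the image with the $\Delta_{Q\setminus Q'}$-invariants, the key is a local analysis at each $w\in Q\setminus Q'$: taking $\Delta_w$-invariants discards every automorphic $\pi$ whose component $\pi_w$ is ramified (for such $\pi$, since $\rhobar$ is unramified at $w$ and {\bf(Q)} holds, the tame character appearing has nontrivial $p$-power order, hence nontrivial diamond eigencharacter on $\Delta_w$); and for $\pi$ unramified at $w$, taking $\Delta_w$-invariants passes to Iwahori level $U_w^0$ (the prime-to-$p$ part of $U_w^0/U_w$ already acting trivially on the $\gm_Q$-localization), while {\bf(Q)} forces the two Frobenius eigenvalues to be distinct modulo $\gm$ with ratio $\neq \mathrm{N}(w)^{\pm1}$, so that the generalized $T_w=\alpha_w$ eigenspace at Iwahori level coincides with the spherical ($\oDw^\times$) level. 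This matches the $\pi$ contributing to $C_{Q,\gm_Q}^{\Delta_{Q\setminus Q'}}$ with those contributing to $C_{Q',\gm_{Q'}}$, with $T_w$ acting through the Hensel root $\tilde{\alpha}_w$; and the non-Eisenstein localization makes the pertinent trace-then-pull-back comparisons exact, so this is an isomorphism over $\oE$, not merely up to torsion.

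For part (ii), apply (i) with $Q'=\emptyset$, so that $C_{Q,\gm_Q}^{\Delta_Q}\cong C_\gm$ is $\oE$-free; since $C_{Q,\gm_Q}$ is itself $\oE$-free, it then suffices, by the standard commutative-algebra lemma (a finitely generated $\oE$-flat module $N$ over $\oE[\Delta_Q]$ with $N^{\Delta_Q}$ $\oE$-free of $\oE$-rank $|\Delta_Q|^{-1}\operatorname{rank}_\oE N$ is free over $\oE[\Delta_Q]$), to verify the rank identity $\operatorname{rank}_\oE C_{Q,\gm_Q}=|\Delta_Q|\operatorname{rank}_\oE C_\gm$. This follows from the rational decomposition of Proposition~\ref{CQmQ} together with a classical level-raising count at the Taylor--Wiles primes $w\in Q$: the automorphic representations contributing to $C_{Q,\gm_Q}$ are those contributing to $C_\gm$ together with, for each choice at every $w\in Q$ of one of the $|\Delta_w|$ nontrivial tame characters of $p$-power order, exactly one representation ramified there. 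Alternatively --- since $D$ is a division algebra and $p>3$, so that $\Delta_Q$ acts freely on $X_{V_Q}$ --- one may observe directly that $R\Gamma_\et(X_{V_Q,\Qbar},\oE)$ is a perfect complex of $\oE[\Delta_Q]$-modules which, localized at the non-Eisenstein $\gm_Q$, is concentrated in degree $1$; the resulting $\oE$-free module, having finite projective dimension and depth $1$ over the one-dimensional Cohen--Macaulay local ring $\oE[\Delta_Q]$, is free by Auslander--Buchsbaum, and applying $\Hom_{\oE[G]}(M,-)$ preserves freeness because $G$ and $\Delta_Q$ act through disjoint sets of places.

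The main obstacle is the local analysis at the places $w\in Q\setminus Q'$ underlying part (i): one must control, \emph{integrally} rather than just rationally, the effect on the Hecke module structure of the level changes between $U_w$, the preimage of $\Delta_w$ in $U_w^0$, the Iwahori $U_w^0$, and the maximal compact $\oDw^\times$, and it is precisely here that the vanishing of $H^0_\et$ and $H^2_\et$ after the non-Eisenstein localization (substituting for Ihara's lemma) and the hypothesis {\bf(Q)} are used. A secondary point, for the second approach to (ii), is checking that $\Delta_Q$ acts freely on $X_{V_Q}$, which relies on $D$ being a division algebra and on $p>3$ forcing any $p$-torsion stabilizer to be trivial.
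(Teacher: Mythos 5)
Your proposal correctly identifies several of the paper's ingredients — the rational decomposition of Proposition~\ref{CQmQ}, the role of {\bf(Q)} in making the Hecke operators behave at Iwahori versus spherical level, the vanishing of $H^0_\et$ and $H^2_\et$ after the non-Eisenstein localization, and cohomological triviality as the mechanism underlying freeness — but it has genuine gaps precisely where the paper works hardest. For part~(i), equality of $\oE$-ranks between $C_{Q',\gm_{Q'}}$ and $C_{Q,\gm_Q}^{\Delta_{Q\setminus Q'}}$ plus injectivity of $\iota_Q^{Q'}$ does \emph{not} give an isomorphism: you still need surjectivity of $C_{Q,\gm_Q}^{Q'}\to C_{Q,\gm_Q}^{\Delta_{Q\setminus Q'}}$, and this is not formal. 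The paper establishes it via Hochschild--Serre for the cover $X_{V_Q}\to X_Q^{Q'}$, which requires the action of $G\times\Delta_{Q\setminus Q'}$ to be \emph{free}, and that freeness is \emph{not} automatic from $D$ being a division algebra: one must actually rule out finite-order elements of $\Gamma_x=(D^\times\cap xU_Q^{Q'}x^{-1}\A_F^\times D_\infty^\times)/F^\times$. The paper does this by inserting an auxiliary Taylor--Wiles prime $w_0$ chosen so that $w_0$ divides no rational prime $q$ with $[F(\sqrt[q]{1}):F]\le 2$. This same device underlies part~(ii), so your second approach to~(ii) --- which asserts the free action on $X_{V_Q}$ ``since $D$ is a division algebra and $p>3$'' --- has the same gap; without $w_0$ you cannot justify the perfect-complex/Auslander--Buchsbaum step.

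Your first approach to~(ii) rests on a false ``standard commutative-algebra lemma''. Take $\Delta=\Z/p$ and $N=\oE\oplus I_\Delta$, where $I_\Delta\subset\oE[\Delta]$ is the augmentation ideal. Then $N$ is a finitely generated $\oE$-free $\oE[\Delta]$-module of $\oE$-rank $p$, and $N^\Delta=\oE$ is $\oE$-free of rank $1=|\Delta|^{-1}\operatorname{rank}_\oE N$; yet $N$ is not free over $\oE[\Delta]$, since $N/\pE N=k_E\oplus\overline{I}_\Delta$ is decomposable while $k_E[\Delta]$ is a local (hence indecomposable) $k_E[\Delta]$-module. The correct criterion is cohomological triviality, i.e.\ $H^i(\Delta_{Q\setminus Q'},C_{Q,\gm_Q})=0$ for $i>0$ (Brown, VI(8.7)), which is what the paper proves via another Hochschild--Serre argument (again needing $w_0$ for the cover to be \'etale). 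The accompanying ``classical level-raising count'' you invoke --- that each choice of nontrivial tame $p$-power character at every $w\in Q$ yields exactly one newform --- is not classical but is rather equivalent to the multiplicity-one output of the lemma you are trying to prove; using it is circular.
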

\begin{proof} 
(i) Pour $w\in Q\backslash Q'$, soit $U_w'$ la pr\'eimage de $\Delta_w$ dans $U_w^0$ (on a donc $U_w\subseteq U'_w\subseteq U^0_w$). On pose :
$$U_Q\subseteq U_{Q}^{Q'} \= U_Q \!\!\!\prod_{w\in Q\backslash Q'} \!\!\!U_w'\subseteq U_{Q,0}^{Q'} \= U_Q \!\!\!\prod_{w\in Q\backslash Q'} \!\!\!U_w^0\subseteq U_{Q'}=U_Q \!\!\!\prod_{w\in Q\backslash Q'} \!\!\!\oDw^\times$$
$$V_Q\subseteq V_{Q}^{Q'} \= V_Q \!\!\!\prod_{w\in Q\backslash Q'} \!\!\!U_w'\subseteq V_{Q,0}^{Q'} \= V_Q \!\!\!\prod_{w\in Q\backslash Q'} \!\!\!U_w^0\subseteq V_{Q'}=V_Q \!\!\!\prod_{w\in Q\backslash Q'} \!\!\!\oDw^\times$$
ainsi que :
\begin{eqnarray*}
C_{Q}^{Q'} &\=& \Hom_{\oE[G]}\big(M,H^1_\et(X_{V_Q^{Q'},\Qbar},\oE)(1)\big)\\
\quad C_{Q,0}^{Q'} &\=& \Hom_{\oE[G]}\big(M,H^1_\et(X_{V_{Q,0}^{Q'},\Qbar},\oE)(1)\big).
\end{eqnarray*}
L'application $\iota_Q^{Q'}$ est alors la compos\'ee des applications :
$$C_{Q',\gm_{Q'}} \to C_{Q,0,\gm_Q}^{Q'} \to C_{Q,\gm_Q}^{Q'} \to C_{Q,\gm_Q}.$$
D'apr\`es la preuve de la proposition~\ref{CQmQ} (et la compatibilit\'e entre les deux descriptions de l'action de $\Delta_w$), on voit que les trois premiers $\oE$-modules ont m\^eme rang que le $\oE$-module $C_{Q,\gm_{Q}}^{\Delta_{Q\backslash Q'}}$.

(ii) Montrons que la fl\`eche $C_{Q',\gm_{Q'}} \to C_{Q,0,\gm_Q}^{Q'}$ est un isomorphisme. Par induction, il suffit de montrer que l'application :
$$ \delta: C_{Q'',0,\gm_{Q''}}^{Q'} \to C_{Q,0,\gm_{Q}}^{Q'},$$
avec $Q' \subset Q''$ et $Q = Q''\cup \{w\}$ pour une place $w\not\in Q'$ est un isomorphisme. Pour cela, comme les deux $\oE$-modules ont m\^eme rang, on voit qu'il suffit de d\'efinir une application $\oE$-lin\'eaire :
$$\epsilon:C_{Q,0,\gm_Q}^{Q'} \to C_{Q'',0,\gm_{Q''}}^{Q'}$$
telle que $\epsilon\circ\delta$ est bijectif. Soit $\pi$ le morphisme naturel $X_{V_{Q,0}^{Q'}} \to X_{V_{Q'',0}^{Q'}}$ ($\pi$ n'est pas une repr\'esentation automorphe ici !), $\delta$ provient donc de l'application : 
$$\pi^* : H^1_\et(X_{V_{Q'',0}^{Q'},\Qbar},\oE)(1) \to H^1_\et(X_{V_{Q,0}^{Q'},\Qbar},\oE)(1).$$
Soit $g_w \= \smat{ 1 & 0 \\ 0 & \varpi_w}$, on a $g_w^{-1} U_w^0 g_w \subset \oDw^\times$ et donc $g_w^{-1} V_{Q,0}^{Q'} g_w \subset V_{Q'',0}^{Q'}$. Soit $\pi'$ le morphisme associ\'e $X_{V_{Q,0}^{Q'}} \to X_{V_{Q'',0}^{Q'}}$. On ne va pas utiliser l'application $(\pi')^*$ sur la cohomologie, mais plut\^ot les deux applications trace :
$$\pi_*,\pi'_* : H^1_\et(X_{V_{Q,0}^{Q'},\Qbar},\oE)(1) \to H^1_\et(X_{V_{Q'',0}^{Q'},\Qbar},\oE)(1).$$
(L'application $\pi'_*$ peut \^etre vue comme associ\'ee \`a la double classe $\oDw^\times g_w^{-1} U_w$.) Les applications $\pi_*$ et $\pi'_*$ commutent avec l'action de $G$ et celle des op\'erateurs $T_{w'}$ pour $w'\not\in (\Sigma\cup\{w\})\backslash S'$. Un calcul standard de doubles classes donne la relation~:
$$\begin{pmatrix} \pi_* \\ \pi'_* \end{pmatrix} \circ T_w  = \begin{pmatrix} 0 & S_w{\mathrm{N}}(w) \\ -1 & T_w \end{pmatrix} \circ \begin{pmatrix} \pi_* \\ \pi'_* \end{pmatrix}$$
entre applications $H^1_\et(X_{V_{Q,0}^{Q'},\Qbar},\oE)(1) \to H^1_\et(X_{V_{Q'',0}^{Q'},\Qbar},\oE)(1)^2$, o\`u le $T_w$ \`a gauche est un endomorphisme de $H^1_\et(X_{V_{Q,0}^{Q'},\Qbar},\oE)(1)$, le $T_w$ \`a droite un endomorphisme de $H^1_\et(X_{V_{Q'',0}^{Q'},\Qbar},\oE)(1)$, et o\`u $S_w$ est d\'efini par l'action de $\varpi_w \in D_w^\times$. Notant encore $\pi_*$ et $\pi'_*$ les applications induites $C_{Q,0,\gm^0}^{Q'} \to C_{Q'',0,\gm^0}^{Q'} = C_{Q'',0,\gm_{Q''}}^{Q'}$, on en d\'eduit que $\pi_* - \tilde{\alpha}_w\circ \pi'_*$ se factorise par la localisation $C_{Q,0,\gm^0}^{Q'} \to C_{Q,0,\gm_Q}^{Q'}$ et on d\'efinit $\epsilon$ comme l'application induite $C_{Q,0,\gm_Q}^{Q'} \to C_{Q'',0,\gm_{Q''}}^{Q'}$. Comme $\pi_*\pi^* = {\mathrm{N}}(w) + 1$ et $\pi'_*\pi^* = S_w^{-1}T_w$, on en d\'eduit :
$$\epsilon\circ\delta = {\mathrm{N}}(w) + 1 - \psi(\varpi_w)^{-1}\tilde{\alpha}_wT_w = 1 - \psi(\varpi_w)^{-1}\tilde{\alpha}_w^2 = 1 - {\mathrm{N}}(w)\tilde{\alpha}_w\tilde{\beta}_w^{-1} \in \T_{Q'',\gm_{Q''}}$$
o\`u $\tilde{\beta}_w$ est l'autre racine de $X^2 - T_w X + \psi(\varpi_w){\mathrm{N}}(w)$. Par l'hypoth\`ese {\bf{(Q)}}, on voit que $1 - {\mathrm{N}}(w)\tilde{\alpha}_w\tilde{\beta}_w^{-1}$ a une r\'eduction non nulle, et donc est une unit\'e dans $\T_{Q'',\gm_{Q''}}$. Donc $\epsilon\circ\delta$ est un isomorphisme.

(iii) Montrons que $C_{Q,0,\gm_Q}^{Q'} \to C_{Q,\gm_Q}^{Q'}$ est un isomorphisme. L'application :
$$X_{V_Q^{Q'}} \to X_{V_{Q,0}^{Q'}}$$
\'etant de degr\'e premier \`a $p$, la composition avec l'application $C_{Q,\gm_Q}^{Q'} \to C_{Q,0,\gm_Q}^{Q'}$ induite par la trace est un automorphisme de $C_{Q,0,\gm_Q}^{Q'}$. Comme les deux $\oE$-modules $C_{Q,0,\gm_Q}^{Q'}$ et $C_{Q,\gm_Q}^{Q'}$ ont m\^eme rang, on en d\'eduit que $C_{Q,0,\gm_Q}^{Q'} \to C_{Q,\gm_Q}^{Q'}$ est un isomorphisme.

(iv) On introduit maintenant une place auxiliaire pour simplifier le reste de l'argument. Par \cite[Lem.3]{DT}, il y a un nombre infini de places finies $w$ v\'erifiant {\bf{(Q)}} telles que ${\mathrm{N}}(w) \not\equiv 1 \bmod p$ (le lemme de {\it loc.cit.} ne dit pas que les racines du polyn\^ome caract\'eristique sont distinctes, mais sa preuve montre que l'on peut toujours le supposer lorsque $p > 3$). On peut donc choisir une telle place $w_0 \not\in Q$ telle que $w_0$ ne divise aucun nombre premier $q$ v\'erifiant $[F(\sqrt[q]{1}):F] \le 2$. Comme $\Delta_{w_0}$ est trivial, on a :
$$C_{Q\cup\{w_0\},\gm_{Q\cup\{w_0\}}}^{Q'}= C_{Q\cup\{w_0\},\gm_{Q\cup\{w_0\}}}^{Q'\cup\{w_0\}}$$
ce qui montre d\'ej\`a (par ce qui pr\'ec\`ede) que les applications horizontales dans le diagramme :
$$\begin{array}{ccccc}
C_{Q',\gm_{Q'}} & \to & C_{Q,\gm_Q}^{Q'} & \to & C_{Q\cup\{w_0\},\gm_{Q\cup\{w_0\}}}^{Q'\cup\{w_0\}}\\
&     & \downarrow      &     & \downarrow \\
&     & C_{Q,\gm_Q}      & \to & C_{Q\cup\{w_0\},\gm_{Q\cup\{w_0\}}}
\end{array}$$
sont toutes des isomorphismes. On peut donc remplacer $Q'$ par $Q'\cup \{w_0\}$ et $Q$ par $Q\cup\{w_0\}$, i.e. supposer que $Q'$ contient $w_0$.

L'hypoth\`ese sur $w_0$ assure que, pour $x\in D_f^\times$, le groupe :
$$\Gamma_x  \= (D^\times \cap xU_Q^{Q'} x^{-1}\A_F^\times D_{\infty}^\times)/F^\times$$
n'a pas d'\'el\'ement non trivial d'ordre fini. (Pour voir cela, notons que si $\gamma F^\times \in \Gamma_x$ est d'ordre premier $q$, alors le quotient des racines du polyn\^ome caract\'eristique de $\gamma$ est une racine $q$-i\`eme de $1$ et $[F(\sqrt[q]{1}):F] \le 2$. Comme d'autre part $\gamma_{w_0} \in x_{w_0}U_{w_0}x_{w_0}^{-1}F_{w_0}^\times$, la r\'eduction de ce quotient est congrue \`a $1$ modulo $w_0$, ce qui implique que $w_0|q$ et est impossible par choix de $w_0$.) Cela entra\^\i ne que, dans l'action \`a droite du groupe $G \times \Delta_{Q\backslash Q'} \cong U_Q^{Q'}\A_{F,f}^\times / V_Q F^\times$ sur la courbe $X_{V_Q}$, les \'el\'ements non triviaux de $G \times \Delta_{Q\backslash Q'}$ ne fixent aucun point g\'eom\'etrique. En effet, il suffit de le v\'erifier pour l'action sur les points complexes de $D^\times\backslash ((\C\backslash\R) \times D_f^\times/V_Q)$. Si $u\in U_Q^{Q'}\A_{F,f}^\times$ fixe le point $D^\times(z,xV_Q)$ (avec $z\in\C\backslash\R,x\in D_f^\times$) alors $(z,xu)=(\gamma_\tau(z),\gamma_f x v)$ pour $\gamma\in D^\times$ et $v\in V_Q$ o\`u $\gamma_f$ est l'image de $\gamma$ dans $D_f^\times$ et $\gamma_\tau$ son image dans $D_\tau^\times \cong \GL_2(\R)$ (rappelons que $\tau$ est l'unique place archim\'edienne de $F$ o\`u $D$ est deploy\'ee). Comme $\gamma_f= xuv^{-1}x^{-1}$, on en d\'eduit que $\gamma F^\times \in \Gamma_x$, mais l'image de $\Gamma_x$ dans l'injection $D^\times/F^\times \hookrightarrow D_\tau^\times/F_\tau^\times \cong \PGL_2(\R)$ est discr\`ete et le stabilisateur de $z$ est compact, donc $\gamma F^\times$ est d'ordre fini, ce qui implique $\gamma \in F^\times$ et donc $u \in V_QF^\times$.

On a donc un diagramme commutatif de rev\^etements galoisiens \'etales de courbes connexes sur $F$:
$$\begin{array}{ccc}   
X_{V_Q}   & \to  & X_{V_Q^{Q'}} \\
\downarrow &      & \downarrow   \\
X_Q      & \to  & X_Q^{Q'},    
\end{array}$$
o\`u $X_Q^{Q'} \= X_{V_Q^{Q'}}/G$, $X_Q \= X_{V_Q}/G$, les applications horizontales ont comme groupe de Galois $\Delta_{Q\backslash Q'}$, les applications verticales $G$, et o\`u on fait agir \`a gauche les \'el\'ements de ces groupes via l'action \`a droite de leur inverse.

Soit ${\mathcal{F}}$ le $\oE$-faisceau lisse sur la courbe $X_Q^{Q'}$ associ\'e \`a l'action de son groupe fondamental $\pi_1(X_Q^{Q'},\overline{s})$ sur $\Hom_{\oE}(M,\oE(1))$ o\`u $\pi_1(X_Q^{Q'},\overline{s})$ agit sur $M$ via son quotient $G$ ($\overline s$ est un point g\'eom\'etrique). On note encore ${\mathcal{F}}$ son image inverse sur $X_{\Qbar}$ pour toutes les courbes $X$ du diagramme ci-dessus. Notons que $C_Q$ s'identifie alors \`a $H^1_{\et}(X_{V_Q,\Qbar},{\mathcal{F}})^G$, et la suite spectrale de Hochschild-Serre donne donc une suite exacte de $\oE[\gF]$-modules :
\begin{multline*}
0 \to H^1(G\times\Delta_{Q\backslash Q'},H^0_{\et}(X_{V_Q,\Qbar},{\mathcal{F}})) \to H^1_{\et}(X^{Q'}_{Q,\Qbar},{\mathcal{F}})  \to  \\
 C_Q^{\Delta_{Q\backslash Q'}} \to H^2(G\times \Delta_{Q\backslash Q'},H^0_{\et}(X_{V_Q,\Qbar},{\mathcal{F}})).
 \end{multline*}
Comme l'action de $\gF$ sur $H^0_{\et}(X_{V_Q,\Qbar},{\mathcal{F}})$ se factorise par $\gF^\ab$ et que $C_{Q,\gm_Q}$ est un r\'eseau dans une somme directe de repr\'esentations $\rho_\pi$ dont la r\'eduction $\rhobar$ est irr\'eductible, on en d\'eduit que l'application compos\'ee :
$$H^1_{\et}(X^{Q'}_{Q,\Qbar},{\mathcal{F}}) \to C_{Q}^{\Delta_{Q\backslash Q'}} \to C_{Q,\gm_Q}^{\Delta_{Q\backslash Q'}}$$
est surjective. Elle se factorise par $C_{Q,\gm_Q}^{Q'} \to C_{Q,\gm_Q}^{\Delta_{Q\backslash Q'}}$ qui est donc aussi surjective. Cela ach\`eve la preuve de (i) puisque ces $\oE$-modules ont le m\^eme rang.

(v) Comme $H^j_{\et}(X_{Q,\Qbar},{\mathcal{F}})$ et $H^j_{\et}(X_{Q,\Qbar}^{Q'},{\mathcal{F}})$ sont nuls pour $j > 2$ et comme l'action de $\gF$ se factorise par $\gF^\ab$ pour $j=0,2$, la suite spectrale de Hochschild-Serre appliqu\'ee au rev\^etement $X_{Q,\Qbar} \to X_{Q,\Qbar}^{Q'}$ montre que, pour $i > 0$, le $\oE[\gF]$-module $H^i(\Delta_{Q\backslash Q'},H^1_{\et}(X_{Q,\Qbar},{\mathcal{F}}))$ a une filtration finie pour laquelle $\gF$ agit via $\gF^\ab$ sur chaque gradu\'e. De plus la suite spectrale de Hochschild-Serre pour le rev\^etement $X_{V_Q,\Qbar} \to X_{Q,\Qbar}$ montre que l'action de $\gF$ sur les noyau et conoyau de $H^1_{\et}(X_{Q,\Qbar},{\mathcal{F}}) \to C_Q$ se factorise par $\gF^\ab$. On en d\'eduit que $H^i(\Delta_{Q\backslash Q'},C_Q)$ a une filtration finie pour laquelle $\gF$ agit via $\gF^\ab$ sur chaque gradu\'e. Comme $C_{Q,\gm_Q}$ est un facteur direct de $C_Q$ en tant que $\oE[\Delta_{Q\backslash Q'}\times \gF]$-module, ceci reste vrai avec $C_{Q,\gm_Q}$ au lieu de $C_Q$. Par ailleurs, par d\'evissage on voit que $H^i(\Delta_{Q\backslash Q'},C_{Q,\gm_Q})$ a une filtration finie par des sous-$\oE[\gF]$-modules tels que chaque gradu\'e est isomorphe \`a $\rhobar$. On en d\'eduit finalement que $H^i(\Delta_{Q\backslash Q'},C_{Q,\gm_Q}) = 0$ pour tout $i > 0$, ce qui implique que $C_{Q,\gm_Q}$ est libre sur $\oE[\Delta_{Q\backslash Q'}]$ par \cite[VI(8.7)]{Bro}.
\end{proof}

Tous les ingr\'edients sont maintenant en place pour appliquer le ``patching argument'' de Taylor-Wiles et d\'emontrer le r\'esultat principal de cette section. Lorsque $Q=\emptyset$, on omet l'indice $Q$.

\begin{thm}\label{thm:free} 
(i) Le $\T_\gm$-module $C_\gm$ est libre de rang $2$.\\
(ii) L'anneau local $\T_\gm$ est un anneau d'intersection compl\`ete sur $\oE$.
\end{thm}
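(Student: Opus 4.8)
The plan is to run the Taylor--Wiles--Kisin patching argument in the form of Diamond--Fujiwara and Kisin (\cite{Di1}, \cite{Ki}), feeding in the formally smooth local deformation rings of Lemma~\ref{lem_deformations}, the $\oE[\Delta_Q]$-freeness of Lemma~\ref{lem:DeltaQfree}(ii), and the surjection $\alpha_Q : R_Q^\triangle \twoheadrightarrow \T_{Q,\gm_Q}^\square$ of Lemma~\ref{triangle}. Throughout one works with the framed objects: $R_\loc^\triangle = \widehat{\otimes}_{w\in\Sigma} R_w^\triangle$, which by Lemma~\ref{lem_deformations} is formally smooth over $\oE$ of relative dimension $3|\Sigma| + [F:\Q]$; the global framed rings $R_Q^\triangle = R_Q^\square \widehat{\otimes}_{R_\loc^\square} R_\loc^\triangle$; and the framed Hecke modules $C_{Q,\gm_Q}^\square = R_Q^\square \otimes_{R_Q} C_{Q,\gm_Q}$, regarded as $R_Q^\triangle$-modules via $\alpha_Q$.

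The first and principal step is the Galois-cohomological construction of Taylor--Wiles data. Using that $\rhobar|_{{\Gal}(\Qbar/F(\sqrt[p]{1}))}$ is irreducible (the exceptional case being excluded when $p = 5$) and that $\rhobar$ is totally odd, the Greenberg--Wiles formula for the Euler characteristic of $\ad^0\rhobar$, relative to the local conditions defining the rings $R_w^\triangle$, produces a fixed integer $q$ (the dimension of the relevant dual Selmer group) together with, for every $n \ge 1$, a finite set $Q_n$ of finite places of $F$ disjoint from $\Sigma$, each satisfying {\bf(Q)} with ${\mathrm{N}}(w) \equiv 1 \bmod p^n$, of cardinality $q$, and such that $R_{Q_n}^\triangle$ is a quotient of $R_\loc^\triangle[[x_1,\dots,x_g]]$ for a suitable $g \ge 0$ independent of $n$. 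The non-negativity of $g$ and, crucially, the fact that this generator count agrees with the dimension count below are exactly the content of the Euler characteristic formula combined with the formal smoothness of the $R_w^\triangle$; this is the only genuinely new input beyond assembling standard machinery, and I expect it to be the main obstacle. For such a $Q_n$, Lemma~\ref{lem:DeltaQfree}(ii) gives that $C_{Q_n,\gm_{Q_n}}$ is finite free over $\oE[\Delta_{Q_n}]$, hence so is $C_{Q_n,\gm_{Q_n}}^\square$ once the framing variables are adjoined, while Lemma~\ref{lem:DeltaQfree}(i) with $Q' = \emptyset$ shows that specializing the $\Delta_{Q_n}$-action to the trivial one recovers $C_\gm$, compatibly with the surjection $R_{Q_n}^\triangle \to R^\triangle$.

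Next I would patch. Along a cofinal system of finite quotients one forms, in the standard way, a power series ring $S_\infty = \oE[[z_1,\dots,z_h]]$ (where $h$ accounts for the $q$ Taylor--Wiles variables and the framing variables), a ring $R_\infty = R_\loc^\triangle[[x_1,\dots,x_g]]$ (formally smooth over $\oE$, hence regular), and an $R_\infty$-module $C_\infty$ which is finite and free over $S_\infty$, together with a local map $S_\infty \to R_\infty$ and an ideal $\mathfrak{a} = (z_1,\dots,z_h)$ satisfying $R_\infty/\mathfrak{a} \cong \T_\gm$, $C_\infty/\mathfrak{a}C_\infty \cong C_\gm$ and $\T_\infty/\mathfrak{a} \cong \T_\gm$, where $\T_\infty$ is the image of $R_\infty$ in $\End_\oE(C_\infty)$; the construction is arranged so that the numerical coincidence $\dim R_\infty = \dim S_\infty = h+1$ holds. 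Since $C_\infty$ is free over $S_\infty$, every $S_\infty$-regular sequence on $C_\infty$ remains $R_\infty$-regular on $C_\infty$, so ${\rm depth}_{R_\infty} C_\infty \ge \dim S_\infty = h+1$; on the other hand ${\rm depth}_{R_\infty} C_\infty \le \dim \T_\infty \le \dim R_\infty = h+1$, the first inequality because $C_\infty$ is a faithful $\T_\infty$-module, the second because $\T_\infty$ is a quotient of $R_\infty$. Hence all four quantities equal $h+1$; as $R_\infty$ is a regular domain, the surjection $R_\infty \twoheadrightarrow \T_\infty$ is an isomorphism (its kernel has height zero), and $C_\infty$, being a maximal Cohen--Macaulay module over the regular local ring $R_\infty$, is free over $R_\infty = \T_\infty$ by Auslander--Buchsbaum. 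Its rank is $2$: inverting $p$ and applying Proposition~\ref{CQmQ} with $Q = \emptyset$ identifies $\Qpbar \otimes_\oE C_\gm$ with $\bigoplus_\pi \rho_\pi$, each $\rho_\pi$ two-dimensional, and $\Qpbar \otimes_\oE \T_\gm$ with $\bigoplus_\pi \Qpbar$, so $C_\gm$ has generic rank $2$ over $\T_\gm$, whence so does $C_\infty$ over $R_\infty = \T_\infty$.

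Finally I would descend. Reducing modulo $\mathfrak{a}$, the freeness of $C_\infty$ of rank $2$ over $R_\infty = \T_\infty$ gives that $C_\gm = C_\infty/\mathfrak{a}C_\infty$ is free of rank $2$ over $\T_\gm = \T_\infty/\mathfrak{a}$, proving~(i). For~(ii), one has $\T_\gm \cong R_\infty/\mathfrak{a} = R_\infty/(z_1,\dots,z_h)$; since $\T_\gm$ is a finite $\oE$-algebra with $\Qpbar \otimes_\oE \T_\gm \neq 0$ (Proposition~\ref{CQmQ}) it has Krull dimension $1$, so $z_1,\dots,z_h$ is a regular sequence in the regular ring $R_\infty$ of dimension $h+1$, and therefore $\T_\gm$ is a quotient of a power series ring over $\oE$ by a regular sequence, i.e. a complete intersection over $\oE$. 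Apart from the Taylor--Wiles prime construction and the matching of generator and dimension counts, everything here is the routine commutative-algebra endgame, Lemmas~\ref{lem_deformations} and~\ref{lem:DeltaQfree} having supplied precisely the facts that make it go through.
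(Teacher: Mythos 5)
Your proposal is correct and follows essentially the same route as the paper, which simply invokes \cite[Thm.3.4.11]{Ki} (noting the simplification coming from the formal smoothness of $R_\loc^\triangle$, cf.\ \cite[Thm.2.1]{Di1}) rather than spelling out the patching as you do. One small precision: what the patching produces at the bottom is the framed module $C_\infty/\mathfrak a C_\infty\cong R^\square\otimes_R C_\gm$ free over $\T^\square_\gm=R^\square\otimes_R\T_\gm$, so to land on $C_\gm$ and $\T_\gm$ themselves one still needs the final descent using that $R^\square$ is a power series ring over $R$ --- which is precisely the last sentence of the paper's proof.
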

\begin{proof}  
On le d\'emontre exactement comme dans \cite[Thm.3.4.11]{Ki}. Aux notations pr\`es, la seule diff\'erence est que l'anneau jouant le r\^ole de $B$ dans {\it loc.cit.}, c'est-\`a-dire $R_\loc^\triangle$, est ici formellement lisse sur $\oE$ et il n'y a donc pas besoin d'inverser $p$ dans la conclusion de \cite[Prop.3.3.1]{Ki} (et en fait la preuve devient plus simple, voir \cite[Thm.2.1]{Di1}). On conclut par cons\'equent que $R^\triangle \buildrel\sim\over \to \T_\gm^\square = R^\square\otimes_R\T_\gm$ est un isomorphisme d'anneaux locaux d'intersection compl\`ete sur $\oE$ et que $R^\square\otimes_R C_\gm$ est libre de rang $2$ sur $\T_\gm^\square$. Comme $R^\square$ est formellement lisse sur $\oE$, on en d\'eduit que $\T_\gm$ est un anneau local d'intersection compl\`ete sur $\oE$ et que $C_\gm$ est libre de rang $2$ sur $\T_\gm$.
\end{proof}

\subsection{R\'esultats principaux}\label{resprinc}

On montre les deux th\'eor\`emes \'enonc\'es dans l'introduction. 

On conserve les notations et hypoth\`eses des sections pr\'ec\'edentes. Rappelons ces hypoth\`eses. On fixe un corps totalement r\'eel $F$ et une repr\'esentation :
$$\rhobar:\gF \to \GL_2(k_E)$$
continue, modulaire, irr\'eductible en restriction \`a ${\Gal}(\Qbar/F(\sqrt[p]{1}))$ (o\`u $k_E$ est une extension finie de $\Fq$ suffisamment grande comme au \S\ \ref{globalprelim}), telle que l'image de $\rhobar({\Gal}(\Qbar/F(\sqrt[p]{1})))$ dans ${\rm PGL}_2(k_E)$ est non isomorphe \`a ${\rm PSL}_2(\F_5)$ si $p=5$, et v\'erifiant les hypoth\`eses :
\begin{enumerate}
\item[(i)]$\rhobar|_{\gFw}$ est r\'eductible non scalaire aux places $w$ de $F$ divisant $p$\\
\item[(ii)]il existe une place $v$ (fix\'ee) de $F$ divisant $p$ telle que $F_v$ est une extension non ramifi\'ee de $\Qp$ et $\rhobar|_{\gFv}$ est g\'en\'erique (r\'eductible).
\end{enumerate}
Rappelons que (ii) implique $p>3$. 

On fixe une alg\`ebre de quaternions $D$ sur $F$ v\'erifiant les hypoth\`eses :
\begin{enumerate}
\item[(iii)]$D$ est d\'eploy\'ee en une seule des places infinies et aux places divisant $p$\\
\item[(iv)]si $w$ est une place finie de $F$ o\`u $D$ est ramifi\'ee, alors $\rhobar|_{\gFw}$ est soit irr\'e\-ductible soit non scalaire de la forme $\overline\mu_w\smat{\omega & *\\ 0& 1}$ pour un caract\`ere $\overline\mu_w: \gFw \to k_E^\times$.
\end{enumerate}
Rappelons que (iv) implique $\pi_D(\rhobar)\neq 0$ par le corollaire \ref{nonzero}. 

On \'ecrit $\rhobar|_{\gFv}$ comme en (\ref{ecrit}) et on rappelle que $\tau(\emptyset)\in {\mathcal D}(\rhobar|_{\gFv})$ est l'unique poids de Serre tel que l'action de $U_v={\rm I}({\mathcal O}_{F_v})$ sur $\tau(\emptyset)^{{\rm I}_1({\mathcal O}_{F_v})}$ est donn\'ee par $\overline{M}_v=\overline\eta'(\emptyset)\otimes\overline\eta(\emptyset)=\overline{\xi}_v\vert_{[k_v^{\times}]}\otimes \overline{\xi}'_v\vert_{[k_v^{\times}]}$ (cf. \S~\ref{spec}). Rappelons aussi que l'on a d\'efini $Z(\rhobar|_{\gFv})$ en (\ref{zrho}) et $F(J)$ en (\ref{fj}) pour $J\subseteq {\mathcal S}_v$.

\begin{thm}\label{thm:multone} 
(i) On a $\dim_{k_E}\Hom_{k_E[\GL_2({\mathcal O}_{F_v})]}\big(\tau(\emptyset),\pi_{D,v}(\rhobar)\big)=1$.\\
(ii) Pour tout $J\subseteq {\mathcal S}_v$, on a :
$$\Hom_{k_E[\GL_2({\mathcal O}_{F_v})]}\big(\ind_{{\rm I}({\mathcal O}_{F_v})}^{\GL_2({\mathcal O}_{F_v})}\overline\eta'(J)\otimes\overline\eta(J),\pi_{D,v}(\rhobar)\big)\ne 0.$$
et si de plus $Z(\rhobar|_{\gFv})\cap F(J)= \emptyset$, alors cet espace d'homomorphismes est de dimension $1$ sur $k_E$.
\end{thm}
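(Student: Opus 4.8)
Le plan est de ramener les deux \'enonc\'es au calcul, pour $J\subseteq {\mathcal S}_v$, de la dimension sur $k_E$ de l'espace $\pi_{D,v}(\rhobar)^{{\rm I}_1({\mathcal O}_{F_v})}[\overline\eta'(J)\otimes\overline\eta(J)]$ des vecteurs de $\pi_{D,v}(\rhobar)^{{\rm I}_1({\mathcal O}_{F_v})}$ sur lesquels ${\rm I}({\mathcal O}_{F_v})$ agit par le caract\`ere $\overline\eta'(J)\otimes\overline\eta(J)$, qui par r\'eciprocit\'e de Frobenius s'identifie \`a $\Hom_{k_E[\GL_2({\mathcal O}_{F_v})]}(\ind_{{\rm I}({\mathcal O}_{F_v})}^{\GL_2({\mathcal O}_{F_v})}\overline\eta'(J)\otimes\overline\eta(J),\pi_{D,v}(\rhobar))$. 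Dans le cas $J=\emptyset$ du $(i)$, on commence par observer que, $\overline M^v$ \'etant irr\'eductible, la pr\'esence d'un poids de Serre $\tau$ dans le $\GL_2({\mathcal O}_{F_v})$-socle de $\pi_{D,v}(\rhobar)$ force $\rhobar$ \`a \^etre modulaire de poids $\tau$ en $v$ par rapport \`a $D$, donc $\tau\in {\mathcal D}(\rhobar\vert_{\gFv})$ par le th\'eor\`eme \ref{cor:Geeplus} ; joint au lemme \ref{unik} (selon lequel $\tau(\emptyset)$ est \`a la fois le seul constituant de $\ind_{{\rm I}({\mathcal O}_{F_v})}^{\GL_2({\mathcal O}_{F_v})}\overline\eta'(\emptyset)\otimes\overline\eta(\emptyset)$ dans ${\mathcal D}(\rhobar\vert_{\gFv})$ et son co-socle), ceci montre que tout vecteur non nul de $\pi_{D,v}(\rhobar)^{{\rm I}_1({\mathcal O}_{F_v})}[\overline M_v]$ engendre une sous-$\GL_2({\mathcal O}_{F_v})$-repr\'esentation isomorphe \`a $\tau(\emptyset)$, d'o\`u l'\'egalit\'e $\dim_{k_E}\Hom_{k_E[\GL_2({\mathcal O}_{F_v})]}(\tau(\emptyset),\pi_{D,v}(\rhobar))=\dim_{k_E}\pi_{D,v}(\rhobar)^{{\rm I}_1({\mathcal O}_{F_v})}[\overline M_v]$.

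L'\'etape suivante est d'identifier, pour chaque $J$, l'espace $\pi_{D,v}(\rhobar)^{{\rm I}_1({\mathcal O}_{F_v})}[\overline\eta'(J)\otimes\overline\eta(J)]$ \`a $\Hom_{k_E[\gF]}(\rhobar,\overline{C(J)}_{\gm})$, o\`u $\overline{C(J)}_{\gm}\=C(J)_{\gm}/\pE$. Ceci r\'esulte d'une suite d'adjonctions : la d\'efinition (\ref{piDv}) de $\pi_{D,v}(\rhobar)$ comme $\Hom_{k_E[U^v]}(\overline M^v,\pi_D(\rhobar))[\gm']$, celle de $\pi_D(\rhobar)$ comme $\Hom_{k_E[\gF]}(\rhobar,\overline\Pi_D)$, et le fait que la cohomologie \'etale modulo $\pE$, en un niveau assez petit, calcule $C(J)$ apr\`es passage aux isotypiques pour les $U_w$ ($w\in S$) et pour le caract\`ere $\eta'(J)\otimes\eta(J)$ en $v$ (le passage de $C(J)/\pE$ \`a la cohomologie \`a coefficients dans $k_E$ utilisant que $H^1_{\et}$ d'une courbe est sans $\pE$-torsion apr\`es localisation en $\gm$). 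La non-nullit\'e de cet espace, pour \emph{tout} $J$, d\'ecoule alors de la proposition \ref{CQmQJ} : l'ensemble des $\pi$ y est non vide, donc $C(J)_{\gm}\ne 0$, donc $\overline{C(J)}_{\gm}\ne 0$ ; comme tous les constituants de ce $\gF$-module sont isomorphes \`a $\rhobar$ (puisque $\Qpbar\otimes_{\oE}C(J)_{\gm}$ est une somme de relev\'es de $\rhobar$), son $\gF$-socle est non nul, et donc $\Hom_{k_E[\gF]}(\rhobar,\overline{C(J)}_{\gm})\ne 0$.

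Pour la majoration des dimensions, on utilise le th\'eor\`eme \ref{thm:free} : $C_{\gm}$ est libre de rang $2$ sur $\T_{\gm}$ et $\T_{\gm}$ est un anneau d'intersection compl\`ete, donc de Gorenstein, sur $\oE$ ; l'argument de multiplicit\'e un de Fujiwara et Diamond (cf. \cite{Fu}, \cite{Di1}) donne alors que $\overline C_{\gm}[\gm]\cong\rhobar$ comme $\gF$-repr\'esentation (le socle de $\T_{\gm}/\pE$ \'etant de dimension $1$ sur $k_E$ par Gorenstein), d'o\`u $\dim_{k_E}\Hom_{k_E[\gF]}(\rhobar,\overline C_{\gm})=1$, ce qui ach\`eve $(i)$. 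Pour $(ii)$ dans le cas $Z(\rhobar\vert_{\gFv})\cap F(J)=\emptyset$, on conclut \`a partir de $(i)$ : par le lemme \ref{unik}, $\tau(\emptyset)$ est le seul constituant de $\ind_{{\rm I}({\mathcal O}_{F_v})}^{\GL_2({\mathcal O}_{F_v})}\overline\eta'(J)\otimes\overline\eta(J)$ dans ${\mathcal D}(\rhobar\vert_{\gFv})$, donc (par le th\'eor\`eme \ref{cor:Geeplus}) tout morphisme non nul $\ind_{{\rm I}({\mathcal O}_{F_v})}^{\GL_2({\mathcal O}_{F_v})}\overline\eta'(J)\otimes\overline\eta(J)\to\pi_{D,v}(\rhobar)$ a une image de socle $\tau(\emptyset)$ ; il se factorise donc par le quotient de cette induite de socle $\tau(\emptyset)$ (cf. proposition \ref{xj}), et sa restriction \`a ce socle est non nulle (sinon l'image, quotient de $(\ind_{{\rm I}({\mathcal O}_{F_v})}^{\GL_2({\mathcal O}_{F_v})}\overline\eta'(J)\otimes\overline\eta(J))/\tau(\emptyset)$, aurait tous ses constituants hors de ${\mathcal D}(\rhobar\vert_{\gFv})$, donc un socle nul). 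On obtient ainsi une injection de $\Hom_{k_E[\GL_2({\mathcal O}_{F_v})]}(\ind_{{\rm I}({\mathcal O}_{F_v})}^{\GL_2({\mathcal O}_{F_v})}\overline\eta'(J)\otimes\overline\eta(J),\pi_{D,v}(\rhobar))$ dans $\Hom_{k_E[\GL_2({\mathcal O}_{F_v})]}(\tau(\emptyset),\pi_{D,v}(\rhobar))$, qui est de dimension $1$ ; joint \`a la non-nullit\'e du paragraphe pr\'ec\'edent, ceci donne la dimension $1$ voulue.

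Le point le plus d\'elicat sera l'identification pr\'ecise de $\pi_{D,v}(\rhobar)^{{\rm I}_1({\mathcal O}_{F_v})}[\overline\eta'(J)\otimes\overline\eta(J)]$ avec $\Hom_{k_E[\gF]}(\rhobar,\overline{C(J)}_{\gm})$ (compatibilit\'e entre op\'erateurs de Hecke, structures de niveau et localisation en $\gm$, commutation de $\Hom$ \`a la r\'eduction modulo $\pE$), ainsi que la v\'erification que le th\'eor\`eme \ref{cor:Geeplus} s'applique bien au facteur local ad hoc $\pi_{D,v}(\rhobar)$ ; en revanche, la d\'eduction de la multiplicit\'e un \`a partir de la libert\'e de $C_{\gm}$ sur $\T_{\gm}$ et de la propri\'et\'e de Gorenstein est classique.
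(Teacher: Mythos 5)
Votre proposition suit essentiellement la m\^eme strat\'egie que le texte : libert\'e de rang $2$ de $C_\gm$ sur $\T_\gm$ et propri\'et\'e de Gorenstein (th\'eor\`eme \ref{thm:free}) pour obtenir $\dim_{k_E}(\overline C_\gm)[\gm]=2$ ; identification avec la cohomologie (via \cite[Lem.4.11]{BDJ}) ; th\'eor\`eme \ref{cor:Geeplus} et lemme \ref{unik} pour ne garder que $\tau(\emptyset)$ ; proposition \ref{CQmQJ} pour la non-nullit\'e dans (ii) ; et d\'eduction de la derni\`ere assertion de (ii) \`a partir de (i). La d\'eduction de (ii) \`a partir de (i) et du lemme \ref{unik} par restriction au socle $\tau(\emptyset)$ est correcte et \'equivalente \`a celle du texte.

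Le point que vous signalez vous-m\^eme comme \og le plus d\'elicat\fg{} contient toutefois une lacune r\'eelle, et votre esquisse de justification ne l'aborde pas au bon endroit. Pour obtenir la \emph{majoration} dans (i), il ne suffit pas de l'injection $\overline C_\gm\hookrightarrow \Hom_{k_E[G]}(\overline M, H^1_{\et}(X_{V,\Qbar},k_E)(1))_\gm$ : il faut que cette application soit \emph{surjective}, sans quoi on n'obtient que $\dim_{k_E}\Hom_{k_E[U_v]}(\overline M_v,\pi_{D,v}(\rhobar))\ge 1$. Or cette surjectivit\'e n'a rien \`a voir avec la $\pE$-libert\'e de $H^1_{\et}$ de la courbe (qui ne donne que l'injectivit\'e) : l'obstruction est une obstruction en cohomologie du groupe fini $G$ (passage de $\Hom_{\oE[G]}(M,-)/\pE$ \`a $\Hom_{k_E[G]}(\overline M,-/\pE)$, dont le conoyau est contr\^ol\'e par un $H^2(G,\cdot)$), conjugu\'ee au d\'efaut \'eventuel de libert\'e de l'action de $G$ sur $X_{V_Q}$. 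Le texte r\'esout cela par l'introduction de la place auxiliaire $w_0$ (qui assure que $X_{V_Q}\to X_Q$ est un rev\^etement galoisien \'etale et que le faisceau $\mathcal F$ descend sur $X_Q$), puis une chasse au diagramme combinant la suite exacte des coefficients de $\mathcal F$ et la suite spectrale de Hochschild--Serre, en exploitant que les termes parasites $H^0_{\et}$ et $H^2_{\et}$ sont \`a action galoisienne ab\'elienne et donc disparaissent apr\`es localisation en $\gm$ (car $\rhobar$ est irr\'eductible). Cet argument, qui constitue la partie technique principale de la preuve, manque dans votre proposition et n'est pas remplac\'e par un argument \'equivalent.

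Enfin, deux points mineurs : l'identification $\pi_{D,v}(\rhobar)^{{\rm I}_1}[\overline\eta'(J)\otimes\overline\eta(J)]\cong \Hom_{k_E[\gF]}(\rhobar,\overline{C(J)}_\gm)$ que vous proposez n'est qu'une reformulation de la cha\^\i ne d'identifications du texte (qui passe par $\rhobar\otimes_{k_E}\Hom_{k_E[U_v]}(\overline M_v(J),\pi_{D,v}(\rhobar))\cong \Hom_{k_E[G]}(\overline{M(J)},H^1_{\et}(X_{V,\Qbar},k_E)(1))[\gm]$) et n\'ecessite donc exactement la m\^eme surjectivit\'e dans le sens qui vous importe ; et l'affirmation que tout vecteur non nul de $\pi_{D,v}(\rhobar)^{{\rm I}_1}[\overline M_v]$ engendre une copie de $\tau(\emptyset)$ repose tacitement sur la multiplicit\'e un de $\tau(\emptyset)$ dans l'induite (vrai, mais \`a expliciter).
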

\begin{proof} 
(i) Par le th\'eor\`eme \ref{thm:free}(i), on a $C_\gm/\pE$ libre de rang $2$ sur $\T_\gm/\pE$. Par le th\'eor\`eme \ref{thm:free}(ii), l'anneau artinien $\T_\gm/\pE$ est d'intersection compl\`ete, donc Gorenstein. Par cons\'equent $(\T_\gm/\pE)[\gm]$ est de dimension $1$ sur $k_E$, et donc $(C_\gm/\pE)[\gm]$ de dimension $2$ sur $k_E$.

Montrons que l'application naturelle injective :
\begin{multline*}
C/\pE = k_E \otimes_{\oE} \Hom_{\oE[G]}\big(M,H^1_{\et}(X_{V,\Qbar},\oE)(1)\big)\hookrightarrow \\
\Hom_{k_E[G]}\big(\overline M, H^1_{\et}(X_{V,\Qbar},k_E)(1)\big)
\end{multline*}
est un isomorphisme apr\`es localisation en $\gm$ (o\`u $\overline M\=M/\pE$ et o\`u on omet $Q=\emptyset$ en indice). Posons $Q = \{w_0\}$ o\`u la place $w_0$ est comme dans la preuve du lemme~\ref{lem:DeltaQfree}, on a un isomorphisme $C_\gm \buildrel\sim\over\rightarrow C_{Q,\gm_Q}$ (lemme~\ref{lem:DeltaQfree}) et un isomorphisme d\'efini de mani\`ere analogue : 
$$\Hom_{k_E[G]}\big(\overline M, H^1_{\et}(X_{V,\Qbar},k_E)(1)\big)_\gm \buildrel\sim\over\longrightarrow \Hom_{k_E[G]}\big(\overline M, H^1_{\et}(X_{V_Q,\Qbar},k_E)(1)\big)_{\gm_Q}.$$
Consid\'erons le diagramme commutatif :
$$\begin{array}{rcl}
H^1_{\et}(X_{Q,\Qbar},{\mathcal{F}})  & \!\!\!\to  \!\!\!& H^1_{\et}(X_{Q,\Qbar},{\mathcal{F}}/\pE) \to   H^2_{\et}(X_{Q,\Qbar},{\mathcal{F}}) \\
\downarrow\qquad & & \qquad\downarrow   \\
\Hom_{\oE[G]}\big(M,H^1_{\et}(X_{V_Q,\Qbar},\oE)(1)\big) &\!\!\!\to \!\!\!& \Hom_{k_E[G]}\big(\overline M, H^1_{\et}(X_{V_Q,\Qbar},k_E)(1)\big)  \\
& &\qquad\downarrow  \\
& & \!\! H^2\big(G,\Hom_{k_E}\!\big(\overline M, H^0_{\et}(X_{V_Q,\Qbar},k_E)(1)\big)\big)
\end{array}$$
avec $X_Q$ et ${\mathcal{F}}$ comme dans la preuve du lemme~\ref{lem:DeltaQfree}. Comme la ligne du haut et la colonne de droite sont exactes et que l'action de $\gF$ sur $H^2_{\et}(X_{Q,\Qbar},{\mathcal{F}})$ et $H^0_{\et}(X_{V_{Q},\Qbar},k_E(1))$ se factorise par $\gF^\ab$, on a (comme dans la partie (v) de la preuve du lemme~\ref{lem:DeltaQfree}) que l'application compos\'ee :
$$H^1_{\et}(X_{Q,\Qbar},{\mathcal{F}}) \longrightarrow  H^1_{\et}(X_{Q,\Qbar},{\mathcal{F}}/\pE) \longrightarrow \Hom_{k_E[G]}\big(\overline M, H^1_{\et}(X_{V_Q,\Qbar},k_E)(1)\big)_{\gm_Q}$$
est surjective, et donc qu'il en est de m\^eme de :
\begin{multline*}
C_{Q,\gm_Q}=\Hom_{\oE[G]}\big(M,H^1_{\et}(X_{V_Q,\Qbar},\oE)(1)\big)_{\gm_Q}\longrightarrow \\
\Hom_{k_E[G]}\big(\overline M, H^1_{\et}(X_{V_Q,\Qbar},k_E)(1)\big)_{\gm_Q}.
\end{multline*}
On en d\'eduit avec ce qui pr\'ec\`ede que :
$$C_\gm/\pE \longrightarrow \Hom_{k_E[G]}\big(\overline M, H^1_{\et}(X_{V,\Qbar},k_E)(1)\big)_\gm$$
est un isomorphisme et donc que :
$$\dim_{k_E}\Hom_{k_E[G]}\big(\overline M, H^1_{\et}(X_{V,\Qbar},k_E)(1)\big)[\gm]=\dim_{k_E}(C_\gm/\pE)[\gm]=2.$$

Montrons maintenant que $\Hom_{k_E[U_v]}(\overline{M}_v,\pi_{D,v}(\rhobar))$ a dimension $1$. Par \cite[Lem.4.6]{BDJ} et \cite[Lem.4.10]{BDJ} (aux changements de convention pr\`es, cf. \S\ \ref{globalprelim}), on a l'isomorphisme d'\'evaluation (\cite[Lem.4.11]{BDJ}) :
$$\rhobar \otimes_{k_E} \pi_D(\rhobar)^{V}\buildrel\sim\over\to H^1_{\et}(X_{V,\Qbar},k_E)(1)[\gm_{\rhobar}^\Sigma]$$
o\`u $\gm_{\rhobar}^\Sigma$ est l'id\'eal de $k_E[T_w,S_w]_{w\not\in\Sigma}$ engendr\'e par les \'el\'ements $T_w - {\mathrm{N}}(w)\tr(\rhobar(\Fr_w))$ et $S_w - {\mathrm{N}}(w)\det(\rhobar(\Fr_w))$ pour $w\not\in\Sigma$. On a donc les identifications par (\ref{piDv}) :
$$\begin{array}{rcl} 
\rhobar\!\otimes_{k_E}\!\Hom_{k_E[U_v]}(\overline{M}_v,\pi_{D,v}(\rhobar)\!)\!\!&\!\!=\!\!&\!\!\rhobar\otimes_{k_E}\Hom_{k_E[U]}(\overline{M}_v \otimes_{k_E}\overline{M}^v,\pi_{D}(\rhobar))[\gm']\\
\!\!\!&\!\!=\!\!&\!\!\Hom_{k_E[U/V]}(\overline{M}_v \otimes_{k_E}\overline{M}^v,\rhobar\otimes_{k_E}\pi_{D}(\rhobar)^{V})[\gm']\\
\!\!\!&\!\!=\!\!&\!\!\Hom_{k_E[U/V]}\big(\overline M,H^1_{\et}(X_{V,\Qbar},k_E)(1)[\gm_{\rhobar}^\Sigma]\big)[\gm']\\
\!\!\!&\!\!=\!\!&\!\!\Hom_{k_E[G]}\big(\overline M,H^1_{\et}(X_{V,\Qbar},k_E)(1)\big)[\gm]
\end{array}$$
o\`u, dans la derni\`ere \'egalit\'e, on a identifi\'e l'action des op\'erateurs $S_w$ avec celle des \'el\'ements correspondants de $G = U\A_{F,f}^\times/VF^\times$. Comme on vient de montrer que ce dernier espace a dimension $2$, on en d\'eduit que $\Hom_{k_E[U_v]}(\overline{M}_v,\pi_{D,v}(\rhobar))$ a dimension $1$, ou encore par r\'eciprocit\'e de Frobenius :
$$\dim_{k_E}\Hom_{k_E[\GL_2({\mathcal O}_{F_v})]}\big(\ind_{{\rm I}({\mathcal O}_{F_v})}^{\GL_2({\mathcal O}_{F_v})}\overline\eta'(\emptyset) \otimes \overline\eta(\emptyset),\pi_{D,v}(\rhobar)\big)=1.$$
Par le lemme~\ref{unik}, le seul constituant de $\ind_{{\rm I}({\mathcal O}_{F_v})}^{\GL_2({\mathcal O}_{F_v})}\overline\eta'(\emptyset) \otimes \overline\eta(\emptyset)$ qui est dans ${\mathcal D}(\rhobar|_{\gFv})$ est son co-socle $\tau(\emptyset)$. Le th\'eor\`eme~\ref{cor:Geeplus} implique alors que les homomorphis\-mes :
$$\ind_{{\rm I}({\mathcal O}_{F_v})}^{\GL_2({\mathcal O}_{F_v})}\overline\eta'(\emptyset) \otimes \overline\eta(\emptyset)\longrightarrow \pi_{D,v}(\rhobar)$$
sont exactement ceux qui se factorisent par le co-socle $\tau(\emptyset)$. On en d\'eduit que $\Hom_{k_E[\GL_2({\mathcal O}_{F_v})]}(\tau(\emptyset),\pi_{D,v}(\rhobar))$ a aussi dimension $1$.

(ii) Par la proposition \ref{CQmQJ}, on a $C(J)_\gm \neq 0$ et l'injection :
 $$C(J)_\gm/\pE \hookrightarrow \Hom_{k_E[G]}\big(\overline{M(J)}, H^1_{\et}(X_{V,\Qbar},k_E)(1)\big)_\gm$$
implique donc en particulier $\Hom_{k_E[G]}\big(\overline{M(J)}, H^1_{\et}(X_{V,\Qbar},k_E)(1)\big)[\gm] \neq 0$. On montre comme dans le (i) ci-dessus l'identification :
$$\rhobar\otimes_{k_E}\Hom_{k_E[U_v]}\big(\overline{M_v(J)},\pi_{D,v}(\rhobar)\big)\cong \Hom_{k_E[G]}\big(\overline{M(J)},H^1_{\et}(X_{V,\Qbar},k_E)(1)\big)[\gm].$$
On voit donc que :
$$\Hom_{k_E[\GL_2({\mathcal O}_{F_v})]}\big(\ind_{{\rm I}({\mathcal O}_{F_v})}^{\GL_2({\mathcal O}_{F_v})}\overline{\eta}'(J)\otimes\overline{\eta}(J),\pi_{D,v}(\rhobar)\big)=\Hom_{k_E[U_v]}\big(\overline{M_v(J)},\pi_{D,v}(\rhobar)\big)$$
est non nul. La derni\`ere assertion d\'ecoule du lemme \ref{unik} et du th\'eor\`eme \ref{cor:Geeplus}.
\end{proof}

\begin{rem}\label{cheng}
{\rm Cheng dans \cite[Thm.4.1, Thm.5.3]{Ch} d\'emontre des r\'esultats similaires aux th\'eor\`emes ~\ref{thm:free} et~\ref{thm:multone}(i) ci-dessus, mais sous des hypoth\`eses techniques diff\'erentes. Les principales diff\'erences sont les suivantes~: d'une part l'hypoth\`ese que les repr\'esenta\-tions $\rhobar|_{\gFw}$ sont r\'eductibles non scalaires pour $w|p$ est remplac\'ee par $\End_{k_E[\gFw]}(\rhobar|_{\gFw}) = k_E$ et $F_w$ non ramifi\'ee, d'autre part tous les poids de Serre r\'eguliers de $\rhobar|_{\gFw}$ sont consid\'er\'es (\`a toutes les places $w|p$) au lieu du poids de Serre particulier $\tau(\emptyset)$ consid\'er\'e ici. Notons que la condition que $\rhobar|_{\gFw}$ a un poids de Serre r\'egulier au sens de \cite{Ch} est plus forte que la g\'en\'ericit\'e au sens de \cite[Def.11.7]{BP}. Les hypoth\`eses impos\'ees dans \cite{Ch} aux places $w$ ne divisant pas $p$ sont par ailleurs plus restrictives que celles de cet article.}
\end{rem}

\begin{rem}\label{egs}
{\rm Dans le tr\`es r\'ecent travail \cite{EGS}, Emerton, Gee and Savitt montrent une variante de \cite[Conj.B.1]{De} qui entra\^\i ne en particulier que le th\'eor\`eme~\ref{thm:multone}(i) est vrai en rempla\c cant $\tau(\emptyset)$ par un quelconque poids de Serre $\tau \in {\mathcal D}(\rhobar|_{\gFv})$, et aussi que l'espace des homomorphismes du th\'eor\`eme~\ref{thm:multone}(ii) est de dimension $1$ pour tout $J \subseteq {\mathcal S}_v$ (sous l'hypoth\`ese que $F$ est non ramifi\'ee en $p$ et que $\rhobar|_{\gFw}$ est g\'en\'erique pour tout $w|p$).}
\end{rem}

On en d\'eduit maintenant le corollaire satisfaisant suivant. 

\begin{cor}\label{ok}
Si la repr\'esentation $\pi_D(\rhobar)$ se factorise comme en (\ref{factorization}) (cf. \S\ \ref{globalprelim}), alors la repr\'esentation $\pi_{D,v}(\rhobar) \= \Hom_{k_E[U^v]}\big(\overline{M}^v,\pi_D(\rhobar)\big)[\gm']$ (c'est-\`a-dire d\'efinie comme en (\ref{piDv}), cf. \S\ \ref{facteur}) est n\'ecessairement le facteur local en $v$ de $\pi_D(\rhobar)$.
\end{cor}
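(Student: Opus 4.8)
Le plan est de brancher directement la factorisation conjectur\'ee dans la d\'efinition ad hoc (\ref{piDv}) et de constater que tout se scinde. Supposons donc (\ref{factorization}) et notons $\pi'$ le facteur local en $v$ qui y appara\^\i t (une repr\'esentation lisse admissible de $D_v^\times$), de sorte que $\pi_D(\rhobar)\cong \pi'\otimes\big(\bigotimes'_{w\ne v}\pi_{D,w}(\rhobar)\big)$, le produit tensoriel restreint \'etant pris par rapport \`a des vecteurs non nuls fix\'es sous $\oDw^\times$ aux places $w\notin S\cup\{v\}$ o\`u $\pi_{D,w}(\rhobar)$ est non ramifi\'ee. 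Comme $U^v=\prod_{w\in S}U_w\prod_{w\notin S\cup\{v\}}\oDw^\times$ et $\overline M^v=\otimes_{w\in S}\overline M_w$ (que l'on voit comme produit tensoriel sur toutes les places $w\ne v$ en prenant pour $\overline M_w$ la repr\'esentation triviale de $\oDw^\times$ si $w\notin S\cup\{v\}$), l'action de $U^v$ ne porte que sur les facteurs aux places $w\ne v$ ; comme de plus $\overline M^v$ est de dimension finie, un calcul standard de produit tensoriel restreint fournit un isomorphisme $D_v^\times\times\T'$-\'equivariant
$$\Hom_{k_E[U^v]}\big(\overline M^v,\pi_D(\rhobar)\big)\ \cong\ \pi'\otimes_{k_E}\Big(\bigotimes'_{w\ne v}\Hom_{k_E[U_w]}\big(\overline M_w,\pi_{D,w}(\rhobar)\big)\Big),$$
o\`u $D_v^\times$ agit sur le premier facteur, o\`u pour $w\notin S\cup\{v\}$ on a $\Hom_{k_E[\oDw^\times]}(k_E,\pi_{D,w}(\rhobar))=\pi_{D,w}(\rhobar)^{\oDw^\times}$, et o\`u, pour $w\in S'$, l'op\'erateur $T_w$ — d\'efini au \S\ \ref{facteur} par une double classe locale en $w$ — agit sur le $w$-i\`eme facteur du produit restreint et trivialement ailleurs, en particulier trivialement sur $\pi'$.

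Ensuite je passerais \`a la composante $\gm'$-isotypique. Puisque chaque $T_w$ ($w\in S'$) agit trivialement sur le $k_E$-espace vectoriel sous-jacent \`a $\pi'$, le foncteur $(\cdot)[\gm']$ commute au produit tensoriel par $\pi'$, ce qui donne $\pi_{D,v}(\rhobar)=\Hom_{k_E[U^v]}\big(\overline M^v,\pi_D(\rhobar)\big)[\gm']\cong \pi'\otimes_{k_E}N$, o\`u l'on pose $N\=\big(\bigotimes'_{w\ne v}\Hom_{k_E[U_w]}(\overline M_w,\pi_{D,w}(\rhobar))\big)[\gm']$ : c'est un $k_E$-espace vectoriel, muni de l'action triviale de $D_v^\times$. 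Il suffit donc de v\'erifier $\dim_{k_E}N=1$ pour conclure $\pi_{D,v}(\rhobar)\cong\pi'$ comme $D_v^\times$-repr\'esentation, c'est-\`a-dire l'\'enonc\'e.

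Pour cela, toujours sous (\ref{factorization}) et de la m\^eme mani\`ere (l'action de $U_v$ ne portant cette fois que sur le facteur $\pi'$), on obtient $\Hom_{k_E[U_v]}\big(\overline M_v,\pi_{D,v}(\rhobar)\big)\cong \Hom_{k_E[U_v]}\big(\overline M_v,\pi'\big)\otimes_{k_E}N$. Or la preuve du th\'eor\`eme \ref{thm:multone}(i) \'etablit que $\Hom_{k_E[U_v]}\big(\overline M_v,\pi_{D,v}(\rhobar)\big)$ est de dimension $1$ sur $k_E$ ; un produit tensoriel non nul de deux $k_E$-espaces vectoriels \'etant de dimension $1$ si et seulement si chacun de ses facteurs l'est, on en d\'eduit \`a la fois $N\ne 0$ et $\dim_{k_E}N=1$ (et au passage $\dim_{k_E}\Hom_{k_E[U_v]}(\overline M_v,\pi')=1$), d'o\`u $\pi_{D,v}(\rhobar)\cong\pi'\otimes_{k_E}N\cong\pi'$.

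L'obstacle principal est purement formel : il faut v\'erifier soigneusement que le passage au $\Hom$ commute au produit tensoriel restreint — en particulier contr\^oler aux places $w\notin S\cup\{v\}$ les vecteurs sph\'eriques via $\Hom_{k_E[\oDw^\times]}(k_E,\pi_{D,w}(\rhobar))=\pi_{D,w}(\rhobar)^{\oDw^\times}$ (l'argument n'utilise aucune information sur la dimension de ces espaces) — ainsi que le fait que chaque op\'erateur de Hecke $T_w$ ($w\in S'$) respecte cette d\'ecomposition en n'agissant que sur le facteur local en $w$ ; une fois ces points acquis, le reste est imm\'ediat et n'utilise du global que la multiplicit\'e un du th\'eor\`eme \ref{thm:multone}(i).
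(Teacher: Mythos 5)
Your proof is correct and follows essentially the same skeleton as the paper: factor $\pi_{D,v}(\rhobar)\cong \pi'\otimes_{k_E}N$ with $N$ a $k_E$-space carrying the trivial $D_v^\times$-action, then invoke a multiplicity-one statement at $v$ to force $\dim_{k_E}N=1$. The only (minor) deviation is that the paper applies the \emph{statement} of Th\'eor\`eme \ref{thm:multone}(i), i.e. $\dim\Hom_{\K}(\tau(\emptyset),\pi_{D,v}(\rhobar))=1$, whereas you use the intermediate claim $\dim\Hom_{k_E[U_v]}(\overline M_v,\pi_{D,v}(\rhobar))=1$ established inside its proof (equivalent by Frobenius reciprocity to multiplicity one of $\ind_{\I}^{\K}\overline\eta'(\emptyset)\otimes\overline\eta(\emptyset)$ in $\pi_{D,v}(\rhobar)$); this spares the appeal to Gee--Kisin at this particular step but otherwise changes nothing.
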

\begin{proof}
Si  l'on a $\pi_D(\rhobar) \cong \otimes_w'\pi_{D,w}'(\rhobar)$ pour $\pi_{D,w}'(\rhobar)$ repr\'esentation lisse admissible de $D_w^\times$ sur $k_E$, alors $\pi_{D,v}(\rhobar) \cong  X \otimes_{k_E} \pi_{D,v}'(\rhobar)$ o\`u :
$$X \= \Hom_{k_E[U^v]}\big(\overline{M}^v,\otimes'_{w\neq v}\pi_{D,w}'(\rhobar)\big)[\gm']$$
(avec action triviale de $\GL_2(F_v)$). On en d\'eduit par le th\'eor\`eme \ref{thm:multone}(i) que le $k_E$-espace vectoriel~:
$$X \otimes_{k_E} \Hom_{k_E[\GL_2(\oFv)]}\big(\tau(\emptyset),\pi_{D,v}'(\rhobar)\big)=\Hom_{k_E[\GL_2({\mathcal O}_{F_v})]}\big(\tau(\emptyset),\pi_{D,v}(\rhobar)\big)$$
est de dimension $1$. Il en est donc de m\^eme du $k_E$-espace vectoriel $X$.
\end{proof}

\begin{cor}\label{xjok}
La repr\'esentation lisse admissible $\pi_{D,v}(\rhobar)$ v\'erifie les hypoth\`eses de la proposition \ref{xj} (pour la repr\'esentation locale $\rhobar|_{\gFv}$) pour tout $J\subseteq {\mathcal S}_v$ tel que $Z(\rhobar|_{\gFv})\cap F(J)= \emptyset$.
\end{cor}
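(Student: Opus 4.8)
L'approche consiste \`a v\'erifier directement les trois hypoth\`eses de la proposition~\ref{xj} pour $\pi\=\pi_{D,v}(\rhobar)$ et la repr\'esentation locale $\rhobar\vert_{\gFv}$ --- qui est r\'eductible g\'en\'erique par les hypoth\`eses en vigueur, et qui poss\`ede un caract\`ere central, \`a savoir $\omega^{-1}\det\rhobar\vert_{\gFv}$ (cf. \S\ \ref{facteur}) --- en rassemblant les r\'esultats des sections pr\'ec\'edentes. Fixons $J\subseteq {\mathcal S}_v$ tel que $Z(\rhobar\vert_{\gFv})\cap F(J)=\emptyset$. L'hypoth\`ese~(i), que $\tau(\emptyset)$ appara\^\i t dans $\soc_{\GL_2({\mathcal O}_{F_v})}\pi_{D,v}(\rhobar)$ avec multiplicit\'e $1$, est exactement le th\'eor\`eme~\ref{thm:multone}(i), puisque $\tau(\emptyset)$ \'etant irr\'eductible on a $\Hom_{\GL_2({\mathcal O}_{F_v})}(\tau(\emptyset),\pi_{D,v}(\rhobar))=\Hom_{\GL_2({\mathcal O}_{F_v})}(\tau(\emptyset),\soc_{\GL_2({\mathcal O}_{F_v})}\pi_{D,v}(\rhobar))$.

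Pour l'hypoth\`ese~(ii), soit $\tau$ un constituant irr\'eductible commun \`a $\ind_{{\rm I}({\mathcal O}_{F_v})}^{\GL_2({\mathcal O}_{F_v})}\overline\eta'(J)\otimes\overline\eta(J)$ et \`a $\soc_{\GL_2({\mathcal O}_{F_v})}\pi_{D,v}(\rhobar)$. Comme constituant de la premi\`ere repr\'esentation, $\tau$ est un poids de Serre; comme sous-objet irr\'eductible de la seconde, on a $\Hom_{\GL_2({\mathcal O}_{F_v})}(\tau,\pi_{D,v}(\rhobar))\ne 0$. En explicitant la d\'efinition~(\ref{piDv}) de $\pi_{D,v}(\rhobar)$ comme espace propre de Hecke dans $\Hom_{k_E[U^v]}(\overline{M}^v,\pi_D(\rhobar))$ et en restreignant l'action de $\GL_2({\mathcal O}_{F_v})\cong \oDv^\times$, on obtient $\Hom_{k_E[\oDv^\times]}(\tau,\pi_D(\rhobar))\ne 0$, c'est-\`a-dire que $\rhobar$ est modulaire de poids $\tau$ (en $v$, par rapport \`a $D$); le th\'eor\`eme~\ref{cor:Geeplus} force alors $\tau\in {\mathcal D}(\rhobar\vert_{\gFv})$, ses hypoth\`eses \'etant satisfaites par les conditions en vigueur sur $\rhobar$. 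Par le lemme~\ref{unik}, comme $Z(\rhobar\vert_{\gFv})\cap F(J)=\emptyset$, le seul constituant de $\ind_{{\rm I}({\mathcal O}_{F_v})}^{\GL_2({\mathcal O}_{F_v})}\overline\eta'(J)\otimes\overline\eta(J)$ appartenant \`a ${\mathcal D}(\rhobar\vert_{\gFv})$ est $\tau(\emptyset)$, d'o\`u $\tau=\tau(\emptyset)$; r\'eciproquement $\tau(\emptyset)$ appartient bien \`a $\soc_{\GL_2({\mathcal O}_{F_v})}\pi_{D,v}(\rhobar)$ par~(i) et est un constituant de l'induite par le lemme~\ref{unik}. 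Ceci \'etablit~(ii).

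Il reste \`a v\'erifier l'hypoth\`ese~(iii) : que $\pi_{D,v}(\rhobar)$ contient l'unique quotient $Q$ de $\ind_{{\rm I}({\mathcal O}_{F_v})}^{\GL_2({\mathcal O}_{F_v})}\overline\eta'(J)\otimes\overline\eta(J)$ de socle $\tau(\emptyset)$; c'est l\`a que se concentre le travail. Le th\'eor\`eme~\ref{thm:multone}(ii), applicable car $Z(\rhobar\vert_{\gFv})\cap F(J)=\emptyset$, fournit un morphisme $\GL_2({\mathcal O}_{F_v})$-\'equivariant non nul $\phi:\ind_{{\rm I}({\mathcal O}_{F_v})}^{\GL_2({\mathcal O}_{F_v})}\overline\eta'(J)\otimes\overline\eta(J)\rightarrow \pi_{D,v}(\rhobar)$. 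Son image est un quotient de l'induite qui se plonge dans $\pi_{D,v}(\rhobar)$, donc tout constituant de $\soc_{\GL_2({\mathcal O}_{F_v})}(\mathrm{Im}\,\phi)$ est commun \`a l'induite et \`a $\soc_{\GL_2({\mathcal O}_{F_v})}\pi_{D,v}(\rhobar)$; par~(ii), et comme les constituants de $\ind_{{\rm I}({\mathcal O}_{F_v})}^{\GL_2({\mathcal O}_{F_v})}\overline\eta'(J)\otimes\overline\eta(J)$ apparaissent chacun avec multiplicit\'e $1$ (\cite[\S\ 2]{BP}), on en d\'eduit $\soc_{\GL_2({\mathcal O}_{F_v})}(\mathrm{Im}\,\phi)=\tau(\emptyset)$, d'o\`u $\phi$ se factorise en $\phi=\overline\phi\circ p$ o\`u $p$ est la surjection canonique sur $Q$. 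Si $\ker\overline\phi\ne 0$, il contiendrait le socle irr\'eductible $\tau(\emptyset)$ de $Q$, de sorte que $\mathrm{Im}\,\phi$ serait un quotient de $Q/\soc(Q)$, lequel n'a pas $\tau(\emptyset)$ comme constituant puisque $\tau(\emptyset)$ appara\^\i t avec multiplicit\'e $1$ dans $Q$ (un sous-quotient de l'induite, o\`u chaque constituant appara\^\i t une fois) --- ce qui contredit $\soc_{\GL_2({\mathcal O}_{F_v})}(\mathrm{Im}\,\phi)=\tau(\emptyset)$. Donc $\overline\phi:Q\hookrightarrow \pi_{D,v}(\rhobar)$ est injectif, ce qui est~(iii), et la proposition~\ref{xj} s'applique. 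Cette derni\`ere \'etape est le point d\'elicat : le th\'eor\`eme~\ref{thm:multone}, d\'emontr\'e par la m\'ethode de Taylor--Wiles, ne donne que les dimensions de certains espaces de $\Hom$, et il faut renforcer l'assertion de dimension $1$ du th\'eor\`eme~\ref{thm:multone}(ii) en le fait que le quotient $Q$ tout entier --- et pas seulement l'un de ses quotients propres --- se plonge dans $\pi_{D,v}(\rhobar)$, ce qui n'est possible que gr\^ace \`a la structure pr\'ecise de multiplicit\'e $1$ de la s\'erie principale $\ind_{{\rm I}({\mathcal O}_{F_v})}^{\GL_2({\mathcal O}_{F_v})}\overline\eta'(J)\otimes\overline\eta(J)$ rappel\'ee dans \cite{BP}.
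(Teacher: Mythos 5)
Your proof is correct and follows the same route as the paper's (one-line) proof: hypothesis (i) from Theorem~\ref{thm:multone}(i), hypothesis (ii) from Theorem~\ref{cor:Geeplus} together with Lemma~\ref{unik} via the ``modular of weight $\tau$'' argument, and hypothesis (iii) from the nonvanishing in Theorem~\ref{thm:multone}(ii) combined with (ii) and a multiplicity-one argument on the socle of the image. Your verification of (iii) is slightly roundabout --- once you know $\mathrm{Im}\,\phi$ is a quotient of the induced representation with socle $\tau(\emptyset)$, the uniqueness of $Q$ among such quotients already forces $\mathrm{Im}\,\phi = Q$, making the subsequent ``injectivity of $\overline\phi$'' step redundant --- but the argument is sound and supplies exactly the details the paper leaves implicit.
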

\begin{proof}
L'hypoth\`ese (i) r\'esulte du th\'eor\`eme \ref{thm:multone}(i), l'hypoth\`ese (ii) du th\'eor\`eme \ref{cor:Geeplus} et du lemme \ref{unik}, et l'hypoth\`ese (iii) du th\'eor\`eme \ref{thm:multone}(ii) et de l'hypoth\`ese (ii).
\end{proof}

\begin{rem}\label{folk}
{\rm Il suit de \cite{EGS} que la repr\'esentation $\pi_{D,v}(\rhobar)$ v\'erifie les hypoth\`eses de la proposition \ref{xjbis} (pour $\rhobar|_{\gFv}$), ce qui contient strictement le corollaire \ref{xjok} (voir la remarque \ref{egs}).}
\end{rem}

Par le corollaire \ref{xjok} et la proposition \ref{xj}, pour tout $J\subseteq {\mathcal S}_v$ tel que $Z(\rhobar|_{\gFv})\cap F(J)= \emptyset$ la repr\'esentation $\pi_{D,v}(\rhobar)$ fournit donc un invariant $x(J)\in k_E^\times$. Pour une repr\'esentation (lisse avec caract\`ere central) arbitraire $\pi$ de $\G$ sur $k_E$ v\'erifiant les hypoth\`eses de la proposition \ref{xj}, ces $x(J)$ peuvent \^etre essentiellement quelconques (cf. \S\ \ref{spec}). 

\begin{cor}\label{mainglobal}
Pour $J\subseteq {\mathcal S}_v$ tel que $Z(\rhobar|_{\gFv})\cap F(J)= \emptyset$, les invariants $x(J)\in k_E^{\times}$ de la repr\'esentation $\pi_{D,v}(\rhobar)$ sont donn\'es par :
$$x(J)=-\overline{\xi}'_v(-1)\Big(\prod_{\sigma\in J}\alpha_{v,\sigma}\prod_{\sigma\notin J}\beta_{v,\sigma}\Big)^{-1}\frac{\displaystyle{\prod_{\stackrel{\sigma\in J}{ \sigma\circ\varphi^{-1}\notin J}}}\!\!x_{v,\sigma}(r_{v,\sigma}+1)}{\displaystyle{\prod_{\stackrel{\sigma\notin J}{ \sigma\circ\varphi^{-1}\in J}}}\!\!x_{v,\sigma}(r_{v,\sigma}+1)}$$
avec $(\alpha_{v,\sigma})_{\sigma\in {\mathcal S}_v}$, $(\beta_{v,\sigma})_{\sigma\in {\mathcal S}_v}$ et $(x_{v,\sigma})_{\sigma\in {\mathcal S}_v}$ comme en (\ref{norm}) pour le module de Fontaine-Laffaille contravariant de $\rhobar|_\gFv\otimes \theta_v^{-1}$ (cf. (\ref{ecrit})). 
\end{cor}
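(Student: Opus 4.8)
The plan is to obtain the formula directly from Corollary \ref{mainlocal}. That corollary already computes $x(J)$ for any smooth representation $\pi$ of $\GFv$ with central character satisfying the hypotheses of Proposition \ref{xj}, \emph{provided} one exhibits a strongly divisible $\oE$-module $\M$ of type $\eta(J)\otimes\eta'(J)$ lifting $\rhobar|_{\gFv}$, together with a $\GFv$-stable $\oE$-lattice $\pi_p^0$ in the associated moderately ramified principal series $\pi_p$ (as in Theorem \ref{main}) containing $\widehat v$, and a $\GFv$-equivariant $\oE$-linear map $\pi_p^0\to\pi$ sending $\widehat v$ to a nonzero vector. By Corollary \ref{xjok}, the representation $\pi=\pi_{D,v}(\rhobar)$ satisfies the hypotheses of Proposition \ref{xj} for our $J$ (which obeys $Z(\rhobar|_{\gFv})\cap F(J)=\emptyset$), so only these two local-to-global inputs remain. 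Once they are in place, Corollary \ref{mainlocal} gives the formula with $\theta=\theta_v$ from (\ref{ecrit}), and the identity $\theta_v(-1)=\overline{\xi}_v'(-1)$ (valid since $\theta_v$ agrees with $\overline{\xi}_v'$ on $I_v$ and $-1\in\oFv^\times$ maps into the image of inertia under the reciprocity map) turns this into the claimed expression.

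For the strongly divisible module, I would apply Proposition \ref{CQmQJ}: it shows $C(J)_\gm\neq 0$ and, in particular, that there is an automorphic representation $\pi$ of $(D\otimes_\Q\A)^\times$ with $\pi_\infty\cong\sigma_2$, $\det\rho_\pi=\psi\varepsilon$, $\rho_\pi$ lifting $\rhobar$ with the prescribed local behaviour at the places of $\Sigma\setminus\{v\}$, and $\rho_\pi|_{\gFv}$ potentially Barsotti--Tate of Weil--Deligne type $[\eta'(J)\oplus\eta(J),0]$. The $p$-divisible group over $\oFv$ with moderately ramified descent datum attached to $\rho_\pi|_{\gFv}$ comes from a strongly divisible $\oE$-module $\M$ of type $\eta(J)\otimes\eta'(J)$ with $\rhobar|_{\gFv}\simeq{\rm Hom}_{{\rm Fil}^1,\varphi_1}(\Mbar,\widehat A_{\rm cris}\otimes_{\Zp}\Fp)^{\vee}(1)$, and Proposition \ref{J'} then forces $\M$ to be of type $J$ (so Theorem \ref{main} applies to it). By local--global compatibility (\cite{Sai}) and the chosen normalization of local Langlands, the $v$-component $\pi_v$ of $\pi_f$ satisfies $\Qpbar\otimes_{\Qbar}\pi_v\cong\pi_p$, where $\pi_p$ and $\widehat v\in\pi_p^{\II}$ are as in Theorem \ref{main}.

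For the lattice and the map, I would put $\pi^0\=\Hom_{\oE[\gF]}(\rho_\pi^0,\Pi_D^0)$ as in (\ref{pi0}) and consider the $\oE$-module $\Hom_{\oE[U^v]}(M^v,\pi^0)[\gm']$, with its natural $\GFv$-action at $v$ (which commutes with the $U^v$-action and with the Hecke operators defining $\gm'$). The $K$-type and Hecke bookkeeping already carried out in the proof of Proposition \ref{CQmQ} shows that tensoring this module with $\Qpbar$ recovers $\pi_v\cong\pi_p$; hence it is a $\GFv$-stable $\oE$-lattice in $\pi_p$, and the free rank-one $\oE$-submodule cut out by the additional $U_v$-type $M_v(J)=\oE(\eta'(J)\otimes\eta(J))$ lies inside the line $E\widehat v$ of $\pi_p$. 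Rescaling $\widehat v$ (which Theorem \ref{main} pins down only up to a scalar) so that it generates this rank-one submodule, I take $\pi_p^0$ to be the $\oE[\GFv]$-submodule generated by $\widehat v$, which spans $\pi_p$ over $E$ since $\pi_p$ is irreducible by genericity. The injection $k_E\otimes_\oE\pi^0\hookrightarrow\pi_D(\rhobar)$ of Lemma \ref{pi0bar} then induces, after applying $\Hom_{U^v}(M^v,-)$ and passing to the $\gm'$-part, a $\GFv$-equivariant $\oE$-linear map $\Hom_{\oE[U^v]}(M^v,\pi^0)[\gm']\to\pi_{D,v}(\rhobar)$, hence by restriction a map $\pi_p^0\to\pi_{D,v}(\rhobar)$.

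The step I expect to need the most care is the non-vanishing of the image of $\widehat v$ in $\pi_{D,v}(\rhobar)$. That image lies in $\Hom_{k_E[U_v]}(\overline M_v(J),\pi_{D,v}(\rhobar))$, which is one-dimensional over $k_E$ by Theorem \ref{thm:multone}(ii) \emph{precisely because} $Z(\rhobar|_{\gFv})\cap F(J)=\emptyset$; and it is nonzero because the free rank-one $\oE$-submodule of $\Hom_{\oE[U^v]}(M^v,\pi^0)[\gm']$ cut out by $M_v(J)\otimes M^v$ is $\pE$-torsion free (as $\pi^0$ has no divisible vectors) and injects modulo $\pE$ into $\pi_{D,v}(\rhobar)$, by left exactness of $\Hom$ over the $\oE$-flat module $M_v(J)\otimes M^v$ together with Lemma \ref{pi0bar} and exactness of localization at $\gm'$. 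Granting this, Corollary \ref{mainlocal} applies to $(J,\pi_{D,v}(\rhobar))$, to $\M$ and to $(\pi_p,\widehat v,\pi_p^0)$, and yields exactly the stated formula for $x(J)$.
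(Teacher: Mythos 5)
Your argument is correct and follows the same route as the paper: choose an automorphic $\pi$ contributing to $C(J)_\gm$ via Proposition \ref{CQmQJ}, use Saito's local--global compatibility to produce a strongly divisible module $\M$ of type $\eta(J)\otimes\eta'(J)$ lifting $\rhobar|_{\gFv}$ and identify $\Qpbar\otimes\pi_v$ with the principal series $\pi_p$ of Theorem \ref{main}, form the lattice from $\Hom_{\oE[U^v]}(M^v,\pi^0)$, invoke Lemma \ref{pi0bar} together with the absence of divisible vectors in $\pi^0$ to get the needed nonvanishing of the image of $\widehat v$ in $\pi_{D,v}(\rhobar)$, and conclude with Corollary \ref{mainlocal}. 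The only cosmetic difference is that you pass to the $\oE[\GFv]$-submodule generated by $\widehat v$ as the lattice $\pi_p^0$ rather than taking $\Hom_{\oE[U^v]}(M^v,\pi^0)$ wholesale as the paper does (both satisfy the hypothesis of Corollary \ref{mainlocal}), and you make explicit the normalization check $\theta_v(-1)=\overline{\xi}'_v(-1)$, which the paper leaves as a verification to the reader.
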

\begin{proof}
On va appliquer le corollaire \ref{mainlocal} \`a $J$ et $\pi=\pi_{D,v}(\rhobar)$, il faut donc v\'erifier toutes les conditions de ce corollaire. Notons qu'il suffit de d\'emontrer l'\'egalit\'e du corollaire dans n'importe quelle extension finie de $k_E$, et on peut donc agrandir arbitrairement $E$ dans l'application du corollaire \ref{mainlocal}, ce que l'on fait de mani\`ere tacite dans la suite de la preuve. 

Soit $\pi$ une repr\'esentation automorphe de $(D\otimes \A)^\times$ telle que $\rho_{\pi}$ contribue \`a $\Qpbar\otimes_{\oE}C(J)_\gm$ comme dans la proposition \ref{CQmQJ}. Comme dans la preuve de la proposition \ref{CQmQ} (pour $Q=\emptyset$), on voit que l'on a $\Hom_{\oE[U^v]}(M^v,\Qpbar\otimes_{\Qbar}\pi_f)\cong \Qpbar\otimes_{\Qbar}\pi_v$ o\`u $\pi_v$ est le facteur local de $\pi$ en $v$, o\`u $S$ et $U^v$ sont comme au \S\ \ref{facteur} et $M^v=\otimes_{w\in S}M_w$ avec $M_w$ comme au \S\ \ref{unI}. Soit $\pi^0$ un $\oE$-r\'eseau de $\pi_f$ comme en (\ref{pi0}), alors $\Hom_{\oE[U^v]}(M^v,\pi^0)$ est un $\oE$-r\'eseau $\pi_v^0$ de $\pi_v$ (stable par $\GFv$) et le $\oE$-module :
$$\Hom_{\oE[U]}(M(J),\pi^0)=\Hom_{\oE[U_v]}(M_v(J),\pi_v^0)\ne 0$$
est non nul, donc libre de rang $1$. On note $\widehat v\in \pi_v^0$ un g\'en\'erateur du sous-$\oE$-module (de rang $1$) des \'el\'ements de $\pi_v^0$ sur lesquels $U_v$ agit par le caract\`ere $M_v(J)=\eta'(J)\otimes\eta(J)$. 

L'injection naturelle $\overline\pi^0 \=k_E \otimes_{\oE} \pi^0 \hookrightarrow \pi_D(\rhobar)$ (lemme \ref{pi0bar}) induit une injection $\Hom_{k_E[U^v]}(\overline M^v,\overline\pi^0)\hookrightarrow \Hom_{k_E[U^v]}(\overline M^v,\pi_{D}(\rhobar))$ qui, par construction, tombe dans $\Hom_{k_E[U^v]}(\overline M^v,\pi_{D}(\rhobar))[\gm']=\pi_{D,v}(\rhobar)$. On a ainsi des injections :
$$k_E \otimes_{\oE}\pi_v^0=\Hom_{\oE[U^v]}(M^v,\pi^0)/\pE\hookrightarrow \Hom_{k_E[U^v]}(\overline M^v,\overline\pi^0)\hookrightarrow \pi_{D,v}(\rhobar)$$
de sorte que l'image de $\widehat v$ dans $\pi_{D,v}(\rhobar)$ est non nulle. 

Le fait que $\rho_{\pi}^0|_{\gFv}\cong {\rm Hom}_{{\rm Fil}^1,\varphi_1}(\M,\widehat A_{\rm cris})^{\vee}(1)$ (o\`u $\rho_\pi^0$ est un $\oE$-r\'eseau stable par $\gF$ dans $\rho_\pi$) pour un (unique) $\oE$-module fortement divisible $\M$ de type $\eta(J)\otimes\eta'(J)$ provient de $\Hom_{\oE[U_v]}(M_v(J),\pi_v)\ne 0$ et de la compatibilit\'e avec la correspondance de Langlands locale (\cite{Sai}). 

On peut donc appliquer le corollaire \ref{mainlocal} avec $J$, $\pi_{D,v}(\rhobar)$, $\M$, $\pi_v^0$ et $\widehat v$, ce qui donne le r\'esultat (le lecteur pourra v\'erifier sur toutes nos normalisations que la repr\'esentation $\pi_v$ est bien alors la repr\'esentation $\pi_p$ du th\'eor\`eme \ref{main}).
\end{proof}

\begin{rem}
{\rm (i) Notons que les valeurs des $x(J)$ du corollaire \ref{mainglobal} ne d\'ependent que de $\rhobar|_{\gFv}$. Lorsque la repr\'esentation $\pi_{D,v}(\rhobar)$ v\'erifie les hypoth\`eses de la proposition \ref{xjbis} pour $\rhobar|_{\gFv}$ (ce qui est maintenant connu par \cite{EGS}, cf. la remarque \ref{folk}) et que $Z(\rhobar|_{\gFv})\ne \emptyset$, il e\-xiste par les r\'esultats de \cite{BP} de nombreux autres invariants que l'on peut associer \`a $\pi_{D,v}(\rhobar)$, par exemple les invariants ``cycliques'' de \cite[Ques.9.5]{Br} lorsque $\rhobar|_{\gFv}$ est (g\'en\'erique) semi-simple. Nous ignorons comment calculer ces invariants cycliques en g\'en\'eral, mais on peut se demander si leurs valeurs conjecturales explicit\'ees dans \cite[Ques.9.5]{Br} ne seraient pas aussi celles donn\'ees par $\pi_{D,v}(\rhobar)$. Voir \`a ce propos les r\'esultats de \cite{Hu}.\\
(ii) Il est vraisemblable que le corollaire \ref{mainglobal} reste valable dans le cas o\`u les $r_{v,\sigma}$ sont tous nuls mais o\`u $\rhobar|_\gFv\otimes \theta_v^{-1}$ est suppos\'ee de plus dans la cat\'egorie de Fontaine-Laffaille.}
\end{rem}

\section{Appendice : R\'eductions de $K$-types}

On donne des formules explicites pour la semi-simplification modulo $p$ de tous les $K$-types pour $\GL_2(L)$ o\`u $L/\Qp$ est une extension finie {\it quelconque}. 

On note $M$ l'extension quadratique non ramifi\'ee de $L$, $\sigma$ l'\'el\'ement non trivial de $\Gal(M/L)$ et $\F_q$, $\F_{q^2}$ les corps r\'esiduels de $L$ et $M$. On note $I_L$ le sous-groupe d'inertie de $\g$, $\varpi_L$ une uniformisante de $L$ et $v_L$ la valuation sur $L$ telle que $v_L(\varpi_L)=1$.

On rappelle que $\II\subset \K$ est le pro-$p$-sous-groupe de Sylow du sous-groupe d'Iwahori $\I$ des matrices triangulaires sup\'erieures modulo une uniformisante $\varpi_L$ de $\oL$. On note ${\mathrm I}(\F_q)\subset \GL_2(\F_q)$ le sous-groupe des matrices triangulaires sup\'erieures et ${\mathrm I}_1(\F_q)$ son $p$-sous-groupe de Sylow. Pour $n\ge 1$, on note $I(n)$ le sous-groupe de $\I$ des matrices triangulaires sup\'erieures modulo $\varpi_L^n$ et $I_1(n)$ son pro-$p$-sous-groupe de Sylow (donc $I(1)=\I$ et $I_1(1)=\II$).

Pour un groupe profini $G$ contenant un pro-$p$-sous-groupe ouvert et pour $\Lambda \in \{E,k_E\}$, on note $R_\Lambda(G)$ le groupe de Grothendieck des repr\'esentations (lisses de dimension finie) de $G$ sur $\Lambda$. Rappelons que $R_{k_E}(G) \cong R_{k_E}(G/N)$ pour tout pro-$p$-sous-groupe normale $N$ de $G$, et qu'un \'el\'ement de $R_{k_E}(G)$ est d\'etermin\'e par son caract\`ere de Brauer, qui est une fonction \`a valeurs dans $\oE$ (pour $E$ suffisamment grand) sur l'ensemble des classes de conjugaison $p$-r\'eguli\`eres de $G$ (ou de $G/N$). Si $\vartheta \in R_E(G)$, on note $\overline\vartheta$ l'\'el\'ement dans $R_{k_E}(G)$ donn\'e par la r\'eduction de $\vartheta$. Rappelons que le caract\`ere de Brauer de $\overline{\vartheta}$ est juste la restriction du caract\`ere de $\vartheta$ \`a l'ensemble des \'el\'ements $p$-r\'eguliers de $G$. On suppose toujours que $E$ est suffisamment grand pour que tous les caract\`eres de Brauer prennent leurs valeurs dans $\oE$ et que toutes les repr\'esentations irr\'eductibles de $G$ en caract\'eristique $p$ soient d\'efinies sur $k_E$.

Pour un caract\`ere $\xi:\F_{q^2}^\times \to k_E^\times$, on note $\vartheta(\xi)$ l'image dans $R_{k_E}(\GL_2(\F_q))$ (ou dans $R_{k_E}(\K)$) de la r\'eduction de la repr\'esentation irr\'eductible de $\GL_2(\F_q)$ de degr\'e $q-1$ sur $E$ associ\'ee \`a $[\xi]$ si $\xi^q\ne\xi$ ou bien de $([\chi]\circ\det)\otimes({\rm Sp}-1) \in R_E(\GL_2(\F_q))$ si $\xi = \chi\circ{\mathrm N}_{\F_{q^2}/\F_q}$ (o\`u Sp d\'esigne la repr\'esentation sp\'eciale de $\GL_2(\F_q)$ sur $E$). Le caract\`ere de Brauer de $\vartheta(\xi)$ (sur les classes de conjugaison $p$-r\'eguli\`eres de $\GL_2(\F_q)$) envoie donc la classe de $g$ vers $(q-1)[\xi(c)]$ si $g = c\in \F_q^\times$, vers $- [\xi(c)]  - [\xi(c)]^q$ si $g$ a pour valeurs propres $c,c^q\in \F_{q^2}^\times \setminus \F_q^\times$, et vers $0$ sinon.

Si $[r,N]$ est un type de Weil-Deligne (cf. \S\ \ref{lifts0}), on d\'efinit son {\em conducteur essentiel} comme le conducteur minimal parmi tous ses tordus par des caract\`eres.

On note d'abord la formule suivante :

\begin{lem0}\label{app:sums}
Pour tout caract\`ere $\psi:\F_{q}^\times \to k_E^\times$, on a :
$$\sum_{\xi}\Theta(\xi) = \ind_{\F_q^\times {\mathrm I}_1(\F_q)}^{\GL_2(\F_q)} \psi = \sum_{\tau}m_\tau\tau$$
dans $R_{k_E}(\GL_2(\F_q))$, o\`u la premi\`ere somme est sur tous les caract\`eres $\xi:\F_{q^2}^\times \!\to k_E^\times$ tels que $\psi = \xi|_{\F_q^\times}$, o\`u la deuxi\`eme somme est sur toutes les repr\'esentations irr\'eductibles $\tau$ de $\GL_2(\F_q)$ sur $k_E$ de caract\`ere central $\psi$ et o\`u :
$$m_\tau \= \left\{\begin{array}{ll}
2^{\val(q) -\val(\dim\tau)}& \mbox{si $\dim\tau > 1$}\\
2^{\val(q)} - 1&\mbox{si $\dim\tau = 1$.}\end{array}\right.$$
\end{lem0}
\begin{proof}
Pour d\'emontrer la premi\`ere \'egalit\'e, on v\'erifie facilement que les deux sommes ont m\^eme caract\`ere de Brauer, qui envoie $g$ vers $(q^2-1)[\psi(g)]$ pour $g\in \F_q^\times$ et vers $0$ sinon. Il suffit de d\'emontrer la seconde \'egalit\'e en sommant sur tous les caract\`eres centraux possibles $\psi:\F_q^\times \to k_E^\times$, i.e. de d\'emontrer :
$$\sum_{\chi,\chi'} \ind_{{\mathrm I}(\F_q)}^{\GL_2(\F_q)}  \chi\otimes\chi'  = \sum_{\tau} m_\tau\tau$$
o\`u la premi\`ere somme est sur tous les caract\`eres $\chi,\chi': \F_q^\times \to k_E^\times$ et la deuxi\`eme somme est sur toutes les repr\'esentations irr\'eductibles $\tau$ de $\GL_2(\F_q)$ sur $k_E$. Comme les constituants irr\'eductibles de chaque $\ind_{{\mathrm I}(\F_q)}^{\GL_2(\F_q)}  \chi\otimes\chi'$ sont distincts, on doit montrer que $m_\tau = m_\tau'$ o\`u $m_\tau'$ est le nombre de paires ordonn\'ees $(\chi,\chi')$ telles que $\tau$ est un constituant de $\ind_{{\mathrm I}(\F_q)}^{\GL_2(\F_q)}  \chi\otimes\chi'$. Quitte \`a tordre par une puissance de $\det$, on peut supposer $\tau =  \otimes_{\sigma\in {\mathcal S}}(\Sym^{r_{\sigma}}k_E^2)^{\sigma}$ o\`u $\mathcal S$ est l'ensemble des plongements $\F_q \hookrightarrow k_E$. Par \cite[Prop.1.1]{Di2}, $m_\tau'$ est le nombre de sous-ensembles $J \subset {\mathcal S}$
tels que les \'equations :
$$\begin{array}{ccccccc}
r_{\sigma}&=&c_{\sigma}&\ {\rm si}&\ \sigma\in J&{\rm et}&\sigma\circ\varphi^{-1}\in J\\
r_{\sigma}&=&c_{\sigma}-1&\ {\rm si}&\ \sigma\in J&{\rm et}&\sigma\circ\varphi^{-1}\notin J\\
r_{\sigma}&=&p-2-c_{\sigma}&\ {\rm si}&\ \sigma\notin J&{\rm et}&\sigma\circ\varphi^{-1}\in J\\
r_{\sigma}&=&p-1-c_{\sigma}&\ {\rm si}&\ \sigma\notin J&{\rm et}&\sigma\circ\varphi^{-1}\notin J
\end{array}$$
aient une solution avec $0 \le c_\sigma \le p-1$ pour tout $\sigma \in {\mathcal S}$ et au moins un $c_\sigma < p - 1$. On trouve qu'il y a une solution si et seulement si $J$ est tel que :
\begin{enumerate}
\item[(i)]$\{\sigma,r_\sigma = p-1\} \cap F(J) = \emptyset$ (o\`u $F(J)$ est comme en (\ref{fj}))
\item[(ii)]si $r_\sigma = p - 1$ pour tout $\sigma \in {\mathcal S}$ alors $J = \emptyset$
\item[(iii)]si $r_\sigma = 0$ pour tout $\sigma \in {\mathcal S}$ alors $J \neq \emptyset$.
\end{enumerate}
En se souvenant que $\dim \tau = \prod_{\sigma \in {\mathcal S}} (r_\sigma + 1)$, on d\'eduit facilement $m_\tau = m_\tau'$.
\end{proof}

On fixe dans la suite un type de Weil-Deligne $[r,N]$.

\begin{prop0}\label{app:PS}
Supposons que $r$ est d\'ecomposable, i.e. $r|_{I_L} = \chi_1 \oplus \chi_2$ pour des caract\`eres lisses $\chi_i: \oL^\times \to E^\times$. Soit $n$ le conducteur de $\chi_1/\chi_2$ (i.e. le conducteur essentiel de $[r,0]$) et $\vartheta$ un $K$-type correspondant \`a $[r,0]$ (cf. \S\ \ref{lifts0}).\\
(i) Si $n=0$, alors $\overline{\vartheta} = \overline\chi\circ\det$ o\`u $\chi = \chi_1 = \chi_2$.\\
(ii) Si $n\ge 1$, alors :
$$\overline{\vartheta} = \left(\ind_{\I}^{\K} (\overline{\chi}_1\otimes\overline{\chi}_2)\right) + \frac{q^{n-1}-1}{q-1}\ind_{\oL^\times\II}^{\K} \psi$$
dans $R_{k_E}(\K)$, o\`u $\psi = \overline{\chi}_1\overline{\chi}_2$ est la r\'eduction du caract\`ere central de $\vartheta$.
\end{prop0}
\begin{proof}
Si $n = 0$ alors $\chi_1 = \chi_2$ et $\vartheta = \chi\circ\det$, l'assertion est alors claire. Si $n > 1$ et $q > 2$, alors il y a un unique $K$-type correspondant \`a $[r,0]$, \`a savoir $\vartheta = (\det\circ\chi_1)\otimes \ind_{I(n)}^{\K}\chi$ o\`u $\chi\left(\smat{a&b\\c&d}\right) \= (\chi_2/\chi_1)(d)$ (cf. \cite[\S~A.2.5]{He}). On a donc $\overline{\vartheta} = \ind_{I(n)}^{\K}\res_{\I}^{I(n)}\overline{\chi}_1\otimes\overline{\chi}_2$. Nous allons montrer la formule de l'\'enonc\'e :
\begin{equation}\label{PSind1}
\ind_{I(n)}^{\K}\res_{\I}^{I(n)}\overline{\chi}_1\otimes\overline{\chi}_2
  = \big(\ind_{\I}^{\K} (\overline{\chi}_1\otimes\overline{\chi}_2)\big) + \frac{q^{n-1}-1}{q-1}\ind_{\oL^\times\II}^{\K} \psi\end{equation}
par r\'ecurrence sur $n$. Le cas $n=1$ \'etant clair, on suppose $n > 1$ et (\ref{PSind1}) vraie avec $n$ remplac\'e par $n-1$. On montre d'abord que :
\begin{equation}\label{PSind2}
\ind_{I(n)}^{I(n-1)}\res_{\I}^{I(n)}\overline{\chi}_1\otimes\overline{\chi}_2 = \res_{\I}^{I(n-1)}\overline{\chi}_1\otimes\overline{\chi}_2 + \ind_{\oL^\times I_1(n-1)}^{I(n-1)}\res_{\I}^{\oL^\times I_1(n-1)}\overline{\chi}_1\otimes\overline{\chi}_2
\end{equation}
dans $R_{k_E}(I(n-1))$. Les classes de conjugaison $p$-r\'eguli\`eres de $I(n-1)$ sont celles des matrices $\smat{[a]&0\\0&[d]}$ pour $a,d\in\F_q^\times$. La valeur du caract\`ere de Brauer sur une telle matrice des deux c\^ot\'es de (\ref{PSind2}) est donn\'ee par $q\chi_1([a])\chi_2([d])$ si $a=d$ et par $\chi_1([a])\chi_2([d])$ si $a\neq d$. L'\'egalit\'e (\ref{PSind2}) et l'hypoth\`ese de r\'ecurrence donnent :
\begin{multline*}
\ind_{I(n)}^{\K}\res_{\I}^{I(n)}\overline{\chi}_1\otimes\overline{\chi}_2 = \\
\ind_{I(n-1)}^{\K}\res_{\I}^{I(n-1)}\overline{\chi}_1\otimes\overline{\chi}_2 + \ind_{\oL^\times I_1(n-1)}^{\K}\res_{\I}^{\oL^\times I_1(n-1)}\overline{\chi}_1\otimes\overline{\chi}_2=\\
 \big(\ind_{\I}^{\K} (\overline{\chi}_1\otimes\overline{\chi}_2)\big) + \frac{q^{n-2}-1}{q-1}\ind_{\oL^\times\II}^{\K} \psi\\
+ \ind_{\oL^\times\II}^{\K}\ind_{\oL^\times I_1(n-1)}^{\oL^\times\II}\res_{\oL^\times\II}^{\oL^\times I_1(n-1)} \psi.
\end{multline*}
Comme les \'el\'ements $p$-r\'eguliers de $\oL^\times\II$ sont centraux et comme $[\oL^\times\II:\oL^\times I_1(n-1)] = q^{n-2}$, on a dans $R_{k_E}(\oL^\times\II)$ :
$$\ind_{\oL^\times I_1(n-1)}^{\oL^\times\II}\res_{\oL^\times\II}^{\oL^\times I_1(n-1)} \psi= q^{n-2}\psi$$
ce qui donne (\ref{PSind1}). Si $n>1$ et $q=2$, il y a deux $K$-types associ\'es \`a $[r,0]$, l'un donn\'e par la m\^eme formule que dans le cas $q>2$, l'autre par son compl\'ement dans $\det\circ\chi_1\otimes \ind_{I(n+1)}^{\K}\res_{I(n)}^{I(n+1)}\chi$ (cf. \cite[\S\ A.2.7]{He}). Notons que si $q = 2$ alors $I(n) = I_1(n)$ et $\overline{\chi}_1$, $\overline{\chi}_2$, ${\psi}$ sont tous triviaux. Le m\^eme calcul que dans le cas $q > 2$ montre que la r\'eduction du premier $K$-type est $2^{n-1}\ind_{\I}^{\K}{\psi}$ (comme demand\'e). De plus la r\'eduction de $\det\circ\chi_1\otimes \ind_{I(n+1)}^{\K}\res_{I(n)}^{I(n+1)}\chi$ est $2^n\ind_{\I}^{\K}{\psi}$. On en d\'eduit que les deux $K$-types ont m\^eme r\'eduction.
\end{proof}

On rappelle que si $r$ est irr\'eductible, alors son conducteur essentiel est pair si et seulement si $r$ est l'induite d'un caract\`ere du groupe de Weil de $M$ (voir par exemple \cite[Cor.4.1.9]{Ku}). Dans ce cas, les r\'eductions des $K$-types sont aussi consid\'er\'ees dans \cite[Thm.2.3]{Sc}, qui donne aussi la formule r\'ecursive (\ref{SCind1}) de la d\'emonstration ci-dessous.

\begin{prop0}\label{app:SCeGL2}
Supposons que $r$ est irr\'eductible de conducteur essentiel pair $n=2m$ (avec $m\ge 1$), de sorte que $r|_{I_L} = \xi\oplus\xi^\sigma$ pour un caract\`ere lisse $\xi:\oM^\times \to E^\times$ tel que $\xi/\xi^\sigma$ est de conducteur $m$. Soit $\vartheta$ le $K$-type correspondant \`a $[r,N]=[r,0]$. On a :
$$\overline{\vartheta} = (-1)^{m-1}\Theta(\overline{\xi}) + \frac{q^{m-1} - (-1)^{m-1}
}{q+1}\ind_{\oL^\times\II}^{\K}\psi$$
dans $R_{k_E}(\K)$, o\`u $\psi = \overline{\xi}|_{\oL^\times}$ est la r\'eduction du caract\`ere central de $\vartheta$.
\end{prop0}
\begin{proof}
Il y a dans ce cas un unique type $\vartheta$ associ\'e \`a $[r,0]$ dont nous rappelons la d\'efinition suivant \cite[\S~3]{Ger} (voir \cite[\S\ A.3]{He} pour son unicit\'e). On identifie ${\mathrm M}_2(L)$ avec $\End_L(M) = M \oplus M\sigma$ et ${\mathrm M}_2(\oL)$ avec $\oM \oplus \oM\sigma$. On pose $U_0 \= \K$ et on d\'efinit des sous-groupes ouverts compacts de $U_0$ par :
$$U_t \= \{ a + b\sigma,  a \in \oM^\times, b \in \varpi_L^t\oM\}$$
pour $t > 0$. Quitte \`a tordre, on peut supposer que $r$ est lui-m\^eme de conducteur $n$. Donc pour $t \ge m/2$, la formule $\beta_\xi(a+b\sigma) = \xi(a)$ d\'efinit un caract\`ere de $U_t$. Alors $\vartheta = \vartheta_\xi = \ind_{U_{[m/2]}}^{\K} \alpha_\xi$ o\`u $\alpha_\xi$ est la restriction \`a $U_{[m/2]}$ de la repr\'esentation $\kappa_\xi$ d\'efinie en \cite[\S\ 3 3]{Ger}.  En particulier, les repr\'esentations $\alpha_\mu$ pour $\mu\neq \mu^\sigma$ sont d\'etermin\'ees par les formules :
\begin{enumerate}
\item[(i)]$\alpha_\mu \otimes {\rm Sp}  = \ind_{U_1}^{\K} \beta_\mu$ si $m=1$
\item[(ii)]$\alpha_\mu = \beta_\mu$ si $m$ est pair
\item[(iii)]$\ind_{U_{(m+1)/2}}^{U_{(m-1)/2}} \beta_\mu = \oplus_{\omega\neq 1} \alpha_{\mu\omega}$ si $m>1$ est impair, o\`u la somme est sur les caract\`eres non triviaux $\omega:\F_{q^2}^\times/\F_q^\times \to E^\times$.
\end{enumerate}
Pour voir l'unicit\'e des $\alpha_\mu$ dans le cas (iii), on note que
$$q\alpha_\mu = (1-q)\ind_{U_{(m+1)/2}}^{U_{(m-1)/2}} \beta_\mu + \sum_{\omega\neq 1} \ind_{U_{(m+1)/2}}^{U_{(m-1)/2}} \beta_\mu\omega$$
dans $R_E(U_{(m-1)/2})$.

On d\'emontre maintenant la proposition par r\'ecurrence sur $m$. Si $m=1$, alors $\alpha_\xi$ est pr\'ecis\'ement la repr\'esentation irr\'eductible de $\GL_2(\F_q)$ associ\'ee \`a $\xi$ et la proposition est imm\'ediate dans ce cas. Si $m=2$, alors le caract\`ere de Brauer de $\overline{\vartheta} = \ind_{U_1}^{U_0}\overline{\beta}_\xi$ vu comme repr\'esentation de $\GL_2(\F_q)$ envoie la classe de conjugaison d'un \'el\'ement $p$-r\'egulier $g$ vers $(q-1)q[\xi(c)]$ si $g = c\in \F_q^\times$, vers $[\xi(c)]  + [\xi^\sigma(c)]$ si $g$ a pour valeurs propres
$c,\sigma(c)\in \F_{q^2}^\times \setminus \F_q^\times$, et vers $0$ sinon. On en d\'eduit que $\overline{\vartheta} + \Theta(\overline{\xi})  = \ind_{\oL^\times\II}^{\K}\psi$ (voir la preuve du lemme \ref{app:sums}). Supposons maintenant $m > 2$ et soit $\xi':\oM^\times \to E^\times$ tel que $\xi'$ a m\^eme r\'eduction que $\xi$ mais avec $\xi'/(\xi')^\sigma$ de conducteur $m-1$. Nous allons montrer que :
\begin{equation} \label{SCind1}
\overline{\vartheta}_\xi = \sum_{\omega\neq 1}\overline{\vartheta}_{\xi'\omega}
\end{equation}
o\`u la somme est sur tous les caract\`eres non triviaux $\omega:\F_{q^2}^\times/\F_q^\times \to E^\times$. Si $m > 2$ est pair, alors $\overline{\beta}_\xi = \overline{\beta}_{\xi'}$ et les formules d\'efinissant les repr\'esentations ci-dessus donnent :
\begin{eqnarray*} 
\overline{\vartheta}_\xi &=&
\ind_{U_{(m-2)/2}}^{\K}\ind_{U_{m/2}}^{U_{(m-2)/2}}\overline{\beta}_\xi\\
&=& \ind_{U_{(m-2)/2}}^{\K}\ind_{U_{m/2}}^{U_{(m-2)/2}}\overline{\beta}_{\xi'}
= \sum_{\omega\neq 1} \ind_{U_{(m-2)/2}}^{\K}\overline{\alpha}_{\xi'\omega}
= \sum_{\omega\neq 1} \overline{\vartheta}_{\xi'\omega}
\end{eqnarray*}
comme demand\'e. Si $m > 2$ est impair, alors (\ref{SCind1}) d\'ecoulera de la formule $\overline{\alpha}_\xi = \sum_{\omega\neq 1}
\overline{\beta}_{\xi'\omega}$ dans $R_{k_E}(U_{(m-1)/2})$. Les caract\`eres de Brauer des $\overline{\alpha}_\mu$ pour $\mu \in \{\xi\omega, \omega:\F_{q^2}^\times/\F_q^\times \to E^\times\}$ sont d\'etermin\'es comme ci-dessus par :
$$\ind_{U_{(m+1)/2}}^{U_{(m-1)/2}} \overline{\beta}_\mu = \sum_{\omega\neq 1} \overline{\alpha}_{\mu\omega}.$$
Comme $\overline{\beta}_\mu = \res_{U_{(m-1)/2}}^{U_{(m+1)/2}} \overline{\beta}_{\mu'}$ (o\`u $\mu' = \xi'\omega$ si $\mu = \xi\omega$), il suffit de montrer :
\begin{equation}\label{SCind2}
\ind_{U_{(m+1)/2}}^{U_{(m-1)/2}} \res_{U_{(m-1)/2}}^{U_{(m+1)/2}} \overline{\beta}_{\mu'}
 = \sum_{\omega,\eta \neq 1} \overline{\beta}_{\mu'\omega\eta}
 \end{equation}
pour $\mu' \in \{\xi'\omega', \omega':\F_{q^2}^\times/\F_q^\times \to E^\times\}$. Notons que le terme de droite dans (\ref{SCind2}) est juste $q\overline{\beta}_\xi + (q-1)\sum_{\omega\neq 1} \overline{\beta}_{\xi\omega}$ et que les classes de conjugaison $p$-r\'eguli\`eres dans $U_{(m-1)/2}$ sont pr\'ecis\'ement celles des \'el\'ements $[c]$ pour $c \in \F_{q^2}^\times$. Le caract\`ere de Brauer du terme de droite dans (\ref{SCind2}) envoie un tel \'el\'ement vers $q(q-1)\xi([c])$ si $c\in \F_q^\times$ et vers $\xi([c])$ sinon. En utilisant le fait que, si $c \not\in\F_q^\times$ et $g \in U_{(m-1)/2} \setminus U_{(m+1)/2}$, alors $g[c]g^{-1} \not \in U_{(m+1)/2}$, on trouve que le caract\`ere de Brauer du terme de gauche dans (\ref{SCind2}) est le m\^eme. Cela termine la preuve de (\ref{SCind1}). En appliquant (\ref{SCind1}), l'hypoth\`ese de r\'ecurrence et le lemme \ref{app:sums} donnent alors :
\begin{eqnarray*}
\overline{\vartheta}_\xi &=&  \sum_{\omega\neq 1} \overline{\vartheta}_{\xi'\omega}\\
&=&  (-1)^{m-2} \sum_{\omega\neq 1} \Theta(\overline{\xi\omega})
          + q\cdot \frac{q^{m-2} - (-1)^{m-2}}{q+1}\ind_{\oL^\times\II}^{\K}\psi \\
 &=& (-1)^{m-1}\Theta(\overline{\xi}) + (-1)^{m-2}\ind_{\oL^\times\II}^{\K}\psi
              + \frac{q^{m-1} - (-1)^{m-2}q}{q+1}\ind_{\oL^\times\II}^{\K}\psi \\
&=& (-1)^{m-1}\Theta(\overline{\xi}) + \frac{q^{m-1} - (-1)^{m-1}}{q+1}\ind_{\oL^\times\II}^{\K}\psi.
\end{eqnarray*}
\end{proof}

\begin{prop0}\label{app:SCoGL2}
Supposons que $r$ est irr\'eductible de conducteur essentiel impair $n=2m+1$ (avec $m\ge 1$). Soit $\vartheta$ le $K$-type correspondant \`a $[r,N]=[r,0]$. On a~:
$$\overline{\vartheta} = q^{m-1}\ind_{\oL^\times\II}^{\K} \psi$$
dans $R_{k_E}(\K)$, o\`u $\psi$ est la r\'eduction du caract\`ere central de $\vartheta$.
\end{prop0}
\begin{proof} 
Il y a dans ce cas un unique type $\vartheta$ associ\'e \`a $[r,N]$ dont nous rappelons la d\'efinition suivant \cite[\S~5]{GK} (voir \cite[\S\ 3]{He} pour son unicit\'e). On identifie d'abord ${\mathrm M}_2(L)$ avec $\End_{L}(M')$ et ${\mathrm M}_2(\oL)$ avec $\End_{\oL}({\mathcal O}_{M'})$ pour une extension quadratique {\it ramifi\'ee} $M'$ de $L$ (qui d\'epend de $r$) en chosissant $\{1,\varpi_{M'}\}$ comme base de $M'$ sur $L$ (o\`u $\varpi_{M'}$ est une uniformisante de $M'$). Pour $t\ge 1$, on d\'efinit des sous-groupes ouverts compacts de $\K$ par :
$$V_t \= \{ g\in \K, g(\beta) - \beta \in \beta\varpi_{M'}^t{\mathcal O}_{M'} \mbox{\ si $\beta \in{\mathcal O}_{M'}$} \}$$
de sorte que $V_1 = \II$, $[\II:V_t] = q^{2(t-1)}$ et $ {\mathcal O}_{M'} \cap V_t = 1 + \varpi_{M'}^t {\mathcal O}_{M'} $, o\`u l'on identifie $\beta \in {\mathcal O}_{M'}$ avec l'endomorphisme de ${\mathcal O}_{M'}$ d\'efini par la multiplication par $\beta$. La seule chose que l'on a besoin de savoir sur $\vartheta$ est qu'il est de la forme $\ind_{{\mathcal O}_{M'}V_m}^{\K}\alpha$ pour un caract\`ere $\alpha: {\mathcal O}_{M'}V_m \to E^\times$. Comme $[\oL^\times\II: {\mathcal O}_{M'}V_m] = q^{m-1}$ et que les classes de conjugaison $p$-r\'eguli\`eres de ${{\mathcal O}_{M'}V_m}$ et $\oL^\times\II$ sont celles des \'el\'ements $[a]$ pour $a\in \F_q^\times$, on voit que le caract\`ere de Brauer de $\ind_{{\mathcal O}_{M'}V_m}^{\oL^\times\II}\overline{\alpha}$ est celui de $q^{m-1}\psi$. Ceci ach\`eve la preuve.
\end{proof}

Soit maintenant $D$ une alg\`ebre de quaternions sur $L$, $\oD$ un ordre maximal dans $D$, $K \= \oD^\times$, $Z\=\oL^\times$ et $I_1$ le pro-$p$ sous-groupe de Sylow de $K$. On note $\Pi_D$ une uniformisante de $D$. On a $I_1 = 1 + \Pi_D\oD$ et on pose $I_n \= 1 + \Pi_D^n\oD$ pour $n\ge 1$. On choisit un plongement $M \hookrightarrow D$ ce qui permet d'identifier les repr\'esentations irr\'eductibles de 
$K$ sur $k_E$ avec les caract\`eres de $K/I_1 \cong \F_{q^2}^\times$. Notons que l'analogue du lemme \ref{app:sums} dans ce contexte dit juste que $\ind_{ZI_1}^K\psi$ est la somme des $q+1$ caract\`eres $\xi$ de caract\`ere central $\psi$.

On fixe un type de Weil-Deligne $[r,N]$.

\begin{prop0}\label{app:special}
Supposons que $N\neq 0$, de sorte que $r|_{I_L} = \chi \oplus \chi$ pour un caract\`ere lisse $\chi: \oL^\times \to E^\times$. Soit $\vartheta$ le $K$-type correspondant \`a $[r,N]$. On a :
$$\overline{\vartheta} = \overline{\chi}\circ\det.$$
\end{prop0}
\begin{proof}
C'est clair puisque $\vartheta = \chi\circ \det$.
\end{proof}

\begin{prop0}\label{app:SCeD}
Supposons que $r$ est irr\'eductible de conducteur essentiel pair $n=2m$ (avec $m\ge 1$), de sorte que $r|_{I_L} = \xi\oplus\xi^\sigma$ pour un caract\`ere lisse $\xi:\oM^\times \to E^\times$ tel que $\xi/\xi^\sigma$ est de conducteur $m$. Soit $\vartheta$ un $K$-type correspondant \`a $[r,N]=[r,0]$. On a :
$$\overline{\vartheta} = (-1)^{m-1}\overline{\mu} + \frac{q^{m-1} - (-1)^{m-1}}{q+1}\ind_{ZI_1}^K\psi$$
dans $R_{k_E}(K)$, o\`u $\mu \in \{\xi,\xi^\sigma\}$ et $\psi = \overline{\xi}|_{\oL^\times}$ est la r\'eduction du caract\`ere central de $\vartheta$.
\end{prop0}
\begin{proof}
Dans ce cas la construction de \cite[\S\ 5]{Ger} montre qu'il y a deux types associ\'es \`a $r$, chacun d\'etermin\'e par un choix de $\mu\in\{\xi,\xi^\sigma\}$. La d\'efinition des types est analogue \`a celle pour $\K$, mais avec le r\^ole des parit\'es invers\'e. En particulier, si $m$ est impair, alors $\vartheta$ est induit d'un caract\`ere $\alpha_\mu:\oM^\times I_{m} \to E^\times$, et si $m$ est pair, alors $\vartheta$ est induit d'une repr\'esentation de dimension $q$ de $\oM^\times I_{m-1}$. On omet la preuve de la proposition, qui est tr\`es similaire \`a celle de la proposition \ref{app:SCeGL2}.
\end{proof}

\begin{prop0}\label{app:SCoD} 
Supposons que $r$ est irr\'eductible de conducteur essentiel impair $n=2m+1$ (avec $m\ge 1$). Soit $\vartheta$ le $K$-type correspondant \`a $[r,N]=[r,0]$. On a :
$$\overline{\vartheta} = q^{m-1}\ind_{ZI_1}^K \psi$$
dans $R_{k_E}(K)$, o\`u $\psi$ est la r\'eduction du caract\`ere central de $\vartheta$.
\end{prop0}
\begin{proof} 
Ici encore on omet la preuve car elle est tr\`es similaire \`a celle de la proposition \ref{app:SCoGL2} en utilisant \cite[\S~54]{BH} pour d\'eterminer le type, qui est unique dans ce cas et est induit d'un caract\`ere de ${\mathcal O}_{M'}^\times I_m$ pour une extension quadratique ramifi\'ee $M'$ de $L$ plong\'ee dans $D$.
\end{proof}

\begin{rem0}
{\rm Dans ce cas des alg\`ebres de quaternions, des r\'esultats si\-milaires (r\'edig\'es avec plus de d\'etails) ont \'et\'e aussi r\'ecemment obtenus par Tokimoto (\cite{To}).}
\end{rem0}

\end{document}